\date{}
\newtheorem{theorem}{Theorem}[section]
\newtheorem{lemma}[theorem]{Lemma}
\newtheorem{corollary}[theorem]{Corollary}
\newtheorem{proposition}[theorem]{Proposition}
\newtheorem{conjecture}[theorem]{Conjecture}
\theoremstyle{definition}
\newtheorem{definition}[theorem]{Definition}
\newtheorem{question}[theorem]{Question}
\newtheorem{ex}[theorem]{Example}
\newtheorem{remark}[theorem]{Remark}
\newcommand{\m}{\mathfrak{m}}
\renewcommand{\S}{\mathbb{S}}
\renewcommand{\P}{\mathbb{P}}
\renewcommand{\L}{\mathbb{L}}
\newcommand{\F}{\mathbb{F}}
\newcommand{\D}{\mathbb{D}}
\newcommand{\N}{\mathbb{N}}
\newcommand{\Z}{\mathbb{Z}}
\newcommand{\Q}{\mathbb{Q}}
\newcommand{\R}{\mathbb{R}}
\newcommand{\C}{\mathbb{C}}
\newcommand{\T}{\mathbb{T}}
\newcommand{\SA}{\mathcal{A}}
\newcommand{\SM}{\mathcal{M}}
\newcommand{\La}{\Lambda}
\newcommand{\la}{\lambda}
\renewcommand{\d}{\Delta}
\newcommand{\Aut}{\operatorname{Aut}}
\newcommand{\Symp}{\operatorname{Symp}}
\newcommand{\PSL}{\operatorname{PSL}}
\newcommand{\Sh}{\operatorname{Sh}}
\renewcommand{\st}{\text{st}}
\newcommand{\std}{\text{st}}
\newcommand{\Cone}{\operatorname{Cone}}
\newcommand{\Hom}{\operatorname{Hom}}
\newcommand{\uf}{\operatorname{uf}}
\newcommand{\FM}{\mathfrak{M}}
\newcommand{\SX}{\mathcal{X}}
\newcommand{\w}{\mathfrak{w}}
\newcommand{\term}{simultaneously simplifiable }
\begin{document}
	\title{Legendrian doubles, twist spuns, and clusters}
	
	\author{James Hughes and Agniva Roy}

	\begin{abstract}
	
Let $\lambda$ be a Legendrian link in standard contact $\mathbb{R}^3$, such that $L_1$, $L_2$ are two exact fillings of $\la$ and $\varphi$ is a Legendrian loop of $\la$. We study fillability and isotopy characterizations of Legendrian surfaces in standard contact $\mathbb{R}^5$ built from the above data by doubling or twist spinning; denoting them $\La(L_1,L_2)$ or $\Sigma_\varphi(\la)$ respectively. In the case of doubles $\La(L_1,L_2)$, if the sheaf moduli $\mathcal{M}_1(\la)$ admits a cluster structure, we introduce the notion of mutation distance and study its relationship with the isotopy class of the Legendrian surface. For twist spuns $\Sigma_\varphi(\la)$, when $\mathcal{M}_1(\la)$ admits a globally foldable cluster structure, we use the existence of a $\varphi$-symmetric filling of the Legendrian link to build a cluster structure on the sheaf moduli of the twist spun by folding. We then use that to motivate, and provide evidence for, conjectures on the number of embedded exact fillings of certain twist spuns. Further, we obstruct the exact fillability of certain twist spuns by analyzing fixed points of the cyclic shift action on Grassmanians.

	\end{abstract}
	


\keywords{}

	\maketitle

\section{Introduction}

In this article, we study the exact Lagrangian fillability and isotopy characterizations of Legendrian surfaces in the standard contact $\R^5$. Along the way, we extend the correspondence between the theory of cluster algebras and exact Lagrangian fillings of a Legendrian submanifold, to the case of Legendrian surfaces built by {\em doubling} or {\em twist-spinning}. For the case of Legendrian doubles, we use the fact that their sheaf moduli is the intersection of two toric charts in the sheaf moduli of a Legendrian link to draw obstructions to their exact fillability and study their isotopy classes. On the other hand, for the case of twist spuns of braid-positive Legendrians, we identify the coordinate ring of regular functions on its sheaf moduli with a {\em skew-symmetrizable} cluster algebra, giving the first examples of cluster structures on sheaf moduli of Legendrian surfaces. We further establish an analogy between a surgery operation for Lagrangian fillings of these surfaces, and a mutation of cluster variables in the folded cluster algebra. Finally, we also provide obstructions to exact Lagrangian fillability of certain twist spun Legendrian tori by showing that their sheaf moduli do not admit any rational points.

\subsection{Scientific context} Recently, a number of results \cite{CasalsGao, CasalsWeng, GSW, CasalsGao24}  
have established cluster theory, an algebraic tool connecting combinatorics and geometry, as a valuable aid in  distinguishing exact Lagrangian fillings of Legendrian links in the standard contact $\R^3$. These connections have led to further understanding of the structure of the augmentation variety \cite{ng2023linfinity, CGGS1},
as well as a conjectural classification of exact Lagrangian fillings of certain classes of Legendrian links \cite{CasalsLagSkel}. Given these successes in $\R^3$, it is a natural question whether such connections between cluster theory and exact fillings can be established for Legendrian surfaces in $\R^5$. From another perspective, the symplectic geometric interpretations of cluster structures have led to advancements in the theory of cluster algebras, such as the existence of cluster structures on open Richardson varieties \cite{CGGLSS}, and a proof that the Muller-Speyer twist map on positroid varieties is the Donaldson-Thomas transformation \cite{CLSBW}. These and a collection of similar results indicate that cluster algebras and symplectic topology are a fruitful pairing.

Construction of Legendrian surfaces and their isotopy characterizations in ambient dimension higher than 3 is a subtle problem due to the flexibility of codimension-$(n+1)$ smooth embeddings, when $n>1$. Work of Murphy \cite{Murphy??} relating to {\em loose Legendrians} uses an h-principle to reduce the question of isotopy characterization of loose Legendrians to a formal problem. However, the non-loose case is largely open. In \cite{EkholmEtnyreSullivan05a} Ekholm-Etnyre-Sullivan defined Legendrian contact homology for Legendrians in $\R^{2n+1}$ for $n>1$ and proved its invariance under Legendrian isotopy, which they used to provide examples of infinite families of smoothly isotopic, pairwise Legendrian non-isotopic, non-loose Legendrian surfaces. Following that, further examples were given by Ekholm-K\'alm\'an \cite{EK} where they defined the twist-spinning construction. In later work, Ekholm \cite{ekholm2016non} also defined the doubling construction of Legendrians in arbitrary dimensions. Other constructions of Legendrian surfaces have also been studied in \cite{CasalsMurphy, CasalsZaslow}, the latter of which has contributed greatly to the cluster-theoretic applications outlined in the previous paragraph. 

The topic of exact Lagrangian fillability of Legendrian surfaces was explored in \cite{TreumannZaslow}, where the authors showed that most cubic planar Legendrians do not admit embedded exact Lagrangian fillings. In \cite{Golovko22}, it was shown that there are Legendrians in arbitrary dimensions that admit infinitely many fillings, building off of \cite{CasalsNg}. Certain connections between cluster algebras and Legendrian surfaces arising from cubic planar graphs have been explored in \cite{SchraderShenZaslow}.

In this article, we show that the sheaf moduli of Legendrian surfaces arising as doubles is the intersection of two toric charts in the sheaf moduli of the Legendrian link\footnote{This computation also follows from work of Li \cite{li2023lagrangian}.}, which is then used to obstruct existence of embedded exact fillings if the double is {\em asymmetric}, even though the Legendrian double is always non-loose when the sheaf moduli is irreducible. 
We define various notions of {\em mutation distance} on exact Lagrangian fillings and the toric charts they introduce,  and we explore the relationship of these distances to isotopy characterizations of doubles arising from exact fillings of torus links. 

In addition to our study of Legendrian doubles, we compute sheaf invariants of twist-spun Legendrian surfaces. We show that if the moduli stack of microlocal rank one sheaves of a braid positive Legendrian link is a globally foldable cluster algebra with respect to a cluster automorphism induced by a Legendrian loop, then the sheaf moduli of the corresponding twist-spun Legendrian surface admits a cluster ensemble structure. Applying the main result of \cite{CasalsGao24} we show that all the seeds in the corresponding cluster algebra arise from embedded exact fillings, which then implies the existence of infinitely many exact Lagrangian fillings for certain classes of twist-spun Legendrians. In other cases we obtain a conjectural classification of the count of fillings, analogous to the ADE conjecture for Legendrian links in $\R^3$ \cite[Conjectures 5.1 and 5.4]{CasalsLagSkel}. On the obstructive side, we use an analysis of fixed points of the cyclic shift map on Grassmanians to show that certain twist spuns do not admit exact fillings, using a necessary condition for fillability arising from \cite{JinTreumann17}. The primary technique we employ to construct and study Legendrian surfaces is Legendrian weaves \cite{CasalsZaslow}.

\subsection{Main results} We describe the main contributions of the article here.

\subsubsection{Legendrian doubles} 

Let $\la\subseteq (\R^3, \xi_{st})$ be a Legendrian link in the standard contact $\R^3$. We denote by $\SM_1(\la)$ the moduli of microlocal rank one sheaves with singular support on $\la$. By \cite{Sheaves3} and \cite{JinTreumann17}, an embedded exact Lagrangian filling $L$ of $\la$ induces an embedding $\mathcal{C}_L=(\C^\ast)^{b_1(L)}\subseteq \SM_1(\la)$. Given two fillings $L$ and $L'$, of $\la$, we can form the Legendrian double $\La(L, L')$ by gluing their Legendrian lifts along their common boundary in the standard contact $\R^5$; see Section~\ref{sec:double}. Direct computation of sheaf invariants for $\La(L, L')$ yields the following.

\begin{theorem}\label{thm: intro_not-loose-fillable}
   If $L$ and $L'$ are embedded exact Lagrangian fillings of $\la$ such that $\mathcal{C}_L\neq \mathcal{C}_{L'}\subseteq \mathcal{M}_1(\la)$, then the asymmetric Legendrian double $\La(L, L')$ does not admit any embedded exact Lagrangian fillings.
\end{theorem}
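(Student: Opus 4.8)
The plan is to compute the sheaf moduli $\SM_1(\La(L,L'))$ and show that, under the hypothesis $\SC_L \neq \SC_{L'}$, it fails the rationality/connectivity criterion that an exact Lagrangian filling would force. First I would invoke the computation advertised in the introduction (also attributed to Li \cite{li2023lagrangian}): the sheaf moduli of the double $\La(L,L')$ is the fiber product of the two toric charts $\SC_L = (\C^\ast)^{b_1(L)}$ and $\SC_{L'} = (\C^\ast)^{b_1(L')}$ over $\SM_1(\la)$, i.e.\ $\SM_1(\La(L,L')) \cong \SC_L \times_{\SM_1(\la)} \SC_{L'}$. This should follow from a Mayer--Vietoris / gluing argument for sheaf categories along the common Legendrian $\la$ on which the two fillings are glued: a microlocal rank one sheaf on the double restricts to microlocal rank one sheaves on each half, whose singular supports are the conormals of $L$ and $L'$, and these must agree on the overlap.

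The key step is then to extract a contradiction from fillability. If $\La(L,L')$ admitted an embedded exact Lagrangian filling $W$, then by \cite{Sheaves3}, \cite{JinTreumann17} it would induce an embedded algebraic torus $\SC_W = (\C^\ast)^{b_1(W)} \hookrightarrow \SM_1(\La(L,L'))$; in particular $\SM_1(\La(L,L'))$ would contain a nonempty Zariski-open torus, hence would have a component of dimension $b_1(W)$ that is unirational and, more to the point, nonempty. So it suffices to show the intersection $\SC_L \times_{\SM_1(\la)} \SC_{L'}$ is empty when $\SC_L \neq \SC_{L'}$. Here I would use that each $\SC_L \hookrightarrow \SM_1(\la)$ is a toric chart cut out inside $\SM_1(\la)$, and that two distinct such charts, being images of the ``all-ones'' point under the respective cluster seed embeddings, are disjoint as subvarieties: the defining equations of $\SC_L$ (microlocal monodromies along a cycle basis of $L$ being the prescribed values coming from the filling) are incompatible with those of $\SC_{L'}$ precisely when the charts differ. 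Concretely, the base point determined by $L$ inside $\SM_1(\la)$ is the image of $1 \in \SC_L$, and if it also lay in $\SC_{L'}$ the two charts would share an open subset and hence coincide, contradicting $\SC_L \neq \SC_{L'}$.

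The main obstacle I anticipate is making precise the sense in which ``$\SC_L \neq \SC_{L'}$ implies $\SC_L \cap \SC_{L'} = \es$'' rather than merely $\SC_L \not\subseteq \SC_{L'}$ — a priori two distinct toric charts in a cluster variety can overlap in a lower-dimensional locus, so the statement must be using something special about how the double's moduli is the \emph{scheme-theoretic} intersection (fiber product) and not just set-theoretic overlap, or about the specific base points being compared. I would resolve this by carefully tracking the microlocal monodromy coordinates: the double $\La(L,L')$ forces the sheaf to be ``constant'' across the doubling region, so a point of $\SM_1(\La(L,L'))$ is a single microlocal rank one sheaf on $\la$ that lies in $\SC_L$ and simultaneously has its $\SC_{L'}$-monodromies equal to the reference values, and the claim becomes that no sheaf in $\SC_L$ can have all $\SC_{L'}$-coordinates trivial unless $\SC_L = \SC_{L'}$. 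I would also double-check the degenerate possibility that $\SM_1(\La(L,L'))$ is nonempty but $0$-dimensional (a finite set of points) — this is still ruled out because an exact filling would give a \emph{positive}-dimensional torus, unless $b_1(W) = 0$, in which case $W$ is a disk filling and one argues separately that the resulting single point would have to lie in $\SC_L \cap \SC_{L'}$, again forcing equality of the charts.
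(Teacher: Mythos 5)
Your first step --- identifying $\SM_1(\La(L,L'))$ with the (fiber product, i.e.\ intersection of the) two toric charts $\mathcal{C}_L\cap\mathcal{C}_{L'}$ inside $\SM_1(\la)$ --- matches the paper's Lemma~\ref{lem: Double Sheaves}. But the core of your argument fails: you reduce to showing that $\mathcal{C}_L\cap\mathcal{C}_{L'}=\varnothing$ whenever $\mathcal{C}_L\neq\mathcal{C}_{L'}$, and this is simply false. The charts are \emph{open} subvarieties of $\SM_1(\la)$, and when $\SM_1(\la)$ is irreducible (which the paper establishes for all $(-1)$-closures of positive braids $\beta\Delta$) any two nonempty open subsets intersect in a dense open set. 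Indeed the paper uses precisely the nonemptiness of $\mathcal{C}_L\cap\mathcal{C}_{L'}$ to prove that these doubles are non-loose (Lemma~\ref{lem: intro_non_loose}), so an emptiness argument cannot possibly be the mechanism. Your attempted repair --- that the charts come with distinguished ``all-ones'' base points whose coincidence would force the charts to share an open set and hence coincide --- is not coherent: the charts are open subsets, not pointed embeddings, a point of the double's moduli is just a sheaf lying in both open sets (there is no condition that the $\mathcal{C}_{L'}$-coordinates take ``reference values''), and two distinct open sets can of course share an open subset without being equal.

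The argument the paper actually runs is rank-theoretic rather than emptiness-based. A filling $W$ of the double gives an open embedding $(\C^\ast)^{b_1(W)}\hookrightarrow\mathcal{C}_L\cap\mathcal{C}_{L'}\subseteq\mathcal{C}_L$. Half-lives-half-dies gives $b_1(W)\geq b_1(L)$, and the dimension of $\mathcal{C}_L$ gives the reverse inequality, so $b_1(W)=b_1(L)=b_1(L')$. The key lemma (Lemma~\ref{lem: Daping's Lemma}) is then that an open embedding of algebraic tori of the \emph{same rank} is an isomorphism --- proved by observing that the induced map of character lattices has finite index equal to the generic fiber cardinality, which is $1$ for an open immersion. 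Applying this to $(\C^\ast)^{b_1(W)}\hookrightarrow\mathcal{C}_L$ and to $\mathcal{C}_{L'}$ forces $\mathcal{C}_L=\mathcal{C}_L\cap\mathcal{C}_{L'}=\mathcal{C}_{L'}$, contradicting the hypothesis. So the missing ideas in your proposal are (i) the Betti-number pinning $b_1(W)=b_1(L)$ and (ii) the rigidity of equal-rank open torus embeddings; without them your proof does not go through.
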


To our knowledge, every known pair of distinct fillings of the same Legendrian link induce distinct toric charts in $\SM_1(\la)$; we conjecture that one can replace the condition that $L$ and $L'$ induce distinct toric charts with the condition that $L$ and $L'$ are not Hamiltonian isotopic. However, note that these Legendrian surfaces can have singular exact fillings or embedded non-exact fillings. For example, when the doubles arise from cubic planar graphs, such fillings exist by work of Treumann-Zaslow \cite{TreumannZaslow}.

\begin{figure}[h!]{ \includegraphics[scale=0.6]{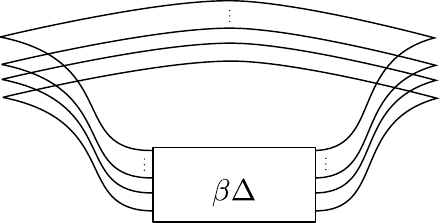}}\caption{Front projection of Legendrian given as the $(-1)$ closure of $\beta\Delta$.}
			\label{fig:-1closure}\end{figure}

We now restrict our attention to Legendrians $\la=\la(\beta\d)\subseteq (\R^3, \xi_{st})$ given as the (-1)-closure of the positive braid $\beta\d$ with Demazure product $\delta(\beta)=\d$; see Figure~\ref{fig:-1closure}. This family of links includes positive torus links, as well as all links of isolated plane curve singularities. In this setting, $\SM_1(\la)$ is irreducible, implying that the sheaf invariants of $\La(L, L')$ never vanish. We make the following observation that also follows from the results of \cite{li2023lagrangian}.

\begin{lemma}\label{lem: intro_non_loose}
     If $\la$ is the $(-1)$-closure of a positive braid $\beta\d$, the Legendrian double $\La(L, L')$ is non-loose for any two embedded fillings $L$ and $L'$ of $\la$.
\end{lemma}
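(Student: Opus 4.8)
The plan is to reduce the statement to the identification of the sheaf moduli of a double with an intersection of charts, plus the irreducibility of $\SM_1(\la)$. Recall from Section~\ref{sec:double} (or from \cite{li2023lagrangian}) that the moduli of microlocal rank one sheaves on the double, $\SM_1(\La(L,L'))$, is canonically identified with the intersection $\mathcal{C}_L\cap\mathcal{C}_{L'}$ of the two toric charts inside $\SM_1(\la)$. Since a loose Legendrian carries no microlocal rank one sheaf, i.e.\ has empty $\SM_1$, it suffices to show that this intersection is non-empty.

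First I would verify that the toric charts are dense in $\SM_1(\la)$. Each $\mathcal{C}_L\cong(\C^\ast)^{b_1(L)}$ is the image of an algebraic morphism, hence a constructible irreducible subset; and for $\la=\la(\beta\d)$ one has $b_1(L)=b_1(L')=\dim\SM_1(\la)$ by the Euler characteristic and index count relating an exact filling of a $(-1)$-closure to the dimension of its sheaf moduli, so $\mathcal{C}_L$ and $\mathcal{C}_{L'}$ are full-dimensional, hence dense, and each therefore contains a dense Zariski-open subset of $\SM_1(\la)$. As recalled above, $\SM_1(\la)$ is irreducible because $\la$ is the $(-1)$-closure of the positive braid $\beta\d$; two dense open subsets of an irreducible variety always meet, so $\mathcal{C}_L\cap\mathcal{C}_{L'}$ is non-empty — indeed open and dense in $\SM_1(\la)$ — and hence $\SM_1(\La(L,L'))\neq\varnothing$. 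Applying the looseness obstruction to $\La(L,L')$ then completes the proof.

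Beyond the identification of $\SM_1(\La(L,L'))$ with $\mathcal{C}_L\cap\mathcal{C}_{L'}$, the only genuine input is the irreducibility of $\SM_1(\la)$ for $(-1)$-closures of positive braids: this is exactly what makes the argument go through in this family and the point at which the hypothesis is used, so I expect it (established/cited elsewhere) to carry all the weight. The dimension count that yields density of the charts is routine but must be recorded carefully, since without it the two charts need not intersect even inside an irreducible variety; the remainder of the argument is formal.
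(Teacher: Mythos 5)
Your proposal is correct and follows essentially the same route as the paper: identify $\SM_1(\La(L,L'))$ with $\mathcal{C}_L\cap\mathcal{C}_{L'}$, invoke irreducibility of the (framed) sheaf moduli of $\la(\beta\d)$, and conclude the intersection is nonempty. The only cosmetic difference is that the paper simply uses that the charts are \emph{open} subsets (via \cite{JinTreumann17}) rather than deriving density from a dimension count on constructible sets, which shortcuts your middle paragraph.
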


As described in \cite{CasalsZaslow}, for such a $\la,$ we can often obtain an exact Lagrangian filling $L$ as the Lagrangian projection of a Legendrian surface $\w_L$, referred to as a Legendrian weave, that can be combinatorially encoded by representing the singularities of the front projection of $\w_L$ by a colored graph. 
One of the advantages of using Legendrian weaves, is that one can combinatorially manipulate the graphs encoding their front projections to perform Legendrian isotopies.
The following theorem relies on this method to establish a specialization of the main result of \cite{courte2017lagrangian}.

\begin{theorem}\label{thm: intro_Symmetric}
Let $L$ be an exact filling of $\la$ represented by a Legendrian weave. The symmetric double $\La(L, L)$ is Legendrian isotopic to $\#^{b_1(L)} \T^2_{st}$.
\end{theorem}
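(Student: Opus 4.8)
The plan is to realize the symmetric double $\Lambda(L,L)$ directly as a Legendrian weave and then simplify that weave combinatorially until it manifestly presents $\#^{b_1(L)}\T^2_{st}$. Recall that when $L$ is the Lagrangian projection of a Legendrian weave $\w_L$ over a surface, the double $\Lambda(L,L)$ is obtained by gluing two copies of the Legendrian lift of $L$ along $\la$; concretely this amounts to taking the weave on the doubled base surface $D = \Sigma_L \cup_{\partial} \overline{\Sigma_L}$ obtained by mirroring the colored graph of $\w_L$ across the boundary circles. So the first step is to check carefully that the doubling operation of Section~\ref{sec:double}, applied to a filling presented by a weave, yields exactly this mirrored weave on $D$; this is essentially a matter of matching the contact-geometric gluing with the front-projection description of weaves near the boundary, using that the front of $\w_L$ is cylindrical (a stack of parallel sheets) near $\la$.

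The second step is the combinatorial heart: simplify the mirrored weave. Because the two halves of the graph on $D$ are mirror images, each trivalent vertex on one side is matched with a trivalent vertex of the same color on the other side across a parallel arc, and one can cancel such mirrored pairs by a weave equivalence — this is the weave-theoretic incarnation of the ``Legendrian isotopy'' version of the statement in \cite{courte2017lagrangian} that a glued-double of a filling along a collar is standard. Iterating these cancellations reduces the colored graph to one with no vertices at all on a surface whose genus matches the total first Betti number accumulated, i.e.\ to the trivial $N$-sheeted weave over a genus-$b_1(L)$ surface with the appropriate number of Reeb-type boundary data, which is precisely the standard Legendrian $\#^{b_1(L)}\T^2_{st}$. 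I would organize the cancellation as an induction on the number of vertices of the graph of $\w_L$, where the base case (an empty graph, so $L$ a disk union collars and the double a standard $\T^2_{st}$ or sphere) is immediate and each inductive step removes a mirrored pair of vertices using the candidate moves (I-, II-, III-moves and the push-through/Reidemeister-type moves for weaves from \cite{CasalsZaslow}).

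The main obstacle I anticipate is the bookkeeping in the inductive step: one must ensure that after removing an outermost mirrored pair of vertices the remaining graph is still of the form ``mirror-symmetric weave of a smaller filling,'' so that the induction hypothesis applies — in particular the cancellation must be performed at a vertex adjacent to the mirror axis and one must verify the local picture there is always one of the standard cancellable configurations. A clean way to handle this is to first isotope $\w_L$ so that its graph is in a normal form relative to a handle decomposition of $\Sigma_L$ (vertices grouped near the cores of $1$-handles), after which the mirrored graph on $D$ visibly decomposes into $b_1(L)$ standard ``handle'' pieces plus a trivial piece, and each handle piece is checked by hand to give one $\T^2_{st}$ summand. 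The genus count — that exactly $b_1(L)$ tori appear — then follows from $\chi(D) = 2\chi(\Sigma_L) - \chi(\la) = 2\chi(\Sigma_L)$ together with $\chi(\Sigma_L) = 1 - b_1(L)$ for a connected filling (and additivity over components in general), so the doubled base is a closed surface of genus $b_1(L)$, matching $\#^{b_1(L)}\T^2_{st}$.
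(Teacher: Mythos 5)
Your overall strategy --- realize $\La(L,L)$ as the mirror-doubled $N$-graph on $S^2$ and then cancel matched singularities by scanning outward from the mirror axis --- is the same as the paper's, but there is a genuine conceptual error at the central step. You propose to ``cancel mirrored pairs of trivalent vertices by a weave equivalence.'' Two trivalent vertices of the same color joined by a pair of parallel edges form a bigon, and removing a bigon is \emph{not} a Legendrian isotopy of the weave: by Theorem~\ref{thm:czsurgery} (second move of Figure~\ref{fig:surgery}) it is a surgery that splits off a $\T^2_{\std}$ connect summand. This distinction is not a technicality --- it is precisely where the $b_1(L)$ standard tori in the statement come from. Only the hexavalent mirrored pairs (candy twists) and the crossings of commuting strands (Move IV) cancel by genuine isotopies. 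If all the cancellations were isotopies, as you assert, you would conclude that $\La(L,L)$ is Legendrian isotopic to the trivial weave over $S^2$, i.e.\ an unlink of standard $2$-spheres, which contradicts your own genus computation.

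Relatedly, you conflate the base surface of the weave with the Legendrian surface itself. The doubled base is $S^2 = \D^2\cup_{\partial}\D^2$, not a genus-$b_1(L)$ surface; the weave surface is an $N$-fold branched cover of that base, and its genus is governed by the number of trivalent vertices via Riemann--Hurwitz, $v(\Gamma)=2g+2N-2$. Your Euler characteristic count $\chi(D)=2\chi(\Sigma_L)$ correctly identifies the smooth genus of the closed surface $\La(L,L)$, but it cannot by itself determine the Legendrian isotopy type. The correct bookkeeping is: each of the $g+N-1$ mirrored trivalent pairs is a bigon whose removal contributes one $\T^2_{\std}$ summand by Theorem~\ref{thm:czsurgery}, and after all surgeries and isotopies one is left with the vertex-free weave $\Delta^2$ over $S^2$, a standard unlink of spheres. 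Your normal-form/handle-decomposition organization of the induction is a reasonable alternative to the paper's height-function scan, but it does not repair the missing identification of bigon removal as surgery rather than isotopy.
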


A second advantage of Legendrian weaves is that they often give a way to combinatorially identify $\L$-compressing cycles -- specific homology cycles of $L$ that are crucial ingredients for translating between cluster algebras and symplectic geometry. In cases where particular subsets of these cycles have sufficiently simple intersections, we can give a sufficient criterion on $L$ and $L'$ so that the double is a connect sum of standard and Clifford tori, denoted $\T^2_{st}$ and $\T^2_c$ respectively.

\begin{theorem}\label{thm:intro_std_and_clifford_tori}
Let $L$ be an exact filling of $\la$ represented by a Legendrian weave. Suppose that $L$ has a collection $\Gamma$ of $k$ \term {\sf Y}-trees
and $L'$ is obtained from $L$ by a sequence of mutations at the cycles in $\Gamma$. Then $\La(L, L')$ is Legendrian isotopic to $\#^{b_1(L)-k}\T^2_{\std}\#^k \T^2_c$.    
\end{theorem}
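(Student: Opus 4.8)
The plan is to localize the computation of $\La(L,L')$ around the $k$ Y-trees in $\Gamma$ and reduce to two ingredients: Theorem~\ref{thm: intro_Symmetric} on the part of the weave away from the trees, and an explicit weave identification of the ``elementary mutated double'' near each tree with the Clifford torus. First I would put $L'$ into a normal form. Since the Y-trees of $\Gamma$ are \term, one can isotope the Legendrian weave $\w_L$ so that each $T_i\in\Gamma$ is supported in its own disk $D_i$ in the base of $\w_L$, with the $D_i$ pairwise disjoint, with the class of the $\L$-compressing cycle of $T_i$ represented by an embedded curve inside $D_i$, and with these $k$ classes extending to a basis of $H_1(L)$; moreover one can arrange that the mutated weave $\w_{L'}$ coincides with $\w_L$ on the complement of $\bigsqcup_i D_i$ and equals the standard ``mutated Y-tree'' local model inside each $D_i$. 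Making this step precise is essentially unpacking the definition of \term.

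Next I would realize $\La(L,L')$ as the doubled weave $\w_L\cup_\la \bar\w_{L'}$ obtained by gluing the weave of $L$ and the mirror of the weave of $L'$ along the common boundary Legendrian $\la$ (cf. Section~\ref{sec:double}). Because $\w_L$ and $\w_{L'}$ differ only inside the disjoint disks $D_i$, a sequence of weave moves localized near each $\partial D_i$ (push-through and candy-twist moves splitting off a neighborhood of $D_i$) should exhibit $\La(L,L')$ as a connect sum
\[
\La(L,L')\;\cong\;\La(L_{\mathrm{rest}},L_{\mathrm{rest}})\;\#\;\Big(\#_{i=1}^{k}\La(L_{T_i},\mu_{T_i}L_{T_i})\Big),
\]
where $L_{\mathrm{rest}}$ is the exact filling of $\la$ whose weave is $\w_L$ with each $D_i$ replaced by a trivial patch (so $b_1(L_{\mathrm{rest}})=b_1(L)-k$, again by \term), and $L_{T_i}$ is the genus-one local model carried by $D_i$ with $\mu_{T_i}$ its $\L$-compressing disk surgery. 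By Theorem~\ref{thm: intro_Symmetric} the first summand is Legendrian isotopic to $\#^{b_1(L)-k}\T^2_{\std}$.

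It then remains to identify $\La(L_T,\mu_T L_T)\cong\T^2_c$ for the single-Y-tree local model $L_T$: here $L_T$ is, up to the relevant local data, an annulus, $\mu_T L_T$ is its disk-surgered mutant, and doubling produces an explicit weave on $T^2$ which, after weave moves, should reduce to the standard (hexagonal) weave representing the Clifford torus; a consistency check is that the sheaf moduli of $\La(L_T,\mu_T L_T)$ is the intersection $\mathcal{C}_{L_T}\cap\mathcal{C}_{\mu_T L_T}$ of two mutation-related toric charts, which matches the known moduli of $\T^2_c$. Combining the three steps yields $\La(L,L')\cong \#^{b_1(L)-k}\T^2_{\std}\#^{k}\T^2_c$. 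I expect the main obstacle to be the last step together with the bookkeeping in the second: one must verify that the \term hypothesis genuinely lets the $k$ tree modifications be peeled off as mutually independent connect summands (independent also of the complement), and then carry out the weave simplification of the doubled Y-tree carefully enough to certify that the outcome is the Clifford torus rather than some other, a priori possibly loose, Legendrian torus --- exactly the kind of distinction the rest of the paper is built to detect.
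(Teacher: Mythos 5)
Your overall skeleton matches the paper's: reduce the {\sf Y}-trees to short {\sf I}-cycles, peel off the part where $L$ and $L'$ agree as standard tori via the argument of Theorem~\ref{thm: intro_Symmetric}, and recognize each Clifford summand as a triangle in the doubled $2$-graph via Theorem~\ref{thm:czsurgery}. But there is a genuine gap in your normalization step, and it hides exactly the part of the argument that carries the real content. You assert that the \term{}hypothesis lets you isotope $\w_L$ so that each tree in $\Gamma$ sits in its own disk $D_i$ with the $D_i$ pairwise disjoint, and you then decompose $\La(L,L')$ as a connect sum of $k$ independent local ``elementary mutated doubles'' plus a symmetric remainder. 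The definition of \term{}only guarantees that all cycles in $\Gamma$ can be simultaneously represented as short {\sf I}-cycles; it does not make them disjoint, and in the central examples they are not. Two short {\sf I}-cycles sharing a trivalent vertex have intersection number $\pm 1$ (e.g.\ the chain of $n-1$ adjacent short {\sf I}-cycles in a $2$-weave filling of $\la(2,n)$, which form an $A_{n-1}$ quiver and are certainly \term), so they cannot be supported in disjoint disks, and mutating at one changes the weave in a neighborhood of the other. Consequently your connect-sum splitting into mutually independent summands fails in general.

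What the paper actually has to do --- and what is missing from your proposal --- is show that even when the cycles of $\Gamma$ intersect, each mutation still contributes a triangle to the doubled graph that is not ``obscured'' by the subsequent mutations at adjacent cycles. This is the rake/super-rake case analysis in the paper's proof: one orders the mutations, tracks where a triangle is created, and checks in each configuration of adjacent cycles (two topmost cycles, a maximal chain entirely in $\Gamma$, a chain leaving $\Gamma$, etc.) that some edge stays fixed long enough to bound a triangle after the full sequence $\mu_{\gamma_1}\cdots\mu_{\gamma_k}$. Your final step (identifying the local mutated double of a single isolated cycle with $\T^2_c$) is fine and agrees with the paper's use of Theorem~\ref{thm:czsurgery}, but it only covers the case where $\Gamma$ consists of pairwise disjoint cycles; to prove the theorem as stated you would need to add an argument handling intersecting cycles, which is where essentially all of the difficulty lies.
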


See Sections~\ref{sec:weaves} and \ref{sub: weave doubles} for definitions of {\sf Y}-trees and \term collections of homology cycles.

\subsubsection{Cubic planar Legendrian doubles}

Denote by $\la(2, n)$ the max-tb representative of the Legendrian $(2, n)$ torus link, which we represent as the (-1)-closure of the positive braid $\sigma_1^{n+2}$.
The embedded exact fillings of $\la(2,n)$ are conjecturally in a one-to-one correspondence with triangulations of an $(n+2)$-gon, of which there are a Catalan number $C_n = \frac{1}{n+1}\binom{2n}{n}$. These triangulations are dual to trivalent graphs, and for $L$ and $L'$ corresponding to these dual trivalent graphs, the double $\La(L,L')$ can be described as a Legendrian weave over a trivalent 2-graph, or a {\em cubic planar Legendrian}. The combinatorics of trivalent planar graphs make these a particularly rich source of examples to study.
 
To characterize whether the doubles coming from fillings of $\la(2,n)$ decompose as connect sums of standard and Clifford tori, we introduce the notion of {\bf mutation distance} which we denote by $d_\mu$ -- which is the distance in the cluster algebra, computed by the minimal number of mutations, between the cluster charts induced by $L$ and $L'$. Figure~\ref{fig:factorable} (left) gives one of the first examples where the double $\La(L, L')$ cannot be decomposed into a connect sum of standard and Clifford tori. We give a sufficient criterion for this behavior in terms of mutation distance.

\begin{theorem}\label{thm: intro_notdecomposable}
    Let $L$ and $L'$ be two trivalent graph fillings of $\la(2,n)$, $n \geq 4$. If $d_{\mu} (L, L') \geq n$, the double $\La(L,L')$ is not a connect sum of standard and Clifford tori.
\end{theorem}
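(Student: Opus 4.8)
The goal is to show that if two trivalent-graph fillings $L$, $L'$ of $\la(2,n)$ are far apart in the cluster mutation graph — specifically $d_\mu(L,L') \geq n$ — then the double $\La(L,L')$ cannot be written as $\#^{b_1-k}\T^2_{\std}\#^k\T^2_c$ for any $k$. The strategy is to contrapose Theorem~\ref{thm:intro_std_and_clifford_tori}: if $\La(L,L')$ \emph{were} such a connect sum, I would extract from that decomposition a bound on how many mutations are needed to pass from $L$ to $L'$, and show that bound is at most $n-1$, contradicting $d_\mu(L,L')\geq n$. The key geometric input is the identification of $\SM_1(\la(2,n))$ with the cluster variety whose seeds are triangulations of the $(n+2)$-gon, under which $\SC_L, \SC_{L'}$ are the toric charts and mutations are flips of triangulations; the double records the pair of charts, and a connect-sum decomposition into standard and Clifford factors forces a very constrained combinatorial relationship between the two triangulations.

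\textbf{Step 1: Reduce to a statement about triangulations.} First I would recall that for $\la(2,n)$ the exact fillings correspond to triangulations $T, T'$ of the $(n+2)$-gon, $b_1(L) = n-1$, the $\SL$-compressing cycles are indexed by the $n-1$ diagonals of $T$, and $d_\mu(L,L')$ equals the flip distance between $T$ and $T'$. So the hypothesis becomes: $T$ and $T'$ differ by at least $n$ flips. Step 2: Analyze what a connect-sum decomposition implies. By Theorem~\ref{thm:intro_std_and_clifford_tori}, whenever $L'$ is obtained from $L$ by mutating at the cycles of a \term collection $\Gamma$ of $k$ {\sf Y}-trees, the double is $\#^{(n-1)-k}\T^2_{\std}\#^k\T^2_c$. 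I would argue a converse-flavored statement: if $\La(L,L')$ is Legendrian isotopic to such a connect sum with some number $k$ of Clifford factors, then $L$ and $L'$ must be related by mutating at a \term (i.e.\ pairwise disjointly-supported, non-interacting) collection of at most $k\le n-1$ {\sf Y}-tree cycles — because the number of Clifford summands is a Legendrian-isotopy invariant detecting the rank of the "interaction form" between the two toric charts, and a \term collection of {\sf Y}-trees contributes mutations that commute and are realized by disjoint local moves. Since each such mutation is a single flip and there are at most $n-1$ of them, $T'$ is reachable from $T$ in at most $n-1$ flips, i.e.\ $d_\mu(L,L')\le n-1 < n$, a contradiction.

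\textbf{Step 2 details and the main obstacle.} The crux — and what I expect to be the hardest part — is pinning down precisely why a connect-sum decomposition $\#^{(n-1)-k}\T^2_{\std}\#^k\T^2_c$ forces $L,L'$ to differ by a \term collection of exactly $k$ {\sf Y}-tree mutations, and in particular by at most $n-1$ flips. One direction is Theorem~\ref{thm:intro_std_and_clifford_tori}; the reverse requires showing that the decomposition type is a complete invariant, or at least that it obstructs large mutation distance. I would approach this by computing a Legendrian-isotopy invariant of $\La(L,L')$ directly from the pair of toric charts $\SC_L,\SC_{L'}\subseteq \SM_1(\la(2,n))$ — concretely, the "gluing" or intersection data of the two charts, which for a connect sum of standard tori is transverse/Darboux-like and for each Clifford factor contributes a fixed nontrivial monodromy contribution. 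The number $k$ of Clifford factors should equal the rank of an integer pairing matrix built from the exchange matrices at $T$ and $T'$ (how the $\SL$-compressing cycles of $L$ pair with those of $L'$). Then the final arithmetic step is purely combinatorial: if $T$ and $T'$ differ by $\ge n$ flips, one shows the associated pairing cannot have the block form (identity-blocks plus $k$ Clifford-blocks with $k\le n-1$) required by a connect-sum decomposition — e.g.\ because realizing a flip distance of $n$ or more in an $(n-1)$-dimensional cluster chart forces "extra" interaction beyond what $n-1$ disjoint {\sf Y}-trees can account for, as a \term collection has size at most $n-1$ and each member moves the triangulation by a single flip, so the reachable set within flip-distance using only \term collections of {\sf Y}-trees is confined to flip-distance $\le n-1$. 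Verifying that last claim — that one genuinely cannot reach flip-distance $n$ from $L$ by a single \term-collection mutation — is the technical heart; I would handle it by induction on $n$ using the tree/fan structure of flips of the $(n+2)$-gon together with the explicit description of {\sf Y}-trees in a trivalent weave, checking the small cases $n=4,5$ by hand (which also match Figure~\ref{fig:factorable}) and propagating.
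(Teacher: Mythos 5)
Your top-level strategy is the same as the paper's: prove the contrapositive, namely that if $\La(L,L')$ is a connect sum of standard and Clifford tori then $d_\mu(L,L')\le n-1$ (this is Proposition~\ref{prop: decomposable} in the text). Step 1, translating to triangulations of the $(n+2)$-gon and flip distance, is also how the paper sets things up. The problem is Step 2: your entire argument rests on a converse to Theorem~\ref{thm:intro_std_and_clifford_tori} --- that a decomposition $\#^{(n-1)-k}\T^2_{\std}\#^k\T^2_c$ forces $L$ and $L'$ to be related by mutating a single \term collection of $k$ {\sf Y}-trees. This converse is not proved in the paper, is not obviously true, and your proposed mechanism for it (that $k$ equals the rank of an ``interaction form'' or pairing matrix between the $\L$-compressing cycles of the two charts) is not an established invariant; the only isotopy invariant actually available here is the chromatic polynomial of the dual graph, and extracting a bound on flip distance from it requires real work. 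As you yourself flag, this is ``the technical heart,'' and the proposal does not supply it.

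What the paper does instead is a direct combinatorial induction on the doubled $2$-graph, with no appeal to a converse of Theorem~\ref{thm:intro_std_and_clifford_tori}. First, Lemma~\ref{lem: doubledual} identifies the dual of the doubled graph with the superposition of the two triangulations $T_1,T_2$. Second (Lemma~\ref{lem: inductionstep}), a triangle in the doubled graph produces a diagonal $D$ of $T_2$ whose flip strictly decreases the number of diagonals differing from $T_1$; one can then delete a vertex of the polygon to obtain fillings of $\la(2,n-1)$ with $d_\mu^a$ decreased by exactly one --- and the exactness of that decrease is nontrivial, relying on Pournin's result that some \emph{minimal} flip sequence from $T_2$ to $T_1$ begins with the flip at $D$. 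Third, the case where no triangle is available is exactly the ``generalized cube graph'' case, which the paper rules out by a separate chromatic-polynomial triangle-count showing such graphs are never decomposable. The induction then terminates after at most $n-1$ steps because each step removes one of the $n-1$ diagonals. If you want to salvage your route, you would either have to prove the converse of Theorem~\ref{thm:intro_std_and_clifford_tori} (a stronger and independently interesting statement) or redirect your ``induction on $n$ using the fan structure of flips'' toward the doubled graph itself, at which point you will have reconstructed the paper's argument, including the need for Pournin's lemma and the generalized-cube-graph exclusion.
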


For doubles arising from $\la(2,n)$ as above, we use $L_{init}$ to refer to the initial filling corresponding to the left-to-right pinching sequence, or the binary tree with all vertices on the outside edge. These fillings satisfy the condition that $d_{\mu}(L, L')\leq n-1$, offering a partial converse to Theorem~\ref{thm: intro_notdecomposable}.

\begin{theorem}\label{thm: intro_initialfilling}
    Let $L$ be any trivalent graph filling of $\la(2,n)$. The double $\La(L_{init},L)$ decomposes as a connect sum of standard and Clifford tori.
\end{theorem}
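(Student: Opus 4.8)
The plan is to derive Theorem~\ref{thm: intro_initialfilling} from Theorem~\ref{thm:intro_std_and_clifford_tori}, for which it suffices to prove the following combinatorial statement: \emph{for every trivalent graph filling $L$ of $\la(2,n)$ there is a \term collection $\Gamma$ of {\sf Y}-trees on the weave $\w_{L_{init}}$ and a sequence of mutations at the cycles of $\Gamma$ carrying $L_{init}$ to $L$.} Granting this, Theorem~\ref{thm:intro_std_and_clifford_tori} gives that $\La(L_{init},L)$ is Legendrian isotopic to $\#^{b_1(L_{init})-|\Gamma|}\T^2_{st}\#^{|\Gamma|}\T^2_c$, which is a connect sum of standard and Clifford tori; the case $\Gamma=\es$ recovers Theorem~\ref{thm: intro_Symmetric}. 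For the decomposition to make sense we need $|\Gamma|\le b_1(L_{init})=n-1$; the bound $d_\mu(L_{init},L)\le n-1$ recorded above guarantees there is a short enough mutation path, and the point is to produce one that is \term.

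To prove the combinatorial statement I would use the dictionary between trivalent graph fillings of $\la(2,n)$ and triangulations of the $(n+2)$-gon $P$ on cyclically ordered vertices $v_0,\dots,v_{n+1}$: $L_{init}$ is the fan triangulation based at $v_0$, its {\sf Y}-trees $e_2,\dots,e_n$ are dual to the diagonals $v_0v_2,\dots,v_0v_n$, consecutive $e_i,e_{i+1}$ meet once while all other pairs are disjoint (the type $A_{n-1}$ pattern), and mutation at $e_i$ realizes the flip of the diagonal $v_0v_i$. Given a target triangulation $T$, let $v_0v_jv_{n+1}$ be the triangle of $T$ incident to the boundary edge $v_0v_{n+1}$. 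Flipping $v_0v_n, v_0v_{n-1},\dots,v_0v_{j+1}$ in this order transforms $L_{init}$ into the filling whose triangulation is the triangle $v_0v_jv_{n+1}$ together with the fan based at $v_0$ on the subpolygon $v_0\cdots v_j$ and the fan based at $v_{n+1}$ on the subpolygon $v_j\cdots v_{n+1}$ --- the latter again an initial filling of a smaller $(2,m)$-torus link, by the dihedral symmetry of the $(-1)$-closure. Recursing on the two subpolygons produces \term collections $\Gamma_L,\Gamma_R$, and one sets $\Gamma=\{e_{j+1},\dots,e_n\}\cup\Gamma_L\cup\Gamma_R$.

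The main obstacle, and the step I expect to be hardest, is verifying that the resulting $\Gamma$ is genuinely \term in the sense of Section~\ref{sub: weave doubles} and assembles into a collection of {\sf Y}-trees of $\w_{L_{init}}$, while keeping $|\Gamma|\le n-1$. The reason to expect success is that the recursion is carried out along a boundary edge: once the triangle $v_0v_jv_{n+1}$ is pinned, the two subpolygons occupy disjoint regions of the weave, so $\Gamma_L$ and $\Gamma_R$ cannot interact and both sit ``inside'' the chain $e_{j+1},\dots,e_n$; and within a single level that chain is a linear type $A$ configuration, which one flips from the outside in so that each mutated cycle can be pushed by weave moves to a short {\sf Y}-tree and simplified away, leaving a strictly shorter chain. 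What requires care is (i) using the explicit local models for the mutations on $\w_{L_{init}}$ to check that no two of the chosen {\sf Y}-trees become linked after the intervening mutations at other cycles, and (ii) replacing the naive ``last-edge'' recursion --- which is wasteful in the number of flips --- by the efficient flip path realizing $d_\mu(L_{init},L)\le n-1$ (rotating $T$ onto a spine one vertex at a time, equivalently peeling ears relative to the fan) while retaining the separation-along-the-boundary phenomenon that makes the collection \term. Once the statement is established, Theorem~\ref{thm:intro_std_and_clifford_tori} completes the proof.
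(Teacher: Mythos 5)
Your route is genuinely different from the paper's. The paper proves this statement (restated in the body as Theorem~\ref{thm: torusknotfillings}) by a direct induction on the doubled $2$-graph: it locates the innermost trivalent vertex of $L$, observes that its two boundary-going edges continue into $\w_{L_{init}}$ and close up into a triangle, removes the corresponding $\T^2_c$ summand via Theorem~\ref{thm:czsurgery}, and recognizes what remains as a double over $\la(2,n-1)$. You instead try to exhibit $L$ as the result of mutating $L_{init}$ once at each cycle of a simultaneously simplifiable collection $\Gamma$ and then quote Theorem~\ref{thm:intro_std_and_clifford_tori}. This is a legitimate alternative, and the combinatorial input it needs is true: since $L_{init}$ is dual to the fan at $v_0$, every triangulation $T$ is reached by flipping exactly the fan diagonals absent from $T$, once each, in a suitable order. (Reverse a path from $T$ to the fan in which each flip strictly increases the degree of $v_0$; in the reversed path every flip is performed at a diagonal incident to $v_0$, i.e.\ at a not-yet-flipped fan diagonal.) Since a homology basis of the $2$-weave $\w_{L_{init}}$ consists of short {\sf I}-cycles, any subset is trivially simultaneously simplifiable, and $|\Gamma|\le n-1=b_1(L_{init})$, reproducing the paper's count $k+l=n-1$.

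The gap is in the explicit sequence you construct. After the first round of flips at $v_0v_n,\dots,v_0v_{j+1}$, your recursion continues on the subpolygon $v_j\cdots v_{n+1}$ by flipping the fan at $v_{n+1}$ --- but the diagonals $v_{n+1}v_{n-1},\dots,v_{n+1}v_{j+1}$ were \emph{created} by the first round and are not edges of $\w_{L_{init}}$. Hence $\Gamma_R$ is not a collection of {\sf Y}-trees of $L_{init}$, and the hypothesis of Theorem~\ref{thm:intro_std_and_clifford_tori} --- that all mutations occur at cycles of a fixed collection on the original weave --- fails for $\Gamma=\{e_{j+1},\dots,e_n\}\cup\Gamma_L\cup\Gamma_R$. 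You flag that the recursion must be replaced, but you diagnose the defect as inefficiency in the number of flips; the real defect is that the theorem you want to invoke is inapplicable to this sequence no matter how short it is. The degree-of-$v_0$ argument above is the correct instantiation of your ``peeling ears relative to the fan'': it keeps every mutation at a still-present edge of the original fan, so $\Gamma$ genuinely lives on $\w_{L_{init}}$, and it renders your worry (i) about linking moot, since each unflipped cycle remains the untouched short {\sf I}-edge dual to its diagonal throughout. With that substitution your argument closes.
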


As a particularly simple example, one can enumerate all Legendrian doubles built out of the ${5 \choose 2}$ pairs of weave fillings of $\la(2, 3)$ and see that these always decompose as a connect sum of standard and Clifford tori.

The proofs of the above theorems rely on elementary combinatorics and the result of \cite{TreumannZaslow, CasalsMurphy2} that the chromatic polynomial of the dual to the 2-graph is an invariant up to Legendrian isotopy. In \cite{TreumannZaslow}, Treumann and Zaslow conjecture that the chromatic polynomial is a complete invariant for cubic planar Legendrians. It is straightforward to observe that if a cubic planar Legendrian can be obtained by a connect sum between another cubic planar Legendrian and $\mathbb{T}^2_c$, the chromatic polynomial of the dual has a $(q-2)$ factor. Legendrian doubles give rise to some curious examples which are worth a mention.

First, Figure~\ref{fig:factorable} depicts a possible counterexample to the expectation that any $q-2$ term appearing in $P_{\La^*}(q+1)$ corresponds to a connect sum with a Clifford torus. Specifically, there exist fillings $L_1, L_2$ of $\la(2, 6)$ such that $\La(L_1, L_2)$ does not appear to reduce but $P_{\La^*(L_1, L_2)}(q+1)$ is divisible by $q-2$. More precisely, the cubic planar picture of $\La(L_1,L_2)$ does not have any triangles, and a search on SageMath \cite{sagemath} revealed that no cubic planar graph with 10 vertices has a chromatic polynomial, evaluated at $(q+1)$, which equals $\frac{1}{q-2}P_{\La^*(L_1, L_2)}(q+1)$. This means there is no cubic planar Legendrian isotopic to $\La(L_1, L_2)$ for which its underlying graph has a triangle.

For our second example of interest, we observe that the underlying trivalent graphs for the cubic planar Legendrians in Figure~\ref{fig:chromaticpolydouble} are non-isomorphic, but yield doubles with the same chromatic polynomial. Unlike similar examples found by Casals and Murphy in \cite[Remark 4.3]{TreumannZaslow}, these have no triangles that correspond to Clifford torus summands. This raises the question -- are these Legendrians isotopic? If not, these would provide counterexamples to Treumann-Zaslow's conjecture.

\begin{figure}[h!]{ \includegraphics[width=.4\textwidth]{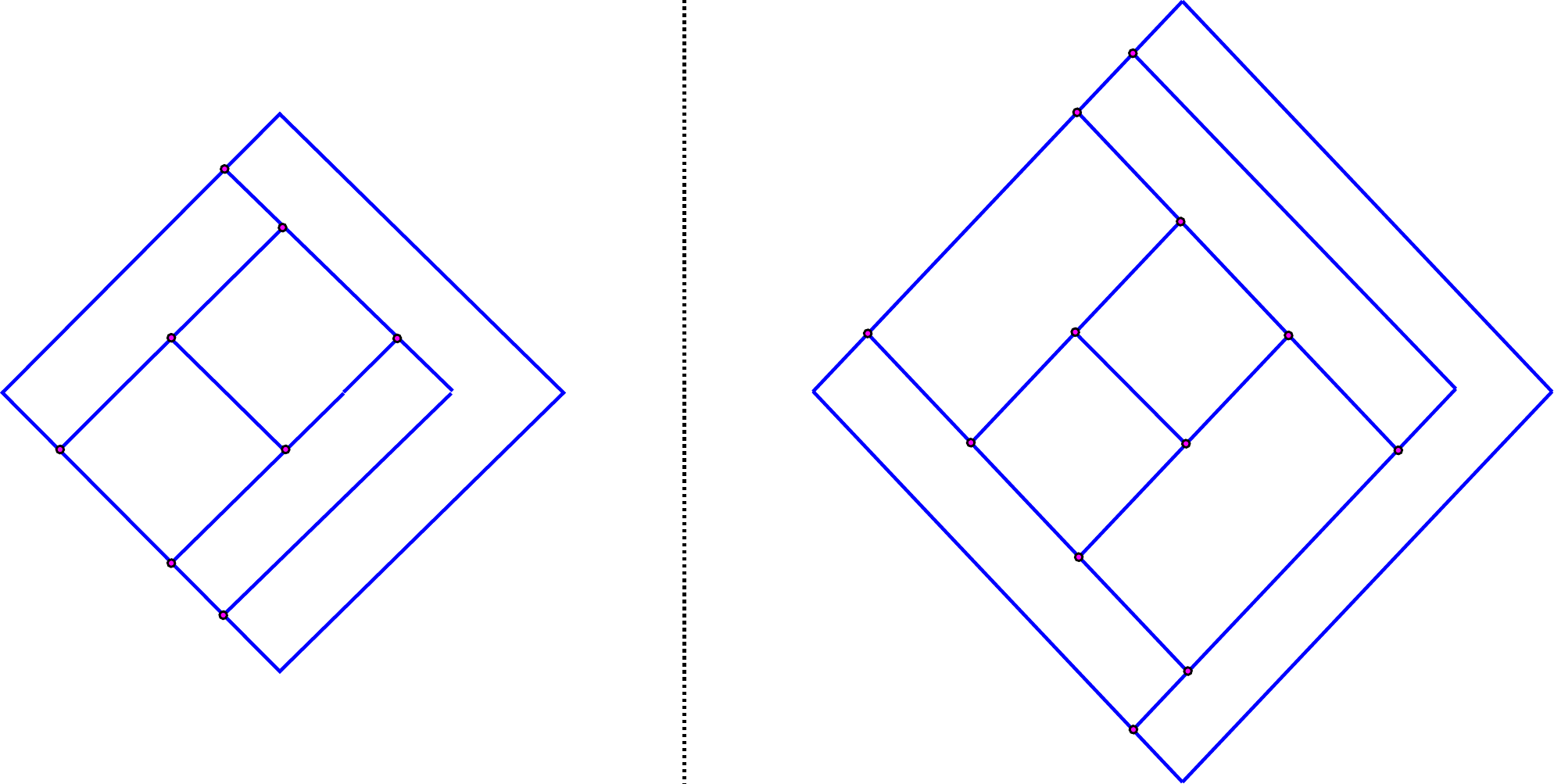}}\caption{Legendrian double that does not decompose as a connect sum of standard and Clifford tori (left) and Legendrian double formed from fillings of $\la(2, 6)$ with a chromatic polynomial divisible by $q-2$ (right).}
			\label{fig:factorable}\end{figure}

\begin{figure}[h!]{ \includegraphics[width=.4\textwidth]{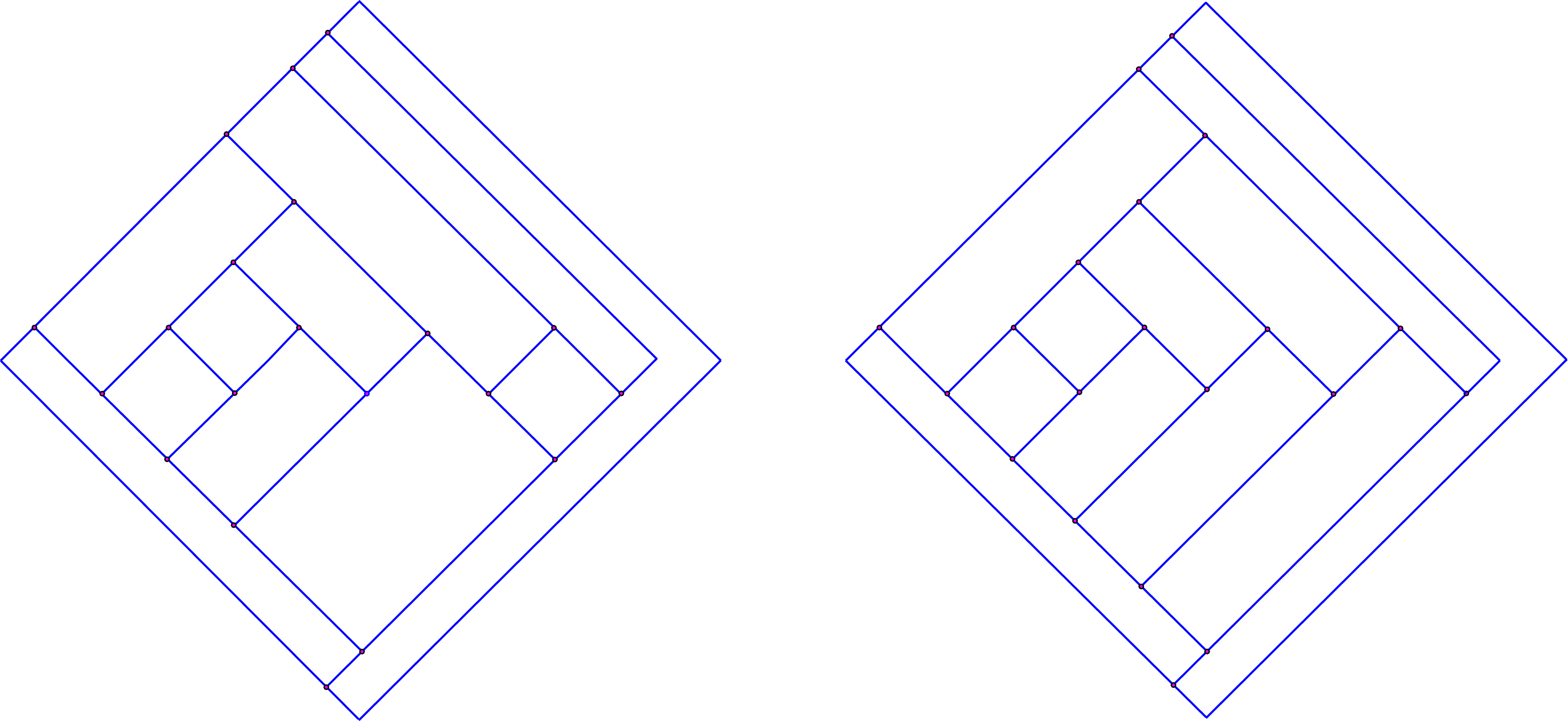}}\caption{Legendrian doubles formed from fillings of $\la(2, 10)$ with the same chromatic polynomial.}
			\label{fig:chromaticpolydouble}\end{figure}

Theorem II of \cite{whitney} (see also \cite{kauffman}) completely characterizes  bridgeless trivalent planar graphs which can be obtained by gluing together two binary trees. These are called ``two tied trees" in \cite{kauffman}. The main characterization is that one cannot “surround” a nontrivial part of a trivalent graph with two or three connected faces. It is clear that all cubic planar Legendrians corresponding to the graphs in the form described above can be obtained by doubles of $\lambda(2,n)$ fillings. This raises an interesting question regarding cubic planar graph Legendrians which cannot be obtained thus. The example of Whitney's \cite[Figure 9]{whitney} of a graph that is not of this form is made up of triangles, so by the moves in Theorem~\ref{thm:czsurgery}, the corresponding Legendrian does arise as a double.

\begin{question}\label{quest: cubic_planar_from_double}
    Is there a cubic planar Legendrian which cannot be obtained as the Legendrian double of two fillings of $\la(2,n)$ for some $n$?
\end{question} 

\begin{figure}[h!]{ \includegraphics[scale=0.2]{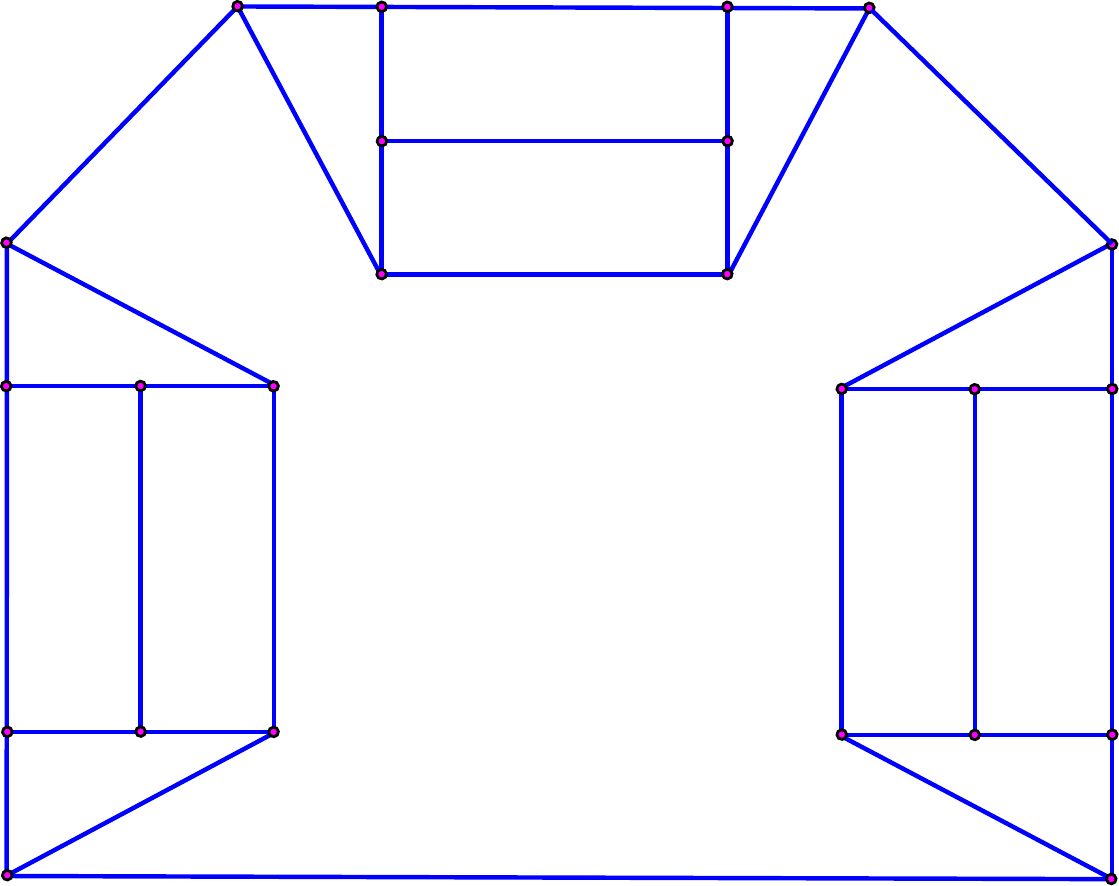}}\caption{A cubic planar Legendrian which is possibly not Legendrian isotopic to a double}
			\label{fig:notdouble}\end{figure}

\begin{ex}
    The trivalent graph $\Gamma$ in Figure~\ref{fig:notdouble} violates the assumptions in \cite[Theorem 2]{whitney} and does not have a simple closed curve passing through every region of the graph exactly once. Moreover, there are no triangle regions in the graph that can be used to decompose it, providing us with a possible example for investigating Question~\ref{quest: cubic_planar_from_double}. As of the time of writing, a SageMath \cite{sagemath} program written to enumerate the chromatic polynomial of all Legendrian doubles, built from trivalent graph fillings of $\la(2,n)$ of the appropriate genus has not output the chromatic polynomial of $\Gamma$. 
\end{ex}

\begin{remark}
    The sheaf-theoretic formulation of the chromatic polynomial also illustrates an interesting connection to the four color theorem (see also \cite[Remark A.11]{CasalsMurphy2}). Namely, the four color theorem is equivalent to the statement that for any $n$, the number of points in the moduli of microlocal rank-one sheaves with singular support on a Legendrian double built from embedded trivalent graph fillings of $\la(2,n)$, with coefficients in $\P\F_3$, is non-zero. Said differently, an equivalent assertion is that given the moduli of microlocal rank-one sheaves with singular support on $\la(2,n)$ with coefficients in $\P\F_3$, the intersection of any two toric charts is non-empty.
\end{remark}

Beyond cubic planar Legendrians, we conjecture that mutation distance gives a Legendrian isotopy invariant capable of distinguishing infinitely many distinct Legendrian doubles. We obtain a family of Legendrian doubles from an initial filling $L$ by repeatedly concatenating a Lagrangian concordances known to have infinite order in the Lagrangian concordance monoid to $L$ and then gluing the result again to $L$.  

\begin{remark}\label{rmk: intersections_bad}
    There are two particular challenges underlying this problem that we wish to highlight. The first is the well-established issue of verifying that known constructions yield a complete classification of all possible exact Lagrangian fillings. The second challenge arises from the inherent difficulties of using cluster theory to study the intersection of two distant cluster charts. Cluster mutation is a local operation, in the sense that for two charts related by a single mutation, the mutation formula defines their precise relationship. However, for charts related by an arbitrarily long sequence of mutations, the tools from cluster theory to compare such charts often only exist for finite type cluster algebras and don't appear to give us sufficient data to readily compare Legendrian isotopy classes of doubles.
\end{remark}

\subsubsection{Twist-spun Legendrians} Let $\la$ be a braid-positive Legendrian link and $\varphi$ a Legendrian loop, i.e. a Legendrian isotopy fixing $\la$ setwise. We use $\Sigma_\varphi(\la)$ to denote the corresponding twist-spun Legendrian surface. Smoothly, $\Sigma_\varphi(\la)$ is the mapping torus of $\varphi$; see Section~\ref{sub: twist-spun def} for a more detailed construction.
As described there, a filling of $\Sigma_\varphi(\la)$ can be constructed from a weave filling of $\la$ if the weave is fixed by the action of $\varphi$. See Figure~\ref{fig: symmetric fillings} for examples. 

Let $\rho$ denote the K\'alm\'an loop action on the Legendrian torus link $\la(k, n)$ given by conjugating $(\sigma_1\dots \sigma_{k-1})^n$ by $(\sigma_1\dots \sigma_{k-1})$. For certain values of $k$ and $n$, we obtain infinite families of embedded exact Lagrangian fillings. 

\begin{theorem}\label{thm: intro_infinite}
    The twist-spun Legendrians $\Sigma_{\rho^3}(\la(3,6))$ and $\Sigma_{\rho^4}(\la(4,4))$ admit infinitely many exact Lagrangian fillings.
\end{theorem}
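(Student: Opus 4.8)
The plan is to exhibit the sheaf moduli of each twist spun as a skew-symmetrizable cluster variety with infinitely many seeds, and then invoke the seed-to-filling correspondence. \textbf{Step 1 (a globally foldable structure upstairs).} Recall from \cite{GSW} and the earlier sections that $\SM_1(\la(k,n))$ carries a cluster $K_2$-structure; for the two relevant links this structure is, after removing frozen directions, that of $\mathrm{Gr}(3,9)$ for $\la(3,6)$ and of $\mathrm{Gr}(4,8)$ for $\la(4,4)$ -- the first genuinely non-finite-type cases, lying just beyond the finite ``ADE'' range $A_n, D_4, E_6, E_8$ (equivalently, the ``affine'' cases $\tfrac1k+\tfrac1n=\tfrac12$), so that each has infinitely many distinct seeds. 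The K\'alm\'an loop $\rho$ acts on $\SM_1(\la(k,n))$ as a cluster automorphism, namely the one-step cyclic shift, so that $\varphi:=\rho^{k}$ (that is, $\rho^3$ on $\la(3,6)$ and $\rho^4$ on $\la(4,4)$) induces a cluster automorphism of finite order, generating a cyclic group $G$ of order $3$, respectively $2$. The first task is to verify that the cluster structure is \emph{globally foldable} with respect to $G$: no arrow of the initial quiver connects two vertices of a common $G$-orbit, and this property persists along a mutation sequence reaching every seed, so that collapsing $G$-orbits produces a well-defined skew-symmetrizable cluster algebra. One also records a $\varphi$-symmetric exact filling of $\la(k,n)$ -- the rotationally symmetric Legendrian weave built from the $(\sigma_1\cdots\sigma_{k-1})$-periodic braid word -- as the input required by the twist-spun construction of the paper.

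\textbf{Step 2 (the folded cluster variety is infinite).} By the paper's theorem that a globally foldable cluster structure on $\SM_1(\la)$ together with a $\varphi$-symmetric filling yields a cluster $K_2$-structure on $\SM_1(\Sigma_\varphi(\la))$ obtained by folding, the moduli $\SM_1(\Sigma_{\rho^3}(\la(3,6)))$ and $\SM_1(\Sigma_{\rho^4}(\la(4,4)))$ carry cluster structures whose seeds are the $G$-folded seeds from Step 1. I would then compute the folded initial quiver explicitly in each case and check that it is not of finite type -- for instance by writing down an infinite mutation sequence whose cluster variables have strictly increasing degree, or by noting that a finite-type skew-symmetrizable cluster algebra unfolds to a simply-laced finite-type one, which would force $\SM_1(\la(3,6))$ (respectively $\SM_1(\la(4,4))$) to be of finite type, contradicting Step 1. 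Hence both twist-spun sheaf moduli have infinitely many distinct seeds.

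\textbf{Step 3 (seeds to fillings).} Applying the main result of \cite{CasalsGao24} as in the earlier part of the paper, every seed of the folded cluster algebra is realized by an embedded exact Lagrangian filling of the twist spun: a folded seed lifts to a $\varphi$-invariant seed of $\SM_1(\la(k,n))$, which is realized by an embedded exact filling of $\la(k,n)$ that may be taken $\varphi$-symmetric, and this symmetric weave then folds to an embedded exact filling of $\Sigma_{\rho^k}(\la(k,n))$. Distinct seeds give distinct toric charts in $\SM_1(\Sigma_{\rho^k}(\la(k,n)))$, hence pairwise non-Hamiltonian-isotopic fillings; combined with Step 2 this produces infinitely many embedded exact Lagrangian fillings of each of $\Sigma_{\rho^3}(\la(3,6))$ and $\Sigma_{\rho^4}(\la(4,4))$.

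\textbf{Main obstacle.} I expect the difficult point to be the global-foldability check of Step 1, specifically that the K\'alm\'an-loop symmetry of the seed survives the entire mutation class (compatibility at the initial seed alone does not suffice) and that the Lagrangian disk-surgery move on $\varphi$-symmetric weaves implements folded mutation, which is what legitimizes the use of \cite{CasalsGao24} in the skew-symmetrizable setting. With that in hand, the finite-versus-infinite type computation of Step 2 and the counting in Step 3 are routine.
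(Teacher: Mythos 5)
Your route is genuinely different from the paper's, and it has a gap at precisely the point you flag as the ``main obstacle.'' The entire argument is routed through Theorem~\ref{thm: intro_ensembles}, whose hypothesis is that $\C[\FM(\la,T)]$ is \emph{globally foldable} with respect to the $G$-action. For $\la(3,6)$ with $G=\Z/3$ and $\la(4,4)$ with $G=\Z/2$ the ambient cluster algebras are of infinite type (as you note, they are essentially $\mathrm{Gr}(3,9)$ and $\mathrm{Gr}(4,8)$), so global foldability is an infinite check: $G$-admissibility must hold at \emph{every} seed reachable by orbit mutations, and there is no finite computation or general structural criterion offered in your proposal (or in the paper) that establishes it. This is not a formality --- the paper itself shows in the proof of Theorem~\ref{thm: Catalan_fillings} that foldability genuinely fails for the $\rho^{(n+2)/3}$ action on $\la(2,n)$, yielding only a generalized cluster structure. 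Your Step 2 fallback (``finite-type skew-symmetrizable algebras unfold to finite type'') also presupposes that the folded algebra exists, and the unfolding direction of that implication is itself delicate since only orbit-mutation-reachable seeds upstairs are controlled by the folded algebra. Without global foldability, Steps 2 and 3 do not get off the ground.

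The paper avoids this hypothesis entirely. Its proof constructs one explicit $\varphi$-symmetric weave filling of each link (Figure~\ref{fig: symmetric fillings}, built from symmetric plabic graphs via the $T$-shift of \cite{CLSBW}) and then invokes Theorem~\ref{thm: infinitely many fillings}: the infinite groups $\PSL_2(\Z)$ and $\operatorname{Mod}(\Sigma_{0,4})$ act on fillings of $\la(3,6)$ and $\la(4,4)$ via Legendrian loops (from \cite{CasalsGao}, \cite{Hughes2024}) that \emph{commute} with $\rho^3$, resp.\ $\rho^4$; hence they preserve $\varphi$-invariance, act on fillings of the twist-spuns, and faithfulness of an infinite group action forces the set of fillings to be infinite. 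If you want to salvage your cluster-theoretic framing without proving global foldability, the fix is essentially this: produce an infinite family of pairwise distinct $\varphi$-invariant toric charts in $\SM_1(\la)$ directly (e.g.\ as the orbit of the symmetric initial chart under the commuting infinite group), and then push them into $\SM_1(\Sigma_\varphi(\la))\cong \SM_1(\la)^G\times\C^*$ via Proposition~\ref{prop: twist spun sheaf moduli} and Lemma~\ref{lem: local_systems_on_spuns}. Your Step 3 mechanism (distinct charts $\Rightarrow$ non-Hamiltonian-isotopic fillings) is fine and is the same invariance argument the paper uses.
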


To produce these families, one can perform an ad-hoc construction of initial weave fillings exhibiting the required rotational symmetry. In future work, the first author plans to give a systematic recipe for producing such symmetric weaves in the case of Legendrian torus links $\la=\la(k,n)$.

For torus links $\la(2, n)$ where we only expect a finite number of fillings, we can obtain a precise enumeration of exact Lagrangian fillings of $\Sigma_\rho^k(\la(2, n))$ produced by our method. 
    
\begin{theorem}\label{thm: intro_catalan_fillings}
There are at least $f(k)$ exact Lagrangian fillings of $\Sigma_{\rho^k}(\la(2, n))$ where 
$$f(k)=\begin{cases}
    C_{\frac{n+2}{3}} & k=\frac{n+2}{3}\in \N\\ 
    C_{\frac{n}{2}} & k=\frac{n+2}{2} \in \N  \\
    C_n & k=0\\
    0 & \text{otherwise}
\end{cases}$$
and $k\in \{0, \dots, n-1\}.$
\end{theorem}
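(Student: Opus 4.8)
The plan is to reduce the enumeration of fillings of $\Sigma_{\rho^k}(\la(2,n))$ to counting $\rho^k$-symmetric weave fillings of $\la(2,n)$, and then to compute that count combinatorially. First I would recall from Section~\ref{sub: twist-spun def} that a weave filling $\w_L$ of $\la=\la(2,n)$ which is preserved (up to the appropriate weave equivalences) by the loop action $\rho^k$ produces an embedded exact Lagrangian filling of the twist-spun surface $\Sigma_{\rho^k}(\la)$, and that distinct $\rho^k$-symmetric weaves (inducing distinct toric charts $\SC_L\subseteq\SM_1(\la)$) yield fillings that remain distinguishable after twist-spinning --- this is where the sheaf-moduli/cluster machinery and the main result of \cite{CasalsGao24} enters, guaranteeing that the symmetric seeds all arise from genuinely non-isotopic embedded fillings. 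So the lower bound $f(k)$ is exactly the number of $\rho^k$-fixed trivalent-graph fillings of $\la(2,n)$, where fillings of $\la(2,n)$ are identified (conjecturally bijectively, but here we only need the realized ones) with triangulations of the $(n+2)$-gon, equivalently with binary trees / Catalan objects, and $\rho$ acts as the cyclic rotation of the $(n+2)$-gon by one vertex.

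The combinatorial core is then: count triangulations of a convex $(n+2)$-gon invariant under rotation by $k$ steps, i.e.\ fixed points of $(\Z/(n+2))$'s order-$\gcd(k,n+2)/\dots$ subgroup acting on the $C_n$ triangulations. This is the classical cyclic-sieving computation for the Catalan numbers (Reiner--Stanton--White). A triangulation fixed by a rotation of order $d\mid (n+2)$ must contain a central triangle or central diagonal fixed by the rotation; elementary case analysis on $d$ gives: rotation by one third of the polygon ($d=3$, forcing a central triangle, possible only when $3\mid n+2$, and leaving three congruent fans each a triangulation of an $\tfrac{n+2}{3}$-truncated region, giving $C_{(n+2)/3}$); rotation by half the polygon ($d=2$, forcing a central diagonal, possible only when $2\mid n+2$, splitting into two halves each giving $C_{n/2}$); and the trivial action $k=0$ giving all $C_n$. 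For every other $k\in\{0,\dots,n-1\}$ one checks that no triangulation can be rotation-invariant --- a rotation of order $d\notin\{1,2,3\}$ would have to fix a cell of the triangulation, but the only cells are triangles (stabilizer of order dividing $3$) and diagonals (order dividing $2$), a contradiction --- hence $f(k)=0$. I would present this as a short lemma with the three-case picture, matching the definition of $\rho$ as conjugation by $(\sigma_1)$ on $\sigma_1^{n+2}$, being careful that the loop $\rho^k$ acts on the weave/triangulation precisely as the $k$-fold rotation.

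The step I expect to be the main obstacle is not the Catalan count but the bridge between it and fillings of the surface: namely verifying that a $\rho^k$-symmetric weave of $\la$ really does descend to a well-defined embedded exact Lagrangian filling of $\Sigma_{\rho^k}(\la)$ (the mapping torus), and that the induced fillings for combinatorially distinct symmetric weaves are pairwise non-Hamiltonian-isotopic. The first point requires checking that the symmetry is realized on the nose (or can be rigidified by an isotopy of weaves) so that the mapping-torus construction of Section~\ref{sub: twist-spun def} applies; the second requires that the folded/quotient cluster structure on $\SM_1(\Sigma_{\rho^k}(\la))$ separates the symmetric seeds, which is where one invokes the globally-foldable cluster structure on $\SM_1(\la(2,n))$ and transports the distinctness of the $\SC_L$ through folding. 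I would handle the potential subtlety that two different triangulations of the $(n+2)$-gon could conceivably become $\rho^k$-equivalent --- but since we are counting honest fixed points of a cyclic action (not orbits), this does not arise: each invariant triangulation is counted once, and the bijections with $C_{(n+2)/3}$, $C_{n/2}$, $C_n$ are explicit. Everything else --- the enumeration itself and the ``otherwise $\Rightarrow 0$'' cases --- is the elementary combinatorics flagged in the surrounding text.
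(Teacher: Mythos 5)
Your overall strategy --- reduce to counting $\rho^k$-invariant triangulations of the $(n+2)$-gon, realize each as a rotationally symmetric weave, spin it into a filling of $\Sigma_{\rho^k}(\la(2,n))$ via the mapping-torus construction of Proposition~\ref{prop: twist-spun fillings}, and separate the resulting fillings by their cluster seeds --- is the route the paper takes, and your central-triangle/central-diagonal case analysis producing $C_{\frac{n+2}{3}}$, $C_{\frac{n}{2}}$, $C_n$, and $0$ matches the counts the paper imports from \cite[Theorem 1.2]{Hughes2021b} and \cite{fraser2018braid}.

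The genuine gap is in the case $k=\frac{n+2}{3}$. You propose to distinguish the symmetric fillings by invoking the globally foldable cluster structure on $\C[\FM(\la(2,n),T)]$ and transporting distinctness of the charts $\mathcal{C}_L$ through folding, but this cluster algebra is \emph{not} globally foldable with respect to the action of $\rho^{\frac{n+2}{3}}$, so Theorem~\ref{thm: ensembles} does not apply and the folding argument you lean on breaks down in exactly this case. The paper treats it separately: folding here produces a \emph{generalized} cluster algebra in the sense of Fraser \cite{fraser2018braid} on $Gr^\circ(2,n+2)^{\rho^{(n+2)/3}}$, with precisely $C_{\frac{n+2}{3}}$ generalized seeds indexed by the rotationally symmetric triangulations, and the fillings $L\times_{\rho^{(n+2)/3}}S^1$ built from the symmetric weaves of \cite{Hughes2021b} are then distinguished as belonging to distinct generalized seeds. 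Relatedly, for a lower bound you do not need the surjectivity input from \cite{CasalsGao24} (that every seed arises from a filling); you only need the explicitly constructed symmetric weaves together with the fact that their $G$-invariant toric charts remain distinct after the identification $\SM_1(\Sigma_{\rho^k}(\la),\tau)\cong\SM_1(\la,T)^G\times\C^\ast$ of Proposition~\ref{prop: twist spun sheaf moduli} --- and establishing that distinctness is precisely where the (ordinary or generalized) cluster seed data is used, so it cannot be waved through uniformly across all three nonzero cases.
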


Note that in the case of $k=0,$ $\Sigma_\rho^0(\la(2, n))\cong \la(2,n )\times S^1$ is a spherical spun, and the techniques of \cite{Golovko22} could be applied to recover our enumeration in this particular case.

We prove Theorems~\ref{thm: intro_infinite} and \ref{thm: intro_catalan_fillings} as applications of a general theorem on existence of cluster structures on a moduli of sheaves of certain twist-spun Legendrians.\footnote{The case of $k=\frac{n+2}{3}$ technically corresponds to a generalized cluster structure studied by Fraser in \cite{fraser2020cyclic}; see Section~\ref{sub: twist-spun enumeration} for more details.}

\begin{theorem}\label{thm: intro_ensembles}
Let $G$ be a finite group generated by the action of a Legendrian loop $\varphi$ of a braid positive Legendrian link $\la$ and suppose that $C[\FM(\la, T)]$ is a globally foldable cluster algebra with respect to the $G$-action. Assume that $L$ is an exact Lagrangian filling of $\la$ fixed by the action of $\varphi$ and equipped with a maximal collection of $\L$-compressing cycles. The moduli stacks $\FM(\Sigma_\varphi(\la, T))$ and $\SM_1(\Sigma_\varphi(\la, T))$ form a cluster ensemble with every cluster chart induced by an embedded exact Lagrangian filling.
\end{theorem}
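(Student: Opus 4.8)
The plan is to establish Theorem~\ref{thm: intro_ensembles} by transporting the folded cluster structure on $C[\FM(\la, T)]$ to the sheaf moduli of the twist spun, using the $\varphi$-fixed weave filling $L$ as the geometric bridge. First I would recall that, since $\Sigma_\varphi(\la)$ is smoothly the mapping torus of $\varphi$ acting on $\la$, and $\varphi$ generates the finite group $G$, the $\varphi$-fixed weave filling $L$ of $\la$ produces (via the construction in Section~\ref{sub: twist-spun def}, as in Figure~\ref{fig: symmetric fillings}) an embedded exact Lagrangian filling $\wt L$ of $\Sigma_\varphi(\la)$ whose first Betti number equals the number of $\varphi$-orbits of the maximal collection of $\L$-compressing cycles on $L$, i.e. the number of folded cluster variables. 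By the results of \cite{Sheaves3, JinTreumann17} invoked in the introduction, $\wt L$ induces a toric chart $\mathcal{C}_{\wt L} = (\C^\ast)^{b_1(\wt L)} \subseteq \SM_1(\Sigma_\varphi(\la, T))$. The key computation is then to identify $\SM_1(\Sigma_\varphi(\la, T))$ with the $G$-fixed locus (or, more precisely, the quotient/folding) of $\SM_1(\la, T)$: a microlocal rank one sheaf on the mapping torus is the same data as a $\varphi$-equivariant sheaf on $\la$, and the folding operation on the cluster algebra side precisely matches restricting to the $G$-invariant part of the ambient toric charts. This is where I would lean on the hypothesis that $C[\FM(\la, T)]$ is \emph{globally} foldable with respect to the $G$-action — global foldability is exactly what guarantees that the fold is well defined simultaneously across all seeds, so that the local identification at the seed coming from $L$ propagates.

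Next I would set up the cluster ensemble structure. Having identified one chart $\mathcal{C}_{\wt L}$ with the initial (folded) seed, I would produce the $\mathcal{A}$- and $\mathcal{X}$-moduli: take $\FM(\Sigma_\varphi(\la, T))$ to be the decorated/framed version carrying the cluster $\mathcal{A}$-variables (obtained by folding $\FM(\la,T)$), and $\SM_1(\Sigma_\varphi(\la, T))$ to carry the $\mathcal{X}$-variables, with the ensemble map $p\colon \mathcal{A} \to \mathcal{X}$ induced by the folded exchange matrix. The compatibility of this map with the symplectic-geometric data is inherited from the unfolded case by $G$-equivariance: the $\L$-compressing cycles on $L$ come in $\varphi$-orbits, and summing/averaging over an orbit gives the folded compressing cycles on $\wt L$, whose intersection form is the symmetrizable (folded) exchange matrix. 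Then I would invoke the main result of \cite{CasalsGao24}: every seed in a cluster algebra arising this way from a weave with a maximal collection of $\L$-compressing cycles is realized by an embedded exact Lagrangian filling obtained by a sequence of geometric mutations (Lagrangian disk surgeries) from $\wt L$. The point is that each folded mutation lifts to a $G$-symmetric sequence of mutations on the weave for $\la$, which by $\varphi$-equivariance descends to a single geometric mutation of $\wt L$ inside $\Sigma_\varphi(\la)$; applying \cite{CasalsGao24}'s realization theorem then shows the resulting chart is again $\mathcal{C}_{\wt L'}$ for an embedded exact filling $\wt L'$.

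I would expect the main obstacle to be the precise matching between the folding operation on the cluster algebra $C[\FM(\la,T)]$ and the sheaf-theoretic operation of passing from $\la$ to $\Sigma_\varphi(\la)$ — in other words, proving that $\SM_1(\Sigma_\varphi(\la,T))$ really is cut out inside (a power of) $\SM_1(\la,T)$ by the $G$-invariance conditions in a way that is compatible with \emph{all} the toric charts and not merely the one from $L$. Concretely, one must check: (i) that a $\varphi$-fixed weave filling exists and that the twist-spinning construction genuinely produces an embedded exact filling $\wt L$ with the expected topology (this is the ad-hoc symmetric-weave input referenced after Theorem~\ref{thm: intro_infinite}); (ii) that the sheaf quantization of $\wt L$ lands in the $G$-invariant locus and that the induced coordinates are exactly the folded $\mathcal{X}$-coordinates, which requires tracking how microlocal monodromies around $\L$-compressing cycles transform under $\varphi$; and (iii) that global foldability, rather than mere foldability at the initial seed, is both necessary and sufficient for the ensemble structure to be defined — here I would cite the standard theory of folded cluster algebras and the analysis of cyclic/foldable structures (e.g. \cite{fraser2020cyclic} for the generalized-cluster subtlety flagged in the footnote). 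Once these three points are in place, the remainder — verifying the ensemble axioms and the exactness/embeddedness of the mutated fillings — follows formally from the unfolded statements by $G$-equivariance together with \cite{CasalsGao24}.
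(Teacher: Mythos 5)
Your proposal is correct and follows essentially the same route as the paper: identify $\SM_1(\Sigma_\varphi(\la,T))$ with the $G$-fixed locus of $\SM_1(\la,T)$ (times a $\C^\ast$ from the $S^1$-monodromy), build the folded seed from $\varphi$-orbits of $\L$-compressing cycles on the $\varphi$-invariant filling, and use global foldability together with \cite{CasalsGao24} to realize every folded seed by an embedded filling. The only step you leave informal --- that a folded mutation ``descends to a single geometric mutation'' of $L\times_\varphi S^1$ --- is exactly what the paper makes precise via simultaneous mutability of cycles in a $G$-orbit and the generalized solid mutation configuration/higher-dimensional Lagrangian surgery of Lemma~\ref{lem: lag_surgery}.
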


In the statement of the theorem, ``globally foldable" is a technical condition that allows one to obtain a new cluster algebra from an original cluster algebra and a compatible $G$-action. See Section~\ref{sub: folding} for the formal definition.

\begin{remark}
The existence of a cluster structure on the sheaf moduli of a Legendrian link equipped with a specific group action was already implicitly used in \cite{ABL2021} and made explicit in \cite[Theorem 6.6]{Hughes2024}. Theorem~\ref{thm: intro_ensembles} can be thought of as giving an alternative characterization of the folding operation from cluster theory used to define these cluster structures.
\end{remark}

The first step in proving the above involves showing that these moduli stacks of sheaves can be identified as the $G$-invariant part of the sheaf moduli $\mathcal{M}_1(\la)$. The next step is identifying the character lattices with homology groups of the twist-spun Legendrians, and then computing coordinates on the toric chart induced by $L\times_\varphi S^1$ from microlocal parallel transport along (relative) homology cycles of $\Sigma_\varphi(\la)$ representing orbits of cycles in $H_1(L)$ under the induced action of $\varphi$. The last step consists of defining a higher-dimensional version of Lagrangian disk surgery \cite{Yau}, and showing that the cluster variables for a {\em solid mutation configuration} change according to cluster transformations. We then adapt the work of \cite{CasalsGao24} to show that every cluster seed in $\SM_1(\Sigma_\varphi(\la))$ can be induced by an embedded filling.

\begin{remark}\label{rem: surgery_lit}
    We point out that the notion of Lagrangian surgery we use is related to other notions studied in the literature. It can be understood as modifying the construction in \cite[Section 2.3]{Yau} by spinning in a higher dimension. A related construction is that of a BSP-surgery defined by Chanda \cite{chanda_bsp} --- from that perspective the notion of surgery we use is the BSP-surgery with $\La = T^2_{st}$. A version of the surgery is described in \cite[Section 2.1]{ChandaHirschiWang} for higher dimensions which generalizes the discussion in \cite[Section 5]{pascaleff_tonkonog} -- however the toric assumption, or the assumption on Lie group embeddings, is not needed to define the surgery on Lagrangians.
\end{remark}

Having established a relationship between fillings of $\Sigma_\varphi(\la)$ and cluster charts of $\FM(\la(\beta))$ in Theorem~\ref{thm: intro_ensembles}, we make the following conjecture.

\begin{conjecture}\label{conj: number_fillings}
    There are exactly $f(k)$ exact Lagrangian fillings of $\Sigma_{\rho^k}(\la(2, n))$, with $f(k)$ as above.
\end{conjecture}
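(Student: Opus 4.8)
The lower bound ``$\geq f(k)$'' is exactly Theorem~\ref{thm: intro_catalan_fillings}, so the content of Conjecture~\ref{conj: number_fillings} is the reverse inequality: a completeness statement asserting that every exact Lagrangian filling of $\Sigma_{\rho^k}(\la(2,n))$ is Hamiltonian isotopic to one of the $f(k)$ fillings produced by the foldable-cluster construction of Theorem~\ref{thm: intro_ensembles}. The plan is to recast this as a statement about the finite-type cluster variety $\SM_1(\Sigma_{\rho^k}(\la(2,n)))$, count its seeds up to the symmetry realized by Legendrian isotopy, and then close the gap between ``charts induced by fillings'' and ``all fillings''.

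First I would fix the cluster-theoretic picture. Applying Theorem~\ref{thm: intro_ensembles} with $\la=\la(2,n)$, $\varphi=\rho^k$, and $G=\langle\rho^k\rangle$ (finite, since $\rho$ acts on the fillings of $\la(2,n)$ through the rotation of the associated $(n+2)$-gon, so $\rho^k$ has order $3$ when $k=\tfrac{n+2}{3}$, order $2$ when $k=\tfrac{n+2}{2}$, and is trivial when $k=0$), the moduli $\SM_1(\Sigma_{\rho^k}(\la(2,n)))$ is a cluster variety obtained by folding the type-$A_{n-1}$ cluster structure on $\SM_1(\la(2,n))$, and all of its cluster charts are induced by embedded exact fillings. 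Since type $A_{n-1}$ is of finite type and folding preserves finite type (the case $k=\tfrac{n+2}{3}$ yielding the generalized cluster structure of Fraser), there are only finitely many cluster charts. A combinatorial count of the seeds --- the $\rho^k$-invariant triangulations of the $(n+2)$-gon --- taken modulo the residual action of $\langle\rho\rangle$ that is realized by Legendrian isotopies of the twist-spun, should equal $f(k)$, recovering $C_n$, $C_{\tfrac{n+2}{3}}$, and $C_{\frac n2}$ in the three cases; one must also check that distinct such classes give non-Hamiltonian-isotopic fillings, e.g. by separating their toric charts in $\SM_1$ using finite-type rigidity.

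The substantive step is completeness: that there are no further fillings. I would attempt a neck-stretching / SFT-compactness argument identifying an exact Lagrangian filling of the mapping torus $\Sigma_{\rho^k}(\la(2,n))$ with a $\rho^k$-equivariant exact Lagrangian filling of $\la(2,n)$, up to $\langle\rho\rangle$-equivariant Hamiltonian isotopy, and then invoking the (type-$A$ case of the) classification of exact fillings of $\la(2,n)$ to see that the equivariant ones are exactly the rotation-invariant triangulations. An alternative, more algebraic route: given a filling $L$ of the twist-spun, use its induced toric chart $\SC_L\subseteq\SM_1(\Sigma_{\rho^k}(\la(2,n)))$ together with the rigidity of cluster charts in a finite-type cluster variety to pin down the Hamiltonian isotopy class of $L$ --- which requires both a converse to the last clause of Theorem~\ref{thm: intro_ensembles} (every such toric chart is a cluster chart) and injectivity of $L\mapsto\SC_L$ on Hamiltonian isotopy classes.

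The main obstacle is precisely this completeness, which inherits the two difficulties flagged in Remark~\ref{rmk: intersections_bad}. First, it presupposes a classification of exact Lagrangian fillings of $\la(2,n)$: even the $k=0$ case of the conjecture reduces to the still-open statement that the $C_n$ weave fillings of $\la(2,n)$ exhaust all exact fillings --- Golovko's methods apply to the spherical spun $\la(2,n)\times S^1$ but only recover the lower bound. Second, comparing toric charts related by long mutation sequences is exactly where cluster-theoretic tools are unavailable outside finite type, which makes the injectivity step and the ``modulo $\langle\rho\rangle$'' bookkeeping delicate even in finite type. A full proof would therefore likely need either a new rigidity input for fillings of the twist-spun --- via its wrapped Fukaya category or augmentation variety --- or an independent combinatorial model for fillings of $\Sigma_{\rho^k}(\la(2,n))$ generalizing the triangulation model, which is why the statement is currently posed only as a conjecture.
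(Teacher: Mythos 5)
This statement is posed in the paper as a conjecture, not a theorem: the paper proves only the lower bound (Theorem~\ref{thm: intro_catalan_fillings}, via the folded cluster structure of Theorem~\ref{thm: intro_ensembles}) and explicitly motivates the exact count by analogy with the ADE conjectures of Casals, offering no argument for the upper bound. Your proposal correctly identifies this division of labor --- lower bound established, completeness open --- and your diagnosis of why the reverse inequality is out of reach (it presupposes a classification of exact fillings that is unknown even for $\la(2,n)$ itself, i.e.\ the $k=0$ case already reduces to an open conjecture, and the chart-comparison issues of Remark~\ref{rmk: intersections_bad} block the algebraic route) matches the paper's own framing precisely. So there is nothing to compare: you have not proved the conjecture, but neither do the authors, and your account of what is proven, what is missing, and why it is missing is accurate.

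One minor caution: in your seed count you propose quotienting the $\rho^k$-invariant triangulations by a ``residual action of $\langle\rho\rangle$ realized by Legendrian isotopy.'' The values $f(k)$ in Theorem~\ref{thm: intro_catalan_fillings} are the raw counts of seeds in the folded (or generalized) cluster algebra --- $C_{(n+2)/3}$, $C_{n/2}$, $C_n$ --- with no further quotient; the paper distinguishes the corresponding fillings by showing they induce distinct seeds, so no additional identification by the rotation is expected or wanted. Introducing that quotient would undercount relative to the conjectured answer.
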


Conjecture~\ref{conj: number_fillings} is motivated in the same spirit as \cite[Conjectures 5.1 and 5.4]{CasalsLagSkel}; each filling constructed is expected to correspond to a unique toric chart in the cluster algebra and $f(k)$ describes the number of seeds in the cluster algebra of Theorem~\ref{thm: intro_ensembles}.

    Theorems~\ref{thm: intro_infinite} and \ref{thm: intro_catalan_fillings} can be seen as contrasting with the general lack of fillings of Legendrian doubles as expressed in Theorem~\ref{thm: intro_not-loose-fillable}. One can interpret this as the relative compatibility of cluster structures with the folding operation described in Subsection~\ref{sub: folding} as opposed to the process of taking intersections of toric charts, which, as explained in Remark~\ref{rmk: intersections_bad} above, is not as compatible with cluster theory. In particular, when folding a cluster algebra, one obtains a (possibly generalized) cluster seed, whereas intersections of distinct cluster charts necessarily fail to yield a reasonable notion of a cluster structure on the intersection.

Despite these differences, we are also able obstruct exact Lagrangian fillability of twist-spun Legendrians in certain cases.

\begin{theorem}\label{thm: Intro_nonfillable_spuns}
Let $\ell$ and $n$ be relatively prime. There are infinitely many values of $n$ such that twist-spuns $\Sigma_{\rho^\ell}(\la(2, n))$ and $\Sigma_{\rho^\ell}(\la(3, n))$ do not admit any exact Lagrangian fillings.
\end{theorem}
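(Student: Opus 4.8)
The plan is to leverage the necessary condition for exact fillability recorded in the excerpt: by \cite{Sheaves3, JinTreumann17}, an embedded exact Lagrangian filling $L$ of a Legendrian $\Lambda$ induces an embedding $(\C^\ast)^{b_1(L)}\cong\mathcal{C}_L\hookrightarrow\SM_1(\Lambda)$, so in particular a fillable Legendrian has $\SM_1(\Lambda)\neq\varnothing$ over $\C$. Thus it suffices to produce, for each $k\in\{2,3\}$ and infinitely many $n$ with $\gcd(\ell,n)=1$, a twist-spun $\Sigma_{\rho^\ell}(\la(k,n))$ whose sheaf moduli is empty. For this I would invoke the first step in the proof of Theorem~\ref{thm: intro_ensembles} — the identification of the sheaf moduli of a twist-spun Legendrian with the invariant locus of the induced group action — which gives $\SM_1(\Sigma_{\rho^\ell}(\la(k,n)))\cong\SM_1(\la(k,n))^{\langle\rho^\ell\rangle}$, where $\langle\rho^\ell\rangle$ is the cyclic group generated by the $\ell$-th power of the K\'alm\'an loop; this identification is purely sheaf-theoretic and does not require the global foldability hypothesis of Theorem~\ref{thm: intro_ensembles}. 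The theorem therefore reduces to showing that the fixed locus of $\rho^\ell$ on $\SM_1(\la(k,n))$ has no $\C$-points for infinitely many $n$.

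The next step is to use the explicit description of $\SM_1(\la(k,n))$, for the max-tb $(k,n)$ torus link written as the $(-1)$-closure of $(\sigma_1\cdots\sigma_{k-1})^n$, as an open subvariety of a Grassmannian $\operatorname{Gr}(k,N)$ (equivalently a cyclic configuration space of flags in $\C^k$): it is cut out by the non-vanishing of the relevant frozen Plücker-type minors, and the K\'alm\'an loop $\rho$ acts by the cyclic shift operator, twisted by a sign/determinant factor so that it has the correct finite order. A point fixed by $\rho^\ell$ is then a subspace invariant under a power of this diagonalizable shift operator, hence corresponds to a choice of $k$ eigenlines — that is, a $k$-element subset $S$ of the (twisted) $N$-th roots of unity — or dually to a single regular element $g$ of finite order in $\PGL_k$ together with a base flag. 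The arithmetic of $\gcd(\ell,n)=1$ fixes the order of $\rho^\ell$, and hence exactly which roots of unity are available in $S$. The open conditions defining $\SM_1$ then translate into the requirement that certain Schur/generalized-Vandermonde polynomials computing those minors do not vanish when evaluated on the elements of $S$.

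The hard part, and the main obstacle, is the arithmetic: to show that for $n$ in a suitable arithmetic progression (compatible with the coprimality hypothesis) \emph{no} admissible $k$-subset $S$ of roots of unity simultaneously satisfies all the required non-vanishing conditions, so that $\SM_1(\la(k,n))^{\langle\rho^\ell\rangle}=\varnothing$. Here one exploits the systematic vanishing of Schur polynomials and $q$-binomial-type expressions at roots of unity — the same mechanism underlying cyclic sieving — together with a pigeonhole/divisibility argument on the residues of the eigenvalue exponents, to force a vanishing frozen minor in every case; for $k=3$ one additionally rules out the non-generic fixed configurations (three of the flags becoming collinear), a mild strengthening of the same computation. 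Producing infinitely many such $n$ then amounts to identifying the congruence classes for which the pigeonhole obstruction is unavoidable. The reduction to a fixed-point problem on a Grassmannian and the input from \cite{JinTreumann17} are comparatively routine; the combinatorial bookkeeping of which minors must be controlled, and the number theory singling out the good values of $n$, are where the real work lies.
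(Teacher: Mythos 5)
There is a genuine gap: the fixed locus you propose to show is empty is in fact nonempty over $\C$ (and even over $\R$). By Karp's theorem (\cite[Theorem 1.1]{Karp}, quoted in the paper as Theorem~\ref{thm: Karp}), the cyclic shift on $Gr(k,n)$ has exactly $\binom{n}{k}$ fixed points, given by spans of vectors $(1,\zeta_j,\dots,\zeta_j^{n-1})$ for $k$ distinct $n$th roots of $(-1)^{k-1}$; among these is the totally positive fixed point, all of whose Pl\"ucker coordinates are nonzero, so it lies in the open positroid cell $\SM_1(\la(k,n-k))^{\rho}$. The paper's own example for $Gr(2,5)$ exhibits exactly such a point with all ratios $\Delta_{ij}/\Delta_{12}$ positive real. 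Hence your ``hard part'' --- forcing a vanishing frozen minor for every admissible eigenvalue subset $S$ --- is not merely hard but impossible, and the conclusion $\SM_1(\Sigma_{\rho^\ell}(\la(k,n)))=\varnothing$ is false. Since the twist-spuns in question are non-loose with nonempty sheaf moduli, mere nonemptiness over $\C$ cannot be the obstruction.

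The fix is to use the Jin--Treumann input in a stronger form than ``$\SM_1\neq\varnothing$'': an embedded exact filling induces an embedded \emph{split algebraic torus} $(\C^\ast)^{b_1}$ in the sheaf moduli, which forces the existence of \emph{rational} points. The paper's proof therefore runs through Karp's explicit classification of the fixed points and shows, via Niven's theorem, that under the divisibility conditions of Equations~\ref{eqn: divisibility1} and \ref{eqn: divisibility2} the Pl\"ucker ratios of every fixed point are irrational (e.g.\ $\Delta_{13}/\Delta_{12}=2\cos(\pi k/n)$ for $k=2$, and $\Delta_{124}/\Delta_{123}=2\cos(2\pi j/n)+1$ for $k=3$), so the fixed locus contains no $\Q$-points. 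Your first two reduction steps (the identification $\SM_1(\Sigma_{\rho^\ell}(\la))\cong\SM_1(\la)^{\langle\rho^\ell\rangle}\times\C^\ast$ without global foldability, and the positroid/cyclic-shift description) agree with the paper and are fine; it is the arithmetic endgame that must be replaced by an irrationality argument rather than an emptiness argument.
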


See Equations~\ref{eqn: divisibility1} and \ref{eqn: divisibility2} for a precise description of conditions on $n$ that we require for our proof. We expect that the conclusion of Theorem~\ref{thm: Intro_nonfillable_spuns} holds more generally; see Remark~\ref{rmk: obstruction_hard} for additional discussion.

\subsection{Organization} We review background relating to contact and symplectic topology and geometry in Section~\ref{sec:background_top}, and also define the high-dimensional Lagrangian surgery operation. In Section~\ref{sec: sheaves_and_clusters} we survey the results and background we need regarding the microlocal theory of sheaves and cluster algebras. We then prove Theorems~\ref{thm: intro_Symmetric}, \ref{thm: intro_not-loose-fillable}, and \ref{thm:intro_std_and_clifford_tori} in Section~\ref{sec: legendrian_doubles} and make other observations about doubled Legendrians. In Section~\ref{sec: torus_knot_doubles}, we define mutation distance and talk about related notions, and prove Theorems~\ref{thm: intro_notdecomposable} and \ref{thm: intro_initialfilling}. In Section~\ref{sec: twist-spuns}, we first prove Theorem~\ref{thm: intro_ensembles}, and then as applications, prove Theorems~\ref{thm: intro_infinite} and \ref{thm: intro_catalan_fillings}. We conclude Section~\ref{sec: twist-spuns} with the proof of Theorem~\ref{thm: Intro_nonfillable_spuns}.

\subsection{Acknowledgments}
This project began during the SyNC workshop at UC Davis in August 2022. Thank you to the organizers, Orsola Capovilla-Searle, Roger Casals, and Caitlin Leverson. Thanks also to Scott Baldridge, Roger Casals,  Soham Chanda, Wenyuan Li, Lenny Ng, and David Treumann for helpful conversations. Some initial findings of this article were part of A.R.'s PhD thesis advised by John Etnyre. Special thanks to Daping Weng for explaining the proof of Lemma~\ref{lem: Daping's Lemma} to us. J.H. was partially supported by NSF grant DMS-1942363. A.R. was partially supported by NSF grants DMS-2203312 and DMS-1907654, and an AMS-Simons Travel Grant.

\section{Background - Topology}\label{sec:background_top}

\subsection{Legendrian submanifolds and Lagrangian fillings}

We begin with the necessary background on Legendrian links and their exact Lagrangian fillings.
 The standard contact structure $\xi_{st}$ in $\R^3$ is the 2-plane field given as the kernel of the 1-form $\alpha_{\st}=dz-ydx$. A link $\La \subseteq (\R^3, \xi_{st})$ is Legendrian if $\La$ is always tangent to $\xi_{st}$. As $\La$ can be assumed to avoid a point, we can equivalently consider Legendrians $\La$ contained in the contact 3-sphere $(\mathbb{S}^3, \xi_{st})$ \cite[Section 3.2]{Geiges08}. We consider Legendrian links up to Legendrian isotopy, i.e. ambient isotopy through a family of Legendrians. 

The symplectization $\Symp(M, \ker(\alpha))$ of a contact manifold $(M, \ker(\alpha))$ is the symplectic manifold $(\R_t\times M, d(e^t\alpha))$. Given two Legendrian links $\La_-, \La_+\subseteq (\R^3,\xi_{\st})$, 
	
 an exact Lagrangian cobordism $L\subseteq \Symp(\R^3, \ker(\alpha_{\st}))$ from $\La_-$ to $\La_+$ is a cobordism $\Sigma$ such that there exists some $T>0$ satisfying the following: 

	\begin{enumerate}
		\item $d(e^t\alpha_{\st})|_\Sigma=0$ 
		\item $\Sigma\cap ((-\infty, T]\times \R^3)=(-\infty, T]\times \La_-$ 
		\item $\Sigma\cap ([T, \infty)\times \R^3)=[T, \infty) \times \La_+$ 
		\item $e^t\alpha_{\st}|_\Sigma=df$ for some function $f: \Sigma\to \R$ that is constant on $(-\infty, T]\times \La_-$ and $[T, \infty)\times \La_+$. 
	\end{enumerate}

	An exact Lagrangian filling of the Legendrian link $\La\subseteq (\R^3, \xi_{\st})$ is an exact Lagrangian cobordism $L$ from $\emptyset$ to $\La$ that is embedded in $\Symp(\R^3, \ker(\alpha_{\st}))$. Equivalently, we consider $L$ to be embedded in the symplectic 4-ball with boundary $\partial L\subseteq (\S^3, \xi_{\st})$.

\subsection{The doubling construction of Legendrians}\label{sec:double}

Given a Legendrian link $K \subset (\R^3, \xi_{st})$, and two exact Lagrangian fillings $F, G$, we recall the definition of the doubled Legendrian $\La(F,G)$ from Section 2.1 of \cite{ekholm2016non}.

Let $(x_1,y_1,x_2,y_2)$ be coordinates on $\R^4$ with the standard symplectic form $-d(y_1dx_1+y_2dx_2)$, and the symplectization $\R \times \R^3$, of $(\R^3, \xi_{st})$, as above.

Consider the exact symplectomorphism $\phi_T: (\R \times \R^3) \to \R^4$, defined by
\[
\phi_T(t,x,y,z) = (x,e^ty,e^t - e^T - 1, z)
\]

Suppose the Legendrian $K$ is at $t= T$, and the exact Lagrangian filling $F$ lives in $t \leq T$, and is a cylinder on $K$ near $T$. The image of $F$ under $\phi_T$ lands in $\{x_2 \leq -1\}$. Suppose further that $F$ agrees with $\{t\} \times K$ on $(T-1) < t \leq T$ -- call this the {\em conical end} of $F$. Then. in that region, the coordinates for $F$ in $(\R \times \R^3)$ look like $(x, dz/dx, z, t)$ where $(x,z)$ are the coordinates for the front projection of $K$. Under $\phi_T$, this maps to $(x, e^t(dz/dx), e^t - e^T - 1, z)$.

Consider this $\R^4$ as being the $\{w = 0\}$ slice in $\R^5$ which is equipped with the standard contact structure $\xi_{st} = \ker(dw - y_1dx_1 - y_2dx_2)$. Exact Lagrangians in $\{w = 0\}$ can be Legendrian lifted via flowing along $\partial_w$.

The conical end of $F$ can be Legendrian lifted to $\big(x, e^t(dz/dx), e^t - e^T - 1, z, e^tz\big)$ (the fifth coordinate is the $w$ coordinate), whose front projection to the $(x_1, x_2, z)$ coordinates is $(x, z, e^tz)$, i.e. the front in $(x_1, w)$ plane scaled in the $x_2$ direction. The Legendrian lift of $F$ thus is an embedded Legendrian with its boundary as just described.

Now, consider the reflection in $\R^4$ defined by $(x_1, y_1, x_2, y_2) \mapsto (x_1,y_1,-x_2,-y_2)$. Composing this reflection with $\phi_T$, we can map $G$ to an exact Lagrangian that lives in $x_2 \geq 1$. Similarly as above, we can lift it to a Legendrian with its boundary being a cylinder on $K$. Now, these two Legendrians with boundary can be glued by the Legendrian corresponding to the front $(x, sz, e^Tz)$ where $s$ goes from -1 to 1, where $(x,z)$ are the coordinates in the front projection of $K$.

This is how the doubled Legendrian $\La(F,G)$ is constructed. In this work, we will call a double {\em symmetric} if $F$ and $G$ are Hamiltonian isotopic, and {\em asymmetric} otherwise.

In \cite{courte2017lagrangian}, the authors show that the isotopy class of a symmetric double $\La(L,L)$ is determined by the formal data associated to the filling $L$, which in turn determines the isotopy class of the Legendrian lift of $L$, by the h-principle. They prove the following result.

\begin{theorem}\label{thm:trivdouble}\cite[Theorem 1]{courte2017lagrangian}
    If $L \subset (\R \times \R^{2n-1})$ is an embedded exact Lagrangian submanifold with Legendrian boundary, then the Legendrian isotopy class of $\La(L,L) \subset \R^{2n+1}$ is determined by the induced trivialization of $TL \otimes \C$.
\end{theorem}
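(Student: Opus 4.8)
The plan is to establish Theorem~\ref{thm:trivdouble} by an h-principle argument carried out in two stages: first, reduce the Legendrian isotopy classification of $\La(L,L)$ to that of the Legendrian lift $\wh L$ of the filling $L$; second, use flexibility of such lifts to reduce this in turn to a purely homotopy-theoretic datum, which one then identifies with a homotopy class of trivialization of $TL\otimes\C$.

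First I would verify that the doubling construction of Section~\ref{sec:double} descends to Legendrian isotopy classes of its input. Given a Legendrian isotopy of the lift $\wh L\subset\{x_2\le -1\}$ which is constant near the cylindrical end, one extends it by the reflection $(x_1,y_1,x_2,y_2)\mapsto(x_1,y_1,-x_2,-y_2)$ to the copy in $\{x_2\ge 1\}$ and interpolates it across the clasp region carrying the front $(x,sz,e^Tz)$, $s\in[-1,1]$, producing a Legendrian isotopy of $\La(L,L)$; a Legendrian isotopy of $\wh L$ that moves the cylindrical end is instead absorbed by an ambient contactomorphism of $(\R^{2n+1},\xi_{st})$, chosen compatibly with the reflection. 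Consequently $[\La(L,L)]$ depends only on the Legendrian isotopy class of $\wh L$, regarded as a Legendrian embedding of the manifold-with-boundary $L$ with its prescribed cylindrical boundary condition.

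Second --- and this is the heart of the matter --- I would show that such a lift $\wh L$ obeys an h-principle, so that its Legendrian isotopy class rel the cylindrical end is governed by its formal Legendrian data. The front of $\wh L$ is a graphical ``$1$-jet-type'' hypersurface over the Lagrangian projection of $L$ which folds back to become cylindrical near $\partial L=\la$; this fold region, together with the clasp, can be arranged to contain a zig-zag, hence a loose chart, whose relative size one shrinks to $0$ along a Legendrian isotopy compressing the conical end. Thus $\wh L$ is loose, and by Murphy's h-principle \cite{Murphy??} in its relative and parametric form, two such lifts are Legendrian isotopic rel boundary iff they are \emph{formally} so. The formal data decomposes into the smooth isotopy class of the double $DL\hookrightarrow\R^{2n+1}$, which in the relevant codimension ($\ge 3$) is determined by the abstract topology of $L$, plus the homotopy class of the formal tangential (Lagrangian) rotation. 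A homotopy-theoretic computation --- e.g.\ through the h-principle for Lagrangian immersions, or directly in the lift model --- identifies the latter with the homotopy class of the trivialization of $TL\otimes\C$ provided canonically by the Lagrangian isomorphism $TL\otimes\C\cong T\R^{2n}|_L$. Combining the two stages: if $L_0$ and $L_1$ are exact fillings carried into one another by a diffeomorphism matching these canonical trivializations up to homotopy, then $\La(L_0,L_0)$ and $\La(L_1,L_1)$ are Legendrian isotopic; equivalently, the Legendrian isotopy class of $\La(L,L)$ is a function of $[TL\otimes\C]$ alone.

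The main obstacle will be the second stage: proving rigorously that the fold-and-clasp locus of $\wh L$ contains a loose chart of small enough relative size for Murphy's theorem to apply rel the fixed boundary, and then carrying the tangential formal data through the doubling model carefully enough to see that it really does collapse to a single trivialization of $TL\otimes\C$, with no surviving dependence on the auxiliary choices (the level $T$, the primitive of $e^{t}\alpha_{st}|_L$, and the symplectomorphism $\phi_T$). A lesser point to keep track of is that the contactomorphisms used in the first stage can be taken equivariant for the reflection, so that one stays within symmetric doubles throughout.
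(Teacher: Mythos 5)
The paper does not prove Theorem~\ref{thm:trivdouble}: it is quoted from \cite{courte2017lagrangian}, and the paper's own contribution in this direction is the constructive refinement Theorem~\ref{thm: Symmetric}, which identifies $\La(L,L)$ with $\#^{b_1(L)}\T^2_{st}$ for weave fillings by purely combinatorial weave moves. Your two-stage architecture --- reduce the double to the Legendrian lift $\wh{L}$ of the filling, then reduce $\wh{L}$ to its formal data by an h-principle --- does match the intended argument (the paper's own gloss preceding the theorem says exactly this). The problem is \emph{which} h-principle you invoke in the second stage.

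The claim that $\wh{L}$, or the fold-and-clasp locus, contains a loose chart is false and cannot be repaired. A loose chart is a local object: a Darboux ball $B$ meeting $\La(L,L)$ only in one copy of $\wh{L}$ (or in the clasp region) with $(B, B\cap \La(L,L))$ standard loose would exhibit the closed double itself as loose. But symmetric doubles are generically non-loose --- by Theorem~\ref{thm: non-loose} with $L_1=L_2=L$ one has $\SM_1(\La(L,L))\supseteq \mathcal{C}_L\neq \emptyset$; already the double of the disk filling of the unknot is the standard non-loose $S^2\subset \R^5$, and Theorem~\ref{thm: Symmetric} gives $\#^{b_1(L)}\T^2_{st}$ in general. (Separately, a zig-zag alone does not yield a loose chart in dimension $\geq 5$ without the quantitative size condition on the complementary wall, and the clasp front $(x,sz,e^Tz)$, $s\in[-1,1]$, is graphical with no zig-zag at all.) So Murphy's h-principle rel the fixed cylindrical end is unavailable. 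The h-principle that actually applies is the one for Legendrian embeddings of a compact $n$-manifold with non-empty \emph{free} boundary: such a manifold retracts onto a spine of dimension $n-1$, so holonomic approximation gives flexibility with no looseness hypothesis. Consistently with this, the boundary $\la$ is not held fixed --- note the theorem's conclusion makes no reference to the Legendrian isotopy class of $\partial L$ --- so your first stage must be strengthened to show that $\La(L,L)$ is natural under free-boundary Legendrian isotopies of $\wh{L}$ (carrying the cylindrical-end structure along), not merely under isotopies supported away from the end.
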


\subsection{The twist-spun construction of Legendrians}\label{sub: twist-spun def}

Let $\la\subseteq (\R^3,\xi_{st})$ be a Legendrian link and $\varphi$ be a Legendrian loop of $\la$. That is, $\varphi=\{\varphi_\theta\}_{\theta\in [0,1]}$ is a Legendrian isotopy with $\varphi_1$ fixing $\la$ setwise. The image $\{\varphi_\theta(\la)\}:=\{\la_\theta\}$ is an $S^1$ family of Legendrians, and we can form the mapping torus of $\varphi$ to obtain a Legendrian surface in contact $(\R^5, \xi_{st})$. More explicitly, we define $\Sigma_{\varphi}(\la)$ by observing that an $S^1$ family of Legendrians $\{\la_\theta\}\subseteq \R_z\times T^*\R_x\times S^1$ 
lifts uniquely to a Legendrian $\Sigma_{\varphi}(\la)\subseteq \R_z\times T^*\R_{x\geq 0}\times T^*S^1$ with contact structure $dz-p_\theta d\theta  -  p_xdx$. We can then canonically identify $T^*\R_{x\geq 0} \times T^*S^1$ with $T^*\R^2$ by the map $\R_{x\geq 0}\times S^1\to \R\times \R\backslash\{0\}$ defined by $(x, \theta)\mapsto xe^{i\theta}$, giving a (strict) contact embedding $(\R_z\times T^*\R^2\times \T^*S^1, dz-p_\theta d\theta  -  p_xdx)\xhookrightarrow{} (\R^5,\xi_{st})$. See \cite[Section 1.3]{DRG21} for more details. 

In this sense, we can give the following definition. 

\begin{definition}\label{def: twist-spuns}
    Given a Legendrian loop $\varphi$ of a Legendrian link $\la$, the twist-spun Legendrian  $\Sigma_\varphi(\la)$ is the union of 
    Legendrian tori $$\la\times [0,1]/(\varphi(\la)\times \{0\}\sim \la\times \{1\})$$ embedded in $(\R^5,\xi_{st})$. 
\end{definition}

We will generally consider twist-spuns constructed by a particularly simple Legendrian loop referred to as \emph{cyclic rotation}. Given a Legendrian link $\la$ presented as the $(-1)$-closure of a positive braid $\beta$ of the form $\sigma_i\tilde{\beta}$ the cyclic rotation $\delta$ of $\la$ is given by the isotopy conjugating $\beta$ by $\sigma_i$ to yield $\tilde{\beta}\sigma_i$. When $\beta$ can be written as $w^n$ for some braid word $w$, then $\delta^{|w|}$ conjugates $\beta$ by $w$ and is a Legendrian loop of $\la$. In the case of Legendrian positive torus links $\la(k, n)$, the loop $\rho:=\delta^{k-1}$ gives a Legendrian loop studied by K\'alm\'an in \cite{Kalman} that we refer to as the \emph{K\'alm\'an loop}.   

\subsection{Legendrian weaves}\label{sec:weaves}

Let $C$ be an oriented surface. In this section, we describe Legendrian weaves, a geometric construction of Casals and Zaslow that can be used to combinatorially represent Legendrian surfaces $\Lambda$ in the 1-jet space $J^1C=T^*C\times \R_z$ 
by the singularities of their front projection in $C\times \R_z$. In practice, one often considers the Lagrangian projection of $\La$ when $\La$ has a Legendrian link at its boundary in order to obtain an exact Lagrangian filling of $\la=\partial \La$. Although the Legendrian surfaces we construct in this work do not have boundary, the Legendrian weaves we define in this section will generally be taken to have nonempty boundary, as we construct Legendrian doubles by gluing together two such (Lagrangian projections of) Legendrian weaves. For this article we will only need $C = \D^2$ or $C = S^2$.

Let $\beta\in Br_n^+$ be a positive braid. The contact geometric description of a Legendrian weave surface with boundary $\la(\beta\Delta)$ is as follows. We construct a filling of a Legendrian $\la(\beta\Delta)$ by first describing a local model for a Legendrian surface $\La$ in $J^1\D^2=T^*\D^2\times \R_z$. We equip $T^*\D^2$ with the symplectic form $d(e^r\alpha)$ where $\ker(\alpha)=\ker(dy_1-y_2d\theta)$ is the standard contact structure on $J^1(\partial \D^2)$ and $r$ is the radial coordinate. This choice of symplectic form ensures that the flow of $e^r\alpha$ is transverse to $J^1\S^1\cong \R^2\times \partial \D^2$ thought of as the cotangent fibers along the boundary of the 0-section. The Lagrangian projection of $\La$ is then a Lagrangian surface in $(T^*\D^2, d(e^r\alpha))$. Moreover, since $\La\subseteq (J^1\D^2, \ker (dz-e^r\alpha))$ is a Legendrian, we immediately obtain the function $z:\pi(\La)\to \R$ satisfying $dz=e^r\alpha|_{\pi(\La)}$, demonstrating that $\pi(\La)$ is exact.

The boundary of $\pi(\La)$ is taken to be a positive braid $\beta$ in $J^1\S^1$  so that we regard it as a Legendrian link in a contact neighborhood of $\partial \D^2$. As the 0-section of $J^1\S^1$ is Legendrian isotopic to a max-tb standard Legendrian unknot, we can take $\partial\pi(\La)$ to equivalently be the standard satellite of the standard Legendrian unknot. Diagramatically, this allows us to express the braid $\beta$ in $J^1\S^1$ as the $(-1)$-framed closure of $\beta$ in contact $\S^3$.

The immersion points of a  Lagrangian projection of a weave surface $\La$ correspond precisely to the Reeb chords of $\La$. In particular, if $\La$ has no Reeb chords, then $\pi(\La)$ is an embedded exact Lagrangian filling of $\partial(\La)$. In the Legendrian weave construction, Reeb chords correspond to critical points of functions giving the difference of heights between sheets. See the middle and bottom rows of Figure~\ref{fig:surgery} for examples where this function necessarily admits critical points.

\subsubsection{$N$-Graphs and Singularities of Fronts} To construct a Legendrian weave surface $\La$ in $J^1\D^2,$ we combinatorially encode the singularities of its front projection in a colored graph. Local models for these singularities of fronts are given by Arnol'd \cite[Section 3.2]{ArnoldSing}; the three singularities that appear in our construction describe elementary Legendrian cobordisms and are pictured in Figure~\ref{fig: wavefronts}.

	\begin{center}
		\begin{figure}[h!]{ \includegraphics[width=.8\textwidth]{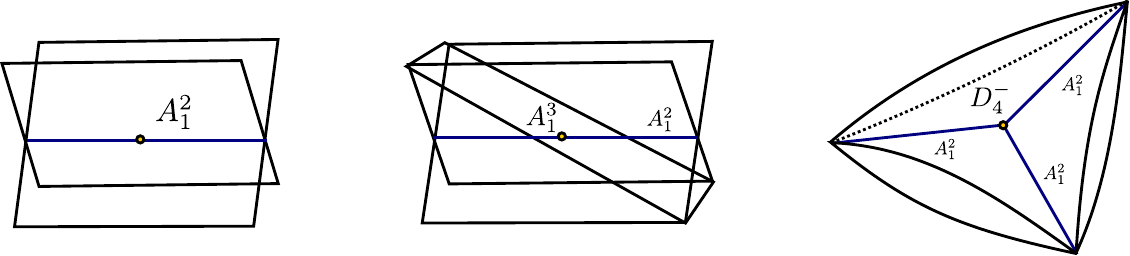}}\caption{Singularities of front projections of Legendrian surfaces. Labels correspond to notation used by Arnold in his classification.}
			\label{fig: wavefronts}\end{figure}
	\end{center}

Since the boundary of our singular surface $\Pi(\La)$ is the front projection of an $N$-stranded positive braid, $\Pi(\La)$ can be pictured as a collection of $N$ sheets away from its singularities. We describe the behavior at the singularities as follows:

\begin{enumerate}
    \item The $A_1^2$ singularity occurs when two sheets in the front projection intersect. This singularity can be thought of as the trace of a constant Legendrian isotopy in the neighborhood of a crossing in the front projection of the braid $\beta\Delta^2$. 
    \item The $A_1^3$ singularity occurs when a third sheet passes through an $A_1^2$ singularity. This singularity can be thought of as the trace of a Reidemeister III move in the front projection.
    \item A $D_4^-$ singularity occurs when three $A_1^2$ singularities meet at a single point. This singularity can be thought of as the trace of a 1-handle attachment in the front projection. 
\end{enumerate}

Having identified the singularities of fronts of a Legendrian weave surface, we encode them by a colored graph $\Gamma\subseteq \D^2$. The edges of the graph are labeled by Artin generators of the braid and we require that any edges labeled $\sigma_i$ and $\sigma_{i+1}$ meet  at a hexavalent vertex with alternating labels while any edges labeled $\sigma_i$ meet at a trivalent vertex. To obtain a Legendrian weave $\Lambda(\Gamma)\subseteq (J^1\D^2,\xi_{\st})$ from an $N$-graph $\Gamma$, we glue together the local germs of singularities according to the edges of $\Gamma$. First, consider $N$ horizontal sheets $\D^2\times \{1\}\sqcup \D^2\times \{2\}\sqcup \dots \sqcup \D^2\times \{N\}\subseteq \D^2\times \R$ and an $N$-graph $\Gamma\subseteq \D^2\times \{0\}$. We construct the associated Legendrian weave $\Lambda(\Gamma)$ as follows  \cite[Section 2.3]{CasalsZaslow}.

	\begin{itemize}
		\item Above each edge labeled $\sigma_i$, insert an $A_1^2$ crossing between the $\D^2\times\{i\}$ and $\D^2\times \{i+1\}$ sheets so that the projection of the $A_1^2$ singular locus under $\Pi:\D^2\times \R \to \D^2\times \{0\}$ agrees with the edge labeled $\sigma_i$.   
		\item At each trivalent vertex $v$ involving three edges labeled by $\sigma_i$, insert a $D_4^-$ singularity between the sheets $\D^2\times \{i\}$ and $\D^2\times\{i+1\}$ in such a way that the projection of the $D_4^-$ singular locus agrees with $v$ and the projection of the $A_2^1$ crossings agree with the edges incident to $v$.
		\item At each hexavalent vertex $v$ involving edges labeled by $\sigma_i$ and $\sigma_{i+1}$, insert an $A_1^3$ singularity along the three sheets in such a way that the origin of the $A_1^3$ singular locus agrees with $v$ and the $A_1^2$ crossings agree with the edges incident to $v$.
	\end{itemize} 
	\begin{center}
		\begin{figure}[h!]{ \includegraphics[width=\textwidth]{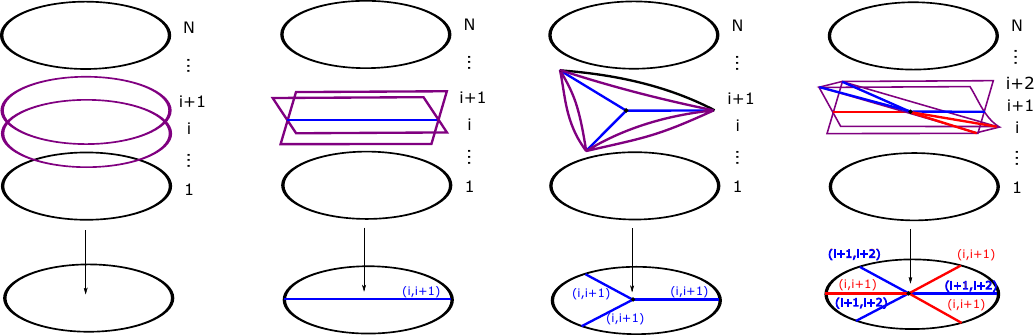}}\caption{The weaving of singularities of fronts along the edges of the $N$-graph (courtesy of Roger Casals and Eric Zaslow, used with permission). Gluing these local models according to the $N$-graph $\Gamma$ yields the weave $\Lambda(\Gamma)$.}
			\label{fig:Weaving}\end{figure}
	\end{center}

	If we take an open cover $\{U_i\}_{i=1}^m$ of $\D^2\times \{0\}$ by open disks, refined so that any disk contains at most one of these three features, we can glue together the resulting fronts according to the intersection of edges along the boundary of our disks. Specifically, if $U_i\cap U_j$ is nonempty, then we define $\Pi(\La(U_1\cup U_2))$ to be the front resulting from considering the union of fronts $\Pi(\La(U_1))\cup\Pi(\La(U_j))$ in $(U_1\cup U_2)\times \R$.

	\begin{definition}\label{def: weave}
	    The Legendrian weave $\Lambda(\Gamma)\subseteq (J^1\D^2, \xi_{st})$ is the Legendrian lift of the front $\Pi(\La(\cup_{i=1}^m U_i))$ given by gluing the local fronts of singularities together according to the $N$-graph $\Gamma$.
	\end{definition}

\subsubsection{Equivalences and surgeries of $N$-graphs}
We consider Legendrian weaves to be equivalent up to Legendrian isotopy fixing the boundary. Such Legendrian isotopies can also often be combinatorially understood through $N$-graphs. We can restrict our attention to specific isotopies, pictured in Figure~\ref{fig:Moves} and refer to them as Legendrian Surface Reidemeister moves. From \cite{CasalsZaslow}, we have the following theorem relating surface Reidemeister moves to the corresponding $N$-graphs. 
	\begin{theorem}[\cite{CasalsZaslow}, Theorem 4.2]
		Let $\Gamma$ and $\Gamma'$ be two $N$-graphs related by one of the moves shown in Figure~\ref{fig:Moves}. The Legendrian weaves $\Lambda(\Gamma)$ and $\Lambda(\Gamma')$ are Legendrian isotopic relative to their boundaries. 
	\end{theorem}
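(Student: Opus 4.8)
The plan is to prove the statement one move at a time, exploiting the fact that both the weave construction and the relevant isotopies are local. First I would note that if $\Gamma$ and $\Gamma'$ are related by a single move from Figure~\ref{fig:Moves}, then they coincide outside a small disk $D_0\subseteq \D^2$ (disjoint from $\partial\D^2$, or a half-disk meeting it along the braid) in which the move is supported; correspondingly, by the local gluing description of Definition~\ref{def: weave}, the weaves $\Lambda(\Gamma)$ and $\Lambda(\Gamma')$ agree outside $J^1D_0$. Since a Legendrian isotopy of the piece of the weave over $D_0$ which is stationary near $\partial D_0$ extends by the identity to all of $J^1\D^2$ (and hence is automatically relative to $\partial\D^2$), it suffices to produce, for each move, an explicit Legendrian isotopy between $\Lambda(\Gamma)|_{J^1D_0}$ and $\Lambda(\Gamma')|_{J^1D_0}$ fixing a collar of $\partial D_0$. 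This reduces the theorem to checking a finite list of local models.

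For each local model I would exhibit the isotopy as a \emph{movie of fronts}: a one-parameter family $\{\Pi_s\}_{s\in[0,1]}$ of singular surfaces in $D_0\times\R_z$ interpolating between $\Pi(\Lambda(\Gamma))$ and $\Pi(\Lambda(\Gamma'))$, built by concatenating the normal forms of Figure~\ref{fig: wavefronts} prescribed by the edges and vertices involved in the move. One then verifies that: (i) each $\Pi_s$ is the front projection of an embedded Legendrian surface in $J^1D_0$, i.e. it arises from a (multivalued) generating family and its only singularities are the codimension $\le 2$ types $A_1^2$, $A_1^3$, $D_4^-$ together with the isolated transient codimension $2$ and $3$ bifurcations from Arnol'd's classification \cite{ArnoldSing} that a generic path of fronts must cross; and (ii) the family is constant near $\partial D_0$, so its Legendrian lift is fixed there. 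The Legendrian lift of such a movie is a Legendrian isotopy, which proves the statement for that move.

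The moves fall into two flavors. The planar-isotopy and edge-slide moves (sliding an edge past a vertex, isotoping an edge within its color) lift trivially: the interpolating family is the trace of an ambient isotopy of $D_0$, and one only has to check that singular strata of different colors do not become tangent. The substantive moves are those that change the vertex set — the hexagonal/Yang–Baxter relations among $\sigma_i,\sigma_{i+1}$ edges at hexavalent vertices, the passage of a trivalent vertex through a hexavalent one, and the creation/cancellation of a bigon of two $\sigma_i$-edges. For these, I expect the main obstacle to be step (i): one must confirm that the natural interpolating family of fronts really does stay a family of \emph{embedded} Legendrian fronts, so that the $A_1^3$ block deforms through the standard Reidemeister III bifurcation and the $D_4^-$ block through a handle-slide bifurcation, rather than through some non-generic degeneration outside Arnol'd's codimension $\le 3$ list. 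This is checked directly on the explicit local normal forms — it is precisely where the ``$D_4^-=$ 1-handle attachment'' and ``$A_1^3=$ Reidemeister III'' descriptions of Section~\ref{sec:weaves} are used — and, once carried out for each move in the finite list of Figure~\ref{fig:Moves}, completes the proof.
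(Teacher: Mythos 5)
This statement is quoted verbatim from Casals--Zaslow and the present paper gives no proof of it, so there is nothing internal to compare against; your sketch should be measured against the original proof of \cite[Theorem 4.2]{CasalsZaslow}. Your outline -- localizing each move to a disk $D_0$, interpolating by a movie of fronts built from the normal forms of Figure~\ref{fig: wavefronts}, and checking via Arnol'd's classification that the family passes only through generic front bifurcations while staying fixed near $\partial D_0$ -- is essentially the same strategy carried out there, and it is correct.
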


	\begin{center}\begin{figure}[h!]{ \includegraphics[width=\textwidth]{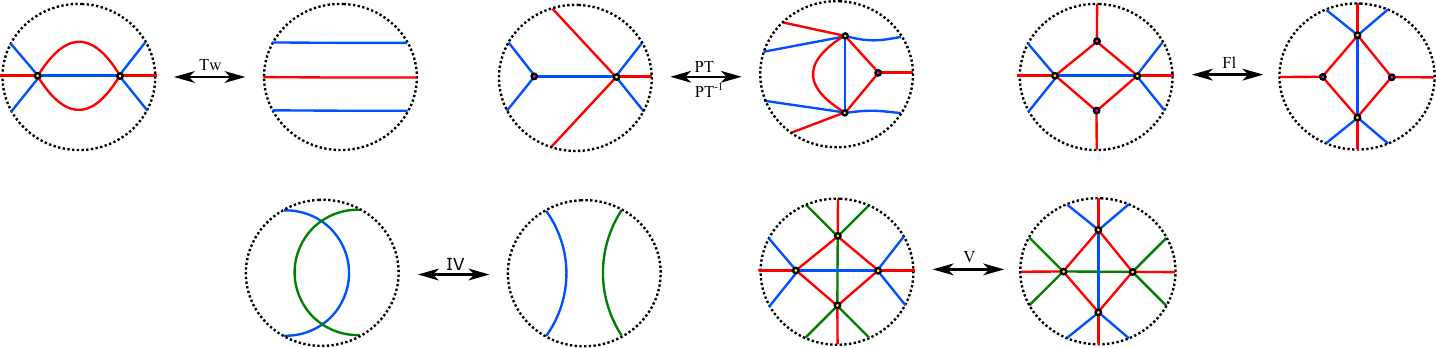}}\caption{Legendrian Surface Reidemeister moves for $N$-graphs. Clockwise from top left, a candy twist, a push-through, a flop, and two additional moves, denoted by Tw, PT, Fl, IV, and V respectively.} 
			\label{fig:Moves}\end{figure}
	\end{center}

 \begin{figure}[htb]{\tiny
\begin{overpic}[width=.4\textwidth,tics=10] 
{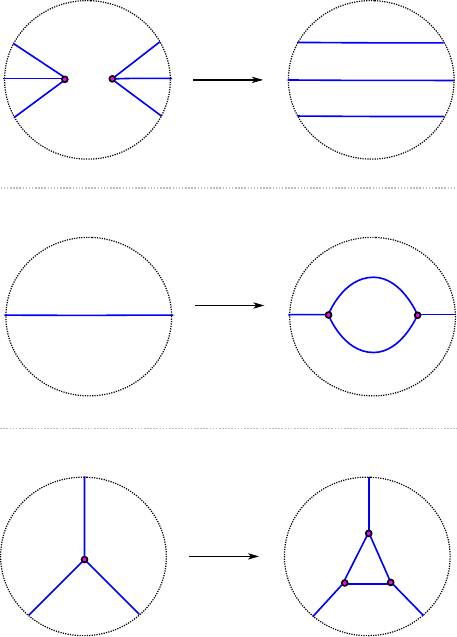}
\end{overpic}}
\caption{The surgery moves in Theorem~\ref{thm:czsurgery}.}
\label{fig:surgery}
\end{figure}

We can also perform local combinatorial modifications to $N$-graphs to realize certain Legendrian surgeries.
 
\begin{theorem}[Theorem 4.10, \cite{CasalsZaslow}]\label{thm:czsurgery}
    Given two $N$-graphs, the local modifications shown in Figure~\ref{fig:surgery} correspond to the following:
    \begin{enumerate}
        \item A connect sum.
        \item Connect summing with a $T^2_{st}$.
        \item Connect summing with a $T^2_{C}$.
    \end{enumerate}
\end{theorem}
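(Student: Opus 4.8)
Each move in Figure~\ref{fig:surgery} modifies an $N$-graph, and hence its weave $\Lambda(\Gamma)$, only inside a small disk $D\subset C$, while the two weaves being compared are literally equal over $C\setminus D$. Since the Legendrian connected sum of two surfaces is itself a well-defined operation carried out inside a Darboux ball, it suffices to work with the local models over $D$ and show that the germ of $N$-graph inserted by each move performs the claimed connected sum there. The plan is to reduce all three cases to one local picture: $\Lambda(\Gamma)$ joined by a standard ``neck'' to a small weave $\Lambda(\Gamma_\ast)$, with $\Gamma_\ast$ empty (Move 1), $\Gamma_\ast=\Gamma_{st}$ (Move 2), or $\Gamma_\ast=\Gamma_C$ (Move 3).

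\textbf{Move (1): connected sum.} First I would fix the standard front model of a Legendrian connected sum of two weaves: remove a Darboux ball meeting a top sheet of each of $\Lambda(\Gamma_1)$ and $\Lambda(\Gamma_2)$ and glue in a neck $S^1\times[0,1]$. Then I would show that the front produced by the local modification of Figure~\ref{fig:surgery}(1) is Legendrian isotopic, rel boundary, to this model. Concretely this is a finite composition of the Legendrian surface Reidemeister moves of Figure~\ref{fig:Moves} (Theorem 4.2 of \cite{CasalsZaslow}) together with elementary front isotopies supported near the neck; the point is that the parallel sheets over the connecting strip, once the germ is inserted, collapse to a single neck between the appropriate sheets. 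Once the front is identified, the assertion that this is a connected sum is by definition.

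\textbf{Moves (2) and (3).} I would first record explicit minimal $N$-graphs $\Gamma_{st}$ and $\Gamma_C$ (on $S^2$, or on $D^2$ with trivial boundary braid) whose weaves are $T^2_{st}$ and $T^2_C$. Building the handle decomposition of $\Lambda(\Gamma_{st})$ out of the front germs of Figure~\ref{fig: wavefronts} (the $A_1^2$, $A_1^3$, $D_4^-$ pieces of Definition~\ref{def: weave}) shows it is a genus-one Legendrian surface; to identify it with the \emph{standard} torus $T^2_{st}$ I would appeal to Theorem~\ref{thm:trivdouble} after checking that the induced trivialization of $T\Lambda(\Gamma_{st})\otimes\C$ is the trivial one, or else exhibit its front directly as a rotated standard-unknot front. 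The same handle count for $\Lambda(\Gamma_C)$ again gives a genus-one surface, but here I must \emph{distinguish} it from $T^2_{st}$: I would compute a finer invariant --- the microlocal monodromy along the core cycles of the inserted handle (equivalently a piece of the sheaf moduli, or the LCH/DGA), or the chromatic polynomial of the dual graph, which I expect to pick up the $(q-2)$ factor characteristic of a triangular region --- and match it with the known invariants of the Clifford torus $T^2_C$. Finally, recognizing that the germ of Figure~\ref{fig:surgery}(2) (resp.\ (3)) is exactly ``$\Gamma_{st}$ (resp.\ $\Gamma_C$) joined to $\Lambda(\Gamma)$ by the neck of Move (1)'', Move (1) gives that the operation is $\Lambda(\Gamma)\#T^2_{st}$ (resp.\ $\Lambda(\Gamma)\#T^2_C$).

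\textbf{Expected main obstacle.} The fiddly-but-routine part is the front bookkeeping in Move (1): certifying that the inserted germ really is the standard connected-sum neck requires an explicit chain of surface Reidemeister moves. The genuine difficulty is the identification in Moves (2)--(3), especially certifying that $\Gamma_C$ produces the \emph{Clifford} torus rather than the standard one: both are genus-one Legendrian tori that agree at the classical level, so the comparison forces a true Legendrian isotopy invariant --- microlocal monodromy / sheaf moduli, or the chromatic polynomial of the dual graph (a triangular region versus a trivial loop) --- and getting this comparison exactly right is where the argument must be most careful.
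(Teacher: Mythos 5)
This statement is not proved in the paper at all: it is quoted verbatim from \cite{CasalsZaslow} (their Theorem 4.10) and used as a black box, so there is no internal proof to compare your attempt against. Judged against the argument in the cited source, your outline has the right shape: the moves are local, one reduces to germs over a disk, the neck of Move (1) is identified with the standard ambient $0$-surgery model by front manipulations and the surface Reidemeister moves of Figure~\ref{fig:Moves}, and Moves (2) and (3) are then ``Move (1) applied to the weave over a bigon (resp.\ triangle) completed to a closed $2$-graph on $S^2$,'' which is precisely how $T^2_{st}$ and $T^2_C$ are defined as weaves in that paper.

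Two corrections to your plan. First, the appeal to Theorem~\ref{thm:trivdouble} to certify that $\Lambda(\Gamma_{st})$ is the standard torus is the wrong tool: that theorem classifies symmetric doubles $\La(L,L)$ by the formal data of the filling $L$, and the bigon germ is not presented as such a double; the identification in \cite{CasalsZaslow} is done directly, by collapsing the bigon front to the spun standard unknot via Reidemeister/front isotopies. Second, you have promoted a non-issue to the ``genuine difficulty'': to prove item (3) as stated one does not need to distinguish $T^2_C$ from $T^2_{st}$, because $T^2_C$ is \emph{defined} as the weave over its $2$-graph, so it suffices to exhibit the local move as a connect sum with that specific weave. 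The fact that $T^2_C\not\cong T^2_{st}$ (via the sheaf count, i.e.\ the $(q-2)$ versus $(q-1)$ factors appearing in Lemma~\ref{lem:chromatic}) is needed for the applications of the theorem, not for the theorem itself. With those adjustments your proposal is a faithful reconstruction of the cited proof.
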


Finally, we have one additional combinatorial move, known as Legendrian mutation, that is closely related to Lagrangian disk surgery. Given a Legendrian weave $\w$ and a short {\sf I}-cycle $\gamma\in H_1(\w;\Z)$, the Legendrian mutation $\mu_\gamma(\w)$ outputs a Legendrian weave smoothly isotopic to $\w$ but that is generally not Legendrian isotopic to $\w$. 
	
Combinatorially, we can describe Legendrian mutation in terms of the $N$-graph associated to a weave. Figure~\ref{fig:Mutations} depicts mutation at a short {\sf I}-cycle. See \cite[Section 4.9]{CasalsZaslow} for a more general description of mutation at long {\sf I}- and {\sf Y}-cycles in $N$-graphs. The geometric operation of Lagrangian disk surgery on the Lagrangian projection of $\w$ coincides with the combinatorial manipulation of the $N$-graphs. 

	\begin{center}\begin{figure}[h!]{ \includegraphics[width=.4\textwidth]{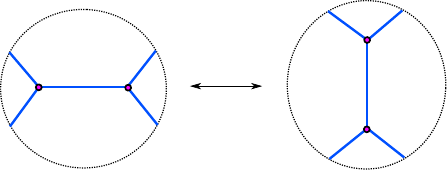}}\caption{Mutation at a short {\sf I}-cycle.}\label{fig:Mutations}\end{figure}
	\end{center}

\subsection{Doubling Legendrian weaves}
We first define a doubling operation on $N$-graphs that we then use to construct the Legendrian doubles via weaves.

\begin{definition}
    Consider two properly embedded $N$-graphs $G_1 \subset \D^2$ and $G_2 \subset \D_2$ with the same boundary. Then the doubled $N$-graph $G_1\cup G_2$ is defined to be the $N$-graph obtained on $S^2$ by gluing the two disks via their boundaries and identifying the boundaries of $G_1$ and $G_2$.
\end{definition}

Consider the following elementary observation. Note that we did not need to assume embedded to define the doubled Legendrian in Section~\ref{sec:double}.

\begin{proposition}\label{prop:double}
    Let $G_1$, $G_2$, be $N$-graphs describing two exact Lagrangian fillings $\pi(\La(G_1), \pi(\La(G_2))$ of a link $K$. Then the Legendrian $\La(\pi(\La(G_1)),\pi(\La(G_2)))$ is the Legendrian weave corresponding to the doubled $N$-graph $(G_1 \cup G_2) \subset S^2$. 
\end{proposition}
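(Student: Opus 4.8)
The plan is to unwind both constructions — the analytic doubling of Section~\ref{sec:double} and the combinatorial doubling of $N$-graphs — and check that they produce the same Legendrian in $(\R^5,\xi_{st})$, up to Legendrian isotopy. First I would recall that the weave $\La(G_i)$ sits in $J^1\D^2 = T^*\D^2\times\R_z$ with its Lagrangian projection $\pi(\La(G_i))\subset(T^*\D^2, d(e^r\alpha))$ an embedded exact filling of $K=\la(\beta\Delta)$, where the symplectic form is chosen (as in Section~\ref{sec:weaves}) so that the Liouville flow is transverse to $J^1 S^1$ along $\partial\D^2$ and $\pi(\La(G_i))$ is cylindrical over $K$ near the boundary. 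The key point is that this $(T^*\D^2, d(e^r\alpha))$ is, after the exact symplectomorphism identifying a collar of $\partial\D^2$ with $(-\infty,T]\times J^1 S^1$, precisely the half-symplectization model $\{x_2\le -1\}\cong\R\times\R^3$ used in Section~\ref{sec:double} to place the first filling $F$. So $\pi(\La(G_1))$ plays the role of $F$ and the reflected copy of $\pi(\La(G_2))$ plays the role of $G$.

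Next I would carry out the matching of the two gluing regions. On the analytic side, $\La(F,G)$ is glued along the middle Legendrian with front $(x, sz, e^Tz)$, $s\in[-1,1]$, which is just the trace in the $x_2$-direction of a constant Legendrian isotopy of $K$ — i.e.\ a cylinder $K\times[-1,1]$ pushed through the $x_2=0$ wall. On the $N$-graph side, gluing $G_1\subset\D^2$ and $G_2\subset\D^2$ along their common boundary to form a graph on $S^2$ corresponds, in the weave picture, to placing a collar $S^1\times[-1,1]$ with no vertices and only the parallel edges labelled by the braid word $\beta\Delta$ running through it — exactly the weave whose front is the product of the front of $K=\la(\beta\Delta)$ with an interval, which is the same cylinder $K\times[-1,1]$. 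Thus the middle piece of $\La(F,G)$ is literally the weave over the equatorial collar of $S^2$. The two halves glue to the two hemispheres: over the southern hemisphere one sees $\La(G_1)$ in its $J^1\D^2$ model sitting inside $\{x_2\le -1\}$, and over the northern hemisphere one sees $\La(G_2)$, but the identification $T^*\D^2\times T^*S^1\cong T^*\R^2$ on the twist-spun side — or here, the reflection $(x_1,y_1,x_2,y_2)\mapsto(x_1,y_1,-x_2,-y_2)$ composed with $\phi_T$ — is precisely what reverses the orientation of the second disk so that its weave lands in $\{x_2\ge 1\}$ as $G$ does. Assembling the three pieces, the Legendrian weave $\Lambda(G_1\cup G_2)\subset J^1 S^2 \hookrightarrow(\R^5,\xi_{st})$ agrees with $\La(\pi(\La(G_1)),\pi(\La(G_2)))$; the contact embedding $J^1 S^2\hookrightarrow(\R^5,\xi_{st})$ used for weaves over $S^2$ and the one implicit in Section~\ref{sec:double} differ at most by a contactomorphism isotopic to the identity, so the two Legendrians are Legendrian isotopic.

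The main obstacle I expect is the bookkeeping at the equator: one must verify that the Legendrian lifts agree not just as smooth submanifolds but as \emph{Legendrians}, i.e.\ that the $w$-coordinate (the lift variable $\int e^t\alpha$ on one side, the $z$-primitive $\int e^r\alpha$ on the weave side) matches up under the identification of collars, including the scaling factors $e^t$ versus $e^T$ appearing in $\phi_T$ and in the front $(x,sz,e^Tz)$. Concretely this amounts to checking that the conical-end formula $\big(x, e^t(dz/dx), e^t-e^T-1, z, e^tz\big)$ from Section~\ref{sec:double} is the image, under the collar identification, of the boundary behaviour of the weave's Lagrangian projection together with its canonical primitive $z:\pi(\La)\to\R$ — which is a direct computation once the symplectomorphism between $(T^*\D^2, d(e^r\alpha))$ near $\partial\D^2$ and the half-symplectization is written down explicitly. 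Everything else is a matter of recognizing standard local models (the $A_1^2$, $A_1^3$, $D_4^-$ fronts of a weave versus the cylindrical trace in Section~\ref{sec:double}) and invoking the contact embeddings already set up in Sections~\ref{sec:double} and~\ref{sec:weaves}.
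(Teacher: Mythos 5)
Your argument is correct and is essentially the paper's proof: the paper also decomposes the standard Legendrian $S^2$ into two half-disks joined by an equatorial annulus, satellites $\La(G_i)$ over each half-disk, and identifies the glued result with the weave over $G_1\cup G_2\subset S^2$. You simply spell out in explicit coordinates the collar/conical-end matching that the paper summarizes as "easy to observe."
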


\begin{proof}
    This is easy to observe after choosing the right $\D^2 \subset \R^5$ to satellite the weave along. Consider the Legendrian disk (call it $D_1$) whose front projection is half the Legendrian unknot (there is a unique Legendrian $\D^2$ in $\R^5_{st}$ but this helps in identifying with the setup in Section~\ref{sec:double}). Suppose the boundary of the disk is along $x_2 = -1$, and the disk extends to $\{x_2 \leq -1\}$. Similarly choose the other half of the Legendrian unknot (call it $D_2$) and translate it to live in $\{x_2 \geq 1\}$. Interpolate between them with a Legendrian annulus whose front is the cylinder in the $x_2$ direction of the front for the 1-dimensional Legendrian unknot in $(x_1, z)$. Now, considering the Legendrians $\La(G_i)$ after satelliting over $D_i$, and gluing them as in the doubling construction, is exactly the same as considering the Legendrian weave in $J^1(S^2)$ over the graph $G_1 \cup G_2 \subset S^2$, and satelliting it over the standard unknot in $\R^5$.
\end{proof}

For simplifying notation, henceforth we will denote the double coming from weaves by $\La(G_1,G_2)$, instead of $\pi(\La(G_1), \pi(\La(G_2))$. In the following, we show a refinement of Theorem~\ref{thm:trivdouble} of \cite{courte2017lagrangian} for weaves. In particular, for an exact Lagrangian filling $\La(G)$ of a link given by a Legendrian weave $G$, we describe exactly the symmetric double $\La(G,G)$.

\subsection{Lagrangian Surgery} \label{sec: lag_surgery}

Here we describe a mutation or surgery procedure for 3-dimensional Lagrangians in 6-space.  It can be understood as modifying the construction in \cite[Section 2.3]{Yau} by spinning in a higher dimension. 

Let $L \subset B^6$ be a properly embedded exact Lagrangian filling of $\La = \partial L$. Let $S$ denote an {\em attaching Lagrangian solid torus}, i.e. a Lagrangian embedding of $S^1 \times \D^2$ into $B^6$ such that

\begin{enumerate}
    \item $T := \partial S = S \cap L$ 
    \item $S$ is transverse to $L$ along $T$
\end{enumerate}

We will call the pair $(L, S)$ a {\em generalized solid mutation configuration} following the terminology in \cite{ChandaHirschiWang}. There exists a neighborhood $U$ of $S$ symplectomorphic to $T^*S$ via a symplectomorphism $\phi$, which identifies $S$ with the zero-section. We can split $T^*S$ as $T^*S^1 \oplus T^*D^2$. Then, one can obtain $S^1$-spun versions of all the objects in the standard model in \cite[Section 2.2]{Yau}, i.e., considering the analogous objects in $T^*D^2$ and taking their product with the 0-section in $T^*S^1$. Following the same recipe, we can surger $L$ along $S$ to obtain a Lagrangian $L'$, which is smoothly isotopic to $L$ but not necessarily Hamiltonian isotopic. The details follow. 

\begin{lemma}\label{lem: lag_surgery}
    Let $(L, S) \subset W$ be a solid mutation configuration. Then there exists a Lagrangian surgery procedure $\eta_S$ which can be applied to $L$ to produce a Lagrangian $L' \subset W$ which is smoothly isotopic to $L$, but not necessarily Hamiltonian isotopic. 
\end{lemma}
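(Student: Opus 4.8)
The plan is to build a standard local model for the surgery inside a Weinstein neighborhood of the attaching solid torus, perform the surgery there, and then observe that the modification is compactly supported and smoothly trivial. First I would invoke the Weinstein neighborhood theorem: since $S \cong S^1 \times \D^2$ is an embedded Lagrangian with boundary meeting $L$ cleanly, there is a neighborhood $U$ of $S$ in $W$ and a symplectomorphism $\phi \colon U \to T^*S$ carrying $S$ to the zero section, and (after a Hamiltonian isotopy supported near $T$) carrying $L \cap U$ to the conormal-type model of a Lagrangian meeting the zero section transversely along $\partial S$. Using the product splitting $T^*S \cong T^*S^1 \times T^*\D^2$, the picture becomes the product of the zero section $S^1 \subset T^*S^1$ with the two-dimensional Lagrangian surgery model from \cite[Section 2.2--2.3]{Yau} living in $T^*\D^2$: there $L$ locally looks like the zero section $\D^2$ together with a cotangent fiber, and Yau's construction replaces their union near the intersection point with the embedded Lagrangian handle (the ``Lagrangian antisurgery'' / Polterovich-type surgery neck).

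Next I would define $\eta_S(L)$ by replacing $L \cap U$ with the $S^1$-spun version of Yau's local model — i.e. take the two-dimensional surgered Lagrangian in $T^*\D^2$ and cross it with the zero section of $T^*S^1$ — and then gluing back in using $\phi^{-1}$. The key points to check are: (i) $L'$ is a well-defined properly embedded Lagrangian in $W$, which follows because Yau's local model is Lagrangian and embedded and the splitting is symplectic, so the product is Lagrangian and embedded, and the construction agrees with $L$ outside a compact subset of $U$ where it matches $L$ to all orders; (ii) $L' = L$ near $\partial L = \La$, so $L'$ is again a filling of $\La$ — this is automatic since the surgery is supported in the interior neighborhood $U$ of $S$; (iii) $L'$ is smoothly isotopic to $L$ — here the point is that in the local model the smooth effect of Lagrangian surgery on a surface is to replace two transverse sheets by an embedded handle, which is the standard smooth handle-smoothing, and spinning by $S^1$ preserves this; more precisely, one produces an explicit smooth isotopy from $L'$ back to $L \# (\text{zero section})$ by shrinking the surgery parameter, and since the two local sheets can be smoothly pushed apart this recovers $L$. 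The non-Hamiltonian-isotopy assertion needs no proof — it is a ``not necessarily'' — but I would remark that $L$ and $L'$ generally have different images under microlocal monodromy / sheaf invariants, exactly as in the weave mutation picture of Figure~\ref{fig:Mutations}.

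I would structure the write-up as: (1) recall Yau's two-dimensional model precisely, naming the local Lagrangian $\Sigma_\epsilon \subset T^*\D^2$ depending on a parameter $\epsilon$, with $\Sigma_0$ the singular union of zero section and fiber; (2) state the spun model $\widetilde\Sigma_\epsilon := (0\text{-section of } T^*S^1) \times \Sigma_\epsilon \subset T^*S^1 \times T^*\D^2 \cong T^*S$; (3) use the Weinstein neighborhood and a normal-form lemma for $(L,S)$ to arrange $\phi(L \cap U) = \widetilde\Sigma_0$ near $T$; (4) set $L' := (L \setminus U) \cup \phi^{-1}(\widetilde\Sigma_\epsilon)$ for small $\epsilon$ and verify Lagrangian, embedded, and filling properties; (5) exhibit the smooth isotopy via the family $\epsilon \mapsto \widetilde\Sigma_\epsilon$ (desingularizing, then smoothly separating the two sheets back to $L$).

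The main obstacle I anticipate is step (3): putting the pair $(L,S)$ into the standard normal form. The Weinstein neighborhood theorem handles $S$ alone, but one must further show that, after a symplectomorphism fixing $S$, the Lagrangian $L$ near $T = \partial S$ looks exactly like the product of the $T^*S^1$ zero section with a cotangent fiber of $T^*\D^2$ — this is a relative/parametrized version of the fact that two cleanly-intersecting Lagrangians can be locally normalized, and the condition ``$S$ transverse to $L$ along $T$'' is precisely what makes the $T^*\D^2$-component of $L$ a cotangent fiber rather than something more degenerate. I would handle this by a Moser-type argument on the conormal directions, parametrized over $S^1$, using that transversality is an open condition so the normalization can be achieved in a neighborhood; this is the one place where the proof is more than a citation, and I would either prove it as a short lemma or cite the analogous statement in \cite[Section 2.1]{ChandaHirschiWang} or \cite[Section 5]{pascaleff_tonkonog}. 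Everything else — Lagrangianness of the product model, compact support, and the smooth isotopy — is routine once the normal form is in place.
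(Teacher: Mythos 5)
Your overall strategy is the same as the paper's: identify a neighborhood of $S$ with $T^*S \cong T^*S^1 \times T^*\D^2$, replace the piece of $L$ lying there by the $S^1$-spun version of Yau's two-dimensional surgery model, and quote Yau's argument for the smooth isotopy. The paper does exactly this, writing the local piece of $L$ as the orbit $Orb_{\mathcal{G}}(\gamma_1)$ of a curve under the rotation group $\mathcal{G}$ acting on the $T^*\D^2$ factor (extended by the identity on $T^*S^1$), and performing the surgery by applying the position--momentum swap $M$ to that curve; it likewise leaves the ``not necessarily Hamiltonian isotopic'' clause as a mere possibility.

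The one concrete problem is that you have the wrong two-dimensional local model in mind. You describe $L$ near the surgery region as ``the zero section $\D^2$ together with a cotangent fiber'' and the surgery as the Polterovich-type resolution of their transverse intersection point. That is a different operation (resolving a transverse double point of an immersed Lagrangian). In Yau's mutation model --- the one relevant here and the one the paper uses --- $L$ contains the boundary circle of the Lagrangian disk and near it is the $\mathcal{G}$-orbit of an arc $\gamma$, i.e.\ an embedded annulus meeting the disk transversely along its boundary circle; the surgery replaces $\gamma$ by $M(\gamma)$. This matters for your step (3): the normal form you propose to prove, namely that the $T^*\D^2$-component of $L$ is a cotangent fiber, would force $L \cap S$ to be a single circle $S^1 \times \{0\}$, whereas the definition of a solid mutation configuration requires $L \cap S = T = \partial S$, a two-torus. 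So the normalization lemma as you state it is for the wrong configuration and cannot hold; it should instead put $(L,S)$ into the spun orbit model. You are right that this normalization is the genuinely nontrivial step and that the Weinstein neighborhood theorem for $S$ alone does not supply it --- it is worth noting that the paper also asserts, rather than proves, the existence of a neighborhood in which $L$ takes this standard form.
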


\begin{proof}
    Consider a cotangent neighborhood of $S$, which can be thought of as $T^*S^1 \oplus T^*D^2$ with coordinates $(p,q, x_1, x_2, y_1, y_2)$, such that $(p,q) \in T^*S^1$, $p \in S^1$, and $(x_1, x_2, y_1, y_2) \in T^*D^2$. Let $\pi_2$ denote the projection from $T^*S$ to $T^*D^2$. Also consider the rotation subgroup of $SU(2)$ acting on the $T^*D^2$ summand denoted $\mathcal{G}$ as in \cite[Section 2.2]{Yau} -- we extend it to $T^*S$ by being the identity on the $T^*S^1$ summand. It follows similarly as in Fact 2.2.1 of \cite{Yau}, that if $\gamma_1 \subset \{q=x_2=y_2=0\}$ and $\pi_2(\gamma_1)$ is an immersed curve in $\R_{x_1,y_1}$, the $\mathcal{G}$-orbit of $\gamma_1$, denoted $Orb_{\mathcal{G}}(\gamma_1)$, is an immersed Lagrangian in $T^*S$.
    
    We pick an open neighborhood $U \subset W$ of $S$ such that it can be identified with $T^*S$ as above, with the following properties:
    \begin{itemize}
        \item $\pi_2 (U)$ contains the closed ball $B_r$ of radius $r$ with center $0 \in \R^4$
        \item $S = \{x_1^2 + x_2^2 \leq (\sqrt{2}-1)^2r^2, q = y_1 = y_2 = 0\}$
        \item $Q_S := L \cap \pi_2^{-1}(B_r)$ = $Orb_{\mathcal{G}}(\gamma_1)$, where $\gamma_1 \subset \{q=x_2=y_2=0\}$ and $\pi_2(\gamma_1)$ is the curve $\gamma$ in Equation (3) of \cite{Yau}, i.e. $\gamma : (3\pi/4, 5\pi/4) \to \R^2_{x_1,y_1}$ given by $x_1(\gamma(s)) = \sqrt{2}r+r\cos s$ and $y_1(\gamma(s)) = r\sin s$.
     \end{itemize}

    Then, also consider the anti-symmetric linear map $M$ on $T^*S$ which acts on the $T^*S^1$ components by identity, and on the $T^*D^2$ components by interchanging $x_1$ and $y_1$, and $x_2$ and $y_2$, respectively. This action of $M$ commutes with the rotation $\mathcal{G}$. Then define $\gamma_1' := M(\gamma_1)$, and $Q_S' := Orb_{\mathcal{G}}(\gamma_1')$. Then, the surgered Lagrangian $L'$ is defined as follows:
    \[
    L' := \eta_S(L) = (L \setminus Q) \cup Q'
    \]

    Then $L'$ is Lagrangian by the facts stated above. That $L'$ is smoothly isotopic to $L$ follows by the same argument as in \cite[Section 2.3]{Yau}.
 \end{proof}

\section{Background - Sheaves and Clusters}\label{sec: sheaves_and_clusters}

In this section we review material relating to the microlocal theory of sheaves and cluster algebras. The reader is encouraged to look at \cite{KashiwaraSchapira} for a  thorough introduction to the microlocal theory of sheaves, and to refer \cite{FWZ1, FWZ2} for a detailed exposition on the theory of cluster algebras.

\subsection{Microlocal theory of sheaves}\label{sec: sheaves background}

In this section we define the moduli of microlocal rank one sheaves with singular support contained in a Legendrian submanifold. Since we restrict our attention to sheaf moduli related to Legendrian weaves, we are able to give a description of these moduli as a space of flags with specific transversality conditions that can be read off of the $N$-graph, avoiding certain categorical nuances that arise when considering 
 more complicated Legendrians. We invite the reader to consult the appendices of \cite{CasalsLi} for a careful discussion of these nuances. 

 Let $\Sh(M)$ denote the derived category of complexes of sheaves of $\C$-modules on a smooth manifold $M$ with constructible cohomology. For $x\in M, \xi \in T^*_xM$ a sheaf $\mathcal{F}\in \Sh(M)$ is said to propagate along the codirection $\xi$ if for some neighborhood $U_x$ of $x$ and all smooth functions $\varphi\in C^\infty(M)$ satisfying $\varphi(x)=0$ and $d\varphi(x)=\xi$, we have
 $$\Cone(R\Gamma((\varphi < -\delta)\cap U_x, \mathcal{F})\to R\Gamma((\varphi<\delta)\cap U_x, \mathcal{F}))\cong 0.$$

  The singular support $SS(\mathcal{F})$ of a sheaf $\mathcal{F}\in \Sh(M)$ is then defined as the closure of the set of points $(x, \xi)$ in $T^*M$ along which the sheaf $\mathcal{F}$ does not propagate. In general, $SS(\mathcal{F})$ is a conic Lagrangian in $T^*M$, which allows us to quotient by the $\R^+$ action to obtain a Legendrian submanifold of the unit cotangent bundle $T^\infty(M)$. For $M\cong C\times \R$, we identify the unit cotangent bundle $T^{\infty,-}(M)$ with the first jet space $J^1C$.

  Given a specific Legendrian $\La\in J^1C$, we can ask the question ``for what sheaves $\mathcal{F}\in \Sh(C\times \R)$ do we have $SS(\mathcal{F})\subseteq \La$?'' such sheaves are necessarily constructible with respect to a stratification of $M$ induced by a front projection of $\La$. 
  We also require that the sheaves we consider have acyclic stalks in a neighborhood of $\C\times \{-\infty\}$.
  By \cite[Theorem 5.3]{CasalsZaslow}, we may simplify the category we work in to consider only sheaves of vector spaces concentrated in degree 0 that are constructible with respect to the induced stratification. This simplification allows us to answer the moduli space question above by understanding a collection of vector spaces assigned to strata and maps between them induced by restriction of sheaves to different strata. 

 We now introduce the concept of microlocal rank of a sheaf $\mathcal{F}$ with $SS(\mathcal{F}) \subseteq(\La)$. Away from singularities of a front projection, a point $x\in \La$ has a neighborhood with front projection that resembles a smooth hypersurface $A$. In a neighborhood $A$, denote the stalk below this hypersurface by $\mathcal{F}_d$ and the stalk above the hypersurface by $\mathcal{F}_u$. Define the microstalk of $\mathcal{F}$ at a point $x \in \Pi(A)$ to be $\Cone(\mathcal{F}_d\to \mathcal{F}_u)$. We say that $\mathcal{F}$ is of microlocal rank $r$ if the microstalk of $\mathcal{F}$ is a vector space of rank $r$ concentrated in a single degree.

Collecting all of the conditions described above, we arrive at the following definition.

\begin{definition}
    We define $\mathcal{M}_1(\La)$ to be the space of microlocal rank one sheaves $\mathcal{F}\subseteq Sh(C\times \R)$ with singular support contained in $\La$ that have stalk 0 in a neighborhood of $C\times \{-\infty\}$.
\end{definition}

\begin{theorem}[Guillermou-Kashiwara-Schapira \cite{GKS_Quantization}] The space $\mathcal{M}_1(\La)$ is a Legendrian isotopy invariant of $\La$.
\end{theorem}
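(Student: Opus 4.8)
\emph{Proof proposal.} The plan is to deduce this from the Guillermou--Kashiwara--Schapira quantization of contact isotopies. Suppose $\{\La_t\}_{t\in[0,1]}$ is a Legendrian isotopy in $J^1C\cong T^{\infty,-}(C\times\R)$ with $\La_0=\La$. First I would invoke the contact isotopy extension theorem to extend $\{\La_t\}$ to an ambient contact isotopy $\{\varphi_t\}$ of $T^\infty(C\times\R)$, and lift it to a $1$-homogeneous Hamiltonian isotopy $\{\Phi_t\}$ of $\dot T^*(C\times\R)=T^*(C\times\R)\setminus 0$. Since the Legendrian isotopy takes place in a compact region of $J^1C$, and in particular stays bounded away from the boundary at infinity $C\times\{-\infty\}$, this extension can be chosen compactly supported, and in particular equal to the identity on a neighborhood of the conormal directions over $C\times\{-\infty\}$.

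Next I would apply \cite{GKS_Quantization}: such a $\{\Phi_t\}$ admits a quantization, i.e.\ a sheaf kernel $\mathcal{K}\in\Sh\big((C\times\R)\times(C\times\R)\times[0,1]\big)$ such that for each $t$ the convolution functor $K_t:=\mathcal{K}_t\circ(-)$ is an autoequivalence of $\Sh(C\times\R)$ with $K_0=\mathrm{id}$, whose effect on singular supports is $SS(K_t\mathcal{F})=\Phi_t\big(SS(\mathcal{F})\big)$. In particular $K_t$ restricts to an equivalence between the full subcategories of sheaves with singular support in the conic lifts of $\La_0$ and of $\La_t$, and — crucially — it does so in families, since $\mathcal{K}$ itself is a sheaf on the product with the parameter interval.

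It then remains to check that $K_t$ preserves the extra structure cutting out $\mathcal{M}_1$. For the boundary condition, since $\Phi_t$ is the identity near the directions over $C\times\{-\infty\}$, the kernel $\mathcal{K}_t$ agrees there with a shift of the constant sheaf along the diagonal, so $K_t$ preserves the property of having acyclic stalks in a neighborhood of $C\times\{-\infty\}$. For the microlocal rank, the kernel $\mathcal{K}_t$ is the quantization of the graph of a contactomorphism and is therefore microlocally of rank one and microlocally invertible; convolution with it shifts the microstalk of $\mathcal{F}$ by tensoring with a rank-one local system (possibly with a uniform degree shift coming from the Maslov data), so $\mathcal{F}$ is of microlocal rank one concentrated in a single degree if and only if $K_t\mathcal{F}$ is. Hence $K_t$ induces an isomorphism of moduli stacks $\mathcal{M}_1(\La_0)\cong\mathcal{M}_1(\La_t)$, and taking $t=1$ yields the claim.

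The main obstacle — essentially the only point beyond citing the quantization theorem — is promoting the categorical equivalence $\Sh_{\La_0}\simeq\Sh_{\La_1}$ to an isomorphism of moduli \emph{stacks}, i.e.\ checking that the quantization is compatible with base change so that it identifies families of microlocal-rank-one objects and not merely isomorphism classes; this is where one uses functoriality of the GKS construction and the fact that $\mathcal{K}$ lives over the parameter interval. A secondary technical point is verifying that the single-degree microlocal-rank-one condition really is preserved, which amounts to checking that the grading shift introduced by the kernel is a uniform shift rather than something that distributes the microstalk across several degrees.
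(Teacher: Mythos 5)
The paper offers no proof of this statement — it is quoted directly as a citation to Guillermou--Kashiwara--Schapira — and your sketch is precisely the standard argument underlying that citation: extend the Legendrian isotopy to a compactly supported homogeneous Hamiltonian isotopy of $\dot T^*(C\times\R)$, quantize it by a GKS kernel living over the parameter interval, and check that convolution preserves the singular support, the acyclic-stalk condition near $C\times\{-\infty\}$, and the microlocal rank-one condition. Your argument is essentially correct and matches the intended proof, including the correct identification of the only substantive points (base-change compatibility for the moduli and simplicity of the kernel along the graph).
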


\subsubsection{Flag moduli space}

For a Legendrian link $\la=\la(\beta\d)\in J^1S^1$ with $\beta \in Br_n^+$, the data of $\mathcal{M}_1(\la)$ is given by a choice of vector space for each open region of $\S^1\times \R_z \backslash \Pi(\la)$. As we cross arcs of $\Pi(\la)$ in the positive $z$ direction, the microlocal rank one condition requires that we have a nested sequence of vector spaces $V^0\subseteq V^1 \dots \subseteq V^n$ with $\dim(V^i) = i$, i.e., a flag $V^\bullet$. Careful computation of the singular support conditions induced by a crossing of $\Pi(\la)$ yields a transversality condition of the flags $V_1^\bullet$ and $V_2^\bullet$ on either side of the crossing. In particular, a crossing between strands $i$ and $i+1$ corresponding to a braid group generator $\sigma_i$ requires that $V_1^i\neq V_2^i$ and $V_1^j=V_2^j$ for all $j\neq i$ \cite[Section 5.2]{CasalsZaslow}.

For a Legendrian weave, $\La\subseteq J^1D^2$, the data of $\mathcal{M}_1(\La)$ is given by the assignment of a flag $V^\bullet$ to each open region of $D^2\times \R\backslash \Pi(\La)$ and transversality conditions imposed by edges. In particular an edge labeled by $\sigma_i$ imposes the same transversality condition as at a crossing of a Legendrian link. This can be seen by recognizing such an edge as representing a local picture of the product of a crossing with an interval. Local computations at trivalent, tetravalent, and hexavalent vertices show that no additional singular support conditions are imposed. As a result, the data of $\SM_1(\La(\Gamma))$ is equivalent to providing the following: \begin{enumerate}
		\item[(i)] An assignment to each face $F$ (connected component of $\D^2\backslash G$) of a flag $V^\bullet(F)$ in the vector space $\C^N$.
		\item[(ii)] For each pair $F_1, F_2$ of adjacent faces sharing an edge labeled by $\sigma_i$, we require that the corresponding flags satisfy 
		$$V^j(F_1)=V^j(F_2), \qquad 0\leq j\leq N, j\neq i, \qquad \text{ and } \qquad V^i(F_1)\neq V^i(F_2).$$  
	\end{enumerate}
	Finally, we consider the moduli space of flags satisfying (i) and (ii) modulo the diagonal action of $GL_N(\C)$ on $V^\bullet$. The precise statement \cite[Theorem 5.3]{CasalsZaslow} we require is that the flag moduli space, denoted $\mathcal{C}(\Gamma)$ is isomorphic to the space of microlocal rank-one sheaves $\SM_1(\La(\Gamma))$. Since $\SM_1(\La(\Gamma))$ is an invariant of $\Lambda(\Gamma)$ up to Hamiltonian isotopy, it follows that $\mathcal{C}(\Gamma)$ is invariant as well.

\subsubsection{The decorated sheaf moduli}\label{sub: decorated sheaf moduli}

In preparation for describing cluster $\mathcal{A}$ spaces in the next section, we introduce the decorated sheaf moduli $\FM(\la, T)$. To do so, we must first specify a trivialization of the abelian local system given by the microlocal monodromy about each component of $\la$. Let $T=\{t_1, \dots, t_k\}$ be a set of marked points on $\la$ with every component of $\la$ carrying at least one marked point. Label the connected components of $\la\backslash T$ by the pair of endpoints of each segment $(t_i, t_{i+1})$ where indices are taken modulo the number of marked points on the appropriate component. The decorated moduli stack $\FM(\la, T)$ is then given by the additional data of a trivialization of the local system $m_\la(F)$ obtained by applying the  microlocalization functor $m_\la$ (see \cite[Section B.2]{CasalsLi}) to a sheaf $F\in \SM_1(\la)$ for every segment of each component of $\la$. 

\begin{equation*}
    \FM(\la, T):=\{(F, \phi_1, \dots, \phi_k) | F\in \SM_1(\la), \phi_i \text{ is a trivialization of } \m_\la(F) \text{ on } (t_i, t_{i+1})\}
\end{equation*}

Two framings are equivalent if they differ by a global factor of $\C^\ast$. As in \cite[Section 2.8]{CasalsWeng} we define the moduli space $\SM_1(\la, T)$ analogously by the addition of the trivialization data associated with the marked points. In this work, we suppress $T$ from our notation when our discussion does not depend on the presence of marked points.

The choice of these trivializations has two primary motivations. First, it allows us to describe $\FM(\la, T)$ as a smooth affine scheme rather than an Artin stack. Second, the trivializations give us the necessary data for defining microlocal merodromies as a microlocal version of parallel transport, yielding regular functions on $\FM(\la, T)$, as we now describe.

To define cluster-$\mathcal{A}$ coordinates on $\FM(\la, T)$, we define an $\L$-compressing cycle to be a homology cycle $\gamma\in H_1(L, T)$ that bounds an embedded disk in the complement of $L$. Consider a basis $\{\gamma_i\}_{i=1}^n$ of $H_1(L, T)$ containing a maximal linearly independent subset of $\mathbb{L}$-compressing cycles of $L$. We then identify the lattice $H_1(L, T)$ with an isomorphic lattice $H_1(L\backslash T, \la\backslash T)$, and consider the dual basis of cycles $\{\gamma_i^\vee\}_{i=1}^n$.

Given an oriented relative cycle $\gamma^\vee\in H_1(L\backslash T, \la\backslash T)$ starting at $s$ and ending at $t$ we first note that the framing data of $\FM(\la, T)$ specifies two vectors $\phi_{s} \in \Phi_{s}$ and $\phi_{t}\in \Phi_{t}$ at $s$ and $t$. The result of parallel transport along $\gamma^\vee$ yields a nonzero vector $\gamma^\vee(\phi_{s})\in \Phi_{t}$.\footnote{For more details on the particulars of signs and spin structures in this computation, we refer the interested reader to \cite[Section 4.5]{CasalsWeng}.} 

\begin{definition}\label{def: merodromy}
    Given an $\mathbb{L}$ compressing cycle $\gamma\in H_1(L, T)$, the microlocal merodromy along its dual relative cycle $\gamma^\vee\in H_1(L\backslash T, \la\backslash T)$ is the function $A_{\gamma^\vee}=\gamma^\vee(\phi_{s})/\phi_{t}$.

\end{definition}

One can compute $A_{\gamma^\vee}$ explicitly from a Legendrian weave $\La(\Gamma)$ by lifting the framing data to a set of decorations. Given a flag $V^\bullet$ with framing data $\phi_i=V^i/V^{i-1}$, we can construct a volume form $\alpha_i\in \bigwedge^i V^i$ by first lifting $\phi_i$ to a nonzero vector $\tilde{\phi}_i\in V_i$ and then setting $\alpha_i=\tilde{\phi}_i\wedge \dots \wedge \tilde{\phi}_1.$ At an edge of $\Gamma$ labeled by $\sigma_i$, we have flags $\mathcal{L}^\bullet$ and $\mathcal{R}^\bullet$ to the left and right of the $\sigma_i$ edge with framing data $\{\la_i\}$ and $\{\rho_i\}$, respectively. Denote by $\{\alpha_i\}$ and $\{\beta_i\}$ the decorations corresponding to the framings on $\mathcal{L}^\bullet$ and $\mathcal{R}^\bullet$. The parallel transport of $\la_i$ along an oriented curve $\eta$ from the $i$th sheet on the left to the $i+1$st sheet on the right can then be computed by 
\begin{equation}\label{eq: merodromy1}\eta(\la_i)=\left(\frac{\tilde{\la}_i\wedge \tilde{\rho}_{i}\wedge \alpha_{i-1}}{\tilde{\rho}_{i+1}\wedge\beta_i}\right)\rho_{i+1}.\end{equation}
Similarly, the parallel transport of $\la_{i+1}$ along the oriented curve $\eta'$ from the $i+1$st sheet on the left to the $i$th sheet on the right yields 

\begin{equation}\label{eq: merodromy2}\eta(\la_{i+1})=\left(\frac{\alpha_{i+1}}{\tilde{\rho}_i\wedge \tilde{\la}_{i}\wedge \alpha_{i-1}}\right)\rho_{i}.\end{equation}

Composing Equations~\ref{eq: merodromy1} and \ref{eq: merodromy2}, allows us to compute the microlocal merodromy along any oriented curve in $\La(\Gamma)$. By \cite[Proposition 4.29]{CasalsWeng}, the microlocal merodromy $A_{\gamma^\vee}$ along a relative cycle $\gamma^\vee \in H_1(\La(\Gamma)\backslash T, \la\backslash T)$ dual to an $\mathbb{L}$-compressing cycle $\gamma$ is a regular function on $\FM(\la, T)$. As we explain in the following section, this regular function is actually a cluster variable, allowing us to define a cluster-$\mathcal{A}$ structure on $\FM(\la, T)$.

\subsection{Cluster algebras and cluster ensembles}

In this subsection, we give the necessary background on cluster theory. We start by briefly defining cluster algebras; see \cite{FWZ1} for more details. Following \cite{GHK15}, 
we then give a description of the notion of a (skew-symmetric)\footnote{For the skew-symmetrizable case, see Section~\ref{sec: twist-spuns}} cluster ensemble. Finally, we explain the necessary ingredients to understand how this structure arises from the pair of sheaf moduli $\FM(\la, T)$ and $\SM_1(\la, T)$. 

\subsubsection{Cluster algebras}

We start with a quiver, the combinatorial input used to define a cluster algebra. A quiver $Q$ is a directed graph without loops or directed 2-cycles. To a quiver $Q_0$ with $n$ vertices, we associate an initial set of variables $a_1, \dots a_n$, one for each vertex. Together, the $n-$tuple ${\bf a}=(a_1, \dots, a_n)$ and the quiver $Q_0$ form a cluster seed $({\bf a}, Q_0)$. We designate a subset of vertices $Q_0^{mut}$ to be the mutable part of the quiver. The vertices in $Q_0\backslash Q_0^{mut}$ are designated as frozen and we require that there are no arrows between them. For a cluster seed $({\bf x}, Q)$ we can denote by $b_{ij}$ the multiplicity of arrows in $Q$ from vertex $i$ to vertex $j$. We then obtain a skew-symmetric matrix, $B=(b_{ij})$, known as the exchange matrix, encoding the information of the quiver.
There are two types of cluster algebras, type $\mathbb{A}$ or type $\mathbb{X}$, depending on the precise form of the mutation formula relating different cluster variables.

\begin{definition}
    Let $({\bf a}, Q)$ be a cluster seed and $k\in Q^{mut}$ be a mutable vertex. The cluster $\mathbb{A}$ seed mutation $\mu_k$ is an operation taking as input the seed $({\bf a}, Q)$ and outputting the new seed $({\bf a}', Q')$ where $Q'$ is related to $Q$ by quiver mutation at vertex $k$ and ${\bf a}'$ is related to ${\bf a}$ by $a_i'=a_i$ for all $i\neq k$ and 
  
   $$a_ka_k'=\prod_{b_{ik}>0} a_i^{b_{ik}} + \prod_{b_{ik<0}} a_i^{-b_{ik}}.$$
   
\end{definition}

Note that seed mutation is an involution, so that $\mu_k^2({\bf a}, Q)=({\bf a}, Q)$.

Denote by $\mathcal{F}$ the field of rational functions $\C(a_1, \dots a_n)$ and consider an initial seed $({\bf a}, Q_0)\subseteq \mathcal{F}$. 
\begin{definition}
The type $\mathbb{A}$ cluster algebra generated by $({\bf a}, Q_0)$ is the $\C$-algebra generated by all cluster variables arising in arbitrary mutations of the initial seed.     
\end{definition}

The type $\mathbb{X}$ cluster algebra is generated from an initial seed $({\bf x}, Q_0)$ by the mutation formula 

$$x_j'=\begin{cases}
    x_j^{-1} & i=j\\
    x_j(y_k+1)^{-b_{kj}} & j\neq k, b_{kj}\leq 0 \\
    x_j(x_k^{-1}+1)^{-b_{kj}} & j \neq k, b_{kj}\geq 0
\end{cases} $$

\subsubsection{Cluster ensembles}

A cluster ensemble consists of a pair of schemes $\mathcal{A}$ and $\mathcal{X}$ formed by birational gluing of algebraic tori according to certain input data. The spaces $\mathcal{A}$ and $\mathcal{X}$ are dual in the following sense: Consider an integer lattice $N$ with a skew-symmetric bilinear form containing a saturated sublattice  $N_{\uf}$ of $N$ known as the unfrozen sublattice. Denote by $M$ the dual lattice $\Hom(N, \Z)$. The lattice $N$ forms the character lattice of a cluster torus in the $\mathcal{X}$ variety, while the lattice $M$ forms the character lattice of a cluster torus in the $\mathcal{A}$ variety. Cluster tori are glued together by the birational map induced by the mutation formulas given in the previous subsection, with the skew-symmetric bilinear form giving the information of the exchange matrix or the quiver. The ring of regular functions $\C[\mathcal{A}]$ forms a cluster algebra.

An abbreviated statement of the main theorem of Casals and Weng tells us that for certain families of Legendrians $\la$, the moduli $\SM_1(\la, T)$ and $\FM(\la, T)$ form a cluster ensemble. The class of Legendrians they consider arises from a combinatorial construction known as a (complete) grid plabic graph and includes all braid positive Legendrians. This class includes all Legendrian links considered in this work. A more precise summary of the main result of Casals and Weng is as follows:

\begin{theorem}[Theorem 1.1 \cite{CasalsWeng}]
    For $\la$ a Legendrian arising from a complete grid plabic graph, the decorated sheaf moduli $\FM(\la, T)$ admits a cluster $\mathcal{A}$ structure. Moreover, there is an explicitly constructed Legendrian 
weave filling $L$ of $\la$ with intersection quiver and sheaf moduli $\FM(L)$ giving the data of the initial seed.
 \end{theorem}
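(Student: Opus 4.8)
The plan is to extract from a single, explicit weave filling $L$ of $\la$ both a toric chart and the exchange data of a cluster seed, and then to promote this local picture to a cluster $\mathcal{A}$-structure on all of $\FM(\la, T)$ by a codimension-two (``Starfish'') argument. I would organize the argument in three stages.

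\emph{Initial data.} From the complete grid plabic graph one reads off a positive braid word $\beta$ with $\la = \la(\beta\d)$ and, using the flag-moduli model of Section~\ref{sec: sheaves background}, identifies $\FM(\la, T)$ with the associated (framed) braid variety --- a smooth affine scheme whose explicit presentation as a $\GL_N$-quotient is what makes the codimension estimates below tractable. The same graph determines a distinguished Demazure-type Legendrian weave filling $L$, glued from the elementary singular germs of Figures~\ref{fig: wavefronts}--\ref{fig:Weaving} according to the combinatorics prescribed by the grid plabic graph. One then computes $H_1(L, T)$ together with its intersection pairing, obtaining the intersection quiver $Q(L)$, and fixes a basis $\{\gamma_1, \dots, \gamma_n\}$ of $H_1(L,T)$ containing a maximal independent family of $\L$-compressing cycles.

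\emph{The initial seed.} For each $\gamma_i$, the microlocal merodromy $A_{\gamma_i^\vee}$ along the dual relative cycle is a regular function on $\FM(\la, T)$ by \cite[Proposition 4.29]{CasalsWeng}, computed by composing the parallel-transport formulas of Equations~\ref{eq: merodromy1}--\ref{eq: merodromy2} around the $N$-graph. The two local assertions to prove are: (a) these functions, together with the frozen merodromies, are algebraically independent, and their common non-vanishing locus is an open torus chart $\mathcal{C}_L \cong (\C^\ast)^{\operatorname{rk} H_1(L,T)} \subseteq \FM(\la, T)$; and (b) Legendrian mutation (equivalently, Lagrangian disk surgery) at $\gamma_i$, as in Figure~\ref{fig:Mutations}, produces another weave filling $L_i$ whose intersection quiver is the quiver mutation $\mu_i Q(L)$ and whose merodromy $A'_{\gamma_i^\vee}$ satisfies the exchange relation $A_{\gamma_i^\vee} A'_{\gamma_i^\vee} = \prod_{b_{ji}>0} A_{\gamma_j^\vee}^{\, b_{ji}} + \prod_{b_{ji}<0} A_{\gamma_j^\vee}^{-b_{ji}}$. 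Assertion (b) is a finite, local computation at a short {\sf I}-cycle, carried out sheet-by-sheet in the $N$-graph using the explicit merodromy formulas.

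\emph{Globalization and the main difficulty.} Since $\FM(\la, T)$ is a smooth affine variety and the cluster variables coming from $L$ and from each one-step mutant $L_i$ are all regular, the Starfish lemma (in the form of \cite{GHK15}) reduces matters to verifying that the complement of $\mathcal{C}_L \cup \bigcup_i \mathcal{C}_{L_i}$ in $\FM(\la, T)$ has codimension at least two; this is where the braid-variety presentation is used decisively. That identifies $\C[\FM(\la, T)]$ with the \emph{upper} cluster algebra of the seed $(\{A_{\gamma_i^\vee}\}, Q(L))$. Upgrading ``upper cluster algebra'' to ``cluster algebra'' --- i.e.\ showing that the cluster variables already generate $\C[\FM(\la, T)]$ --- I would handle by induction on the length of $\beta$, peeling off one crossing at a time in an amalgamation-type argument. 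I expect this last comparison, together with the uniform codimension-two verification across all grid plabic graphs, to be the main obstacle; by contrast, the merodromy-mutation identity (b), though it is the geometric heart of the statement, becomes essentially bookkeeping once the weave $L$ and a good basis of its $\L$-compressing cycles have been pinned down.
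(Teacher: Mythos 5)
This statement is not proved in the paper at all: it is quoted verbatim as Theorem 1.1 of \cite{CasalsWeng} and used as a black box, so there is no internal proof to compare against. Your proposal is therefore best judged against the actual Casals--Weng argument, and at the level of architecture it reconstructs that argument faithfully: identify $\FM(\la, T)$ with a framed braid/flag variety, build the initial weave $L$ from the grid plabic graph, take microlocal merodromies along duals of $\L$-compressing cycles as the initial cluster variables, verify the exchange relation under Lagrangian disk surgery, and globalize via a Starfish/codimension-two argument to get the upper cluster algebra, followed by a separate argument for $\mathcal{A}=\mathcal{U}$. That is the right skeleton.

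Two cautions on where your sketch underestimates the work. First, you describe the merodromy--mutation identity (b) as ``essentially bookkeeping once the weave and a good basis of $\L$-compressing cycles have been pinned down.'' In \cite{CasalsWeng} this is one of the genuinely delicate steps: the relevant cycles are not all short {\sf I}-cycles a priori, so one must first show (their Proposition 3.5, invoked in this paper's Section~\ref{sub: weave doubles}) that each {\sf Y}-tree in the chosen basis can be isotoped to a short {\sf I}-cycle without creating Reeb chords, and the exchange relation itself only holds after a careful treatment of signs coming from a choice of spin structure --- this is precisely why the cluster structure is only pinned down up to quasi-cluster equivalence, as the paper's own Remark following the theorem notes. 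Second, your proposed route to upgrading the upper cluster algebra to the cluster algebra (induction on braid length, peeling off crossings) is closer in spirit to the later argument for general braid varieties in \cite{CGGLSS} than to what Casals--Weng do for grid plabic graphs; it is a viable strategy, but it is a genuinely different (and not obviously easier) final step, and you should not present it as interchangeable with the cited proof. Neither point is a fatal gap, but both are load-bearing and should not be waved through.
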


Casals and Weng obtain mutable cluster $\mathcal{A}$ coordinates by computing microlocal merodromies along relative homology cycles of $L$ dual to $\mathbb{L}$-compressing cycles \cite[Section 4]{CasalsWeng}. They also obtain mutable cluster $\mathcal{X}$ variables of $\SM_1(\la)$ as microlocal monodromies about these $\mathbb{L}$-compressing cycles. These two homology lattices $H_1(L, \mathfrak{t})$ and $H_1(L\backslash \mathfrak{t}, \la \backslash \mathfrak{t})$ yield the dual lattices appearing in the definition of a cluster ensemble. Legendrian mutation of an $\mathbb{L}$-compressing cycle induces a cluster-$\mathcal{X}$ mutation on toric charts in $\SM_1(\la, T)$, while Legendrian mutation of its dual induces a cluster-$\mathcal{A}$ mutation on toric charts in $\FM(\la, T)$. Frozen variables correspond to either marked points or to homology cycles in a Legendrian weave filling $L$ that do not bound embedded Lagrangian disks in the complement $\D^4\backslash L$.

\begin{remark}
    The cluster structures of \cite{CasalsWeng} are defined up to quasi-cluster equivalence. In order to simplify exposition, we omit any consideration of quasi-cluster equivalences and refer the interested reader to \cite[Appendix A]{CasalsWeng} for further details.
\end{remark}

\section{Legendrian doubles}\label{sec: legendrian_doubles}

In this section, we discuss our results related to Legendrian doubles. We start by discussing the sheaf-theoretic invariants of doubles and use that to derive more general statements about doubles and their (lack of) exact Lagrangian fillings. We then restrict our attention to doubles coming from Legendrian weaves and discuss Legendrian isotopy characterizations for certain families of doubles.

\subsection{Sheaf invariants of doubles}

    Let us now describe the sheaf moduli of Legendrian doubles and related polynomial point counts. We start by first describing the general categorical framework following \cite{CasalsLi} and \cite{li2023lagrangian} and then restrict to the case of certain $(-1)$-closures of positive braids. Denote by $\text{mod}(\mathbbm{k})$ the dg-derived category of chain complexes of $\mathbbm k$-modules for $\mathbbm{k}$ a field of characteristic zero. Denote by $\Sh_\La(M)_0$ the dg-derived category of sheaves of $\text{mod}(\mathbbm{k})$ with singular support in $\La\subseteq J^1(M)$ and acyclic stalks at $-\infty$. See \cite[Appendix A]{CasalsLi} for how this category differs from the category of sheaves discussed in Section~\ref{sec: sheaves background} above. Let $\la\subset J^1(\R)$ be a Legendrian link and $L_1$ and $L_2$ be two exact Lagrangian fillings of $\la$. For the double $\La(L_1, L_2)$, the category $\Sh_{\La(L_1, L_2)}(S^2)_0$ can be computed as a homotopy pullback via the following diagram, as in \cite[Section 6.2]{li2023lagrangian}.

\begin{center}
\begin{tikzcd}
\Sh_{\La(L_1, L_2)}(S^2)_{0} \arrow[r] \arrow[d]
& Loc(L_1) \arrow[d] \\
Loc(L_2) \arrow[r]
& || \Sh_{\la}(\R)_0
\end{tikzcd}
\end{center}

As an immediate corollary of this formula, one can produce a criteria for showing that a Legendrian double $\La(L_1, L_2)$ is non-loose.
\begin{theorem}[\cite{li2023lagrangian}]\label{thm: non-loose}
Given two fillings $L_1$, $L_2$ of $\la$ such that the essential images of $Loc(L_1)$ and $Loc(L_2)$ intersect nontrivially in $Sh_\la(M\times \R)_0$, then $\La(L_1, L_2)$ is a non-loose Legendrian.
\end{theorem}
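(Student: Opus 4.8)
The plan is to prove non-looseness by exhibiting a nonzero sheaf invariant for $\La(L_1,L_2)$ and invoking the fact that loose Legendrians have trivial category of microlocal sheaves (equivalently, the argument of \cite{Murphy??}, \cite{JinTreumann17} that a loose Legendrian bounds no interesting microlocal sheaves). Concretely, I would show that the homotopy pullback category $\Sh_{\La(L_1,L_2)}(S^2)_0$ is nontrivial, which forces $\La(L_1,L_2)$ to be non-loose.

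The key steps, in order, are as follows. First, recall from the homotopy pullback square displayed above that an object of $\Sh_{\La(L_1,L_2)}(S^2)_0$ is the data of an object of $\mathrm{Loc}(L_1)$, an object of $\mathrm{Loc}(L_2)$, and an isomorphism between their images in $\Sh_\la(\R)_0$ under the restriction (microlocalization-along-$\la$) functors $r_i\colon \mathrm{Loc}(L_i)\to \Sh_\la(\R)_0$, together with higher coherence data. Second, translate the hypothesis: saying the essential images of $\mathrm{Loc}(L_1)$ and $\mathrm{Loc}(L_2)$ intersect nontrivially in $\Sh_\la(\R)_0$ means there exist local systems $\mathcal{L}_1$ on $L_1$ and $\mathcal{L}_2$ on $L_2$ (not both acyclic, i.e.\ giving a nonzero object) together with an isomorphism $r_1(\mathcal{L}_1)\cong r_2(\mathcal{L}_2)$ in $\Sh_\la(\R)_0$. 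Third, feed this triple $(\mathcal{L}_1,\mathcal{L}_2,\text{iso})$ into the pullback to produce a nonzero object $\mathcal{F}\in \Sh_{\La(L_1,L_2)}(S^2)_0$; nonvanishing is detected because the projections to $\mathrm{Loc}(L_i)$ recover $\mathcal{L}_i\neq 0$, so $\mathcal{F}$ cannot be the zero object. Fourth, conclude: by the sheaf-theoretic obstruction to looseness (a loose Legendrian $\La$ has $\Sh_\La(M)_0 \simeq 0$, cf.\ the flexibility input of \cite{Murphy??} as used in \cite{JinTreumann17}), the nontriviality of $\Sh_{\La(L_1,L_2)}(S^2)_0$ implies $\La(L_1,L_2)$ is non-loose.

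The main obstacle I anticipate is handling the higher coherence data in the homotopy pullback rigorously: it is not enough to have a single isomorphism $r_1(\mathcal{L}_1)\cong r_2(\mathcal{L}_2)$ in the homotopy category; one wants a genuine point of the derived fiber product, which requires a compatible choice of quasi-isomorphism at the chain level. Since $\Sh_\la(\R)_0$ here is (quasi-equivalent to) a category whose objects over the filling restriction functors are concretely local systems with prescribed microlocal monodromy, and the functors $r_i$ are the natural "forget to the boundary" functors, the Maurer--Cartan/gluing obstructions vanish and any isomorphism in the homotopy category lifts; but spelling this out cleanly is where the real work lies. A secondary point is to make sure "non-loose" is being established in the appropriate sense: since $\Sh_{\La(L_1,L_2)}(S^2)_0 \not\simeq 0$ is a Legendrian isotopy invariant by \cite{GKS_Quantization}, and it vanishes for loose Legendrians, invariance plus nonvanishing gives the conclusion without needing any filling of the double itself.

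I would therefore present the proof as: (i) unwind the pullback square; (ii) observe the hypothesis produces a nonzero compatible triple; (iii) note the coherence obstructions vanish so the triple lifts to a genuine nonzero object of $\Sh_{\La(L_1,L_2)}(S^2)_0$; (iv) apply the looseness obstruction. This matches the cited statement attributed to \cite{li2023lagrangian} and requires only the pullback description already recalled in the excerpt together with the standard sheaf-theoretic flexibility obstruction.
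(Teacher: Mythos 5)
Your argument is correct and is exactly the reasoning the paper intends: the paper states this result as an immediate corollary of the homotopy pullback square (citing \cite{li2023lagrangian}) and does not write out a proof, and your unwinding --- a nonzero compatible triple $(\mathcal{L}_1,\mathcal{L}_2,\mathrm{iso})$ gives a nonzero object of $\Sh_{\La(L_1,L_2)}(S^2)_0$, which is incompatible with the triviality of the sheaf category of a loose Legendrian --- is precisely that corollary. The coherence worry you flag is not a genuine obstacle, since an object of a homotopy pullback of dg-categories along two functors is specified by a single choice of closed degree-zero quasi-isomorphism between the images, so your proof is complete as written.
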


    If we consider the full dg-subcategory of compact objects $\Sh_{\La(L_1, L_2)}(S^2)$, then following the exposition in \cite[Section 5.1]{CasalsLi}, there is a moduli of objects, which we can denote $\SM(\La(L_1, L_2))$, and similarly with $\SM(\la)$ for $\la\cong \partial L_1\cong \partial L_2$. The fillings $L_1$ and $L_2$ then give embeddings of toric charts $\mathcal{C}_{L_i} \cong (\C^\ast)^{b_1(L_i)}\xhookrightarrow{} \SM(\la)$ and the homotopy pullback formula from \cite{li2023lagrangian} reduces to the following:
  
    \begin{lemma}\label{lem: Double Sheaves}
        The sheaf moduli $\mathcal{M}_1(\La(L_1, L_2))$ is given by the intersection $\mathcal{C}_{L_1}\cap \mathcal{C}_{L_2}$.
    \end{lemma}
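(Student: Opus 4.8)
The plan is to derive Lemma~\ref{lem: Double Sheaves} directly from the homotopy pullback description of the sheaf category of the double together with the fact that the microlocal rank one condition is built into the definition of $\mathcal{M}_1$. First I would recall that by \cite[Section 6.2]{li2023lagrangian} the category $\Sh_{\La(L_1,L_2)}(S^2)_0$ is the homotopy pullback of $Loc(L_1) \to \Sh_\la(\R)_0 \leftarrow Loc(L_2)$. Restricting to compact objects and then passing to moduli of objects (functorially, as in \cite[Section 5.1]{CasalsLi}), the homotopy pullback of categories becomes a (derived) fiber product of moduli spaces, so $\SM(\La(L_1,L_2)) \cong \SM(L_1) \times_{\SM(\la)} \SM(L_2)$, where $\SM(L_i)$ denotes the moduli of objects in $Loc(L_i)$, i.e.\ rank one local systems on $L_i$, which is $(\C^\ast)^{b_1(L_i)} = \mathcal{C}_{L_i}$. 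The two maps $\SM(L_i) \to \SM(\la)$ are exactly the embeddings $\mathcal{C}_{L_i} \hookrightarrow \SM(\la)$ induced by pushforward of local systems along the filling; these are embeddings by \cite{Sheaves3, JinTreumann17}. A fiber product of two subvarieties along their inclusions into a common ambient space is their scheme-theoretic intersection, giving $\SM(\La(L_1,L_2)) \cong \mathcal{C}_{L_1} \cap \mathcal{C}_{L_2}$.

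The one point requiring care is the passage from $\SM(\La(L_1,L_2))$ — moduli of all compact objects — to $\mathcal{M}_1(\La(L_1,L_2))$, the microlocal rank one locus. Here I would argue that an object of the pullback category is a pair of objects (one in each $Loc(L_i)$) with an identification of their images in $\Sh_\la(\R)_0$, and such an object has microlocal rank one along $\La(L_1,L_2)$ precisely when each of the two local systems has rank one — the microstalk of the glued sheaf along the portion of $\La(L_1,L_2)$ lying over $L_i$ is computed by the rank of the local system on $L_i$, and the gluing region contributes no new microlocal data (this is the content of the doubling construction in Section~\ref{sec:double}, where the interpolating Legendrian annulus carries the trace of a constant isotopy). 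Hence the microlocal rank one locus of the fiber product is the fiber product of the rank one loci, which is $\mathcal{C}_{L_1} \times_{\SM_1(\la)} \mathcal{C}_{L_2} = \mathcal{C}_{L_1} \cap \mathcal{C}_{L_2}$ inside $\SM_1(\la) = \mathcal{M}_1(\la)$.

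The main obstacle I anticipate is justifying that the homotopy pullback of \emph{categories} induces a genuine fiber product of \emph{moduli spaces of objects}, rather than just a map that is an equivalence on some open locus or a derived enhancement that one then has to truncate. In the generality of \cite{CasalsLi, li2023lagrangian} this should follow from the functor "moduli of objects" sending homotopy limits of suitable dg-categories to homotopy limits of the associated derived stacks, but spelling this out carefully (including finiteness/representability hypotheses and the identification of $Loc(L_i)$ with rank one local systems once one restricts microlocal rank) is where the real work lies; the rest is formal. I would therefore structure the proof as: (1) invoke the homotopy pullback of categories from \cite{li2023lagrangian}; (2) apply the moduli-of-objects construction to get a fiber product of stacks; (3) identify the three terms with $\mathcal{C}_{L_1}$, $\mathcal{C}_{L_2}$, and $\mathcal{M}_1(\la)$ and the maps with the toric chart embeddings; (4) observe that a fiber product along embeddings is an intersection, and check the microlocal rank one condition is preserved under this identification.
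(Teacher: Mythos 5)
Your argument follows the paper's own route essentially verbatim: the paper likewise invokes the homotopy pullback of categories from \cite{li2023lagrangian}, passes to the moduli of objects as in \cite{CasalsLi}, and identifies the resulting fiber product of toric charts over $\SM(\la)$ with the intersection $\mathcal{C}_{L_1}\cap\mathcal{C}_{L_2}$. The technical point you flag (that moduli-of-objects should carry the homotopy pullback of categories to a fiber product of moduli) is exactly what the paper leaves implicit, and the paper's stated alternative --- observing directly that any sheaf in $\SM_1(\La(L_1,L_2))$ must satisfy the singular support conditions imposed by both $L_1$ and $L_2$ --- is the elementary workaround you could fall back on.
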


Alternatively, one can obtain Lemma~\ref{lem: Double Sheaves} directly by observing that any sheaf in $\SM_1(\La(L_1, L_2))$ must satisfy the singular support conditions imposed by both (the front projections of the Legendrian lifts of) $L_1$ and $L_2$.

We restate Theorem~\ref{thm: intro_not-loose-fillable} and prove the following obstruction to exact fillings for Legendrian doubles arising from fillings that induce distinct toric charts.

\begin{theorem}\label{thm: not filllable}
   Let $L_1, L_2$ be exact Lagrangian fillings of $\la$ satisfying that $L_1$ and $L_2$ induce distinct toric charts on $\mathcal{M}_1(\lambda)$. Then the asymmetric Legendrian double $\La(L_1, L_2)$ is not exact Lagrangian fillable.
\end{theorem}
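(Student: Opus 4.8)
The strategy is to combine the sheaf-theoretic computation of $\SM_1(\La(L_1,L_2))$ with the necessary condition for exact fillability coming from \cite{JinTreumann17}. First I would invoke Lemma~\ref{lem: Double Sheaves} to identify $\SM_1(\La(L_1,L_2))$ with the scheme-theoretic intersection $\mathcal{C}_{L_1}\cap\mathcal{C}_{L_2}$ inside $\SM_1(\la)$, where each $\mathcal{C}_{L_i}\cong(\C^\ast)^{b_1(L_i)}$ is an algebraic torus. The key point is that two distinct toric charts in a common ambient variety intersect in a \emph{proper} closed subscheme of each: since $\mathcal{C}_{L_1}\neq\mathcal{C}_{L_2}$ as subsets of $\SM_1(\la)$, the intersection $\mathcal{C}_{L_1}\cap\mathcal{C}_{L_2}$ is a locally closed subscheme of the torus $\mathcal{C}_{L_1}$ that is not all of $\mathcal{C}_{L_1}$ (and likewise not all of $\mathcal{C}_{L_2}$). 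Hence its dimension is strictly less than $b_1(L_1)$, and in particular it cannot contain any open torus orbit of full dimension.

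Next I would bring in the obstruction. If $\La(L_1,L_2)$ admitted an embedded exact Lagrangian filling $F$, then by \cite{Sheaves3} and \cite{JinTreumann17} (exactly as recalled in the introduction for the case of links) $F$ would induce an embedding $\mathcal{C}_F=(\C^\ast)^{b_1(F)}\hookrightarrow\SM_1(\La(L_1,L_2))$. So the existence of a filling forces $\SM_1(\La(L_1,L_2))$ to contain an algebraic torus of dimension $b_1(F)$. One then needs a lower bound on $b_1(F)$ that is incompatible with the bound $\dim(\mathcal{C}_{L_1}\cap\mathcal{C}_{L_2})<b_1(L_1)$ established above — the cleanest route is to observe that $\La(L_1,L_2)$ is a closed surface built as a double, so its Euler characteristic (equivalently, the genus, which is read off from $L_1$, $L_2$ and $\la$ via the doubling construction in Section~\ref{sec:double}) pins down $b_1$ of any filling $F$ by the standard adjunction/Euler characteristic count for exact Lagrangian fillings in $B^6$. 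Comparing this forced value of $b_1(F)$ with $b_1(L_1)=b_1(L_2)$ (which are equal, since $L_1$ and $L_2$ are fillings of the same $\la$) shows that a full-dimensional torus $\mathcal{C}_F$ cannot fit inside the strictly-lower-dimensional intersection.

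The step I expect to be the main obstacle is making the dimension comparison airtight: one has to be careful that $\mathcal{C}_{L_1}\cap\mathcal{C}_{L_2}$ is genuinely lower-dimensional rather than merely a proper subscheme that could, a priori, still be a thickened top-dimensional piece. For this I would argue that $\mathcal{C}_{L_1}$ is an \emph{irreducible} variety (a torus), so any proper closed subscheme has strictly smaller dimension; combined with $\mathcal{C}_{L_1}\cap\mathcal{C}_{L_2}\subsetneq\mathcal{C}_{L_1}$ this gives $\dim\le b_1(L_1)-1$. A secondary subtlety is confirming the precise relation $b_1(F)=b_1(L_1)$ (or whatever equality/inequality the doubling construction actually yields) for a filling of the closed surface $\La(L_1,L_2)$; this is where I would spell out the Euler characteristic bookkeeping for the double, using that $\chi(\La(L_1,L_2))=\chi(L_1)+\chi(L_2)-\chi(\la)$ and that an exact filling $F$ of a connected closed surface $S$ in $B^6$ satisfies the Euler characteristic constraint forcing $b_1(F)$ to equal $1-\chi(S)/2$ plus the genus contribution. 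Once both bounds are in hand, the contradiction $b_1(F)\le b_1(L_1)-1 < b_1(F)$ closes the argument. Finally, since a singular or non-exact filling is not claimed to be obstructed, I would remark (as the paper already does) that only the embedded exact case is ruled out.
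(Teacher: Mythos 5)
Your overall architecture matches the paper's: identify $\SM_1(\La(L_1,L_2))$ with $\mathcal{C}_{L_1}\cap\mathcal{C}_{L_2}$ via Lemma~\ref{lem: Double Sheaves}, note that a filling $F$ would force an embedded torus $(\C^\ast)^{b_1(F)}$ inside that intersection, and pin $b_1(F)$ to $b_1(L_1)=b_1(L_2)$. However, the central step of your argument contains a genuine error. You claim that because $\mathcal{C}_{L_1}\neq\mathcal{C}_{L_2}$, the intersection $\mathcal{C}_{L_1}\cap\mathcal{C}_{L_2}$ is a \emph{proper closed} subscheme of the irreducible torus $\mathcal{C}_{L_1}$ and hence has dimension at most $b_1(L_1)-1$. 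This is false: the charts $\mathcal{C}_{L_i}$ are \emph{open} embeddings into $\SM_1(\la)$ (this is exactly how the paper argues non-looseness of the double when $\SM_1(\la)$ is irreducible — the two opens must meet), so $\mathcal{C}_{L_1}\cap\mathcal{C}_{L_2}$ is a nonempty \emph{open} subset of $\mathcal{C}_{L_1}$ and therefore has full dimension $b_1(L_1)$. Your proposed contradiction $b_1(F)\le b_1(L_1)-1<b_1(F)$ never materializes, and no dimension count can close the argument.

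What the paper uses instead is a genuinely algebraic fact (Lemma~\ref{lem: Daping's Lemma}): any open embedding of one algebraic torus into another of the same rank is an isomorphism, proved by observing that the induced map on coordinate rings sends units to units, hence induces a finite-index inclusion of character lattices whose index equals the (generic) fiber cardinality, which is $1$ for an open embedding. Applying this to $\mathcal{C}_F\hookrightarrow \mathcal{C}_{L_1}\cap\mathcal{C}_{L_2}\subseteq\mathcal{C}_{L_1}$ forces $\mathcal{C}_F$ to surject onto $\mathcal{C}_{L_1}$, and symmetrically onto $\mathcal{C}_{L_2}$, contradicting $\mathcal{C}_{L_1}\neq\mathcal{C}_{L_2}$. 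You would need to supply this lemma (or an equivalent) to repair your proof. A secondary, smaller issue: the equality $b_1(F)=b_1(L_1)$ is not pinned down by Euler characteristic bookkeeping alone (for a $3$-manifold $F$ with $\partial F=\La(L_1,L_2)$, the constraint $\chi(F)=\chi(\partial F)/2$ does not determine $b_1(F)$); the paper gets the inequality $b_1(F)\ge b_1(L_1)$ from the half-lives-half-dies theorem and the reverse inequality from the torus embedding itself.
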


We prove Theorem~\ref{thm: not filllable} with the following lemma restricting possible embeddings of one algebraic torus into another.

\begin{lemma}\label{lem: Daping's Lemma}
    If $\mathcal{C}_1$ and $\mathcal{C}_2$ are two algebraic tori of the same rank, then any open embedding of $\mathcal{C}_1$ into $\mathcal{C}_2$ is an isomorphism.
\end{lemma}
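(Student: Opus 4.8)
The plan is to reduce the statement to a purely algebraic fact about the coordinate ring $\C[\mathcal{C}_1] = \C[x_1^{\pm 1},\dots,x_r^{\pm 1}]$ of a rank-$r$ algebraic torus, namely that it admits no proper localization by an element that is again a Laurent polynomial ring. Concretely, suppose $j \colon \mathcal{C}_1 \hookrightarrow \mathcal{C}_2$ is an open embedding of tori of the same rank $r$. First I would observe that $j$ is dominant: its image is a nonempty open subset of the irreducible variety $\mathcal{C}_2$, so the induced ring map $j^* \colon \C[\mathcal{C}_2] \to \C[\mathcal{C}_1]$ is injective and identifies the fraction fields, $\C(\mathcal{C}_2) \cong \C(\mathcal{C}_1)$. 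Since $\mathcal{C}_1$ is affine, its image $U := j(\mathcal{C}_1)$ is an affine open subscheme of $\mathcal{C}_2$, so $\C[\mathcal{C}_1] = \C[U] \supseteq \C[\mathcal{C}_2]$ inside the common fraction field, and $\Spec$ of the inclusion is the open immersion $U \hookrightarrow \mathcal{C}_2$.

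The key step is then to show $U = \mathcal{C}_2$, equivalently $\C[\mathcal{C}_1] = \C[\mathcal{C}_2]$. The complement $Z := \mathcal{C}_2 \setminus U$ is a closed subset of pure codimension $1$ if nonempty (an open subscheme of a variety whose complement has codimension $\geq 2$ has the same global functions, but $\mathcal{C}_1$ is affine of the same dimension, so by comparing coordinate rings $Z$ cannot have codimension $\geq 2$ unless it is empty). So suppose $Z$ contains an irreducible divisor $D$. Here I would invoke the divisor class group computation: $\mathrm{Cl}(\mathcal{C}_2) = 0$ since $\mathcal{C}_2 \cong (\C^*)^r$ is a localization of affine space, and likewise $\mathrm{Cl}(\mathcal{C}_1) = 0$. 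The excision sequence for the open immersion $U \hookrightarrow \mathcal{C}_2$ with complement $Z$ of pure codimension $1$ gives a surjection $\bigoplus_{D \subseteq Z} \Z \cdot [D] \twoheadrightarrow \mathrm{Cl}(\mathcal{C}_2) = 0$ with kernel generated by principal divisors; hence each component $D$ is cut out (up to units) by a global function $f_D \in \C[\mathcal{C}_2]$ vanishing exactly on $D$, and $U$ is the distinguished affine open $\{f := \prod f_D \neq 0\}$, so $\C[\mathcal{C}_1] = \C[\mathcal{C}_2][1/f]$.

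It remains to rule out that a Laurent polynomial ring $\C[x_1^{\pm1},\dots,x_r^{\pm1}]$ becomes, after inverting a single nonunit $f$, again a Laurent polynomial ring in $r$ variables. I would argue via unit groups: the unit group of $\C[\mathcal{C}_2] = \C[x_i^{\pm 1}]$ is $\C^* \times \Z^r$ (Laurent monomials up to scalar), and inverting $f$ enlarges the unit group by adjoining $f$ and its irreducible factors; since $f$ is not a monomial, at least one irreducible factor $g$ of $f$ is not a Laurent monomial, so $g$ becomes a new unit not lying in $\C^* \cdot \langle x_1,\dots,x_r\rangle$, and the unit group of $\C[\mathcal{C}_2][1/f]$ strictly contains $\C^* \times \Z^r$ with rank $> r$. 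But the unit group of $\C[\mathcal{C}_1] \cong \C[y_j^{\pm 1}]$ modulo $\C^*$ is free of rank exactly $r$. This contradiction forces $Z = \es$, hence $U = \mathcal{C}_2$ and $j$ is an isomorphism.

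The main obstacle I anticipate is making the divisor-class / unit-group bookkeeping airtight: one must be careful that $f$ is genuinely a nonunit (otherwise $U = \mathcal{C}_2$ already and there is nothing to prove), that the irreducible factors of $f$ are not themselves scalar multiples of monomials (a monomial is a unit, so it would not contribute to cutting out a divisor), and that comparing unit group ranks is legitimate — this uses that $\C[\mathcal{C}_1]$ is genuinely a Laurent polynomial ring, which is exactly the hypothesis that $\mathcal{C}_1$ is an algebraic torus. An alternative, perhaps cleaner, route avoiding class groups: restrict $j$ to a generic one-parameter subgroup or use that a dominant morphism of tori of equal dimension which is an open immersion must be a finite birational morphism onto its image, combined with normality of $\mathcal{C}_2$ and Zariski's main theorem to conclude it is an isomorphism onto a closed-and-open, hence full, subset. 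I would present whichever of these the authors' Lemma~\ref{lem: Daping's Lemma} is proved with, but the unit-group argument is the most self-contained.
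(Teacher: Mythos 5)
Your proposal is correct, and it reaches the conclusion by a genuinely different route than the paper. The paper's argument (due to D.~Weng) stays entirely on the level of the ring map: the open embedding induces an injective algebra homomorphism $\varphi^*\colon \C[\mathcal{C}_2]\to\C[\mathcal{C}_1]$ sending units to units, hence an injective map of character lattices (units modulo scalars); equality of ranks forces the image to have finite index, that index is identified with the number of points in a generic fiber of $\varphi$, and an open immersion has one-point fibers, so the lattice map is an isomorphism, $\varphi^*$ is surjective, and $\varphi$ is an isomorphism. You instead first realize the image as a principal open subset $D(f)$ of $\mathcal{C}_2$ (using purity of the complement of an affine open in a normal variety plus $\operatorname{Cl}((\C^*)^r)=0$), and then rule out $f$ being a nonunit by observing that inverting a nonunit in the UFD $\C[x_1^{\pm1},\dots,x_r^{\pm1}]$ strictly increases the rank of the unit group modulo $\C^*$, which must remain $r$ if the result is again a rank-$r$ torus. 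Both proofs ultimately pivot on the same invariant, the unit group $\C^*\times\Z^r$ of a Laurent polynomial ring, but the paper's version is shorter and avoids divisor class groups and the purity statement, while yours avoids the degree-of-map/fiber-counting step (which the paper leaves somewhat implicit) and is more self-contained commutative algebra. The only place you should tighten the write-up is the purity step: if $Z$ has components of codimension $\ge 2$ as well as divisorial ones, first remove the divisorial part to get $U'\supseteq U$ with $\C[U']=\C[\mathcal{C}_2][1/f]$, and then use algebraic Hartogs on the normal variety $U'$ to conclude $\C[U]=\C[U']$ and hence $U=U'$; your localization identity then holds as stated.
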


The following proof was explained to us by D. Weng, to whom we are very grateful.

\begin{proof}
    Let $A_1$ and $A_2$ be the coordinate rings of $\mathcal{C}_1$ and $\mathcal{C}_2$, respectively. Then $A_1$ and $A_2$ are both Laurent polynomial rings, hence integral domains. An open embedding $\varphi$ of $\mathcal{C}_1$ into $\mathcal{C}_2$ induces an injective algebra homomorphism $\varphi^*$ of $A_2$ into $A_1$ which sends units to units. Under $\varphi^*$, the character lattice 
    $L_2$ of $A_2$ is sent to the character lattice $L_1$ of $A_1$ and $\varphi^*$ is a linear map of lattices. Since $L_1$ and $L_2$ are of the same rank, the image of $L_2$ must be of finite index inside of $L_1$. This index is precisely determined by the number of points in the fiber $\varphi: \mathcal{C}_1 \to \mathcal{C}_2$. Since $\varphi$ is an open inclusion, this index must be one, so that $\varphi^*$ is a linear isomorphism of character lattices. Hence, $\varphi$ is an isomorphism of varieties.
\end{proof}

\begin{proof}[Proof of Theorem~\ref{thm: not filllable}]
     By Lemma~\ref{lem: Double Sheaves} and \cite{JinTreumann17}, any embedded exact Lagrangian filling $\mathcal{L}$ of $\La(L_1, L_2)$ induces an embedding of $(\C^\ast)^{b_1(\mathcal{L})}\hookrightarrow \SM_1(\La(L_1, L_2))=\mathcal{C}_1\cap \mathcal{C}_2$ where $\mathcal{C}_1$ and $\mathcal{C}_2$ are the toric charts $(\C^\ast)^{b_1(L_1)}\cong (\C^\ast)^{b_1(L_2)}$ induced by the fillings $L_1$ and $L_2$ in $\SM_1(\la)$. Note that $b_1(\mathcal{L})\geq b_1(L_1)=b_1(L_2)$ by the half lives, half dies theorem. It follows that $b_1(\mathcal{L})=b_1(L_1)=b_1(L_2)$. By Lemma~\ref{lem: Daping's Lemma}, this embedding of algebraic tori of the same rank can only be an isomorphism. Therefore, when $L_1$ and $L_2$ inhabit distinct toric charts, $\La(L_1, L_2)$ does not admit any embedded exact Lagrangian fillings.   
\end{proof}

Note that the conjectural classification of exact Lagrangian fillings of braid positive Legendrians  can be phrased as a one-to-one correspondence between toric charts and exact Lagrangian fillings \cite[Conjecture 5.1]{CasalsLagSkel}. This conjecture would imply that Theorem~\ref{thm: not filllable} holds for $L_1\not \cong L_2$. When $\la = \partial L_1$ is of the form $\la(\beta\Delta)$, every known pair of distinct fillings $L_1\not\cong L_2$ induce distinct toric charts in $\mathcal{M}_1(\partial L_1)$.

\subsection{Legendrian doubles from weaves}\label{sub: weave doubles}

Let $\la=\la(\beta\d)$ be an $n-$component Legendrian link in $(\R^3, \xi_{st})$ given as the (-1)-closure of the positive braid $\beta\d$ with the Demazure product of $\beta=\d$.

Let $L$ be an embedded exact Lagrangian filling of $\la$ 
given as the Lagrangian projection of a Legendrian weave with boundary $\la$. We denote by $b_1(L)$ the rank of $H_1(L)$. We now restate and prove Theorem~\ref{thm: intro_Symmetric}.

\begin{theorem}\label{thm: Symmetric}
The symmetric double $\La(L, L)$ is Legendrian isotopic to $\#^{b_1(L)} \T^2_{std}$.
\end{theorem}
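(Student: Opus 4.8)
The plan is to reduce the statement to the general theorem of Courte--Ekholm (Theorem~\ref{thm:trivdouble}) together with an explicit combinatorial analysis of $N$-graphs, and then to show that the formal datum controlling the Legendrian isotopy class of $\La(L,L)$ is the simplest possible one, namely that of $\#^{b_1(L)}\T^2_{\std}$. By Proposition~\ref{prop:double}, the symmetric double $\La(L,L)$ is the Legendrian weave on $S^2$ obtained by gluing two copies of the $N$-graph $G$ for $L$ along their common boundary $\partial \D^2$. Since both halves carry the \emph{same} graph $G$, the doubled $N$-graph $G\cup_{\partial}G\subseteq S^2$ has a manifest $\Z/2$-reflection symmetry exchanging the two hemispheres and fixing the equator pointwise. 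First I would use the surface Reidemeister moves of Figure~\ref{fig:Moves} and the surgery moves of Theorem~\ref{thm:czsurgery} to simplify $G\cup_{\partial}G$: the reflection symmetry means that near the equator the two copies of each edge of $G$ run parallel, so each such pair of parallel $\sigma_i$-edges bounds a bigon that can be eliminated by a push-through move, after which the graph collapses onto a neighborhood of the equator. What remains after peeling off the parallel-edge bigons should be, for each generator of $H_1(L)$, a standard local piece that Theorem~\ref{thm:czsurgery} identifies as a connect-sum with $\T^2_{\std}$ (the ``$T^2_{st}$'' move), with all other cancellations producing trivial $S^2$ summands.

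More precisely, the key steps I would carry out, in order, are: (i) present $L$ by an $N$-graph $G\subseteq\D^2$ and observe that $b_1(L)$ equals $b_1$ of the weave, which is read off from the cycle structure of $G$; (ii) form $G\cup_{\partial}G$ and record its reflection symmetry; (iii) perform a sequence of Reidemeister and surgery moves on $S^2$, working outward-in, to cancel the doubled edges in pairs, each cancellation either removing a trivial sphere component or, when it encircles an essential cycle of $G$, splitting off one $\T^2_{\std}$ summand via Theorem~\ref{thm:czsurgery}(2); (iv) conclude by induction on $b_1(L)$ that the result is $\#^{b_1(L)}\T^2_{\std}$. Alternatively --- and this is the cleaner route --- I would invoke Theorem~\ref{thm:trivdouble} directly: the Legendrian isotopy class of $\La(L,L)$ depends only on the induced trivialization of $TL\otimes\C$; since $L$ is a weave filling it is (smoothly) a handlebody built from $0$- and $1$-handles, so $TL\otimes\C$ is trivial and, crucially, the weave structure equips $L$ with a canonical such trivialization that agrees with the one carried by a standard genus-$b_1(L)$ surface filling (which doubles to $\#^{b_1(L)}\T^2_{\std}$). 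Hence the two doubles are Legendrian isotopic.

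The main obstacle, and the step that needs the most care, is verifying that the trivialization of $TL\otimes\C$ induced by the weave $G$ is indeed the ``standard'' one --- equivalently, that the weave filling is \emph{formally} equivalent, rel boundary, to the standard handlebody filling. One has to check that no nontrivial framing obstruction is hidden in the $D_4^-$ and $A_1^3$ singularities; concretely this amounts to comparing the relative Maslov/framing data of the weave against that of the standard model on each $1$-handle. I expect this to be manageable because the surface Reidemeister moves are framing-preserving and any two weave fillings related by such moves share the same formal data, but it is the point where a naive argument could fail if the weave carried exotic framing information. Once this is pinned down, Theorem~\ref{thm:trivdouble} closes the argument immediately. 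I would present the combinatorial $N$-graph cancellation argument as the primary proof (since it is self-contained and constructive), and remark that it can also be deduced from Theorem~\ref{thm:trivdouble}.
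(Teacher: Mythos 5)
Your primary argument is essentially the paper's proof: the authors likewise scan the doubled $N$-graph from the equator inward, pairing each singularity with its mirror image, removing the resulting bigons as connect sums with $\T^2_{\std}$ via Theorem~\ref{thm:czsurgery} and undoing candy twists and Move~IV configurations by isotopy, until only the concentric $\Delta^2$ circles (a standard unlink of spheres) remain. The only substantive differences are bookkeeping: the bigons are eliminated by the surgery move of Theorem~\ref{thm:czsurgery}, not by a push-through, and where you induct on $b_1(L)$ the paper counts trivalent vertices via Riemann--Hurwitz to confirm that exactly $b_1(L)$ of the bigon surgeries contribute torus summands while the remaining $N-1$ merely separate sphere components.
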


\begin{proof}[Proof of Theorem~\ref{thm: Symmetric}]
Let $\la=\la(\beta\Delta)$ and $L=\w_L$ be a Legendrian weave filling of $\la$. Denote the symmetric Legendrian weave double by $\mathfrak{w}(L, L)\subseteq \R^5$. Consider the corresponding $N-$graph $\Gamma(L, L)\subseteq \R^2$ with coordinates $(x_1, x_2)$ and a height function $h(x_2)$. Such a function can be given by first considering $\Gamma(L)$ constructed in the disk, as in \cite{CasalsZaslow} and then taking a foliation of that disk by concentric copies of $S^1$. These copies of $S^1$ can be thought of as level sets for $h.$ Choose coordinates so that $\mathfrak{w}(L, L)|_{x_2<0}$ is Legendrian isotopic to $\mathfrak{w}(L)$ and perform a Legendrian isotopy so that $\mathfrak{w}(L, L)$ is symmetric under reflection through the $x_1-$axis. Perturb $\Gamma(L, L)$ by a planar isotopy (lifting to a Legendrian isotopy of $\mathfrak{w}(L, L)$) so that no two singularities (vertices or crossings) of $\Gamma(L, L)$ lie along the same level set of $h$. Starting at $x_2=0$ and scanning in the negative $x_2$ direction, consider the first singularity of $L$ that appears at value $x_2=t_0$. By the symmetry of $\Gamma(L, L)$, there is an identical singularity contained in the level set $h(|t_0|).$ We consider the three possible cases:

\begin{itemize}
    \item If $v$ and $v'$ are trivalent vertices at $h(t_0)$ and $h(|t_0|)$ respectively, then $\Gamma(L, L)$ forms a bigon with vertices $v$ and $v'$.
    \item If $v$ and $v'$ are hexavalent vertices at $h(t_0)$ and $h(|t_0|)$ respectively, then $\Gamma(L, L)$ forms a candy twist with vertices $v$ and $v'$.
    \item If a strand labeled $\sigma_i$ crosses a strand labeled $\sigma_j$  at height $h(t_0)$ and $h(|t_0|)$ with $|i-j|\geq 2$, then the restriction of  $\Gamma(L, L)$ to $\{t_0 \leq x_2\leq |t_0|\}$ contains a local picture as in Move IV (left) of Figure~\ref{fig:Moves}. 
\end{itemize}

By Theorem~\ref{thm:czsurgery}, removing the bigon corresponds to a 0-surgery of the surface. Assuming that the result is a connected Legendrian surface, this yields an additional standard torus in our connect sum decomposition of $\mathfrak{w}(L, L)$. Undoing the candy twist and performing 
Move IV of Figure~\ref{fig:Moves} correspond to isotopies of the weave surface. This allows us to proceed inductively, traversing the weave from $x_2=0$ to $x_2=-\infty$, removing singularities of $\Gamma(L, L)$ as we go.

Since $\mathfrak{w}_L,$ is an embedded weave filling of $\la(\beta),$ the end result of this inductive process is the braid $\Delta^2$ forming concentric circles. Following \cite[Example 6.2]{CasalsZaslow}, this is a standard Legendrian unlink of 2-spheres with $N$ components. 

	We verify that the number of standard tori obtained in this connect sum decomposition is precisely $b_1(L)$ via the following computation. For an $N$-graph, $\Gamma\subseteq S^2$, the genus of $\Lambda(\Gamma)\subseteq J^1(S^2)$ is computed using the Riemann-Hurwitz formula: 
	$$g(\Lambda(\Gamma)-|\partial\Lambda(\Gamma)|)=\frac{1}{2}(v(\Gamma)+2-N\chi(S^2))=\frac{1}{2}(v(\Gamma) + 2-2N-|\partial\Lambda(\Gamma)|)$$
	where $v(\Gamma)$ is the number of trivalent vertices of $\Gamma$ and $|\partial\Lambda(\Gamma)|$ denotes the number of boundary components of $\Gamma$. 
Solving for $v(\Gamma)=2g(\Lambda(\Gamma))-2+2N$ for a closed weave, we see that there are precisely $g(\La(\gamma))+N-1$ bigons that we iteratively remove from the symmetric double $\La(L, L)$ in the process described above. By construction, $g(\La(\gamma))=b_1(L)$, implying that $\La(L, L)=\#^{b_1(L)}\T^2_{std}$, as desired. 
\end{proof}

Let $L$ be an exact Lagrangian filling of $\la$ and let $\mu_\gamma(L)$ denote the Lagrangian filling obtained from mutation at a cycle $\gamma$ of $L$. Following the proof of Theorem~\ref{thm: Symmetric}, we observe that when $L$ is a Legendrian weave filling, the double $\La(L, \mu_{\gamma}(L))$ is particularly simple to describe. Indeed, when we have a collection of sufficiently distant cycles $\gamma_1, \dots, \gamma_k$ that satisfy a certain combinatorial property, then we can verify that $\mu_{\gamma_1}\circ\dots\circ\mu_{\gamma_k}(L)$ is isotopic to a connect sum of Clifford and standard tori. We first introduce this property:

\begin{definition}
    We call a collection $\Gamma$ of $\sf{Y}$-trees $\{\gamma_1, \dots, \gamma_k\}\subseteq H_1(L)$ of a Legendrian weave filling $L$ \term if $L$ is isotopic to a Legendrian weave where each cycle in $\Gamma$ is represented as a short {\sf I}-cycle.
\end{definition}

Note that in particular, a disjoint collection of {\sf Y}-trees satisfies the conditions of the definition above. For such {\sf Y}-trees, one can use \cite[Proposition 3.5]{CasalsWeng} to reduce each to a short {\sf I}-cycle without introducing intersections between cycles in $\Gamma$ or Reeb chords in the Legendrian weave. We now restate and prove Theorem~\ref{thm:intro_std_and_clifford_tori}.

\begin{theorem}\label{thm:std_and_clifford_tori}
Suppose that $L$ has a collection $\Gamma=\{\gamma_1, \dots, \gamma_k\}$ of \term {\sf Y}-trees
and $L'$ is obtained from $L$ by a sequence of mutations at the cycles in $\Gamma$. Then $\La(L, L')$ is Legendrian isotopic to $\#^{b_1(L)-k}\T^2_{\std}\#^k \T^2_c$.  
\end{theorem}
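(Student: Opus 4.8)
The plan is to combine the two tools already established: Theorem~\ref{thm: Symmetric}, which resolves the symmetric double $\La(L,L)$ into $\#^{b_1(L)}\T^2_{\std}$ via a scanning argument on the doubled $N$-graph, and the combinatorial model of Legendrian mutation in Figure~\ref{fig:Mutations} together with the surgery identification of Theorem~\ref{thm:czsurgery}. The key observation is that if $\gamma_i$ is represented by a short \textsf{I}-cycle in the weave for $L$, then in the doubled $N$-graph $\Gamma(L,L')$ the effect of having mutated at $\gamma_i$ to produce $L'$ on the ``other'' disk is purely local: near the image of the short \textsf{I}-cycle, the two halves no longer agree by a reflection, but instead differ precisely by the local replacement pictured in Figure~\ref{fig:Mutations}. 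So $\Gamma(L,L')$ looks exactly like $\Gamma(L,L)$ except in $k$ disjoint local regions, one for each $\gamma_i\in\Gamma$.

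\textbf{Key steps.} First, using the hypothesis that $\Gamma$ is a \emph{\term} collection, isotope $L$ (hence also the reflected copy and $L'$, compatibly) so that each $\gamma_i$ is a short \textsf{I}-cycle and the supports of the $\gamma_i$ are pairwise disjoint; invoke \cite[Proposition 3.5]{CasalsWeng} to do this without creating Reeb chords or new intersections. Second, set up the scanning coordinates $(x_1,x_2)$ and height function $h(x_2)$ exactly as in the proof of Theorem~\ref{thm: Symmetric}, arranged so that each of the $k$ local mutation regions sits symmetrically across the $x_1$-axis and is scanned as a single ``event.'' Third, run the inductive scan from $x_2 = 0$ to $x_2 = -\infty$: away from the $k$ special regions the analysis is verbatim that of Theorem~\ref{thm: Symmetric} — each matched pair of trivalent vertices contributes a bigon whose removal is an $0$-surgery adding a $\T^2_{\std}$ summand (Theorem~\ref{thm:czsurgery}(2)), while matched hexavalent vertices and distant crossings are removed by isotopies (candy twist, Move IV). Fourth, at each of the $k$ special regions, show by direct inspection of the local model of Figure~\ref{fig:Mutations} glued to its reflection that the resulting local $N$-graph picture is exactly the third move of Figure~\ref{fig:surgery}, i.e.\ it realizes a connect sum with $\T^2_c$ rather than with $\T^2_{\std}$; this is the place where the mutation, as opposed to symmetry, genuinely changes the topology. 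Fifth, bookkeeping: by the Riemann–Hurwitz count reproduced in the proof of Theorem~\ref{thm: Symmetric}, the total number of surgery events yielding torus summands is $b_1(L)$ (the remaining web reduces to the standard unlink of $N$ two-spheres forming the braid $\Delta^2$); of these, exactly $k$ are now Clifford surgeries localized at the $\gamma_i$ and the other $b_1(L)-k$ are standard, giving $\#^{b_1(L)-k}\T^2_{\std}\#^k\T^2_c$.

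\textbf{Main obstacle.} The delicate step is the fourth one: verifying that the local \textsf{I}-cycle mutation picture, once doubled against its mirror image, really is the Clifford connect-sum move of Figure~\ref{fig:surgery}(3) and not something that could be isotoped away or that produces a standard torus. One must track the $N$-graph carefully through the mutation (recalling that Legendrian mutation is smoothly but not Legendrian-isotopic to the identity, so the double genuinely changes) and match it against the surgery move of Theorem~\ref{thm:czsurgery}, paying attention to which sheets are involved and to the direction of the height function. The \emph{\term} hypothesis is exactly what makes this tractable, since it guarantees a \emph{short} \textsf{I}-cycle model where the local picture is small and standardized, and the disjointness of the $\gamma_i$ ensures the $k$ Clifford surgeries and the remaining standard surgeries do not interact, so the connect-sum decomposition is genuinely a disjoint union of local contributions. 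A secondary (routine) point to check is connectedness of the surface after each bigon removal, just as flagged parenthetically in the proof of Theorem~\ref{thm: Symmetric}.
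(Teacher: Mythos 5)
Your argument is essentially the paper's argument in the special case where the cycles in $\Gamma$ have pairwise disjoint supports, but it does not prove the theorem as stated, and the gap is exactly where the paper's proof does its real work. The hypothesis that $\Gamma$ is a \term collection only asserts that $L$ can be isotoped so that every $\gamma_i$ is simultaneously a short {\sf I}-cycle; it does \emph{not} assert that the $\gamma_i$ are disjoint. (The paper explicitly notes that disjoint collections are merely a particular instance of the definition.) Adjacent short {\sf I}-cycles in a 2-weave share trivalent vertices and have nonzero intersection, so mutating at one of them changes the local picture around its neighbors. In your second key step you ``isotope so that the supports of the $\gamma_i$ are pairwise disjoint,'' which is not available in general, and everything downstream — the claim that $\Gamma(L,L')$ differs from $\Gamma(L,L)$ in $k$ disjoint local regions, each contributing an independent Clifford surgery — relies on that unavailable disjointness.

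Concretely, the failure mode you are not handling is the one the paper flags as the ``chief obstacle'': a single mutation at $\gamma_i$ does create an easily identifiable triangle in the double, but a subsequent mutation at an adjacent $\gamma_j\in\Gamma$ can destroy or obscure that triangle, so one cannot simply read off $k$ copies of the move of Figure~\ref{fig:surgery}(3), one per cycle. The paper's proof is a case analysis on the combinatorics of the ``rakes'' and ``super-rakes'' of adjacent mutated cycles (Cases 1, 2.i, 2.ii, depending on whether there is a second topmost cycle, whether the descending chain $\alpha_1,\alpha_2,\dots$ leaves $\Gamma$ at some point, and on the order of the mutations) to show that after the full sequence of mutations a triangle always survives somewhere in the double; it then peels that triangle off as a $\T^2_c$ summand and inducts. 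To repair your proof you would need either to add this interaction analysis, or to strengthen your hypothesis to disjointness — in which case you would be proving a weaker statement than the theorem. The rest of your outline (the scanning setup, the bigon/candy-twist/Move~IV bookkeeping away from $\Gamma$, and the Riemann--Hurwitz count giving $b_1(L)$ total torus summands) matches the paper and is fine.
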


\begin{center}
    \begin{figure}
        \centering
        \includegraphics[width=0.5\linewidth]{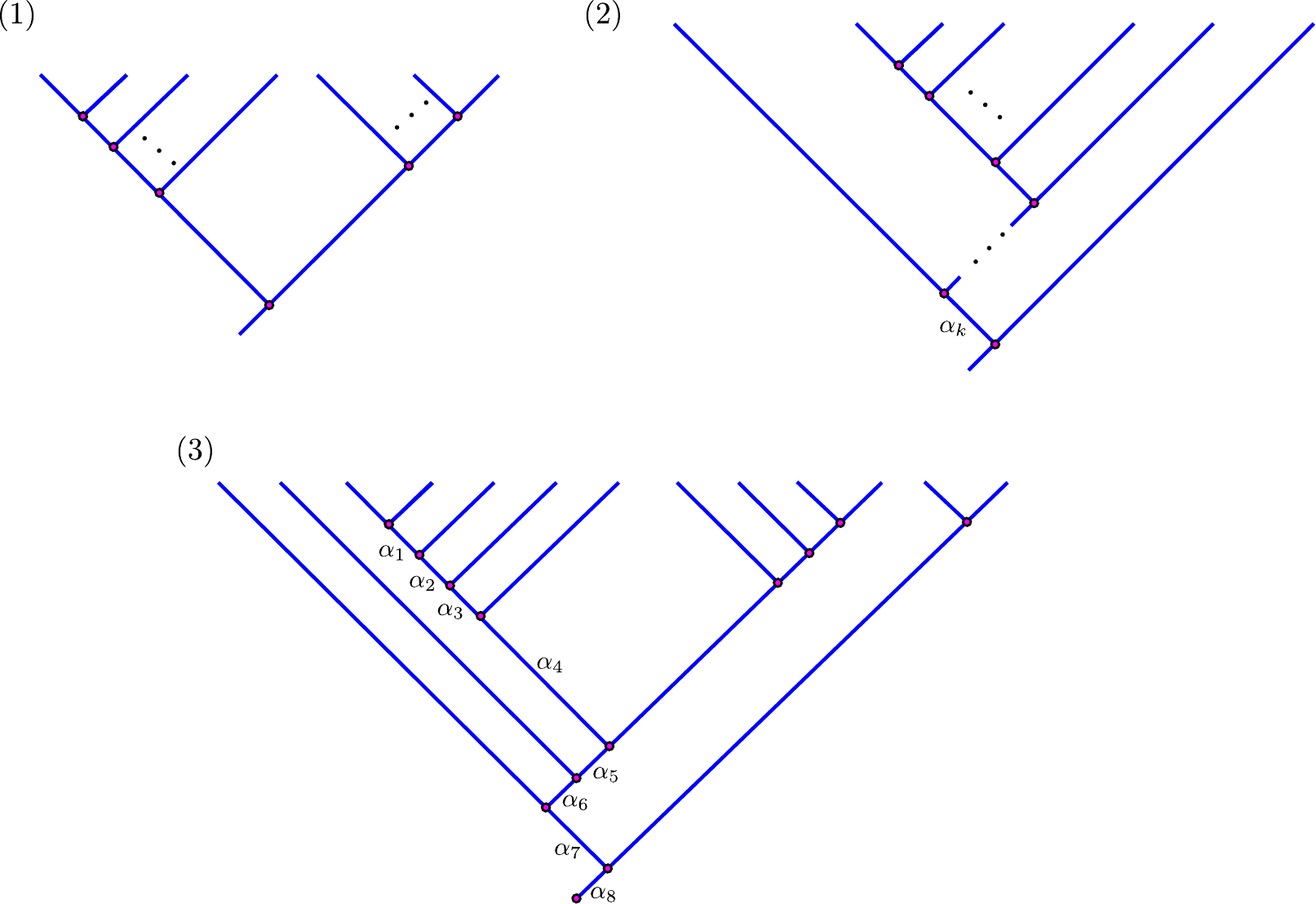}
        \caption{2-graphs representing Cases (1) and (2) appearing in the proof of Theorem \ref{thm:std_and_clifford_tori} together with an example (3) with labeled cycles.}
        \label{fig:simultaneous}
    \end{figure}
\end{center}

 \begin{proof}

   After reducing the cycles in $\Gamma$ to short {\sf I}-cycles, we can see that $L$ and $L'$ are symmetric except for neighborhoods around cycles in $\Gamma$. As in the proof of Theorem~\ref{thm: Symmetric}, we can choose coordinates $x_1, x_2$ and isotope $\La(L, L')$ so that $\La(L, L')\cap \{x_2\leq 0\}\cong L'$ and $\La(L, L')\cap \{x_2\geq 0\}\cong L$, and proceeding from $x_2=0$, we can resolve any singularities of $\La(L, L')$ that appear before the first trivalent vertex involving a cycle in $\Gamma$ at the $x_2=t$ level, which we will label $v_0$. Since $\Gamma$ only contains short {\sf I}-cycles, we can find a neighborhood of $v_0$ that resembles a 2-weave, allowing us to restrict to this case. Therefore, it is enough to show how we can find a triangle in the weave $\La(L, L')$ in the case that $L$ is represented by a 2-weave, as such a triangle would represent a Clifford torus summand by Theorem~\ref{thm:czsurgery}. We can then use induction to resolve the 2-weave case, and the proof in the general case follows from our argument above.

   We argue that you can guarantee the existence of a triangle by studying a neighborhood of $v_0$ appearing as in Figure~\ref{fig:simultaneous}. Let $\alpha_1\in \Gamma$ be the topmost cycle with vertex $v_0$. Then $\alpha_1$ appears as part of a distinguished ``vee" in the weave, such as on the left in (1) of Figure~\ref{fig:simultaneous}. Call the part of a 2-weave that contains a ``vee'' with topmost cycle $\alpha\in \Gamma$ and with all cycles close to it that aligned in the same direction in the vertical weave diagram an $\alpha - rake$. For example, in (3) in Figure~\ref{fig:simultaneous}, the part of the weave that contains the cycles $\alpha_1$ through $\alpha_4$ is an $\alpha_1$ - rake. A larger region that involves cycles that don't necessarily align with the topmost cycle can be called a ``super-rake''. For example, the regions of the weave in (3) in Figure~\ref{fig:simultaneous} above $\alpha_7$ and $\alpha_8$ are both super-rakes. The chief obstacle to ensuring that we can identify a triangle in $\La(L, L')$ is that, while a single mutation creates to an easily identifiable triangle in the double, subsequent mutations could obscure this triangle, as one might need to remove other triangles or bigons in order to recover it. By carefully analyzing the cases below, we can ensure that a triangle appears in some region and does not become obscured by subsequent mutations.

   {\bf Case 1:} There is another topmost cycle in $\Gamma$, as in (1) of Figure~\ref{fig:simultaneous}. In this case we can find a triangle in the double. Call the two ``highest" cycles $\alpha$ and $\beta$. Then if $\alpha$ is mutated first, we find the triangle in the region of the double corresponding to the $\beta$-rake.

   {\bf Case 2:} There are no ``highest" cycles in $\Gamma$ in close proximity to $\alpha_1$ as in Case 1. Then the neighborhood around $\alpha_1$ looks like (2) in Figure~\ref{fig:simultaneous}, after possibly simplifying and removing bigons. Then consider the sequence of cycles, starting with $\alpha_1$, moving towards the bottom of the weave, numbered $\alpha_i$ as in (3) of Figure~\ref{fig:simultaneous}. 
   
   {\bf Case 2.i)} If there is an $i$ such that $\alpha_i \in \Gamma$, but $\alpha_{i+1} \notin \Gamma$, we see a triangle corresponding to a cycle in the $\alpha_1$-rake. Similarly, if there is an $i$ such that $\alpha_i, \alpha_{i+1} \in \Gamma$ and $\alpha_i$ is mutated before $\alpha_{i+1}$, we see a triangle corresponding to a cycle in the $\alpha_1$-rake. Otherwise, if neither of the above occur but there exists a $k$ such that $\alpha_k$ is not in $\Gamma$ (as in (2) of the figure), we see a triangle after the sequence of mutations $\mu_{\alpha_1} \dots \mu_{\alpha_{k-1}}$.

   {\bf Case 2.ii)} If for all ``highest" cycles in $\Gamma$, and all sequences of cycles $\alpha_1, \dots, \alpha_n$ as named above, all the cycles are in $\Gamma$, consider the ``highest" cycle that is part of the {\em leftmost} rake in the weave. Call this $\alpha_1$ and name cycles $\alpha_1$ through $\alpha_k$ as before. We then see a triangle in the region of the double corresponding to the largest super-rake the cycle belongs to. The main idea is that for the last cycle of the sequence $\alpha_1, \dots, \alpha_k$, there is an edge to its left in the weave representing $L$ that stays unmoved even after all possible mutations -- this will give a triangle after the sequence of mutations $\mu_{\alpha_1} \dots \mu_{\alpha_k}$.

 \end{proof}

Theorem \ref{thm:std_and_clifford_tori} immediately implies that we can obtain a connect sum of any number of standard and Clifford tori from doubles of fillings of $\la(2, n)$, as we can always find a homology basis of short {\sf I}-cycles for any 2-weave. 

\begin{corollary}
Given $L_1$, and $j$ such that $0 \leq j \leq n-1$, there exists $L_2$ such that $\La(L_1,L_2)$ is Hamiltonian isotopic to $\#^{j} \T^2_{std} \#^{n-1-j} \T^2_{c}$
\end{corollary}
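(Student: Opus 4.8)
The corollary asks to show that for any trivalent graph filling $L_1$ of $\la(2,n)$ and any $0 \leq j \leq n-1$, there is a filling $L_2$ with $\La(L_1,L_2)$ Hamiltonian isotopic to $\#^j \T^2_{\std} \#^{n-1-j} \T^2_c$. The strategy is to produce $L_2$ as the output of mutating $L_1$ at a suitably chosen \term collection $\Gamma$ of $j$ \textsf{Y}-trees, and then invoke Theorem~\ref{thm:std_and_clifford_tori} directly: since $b_1(L_1) = n-1$ for any filling of $\la(2,n)$, the theorem gives $\La(L_1, L_2) \cong \#^{(n-1)-j}\T^2_{\std} \#^j \T^2_c$, which matches the claimed decomposition after relabeling $j \leftrightarrow n-1-j$ (note the corollary statement and the theorem use complementary conventions, so I would be careful to present the count so the two agree).

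\textbf{Key steps.} First I would recall that $\la(2,n)$ is the $(-1)$-closure of $\sigma_1^{n+2}$, so its weave fillings are all $2$-weaves, i.e. $N$-graphs with $N=2$ drawn in the disk, and each such filling $L_1$ has $b_1(L_1) = n-1$. Second, I would invoke the standard fact (from \cite{CasalsZaslow}, using the reduction moves of \cite[Proposition 3.5]{CasalsWeng} cited just above the statement of Theorem~\ref{thm:std_and_clifford_tori}) that every $2$-weave admits a homology basis of $H_1(L_1)$ consisting of short \textsf{I}-cycles; in particular for any $0 \le j \le n-1$ one can select a sub-collection $\Gamma = \{\gamma_1,\dots,\gamma_j\}$ of $j$ of these cycles, and such a collection of short \textsf{I}-cycles is automatically \term by definition. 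Third, I would let $L_2 := \mu_{\gamma_1}\circ \cdots \circ \mu_{\gamma_j}(L_1)$, the filling obtained by mutating at these cycles; this is again an exact Lagrangian filling of $\la(2,n)$. Finally, applying Theorem~\ref{thm:std_and_clifford_tori} with $k = j$ yields $\La(L_1, L_2) \cong \#^{b_1(L_1)-j}\T^2_{\std} \#^j \T^2_c = \#^{n-1-j}\T^2_{\std} \#^{j}\T^2_c$, so taking $j$ to range over $\{0,\dots,n-1\}$ and reindexing gives every decomposition $\#^{j}\T^2_{\std}\#^{n-1-j}\T^2_c$ as claimed. The Hamiltonian (rather than just Legendrian) nature of the isotopy is inherited from the statement of Theorem~\ref{thm:std_and_clifford_tori} applied in the weave setting, where the connect-sum moves of Theorem~\ref{thm:czsurgery} are realized by ambient isotopies.

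\textbf{Main obstacle.} The only genuine content beyond bookkeeping is verifying that a $2$-weave filling of $\la(2,n)$ really does carry a homology basis of short \textsf{I}-cycles of arbitrary cardinality up to $n-1$ — equivalently, that the short \textsf{I}-cycles one can exhibit are linearly independent in $H_1$ and that one can always find $j$ of them forming a \term collection. For $2$-weaves this is essentially the statement that triangulations of the $(n+2)$-gon have $n-1$ internal edges, each internal edge (flip) corresponding to a short \textsf{I}-cycle/\textsf{Y}-tree, and that disjoint sets of such edges give \term collections; I would spell this out by realizing $L_1$ via the weave dual to a triangulation and reading off the cycles, rather than by a general weave-combinatorics argument. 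Once that identification is in place, the rest is a direct citation of Theorem~\ref{thm:std_and_clifford_tori}, so I do not expect any further difficulty.
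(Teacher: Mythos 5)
Your proposal is correct and matches the paper's (very brief) justification: the paper derives the corollary from Theorem~\ref{thm:std_and_clifford_tori} exactly as you do, noting that any $2$-weave filling of $\la(2,n)$ admits a homology basis of $n-1$ short {\sf I}-cycles, so one mutates $L_1$ at $n-1-j$ of them (which form a \term collection automatically) to produce $L_2$. Your extra care with the index convention ($k$ in the theorem versus $j$ in the corollary) and with why the short {\sf I}-cycles give a basis is sound and fills in what the paper leaves implicit.
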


Let $L, L'$ be two embedded exact Lagrangian fillings of $\la(\beta\Delta)$.
Direct computation of $\mathcal{M}_1(\La(L, L'))$ via Lemma~\ref{lem: Double Sheaves} implies the following, which is a restatement of Lemma~\ref{lem: intro_non_loose}:

\begin{proposition}

    The Legendrian double $\La(L, L')$ is a non-loose Legendrian. 
\end{proposition}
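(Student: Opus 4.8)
The plan is to show that the Legendrian double $\La(L,L')$ of two embedded exact Lagrangian fillings of $\la=\la(\beta\Delta)$ is non-loose by exhibiting a nonzero sheaf in $\SM_1(\La(L,L'))$, which, by the $h$-principle characterization of loose Legendrians (loose Legendrians have trivial sheaf category / vanishing microlocal invariants), suffices. Concretely, I would invoke Theorem~\ref{thm: non-loose} (the criterion of Li~\cite{li2023lagrangian}): $\La(L_1,L_2)$ is non-loose whenever the essential images of $\operatorname{Loc}(L_1)$ and $\operatorname{Loc}(L_2)$ intersect nontrivially in $\Sh_\la(\R)_0$, equivalently by Lemma~\ref{lem: Double Sheaves}, whenever $\mathcal{C}_L\cap\mathcal{C}_{L'}\neq\varnothing$ inside $\SM_1(\la)$.

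So the crux is to prove that the two toric charts $\mathcal{C}_L\cong(\C^\ast)^{b_1(L)}$ and $\mathcal{C}_{L'}\cong(\C^\ast)^{b_1(L')}$ always meet inside $\SM_1(\la)$ when $\la$ is the $(-1)$-closure of a positive braid $\beta\Delta$ with Demazure product $\delta(\beta)=\Delta$. Here I would use the structural fact, recalled in the introduction and attributable to the Casals--Zaslow/Casals--Weng picture, that for such $\la$ the moduli $\SM_1(\la)$ is \emph{irreducible} (this is precisely the family of links of plane curve singularities / positive braid closures for which irreducibility is known). Each filling $L$ of $\la$ by a weave induces a toric chart $\mathcal{C}_L\hookrightarrow\SM_1(\la)$ which is open and dense: it is open by \cite{JinTreumann17} / \cite{Sheaves3}, and it is dense because an open subscheme of an irreducible scheme is dense. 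Two dense open subsets of an irreducible scheme necessarily intersect, so $\mathcal{C}_L\cap\mathcal{C}_{L'}\neq\varnothing$. Combined with the previous paragraph this gives non-looseness of $\La(L,L')$.

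In slightly more detail, the steps in order are: (1) recall from Lemma~\ref{lem: Double Sheaves} that $\SM_1(\La(L,L'))=\mathcal{C}_L\cap\mathcal{C}_{L'}$; (2) cite irreducibility of $\SM_1(\la(\beta\Delta))$ for $\beta$ with $\delta(\beta)=\Delta$ --- this follows from the cluster-variety description of $\SM_1(\la)$ for braid-positive Legendrians as a locally closed subvariety built out of (affine-space-like) pieces, together with the identification of $\la(\beta\Delta)$'s sheaf moduli with a suitable braid/Bott--Samelson variety fibration that is irreducible; (3) each $\mathcal{C}_L$ is a nonempty open subscheme of $\SM_1(\la)$, hence dense; (4) two dense opens of an irreducible space intersect nontrivially, so $\SM_1(\La(L,L'))\neq\varnothing$; (5) apply Theorem~\ref{thm: non-loose} to conclude $\La(L,L')$ is non-loose. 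I would remark that in the asymmetric case this nonempty intersection is nevertheless \emph{not} itself a torus of the expected rank --- which is exactly the mechanism behind Theorem~\ref{thm: not filllable} --- so non-looseness and non-fillability coexist.

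The main obstacle I anticipate is carefully pinning down and citing the irreducibility of $\SM_1(\la(\beta\Delta))$ in the stated generality: one must be sure the hypothesis $\delta(\beta)=\Delta$ is exactly what guarantees irreducibility (rather than, say, reducedness or connectedness), and that the toric chart $\mathcal{C}_L$ genuinely lands as an \emph{open} subscheme (not merely a constructible locally closed one) so that the density argument applies. Once irreducibility and openness are in hand, the topological fact that two dense opens meet is immediate and the rest of the argument is formal.
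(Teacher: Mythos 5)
Your proposal is correct and follows essentially the same route as the paper: the paper's proof also combines irreducibility of the (framed) sheaf moduli of $\la(\beta\Delta)$ (citing Escobar's brick-variety result, which is the Bott--Samelson-type identification you allude to) with openness of the toric charts $\mathcal{C}_L$, $\mathcal{C}_{L'}$ to conclude the intersection is nonempty, and then applies Lemma~\ref{lem: Double Sheaves} and the non-looseness criterion. The only cosmetic difference is that the paper states the irreducibility citation explicitly rather than sketching its provenance.
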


\begin{proof}
By \cite[Theorem 20]{Escobar16} (see also \cite[Section 3.3]{CGGLSS} together with \cite[Theorem 2.39]{CGGS1}), the framed sheaf moduli of $\la\cong \partial(1)\cong \partial(L')$ is an irreducible algebraic variety. Since $\mathcal{C}_{L}$ and $\mathcal{C}_{L'}$ are open subsets of $\SM_1^{fr}(\la)$, the intersection $\mathcal{C}_{L}\cap\mathcal{C}_{L'}$ is necessarily nonempty. By Lemma~\ref{lem: Double Sheaves}, $\SM_1(\La(L, L')$ is therefore nonempty. Hence $\La(L, L')$ is non-loose.   
\end{proof}

Nevertheless, if we allow $L$ or $L'$ to be immersed, then we immediately obtain examples of loose Legendrians from the doubling construction.

\begin{ex}
Consider any embedded orientable weave filling $L$ of $\la(2,n)$ and an immersed orientable weave filling $L'$ of $\la(2, n)$ obtained from $L$ by deleting a short {\sf I}-cycle $\gamma$; see Figure~\ref{fig:Loosedouble} for an example. The 2-graph $\Gamma$ representing $\La(L, L')$ contains a bridge, i.e. an edge whose deletion increases the number of connected components of $\Gamma$, and is therefore loose by \cite[Proposition 4.24]{CasalsZaslow}.

\begin{figure}[h!]{ \includegraphics[width=.8\textwidth]{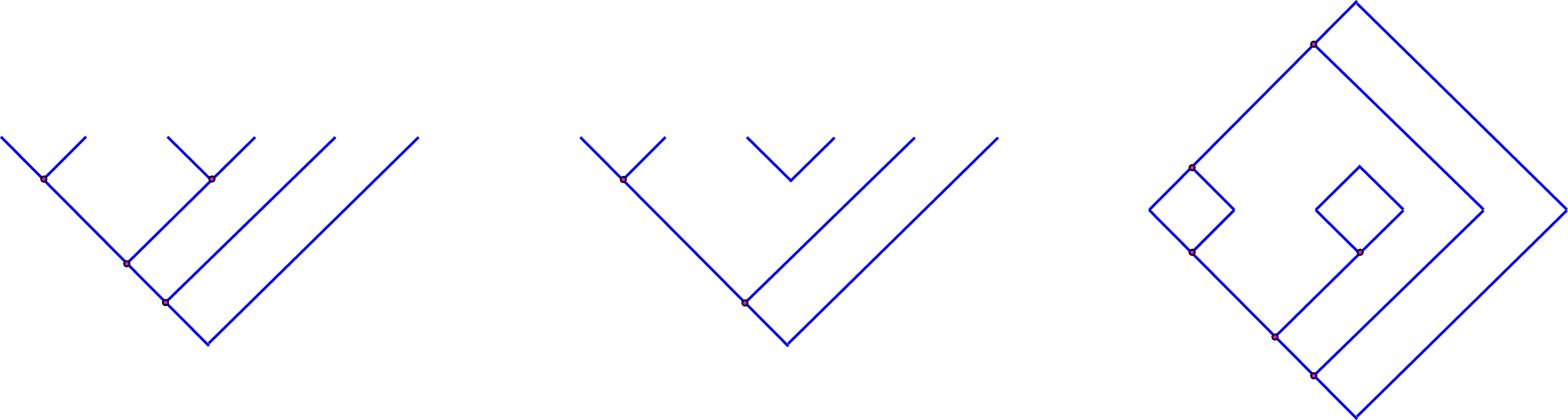}}\caption{A Legendrian weave $L'$ (middle) representing an immersed Lagrangian filling obtained from the weave $L$ (left) by deleting a short {\sf I}-cycle. The resulting Legendrian double $\La(L, L')$ is loose.}
			\label{fig:Loosedouble}\end{figure}

\end{ex}

\section{Doubles of Torus Link Fillings}\label{sec: torus_knot_doubles}

In this section, we continue to explore methods of distinguishing Legendrian doubles having to do with the combinatorics of Legendrian weaves. In the setting of Legendrian torus links, particularly the family $\la(2, n)$, the classification problem is a bit more tractable than the general setting, and we are able to employ techniques involving the combinatorics of triangulations and point counts of the sheaf moduli over finite fields to give Legendrian isotopy characterizations of certain doubles.

The Legendrian torus link $\la(2,n)$, has at least $C_n$ embedded exact Lagrangian fillings up to Hamiltonian isotopy, which can be represented by Lagrangian projections of Legendrian weaves \cite{Pan-fillings, TreumannZaslow}. These weaves are trivalent graphs obtained as the duals to certain triangulations of an $(n+2)$-gon; we will refer to the corresponding fillings as {\em trivalent graph fillings}. Denote by $L_{init}$ the filling of $\la(2,n)$ coming from the pinching sequence $(1, 2, \dots, n)$.

\begin{theorem}\label{thm: torusknotfillings}
Let $L$ be any trivalent graph filling of $\la(2,n)$. The double $\La(L_{init},L)$ is Hamiltonian isotopic to $\#^{k} \T^2_{std} \#^l \T^2_{c}$ for some $k$ and $l$ such that $k+l=n-1$ 
\end{theorem}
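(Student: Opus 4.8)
The plan is to apply Theorem~\ref{thm:std_and_clifford_tori} directly, once we verify its hypotheses are met for the pair $(L_{init}, L)$. Concretely, I would argue that $L$ can always be obtained from $L_{init}$ by a sequence of mutations at a \term collection of {\sf Y}-trees in $H_1(L_{init})$; then Theorem~\ref{thm:std_and_clifford_tori} immediately gives $\La(L_{init}, L) \cong \#^{b_1(L_{init}) - k}\T^2_{\std}\#^k \T^2_c$ where $k$ is the size of the collection, and since $b_1(L_{init}) = n-1$ for any trivalent graph filling of $\la(2,n)$, the genus count is automatic and we set $l = k$, $k_{\text{thm}} = n-1-k$.

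First I would recall the dictionary between trivalent graph fillings of $\la(2,n)$ and triangulations of the $(n+2)$-gon, under which Lagrangian disk surgery (Legendrian mutation) on an {\sf I}-cycle corresponds to a diagonal flip in the triangulation, as in \cite{TreumannZaslow, CasalsWeng}. The filling $L_{init}$ corresponds to the ``fan'' triangulation coming from the pinching sequence $(1,2,\dots,n)$, i.e. the triangulation in which all diagonals emanate from a single vertex (equivalently, the binary tree with all leaves on one side). The key combinatorial input is that any triangulation of a convex polygon is connected to the fan triangulation by a sequence of diagonal flips — this is classical (the flip graph / associahedron is connected). The refinement I actually need is stronger: I want to realize $L$ from $L_{init}$ by mutating at a collection $\Gamma$ of cycles that is \term, meaning $L_{init}$ is isotopic to a weave in which every cycle of $\Gamma$ is a short {\sf I}-cycle, and such that each mutation in the sequence is at a distinct cycle of $\Gamma$ (so we perform each mutation at most once). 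For the fan triangulation this should be arrangeable: the $n-1$ diagonals of the fan are pairwise ``compatible'' in the weave in the sense that the corresponding {\sf I}-cycles can be simultaneously displayed as short {\sf I}-cycles (a 2-weave always admits a homology basis of short {\sf I}-cycles, as noted right after Theorem~\ref{thm:std_and_clifford_tori}), and flipping from the fan to an arbitrary triangulation $T$ can be done by flipping each diagonal of the fan that is not already a diagonal of $T$ exactly once, in an order compatible with the rake structure analyzed in the proof of Theorem~\ref{thm:std_and_clifford_tori}.

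So the steps, in order, are: (1) set up the triangulation/flip correspondence and identify $L_{init}$ with the fan triangulation; (2) show that the set of {\sf I}-cycles dual to the fan diagonals forms a \term collection $\Gamma$ in $H_1(L_{init})$ — this is where I would invoke \cite[Proposition 3.5]{CasalsWeng} to reduce {\sf Y}-trees to short {\sf I}-cycles without creating intersections or Reeb chords, exactly as in the setup of Theorem~\ref{thm:std_and_clifford_tori}; (3) show that any trivalent graph filling $L$ is obtained from $L_{init}$ by mutating at a subcollection $\Gamma' \subseteq \Gamma$, each cycle once — this reduces to the combinatorial statement that going from the fan triangulation to $T$ requires flipping exactly the fan-diagonals absent from $T$, once each; (4) apply Theorem~\ref{thm:std_and_clifford_tori} with this $\Gamma'$ to conclude, reading off $k + l = b_1(L_{init}) = n-1$ from the Riemann--Hurwitz count already done in the proof of Theorem~\ref{thm: Symmetric}. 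The Hamiltonian (rather than merely Legendrian) isotopy statement follows because mutation and the weave surgery moves of Theorem~\ref{thm:czsurgery} can be taken to be Hamiltonian on the Lagrangian projections.

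The main obstacle I anticipate is step (3) together with the order-of-mutations bookkeeping in step (2)--(4): it is not a priori obvious that one can reach \emph{every} triangulation from the fan by flipping each unwanted fan-diagonal exactly once while keeping all the relevant cycles simultaneously short, and while respecting the case analysis (leftmost rake, super-rakes, etc.) in the proof of Theorem~\ref{thm:std_and_clifford_tori} that guarantees a triangle is never obscured by a later mutation. Making this precise likely requires an induction on $n$ (peeling off an ear of the polygon) or a careful choice of the flip order dictated by the nesting of diagonals, and verifying that the intermediate weaves all remain 2-weaves with the cycles of $\Gamma'$ displayed as short {\sf I}-cycles. Everything else is either a direct citation (Theorem~\ref{thm:std_and_clifford_tori}, \cite[Proposition 3.5]{CasalsWeng}, the Riemann--Hurwitz genus formula) or standard associahedron combinatorics.
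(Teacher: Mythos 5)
Your proposal is correct in outline but takes a genuinely different route from the paper. The paper's proof of Theorem~\ref{thm: torusknotfillings} is a short, self-contained induction on $n$: it locates the ``interiormost'' trivalent vertex of the graph for $L$, observes that its two boundary-going edges meet the outer edge of the graph for $L_{init}$ to form a triangle in the doubled $2$-graph, splits off a $\T^2_c$ summand via the third move of Theorem~\ref{thm:czsurgery}, and recognizes the result as a double of fillings of $\la(2,n-1)$. You instead reduce to Theorem~\ref{thm:std_and_clifford_tori} by exhibiting $L$ as the result of mutating $L_{init}$ at a simultaneously simplifiable collection of cycles, each exactly once. The crux you correctly flag --- that every triangulation $T$ is reached from the fan by flipping each fan diagonal absent from $T$ exactly once --- is a true classical fact, and the clean way to close it is the standard degree argument: run the flip sequence from $T$ to the fan in which every flip increases the degree of the fan apex $v_0$, then reverse it; each reversed flip deletes one diagonal incident to $v_0$ and inserts one not incident to $v_0$, so no fan diagonal is ever re-created and no inserted diagonal is ever removed, forcing exactly $n-1-j$ flips at distinct fan diagonals, where $j$ is the number of shared diagonals. (Your worry about keeping all cycles simultaneously short through intermediate weaves is already absorbed into the hypotheses and case analysis of Theorem~\ref{thm:std_and_clifford_tori} --- the cycles of $L_{init}$ are literally short {\sf I}-cycles forming one long rake, which is Case 2 of that proof --- so no further isotopy bookkeeping is needed beyond choosing the order dictated by the reversed degree sequence.) What each approach buys: the paper's induction is shorter and independent of the rake analysis, while your route makes explicit the link between the decomposability of $\La(L_{init},L)$ and the mutation distance $d_\mu(L_{init},L)\le n-1$, which is exactly the perspective the paper adopts in the remark following the theorem and in Proposition~\ref{prop: decomposable}. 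Note also that both arguments really produce a Legendrian isotopy of the doubled surfaces, which is the correct reading of the statement.
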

\begin{proof}

Start with the ``highest" or ``interiormost" trivalent vertex in the weave representation of $L$ (if there are none, then it has to be $L_{init}$ and $l=0$). The two edges of this vertex that go to the boundary of the graph continue into the weave for $L_{init}$ and meet the outside edge at two trivalent vertices, thus creating a triangle. This corresponds to a connect summed $T_c$; doing the reverse operation as in the third row of Figure~\ref{fig:surgery} takes that summand out. The resulting 2-graph is the double of the initial filling of $\la(2,n-1)$ and some other weave filling, and the claim follows by induction.
\end{proof}

\begin{remark}
    Theorem \ref{thm: torusknotfillings} can be understood as a geometric consequence of a combinatorial property of triangulations of the $n+2$-gon. In particular, the triangulation corresponding to $L_{init}$ is a ``fan" with every edge sharing a single vertex. This fan triangulation (or any of its rotations) is special in that it is at most $n-1$ edge flips (i.e. mutations) away from any other triangulation.  
\end{remark}

\subsection{Cubic planar Legendrian doubles and point counts}

In this section, unless otherwise mentioned for statements made in greater generality, $\La$ will always be a connected orientable Legendrian surface in $J^1(S^2)$ arising as a  planar trivalent graph, as in \cite{TreumannZaslow}. Denote by $\mathcal{M}_1(\La; \P\F_q)$ the moduli of microlocal rank-one sheaves with singular support on $\La$ with coefficients in $\P\F_q$ i.e. the constructible sheaves appearing in the definition of singular support are locally constant and take values in $(\P\F_q)^n$. For a cubic planar Legendrian $\La$, we will refer to a choice of a  planar trivalent graph as $\La_G$, and denote its dual graph by $\La^*_G$. Note that the graph $\La_G$ is not necessarily well-defined up to isotopy. Then we have (by work of Sackel in the appendix to \cite{CasalsMurphy} and independently by Treumann-Zaslow \cite{TreumannZaslow}):

\begin{lemma}[Prop. 1.2 \cite{TreumannZaslow}]
   $$ (q+1)(q)(q-1)|\mathcal{M}_1^{fr}(\La; \P\F_q)|=P_{\La^*_G}(q+1)$$
\end{lemma}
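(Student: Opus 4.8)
The plan is to compute the moduli space $\mathcal{M}_1^{fr}(\La;\P\F_q)$ of framed microlocal rank-one sheaves directly from the cubic planar graph $\La_G$, using the flag-moduli description of $\SM_1$ recalled in Section~\ref{sec: sheaves background}. Since $\La=\La(\La_G)$ is a $2$-weave (microlocal rank one forces $N=2$), the singular-support data reduces to assigning, to each face $F$ of $\La_G\subseteq S^2$, a line $\ell(F)\subseteq \P\F_q^2$ — i.e. a point of $\P^1(\F_q)$ — such that whenever two faces $F_1,F_2$ are separated by an edge, the assigned points are \emph{distinct}, $\ell(F_1)\neq\ell(F_2)$. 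Thus a microlocal rank-one sheaf with singular support on $\La$ is precisely a proper coloring of the faces of $\La_G$ by the $q+1$ elements of $\P^1(\F_q)$, equivalently a proper coloring of the dual graph $\La_G^*$ by $q+1$ colors, and these are counted by the chromatic polynomial $P_{\La_G^*}(q+1)$.

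Next I would account for the difference between the unframed moduli $\mathcal{M}_1(\La;\P\F_q)$, which quotients the space of flags by the diagonal $\mathrm{PGL}_2(\F_q)$-action, and the framed moduli $\mathcal{M}_1^{fr}(\La;\P\F_q)$ appearing in the statement. Concretely, the count $P_{\La_G^*}(q+1)$ is the number of tuples $(\ell(F))_F$ with no normalization imposed; passing to the framed moduli introduces a rigidification that cuts down by exactly the order of the relevant automorphism group. Here the structure group acting is $\mathrm{GL}_2$, but since all the data are lines the center $\F_q^*$ acts trivially, so the effective action is by $\mathrm{PGL}_2(\F_q)$, whose order is $(q+1)q(q-1)$. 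Because $\La$ is connected and its flag moduli is cut out with enough transversality (adjacent faces get distinct lines, and one checks the action on any valid coloring is free — two distinct lines already have trivial common stabilizer in $\mathrm{PGL}_2$), the $\mathrm{PGL}_2(\F_q)$-action on the set of proper colorings is free, giving
\[
(q+1)(q)(q-1)\,\bigl|\mathcal{M}_1^{fr}(\La;\P\F_q)\bigr| = \bigl|\{\text{proper }(q+1)\text{-colorings of }\La_G^*\}\bigr| = P_{\La_G^*}(q+1).
\]
I would spell out the free-action claim carefully: on a connected graph a proper coloring involves at least two distinct colors (lines), and an element of $\mathrm{PGL}_2(\F_q)$ fixing two distinct points of $\P^1$ and acting on a connected configuration must be the identity.

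I expect the main obstacle to be the bookkeeping in the framed-versus-unframed comparison — precisely identifying which normalization the definition of $\mathcal{M}_1^{fr}$ imposes and verifying the resulting group action is free rather than merely of the expected generic stabilizer, so that the naive division by $|\mathrm{PGL}_2(\F_q)|$ is exactly correct (not just correct up to stacky weighting). A secondary subtlety is justifying that for a $2$-weave over a planar trivalent graph the local computations at trivalent and hexavalent vertices of Section~\ref{sec: sheaves background} impose \emph{no} further constraints beyond distinctness of lines across edges; this is where I would cite \cite[Theorem 5.3]{CasalsZaslow} and the face/edge description specialized to $N=2$. Once these two points are pinned down the identification with $P_{\La_G^*}$ is immediate from the definition of the chromatic polynomial as the count of proper colorings.
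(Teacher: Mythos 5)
The paper does not actually prove this lemma; it is quoted from Treumann--Zaslow (Prop.~1.2 of \cite{TreumannZaslow}, with independent work in Sackel's appendix to \cite{CasalsMurphy}), so there is no internal proof to compare against. Your reconstruction follows the standard route, and its first half is correct: for a $2$-weave over a trivalent planar graph the flag-moduli description of Section~\ref{sec: sheaves background} reduces to assigning a line in $\F_q^2$, i.e.\ a point of $\P^1(\F_q)$, to each face of $\La_G$, with the only constraint being distinctness of the lines across each edge; such assignments are exactly the proper $(q+1)$-colorings of $\La^*_G$, counted by $P_{\La^*_G}(q+1)$.

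The genuine gap is in your justification that the $\PGL_2(\F_q)$-action on these colorings is free. You assert that ``two distinct lines already have trivial common stabilizer in $\PGL_2$'' and, again, that an element of $\PGL_2(\F_q)$ fixing two distinct points of $\P^1$ must be the identity. This is false: the stabilizer of an ordered pair of distinct points of $\P^1(\F_q)$ is a maximal torus of order $q-1$ (the diagonal matrices fix both $[1:0]$ and $[0:1]$). Trivial stabilizer requires \emph{three} pairwise distinct points, since $\PGL_2$ acts simply transitively on ordered triples of distinct points of $\P^1$. The correct fix uses the cubic structure: at any trivalent vertex of $\La_G$ the three incident faces are pairwise adjacent (they form a triangle in $\La^*_G$, as the paper itself uses in the proof of Lemma~\ref{lem:chromatic}), so every proper coloring assigns them three pairwise distinct lines, whence the stabilizer of the whole configuration is trivial and the division by $|\PGL_2(\F_q)|=(q+1)q(q-1)$ is exact. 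This is not cosmetic: a configuration realizing only two distinct lines would have nontrivial stabilizer, so freeness genuinely rests on the presence of a trivalent vertex, which is exactly the point your argument misstates. You correctly identified freeness as the crux of the bookkeeping, but the reason you supply for it does not hold.
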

where $P_{\La^*_G}(q+1)$ is the chromatic polynomial of $\La^*_G$ evaluated at $q+1$.

It was conjectured in \cite{TreumannZaslow} that the chromatic polynomial is a complete invariant for cubic planar Legendrians.

\begin{remark}
    The doubles in Figure~\ref{fig:chromaticpolydouble} have the same chromatic polynomials. However the  graphs are not isomorphic, and unlike the examples in \cite[Remark 4.3]{TreumannZaslow} due to Casals-Murphy, do not have any triangles that could correspond to Clifford tori summands. 
\end{remark}

\begin{lemma}\label{lem:chromatic}
For $\La=\La'\#^k \T^2_{std}\#^l \T^2_c$, we have $P_{\La^*_G}(q+1)=(q-1)^k(q-2)^lP_{\La'^*_G}(q+1)$.
\end{lemma}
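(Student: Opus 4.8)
The plan is to argue multiplicatively by reducing to the two atomic cases $\La = \La' \# \T^2_{\std}$ and $\La = \La' \# \T^2_c$, since the general statement follows by induction on $k + l$ once these are established. For the inductive step I would use the dual-graph description: connect summing a cubic planar Legendrian with $\T^2_{\std}$ or $\T^2_c$ corresponds, on the level of the trivalent graph $\La_G$, to the local surgery moves of Theorem~\ref{thm:czsurgery} (rows two and three of Figure~\ref{fig:surgery}), and I would track what each move does to the dual graph $\La^*_G$. The key point is that the sheaf-theoretic point count is multiplicative under connect sum in the relevant sense: since $\SM_1$ of a connect sum is computed by a fiber product over the sheaf moduli of the separating sphere (as in the homotopy pullback diagram for doubles, but now over $S^2$ rather than $\mathbb{R}$), and the moduli over the standard $\T^2$ summands contribute a single extra toric factor, the framed point count picks up exactly one factor of $|\mathcal{M}_1^{fr}(\T^2_{\std})|$ or $|\mathcal{M}_1^{fr}(\T^2_c)|$.

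Concretely, I would first record the base computations. For $\La = \T^2_{\std}$ the dual graph $\La^*_G$ is (a graph whose chromatic polynomial, evaluated at $q+1$, carries) the factor $(q-1)$ relative to the empty/trivial piece; this is just the statement that adding a standard-torus summand multiplies $|\mathcal{M}_1^{fr}(\La;\mathbb{P}\F_q)|$ by $q-1$, matching the $(q-1)$ on the chromatic side via Lemma~(Prop.~1.2 of \cite{TreumannZaslow}). Similarly, $\T^2_c$ contributes the factor $(q-2)$: the Clifford torus summand appears in $\La^*_G$ as a triangle (a $3$-cycle glued along an edge or vertex, per the discussion preceding Question~\ref{quest: cubic_planar_from_double}), and deleting/contracting a triangle in a graph multiplies the chromatic polynomial evaluated at $q+1$ by $(q+1-3) = q-2$ — this is the standard deletion–contraction bookkeeping for a triangle attached along an edge. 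I would make this precise by identifying the exact local picture in $\La^*_G$ produced by the move in Figure~\ref{fig:surgery}, and then invoking the multiplicativity of the chromatic polynomial for graphs glued along a complete subgraph $K_m$: if $G = G_1 \cup_{K_m} G_2$ then $P_G(t) = P_{G_1}(t) P_{G_2}(t) / P_{K_m}(t)$. Here the relevant gluing is along a $K_1$ or $K_2$, and $P_{K_1}(q+1) = q+1$, $P_{K_2}(q+1) = (q+1)q$, so the normalization works out to leave precisely $(q-1)$ or $(q-2)$.

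For the proof proper I would therefore: (i) fix the local models in $\La_G$ for the two connect-sum operations and compute the induced local change in $\La^*_G$; (ii) verify that this change is a gluing along $K_1$ or $K_2$ with the "extra" piece being a small graph whose chromatic polynomial at $q+1$ is $(q+1)q(q-1)$ (standard torus) or $(q+1)q(q-2)$ (Clifford torus), after dividing by the overlap $P_{K_m}(q+1)$; (iii) apply the clique-gluing multiplicativity formula to peel off one factor of $(q-1)$ or $(q-2)$ per summand; (iv) induct on $k+l$. The main obstacle I anticipate is step (i)–(ii): one must be careful that the connect-sum operation of Theorem~\ref{thm:czsurgery} really does correspond to a \emph{clean} clique-gluing on the dual side — in particular that the separating sphere/curve meets the rest of the graph in exactly the complete subgraph claimed and that no extra edges or identifications are introduced — and that the small "cap" graph is the one I claim. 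Once that local identification is nailed down, the chromatic-polynomial algebra is routine. An alternative route that sidesteps some of the graph combinatorics is to argue entirely on the sheaf side: show $|\mathcal{M}_1^{fr}(\La'\#^k\T^2_{\std}\#^l\T^2_c;\mathbb{P}\F_q)| = (q-1)^{k-?}(q-2)^{l}\,|\mathcal{M}_1^{fr}(\La';\mathbb{P}\F_q)| \cdot (\text{normalization})$ using Lemma~\ref{lem: Double Sheaves}-style fiber-product descriptions of connect sums, and then transport the identity to chromatic polynomials via the displayed formula of Prop.~1.2; I would likely present whichever of the two is cleaner after checking the local models.
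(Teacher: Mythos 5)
Your approach is essentially the paper's. The paper's proof takes $\La_G$ to be $\La'_G$ with $k$ bigons and $l$ triangles (via Theorem~\ref{thm:czsurgery}) and observes that in the dual graph a bigon becomes a valence-two vertex whose two neighbors are themselves adjacent (their faces share two edges), while a triangle becomes a valence-three vertex whose three neighbors are pairwise adjacent; any proper coloring of the remaining graph then extends to the new vertex in exactly $x-2$ (resp.\ $x-3$) ways, which at $x=q+1$ gives the factors $(q-1)$ and $(q-2)$. That is precisely your clique-gluing computation, phrased as a direct count rather than via the formula $P_{G_1\cup_{K_m}G_2}=P_{G_1}P_{G_2}/P_{K_m}$, and your induction on $k+l$ is implicit in the paper's ``for some $v_b$/$v_t$'' phrasing.

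One correction to your bookkeeping in the Clifford case: the overlap clique is $K_3$, not $K_1$ or $K_2$. The dual cap of a triangle of $\La_G$ is a $K_4$ (the new vertex together with its three pairwise-adjacent neighbors) glued along the $K_3$ spanned by those neighbors, so the peeled-off factor is $P_{K_4}(q+1)/P_{K_3}(q+1)=(q+1)-3=q-2$. There is no cap graph with chromatic polynomial $(q+1)q(q-2)$ glued along a $K_2$ arising from this local picture, so that particular normalization would not go through as written; the standard-torus case (a $K_3$ cap glued along a $K_2$, giving $q-1$) is correct as you have it. With the Clifford local model fixed, your steps (i)--(iv) reproduce the paper's argument; the sheaf-side alternative you sketch is not what the paper does and is unnecessary here.
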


\begin{proof}
    Let $\Lambda=\La'\#^k \T^2_{std}\#^l \T^2_c$. Then by Theorem~\ref{thm:czsurgery}, one candidate for $\La_G$ is the 2-graph $\La'_G$, with $k$ bigons and $l$ triangles. In the dual graph, each of these $k$ bigons corresponds to a vertex $v_b$ of valence two, while each of the triangles corresponds to a vertex $v_t$ of valence three. For some $v_b$, the coloring of the two adjacent vertices (themselves adjacent because their dual faces share two edges) leaves $x-2$ colorings for $v_b$. For some $v_t,$ each of the three adjacent vertices is itself adjacent to the other two, so we have that there are $x-3$ remaining options for coloring $v_t$. 
\end{proof}

\begin{definition}
    We will refer to a cubic planar Legendrian as {\em decomposable} if it is a connect sum of standard and Clifford tori.
\end{definition}

\subsection{Mutation distance}

As an alternative method to distinguishing doubles by point counts, we introduce the notion of mutation distance. Let $\la$ be a braid-positive Legendrian link for which we know the ring of regular functions on the sheaf moduli $\mathcal{M}_1(\la)$ admits a cluster structure. By \cite{CasalsGao24} for every cluster seed in $\mathbb{C}[\mathcal{M}_1(\la)]$, there is an exact Lagrangian filling, equipped with an $\mathbb{L}$-compressible system $\Gamma$, which define toric coordinates $A(\Gamma)$ such that they are cluster coordinates for a cluster seed for the cluster algebra structure on $\mathbb{C}[\SM_1(\la)]$. Let $L_1$ and $L_2$ be two such fillings; they are known to be related by a sequence of Lagrangian disk surgeries $\mu_1,\dots, \mu_n$.

\begin{definition}
    The {\em mutation distance} $d_\mu(L_1, L_2)$ between fillings $L_1$ and $L_2$ is the smallest $n$ such that $\mu_n\circ \dots \mu_1(L_1)\cong L_2$ where $\cong$ denotes Hamiltonian isotopy. 
\end{definition}

It is known that there exists a sequence whose length is equal to the smallest number of mutations relating the two associated cluster seeds \cite{CasalsGao24}. However, note that the correspondence between exact Lagrangian fillings and cluster charts is via an $\mathbb{L}$-compressing system. We currently do not know whether exact Lagrangian fillings can have distinct $\mathbb{L}$-compressing systems, and thus, could potentially have the toric chart induced by a filling belonging to distinct cluster structures on $\mathbb{C}[M_1(\la)]$. 

For this reason, we introduce another notion, that of an {\em algebraic} mutation distance, which measures the distance between the associated cluster charts (as shown in \cite{CasalsGao24}) corresponding to fillings $L_1$ and $L_2$. 

\begin{definition}
    The {\em algebraic mutation distance} $d_\mu^a(L_1, L_2)$ between fillings $L_1$ and $L_2$ is the smallest $n$ such that the associated cluster seeds are related by $n$ mutations.
\end{definition}

We define $d_\mu^a(L_1, L_2)$ to be infinite if $L_1$ and $L_2$ induce cluster seeds in distinct cluster algebras. This can occur if $\SM_1(\la)$ is not irreducible, as in the case of $m(8_{21})$, or if there are distinct cluster structures on the same irreducible variety induced by different collections of fillings of $\la$. There are as yet no known examples of the latter kind, nor are such examples expected. As a consequence, it is reasonable to conjecture that $d_\mu(L_1,L_2) = d_\mu^a(L_1,L_2)$. Theorem~\ref{thm: torusknotfillings} above gives an example of a cases in which $d_\mu^a$ is readily computable and determines the Legendrian isotopy class of Legendrian doubles formed from Legendrian weaves. These cases, together with additional computational evidence, suggest the following. 

\begin{conjecture}\label{conj: Mutation distance invariant}
    The mutation distance $d_\mu(L_1, L_2)$ between $L_1$ and $L_2$ is a Legendrian isotopy invariant of the Legendrian double $\La(L_1, L_2)$.
\end{conjecture}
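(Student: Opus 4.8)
\emph{Proof proposal.} The conjecture asks that the integer $d_\mu(L_1,L_2)$ depend only on the Legendrian isotopy class of the surface $\La(L_1,L_2)$, so the plan is to manufacture a Legendrian isotopy invariant $\Phi$ of $\La(L_1,L_2)$ together with an identity $\Phi(\La(L_1,L_2))=d_\mu(L_1,L_2)$. The natural source of such a $\Phi$ is the sheaf moduli: by Lemma~\ref{lem: Double Sheaves} one has $\SM_1(\La(L_1,L_2))=\mathcal{C}_{L_1}\cap\mathcal{C}_{L_2}$, and more generally the full category $\Sh_{\La(L_1,L_2)}(S^2)$ is the homotopy pullback $\op{Loc}(L_1)\times_{\Sh_\la(\R)_0}\op{Loc}(L_2)$; all of these, and their point-count, weight-polynomial and derived refinements, are invariants by \cite{GKS_Quantization}. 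The program then splits into two steps: (Step 1) relate $d_\mu$ to the algebraic mutation distance $d_\mu^a$ between the cluster seeds of $L_1$ and $L_2$, and (Step 2) recover $d_\mu^a$ from the invariant package above.

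\textbf{Step 1.} A Lagrangian disk surgery changes the induced cluster chart by a single seed mutation (this is the dictionary used in \cite{CasalsGao24,CasalsWeng}), so every length-$N$ surgery sequence from $L_1$ to $L_2$ yields a length-$N$ mutation path between their seeds and hence $d_\mu(L_1,L_2)\ge d_\mu^a(L_1,L_2)$. For the reverse inequality one realizes a minimal mutation path by Legendrian weave mutations; \cite{CasalsGao24} guarantees each intermediate seed is a filling carrying the requisite $\L$-compressing system, and the only remaining point is to identify the terminal filling with $L_2$ up to Hamiltonian isotopy. For $\la=\la(2,n)$ and $L_1,L_2$ trivalent graph fillings this is unconditional: mutation paths are flip sequences between triangulations of the $(n+2)$-gon, each flip is realized by a Legendrian weave mutation, the terminal weave is by definition the one attached to the triangulation of $L_2$, and distinct triangulations give distinct fillings; thus $d_\mu(L_1,L_2)$ equals the flip distance of the two triangulations. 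In general, equality of $d_\mu$ and $d_\mu^a$ is the open ``no exotic $\L$-compressing system'' question flagged in the text, so in the general case the conjecture is only reduced to Step 2 modulo that input.

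\textbf{Step 2.} Fix the seed $s_1$ of $L_1$; by the Laurent phenomenon each cluster variable of $s_2$ is a Laurent polynomial on $\mathcal{C}_{L_1}\cong(\C^\ast)^{b_1(L_1)}$, and $\SM_1(\La(L_1,L_2))=\mathcal{C}_{L_1}\cap\mathcal{C}_{L_2}$ is the complement of the zero loci of those cluster variables that are not monomials. One then seeks an invariant $\Phi$ of this open variety --- candidates being the number of codimension-one boundary strata in its natural stratification, the degree or low-order coefficients of its weight/point-count polynomial, or the homotopy pullback category itself --- and proves $\Phi=d_\mu^a$. The lower bound $\Phi\ge d_\mu^a$ should come from a monotonicity estimate (performing one extra mutation changes $\Phi$ by a controlled amount); it can be calibrated against the cases where the answer is known, namely Theorems~\ref{thm: Symmetric}, \ref{thm:std_and_clifford_tori} and \ref{thm: torusknotfillings}, which give $\La(L,L')\cong\#^{b_1-k}\T^2_{st}\#^{k}\T^2_c$ in the low-distance regime with $k$ detectable from the chromatic polynomial by Lemma~\ref{lem:chromatic}, while Theorem~\ref{thm: intro_notdecomposable} controls the opposite regime. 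For $\la(2,n)$ this reduces the conjecture to the purely combinatorial assertion that the flip distance between two triangulations of the $(n+2)$-gon is determined, up to the dihedral symmetry and the swap of the two triangulations, by the cubic planar graph on $S^2$ obtained by gluing their dual trees.

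\textbf{Main obstacle.} The crux is the converse bound in Step 2. The pair of doubles in Figure~\ref{fig:chromaticpolydouble} has equal chromatic polynomials --- equivalently, equal $\F_q$-point counts of $\SM_1(\La(L_1,L_2))$ --- so the point-count polynomial is too coarse and $\Phi$ must be strictly finer (the weight polynomial, or the category itself); one then has to prove simultaneously that Legendrian isotopy preserves this finer $\Phi$ and that $\Phi$ pins the mutation count, two demands in visible tension, and indeed if those two doubles turn out to be Legendrian isotopic with unequal flip distances the conjecture is false. Beyond finite type there is no usable handle on the relative position of two distant cluster charts (Remark~\ref{rmk: intersections_bad}), so even defining $\Phi$ intrinsically is unclear, and in the general case Step 1 is additionally obstructed by the $\L$-compressing-system uniqueness question. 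A realistic first target is therefore $\La(L,L')$ with $L,L'$ trivalent graph fillings of $\la(2,n)$, where Step 1 is unconditional and only the combinatorial kernel of Step 2 remains.
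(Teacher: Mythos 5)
The statement you are addressing is stated in the paper as a \emph{conjecture}, and the paper offers no proof of it; it only records supporting evidence (Theorem~\ref{thm: torusknotfillings} and related computations) and a suggested route to a proof. Your proposal is appropriately honest that it is a program rather than a proof, and your identification of the two core difficulties --- the possible non-uniqueness of $\L$-compressing systems separating $d_\mu$ from $d_\mu^a$, and the coarseness of the point-count invariant exhibited by Figure~\ref{fig:chromaticpolydouble} --- matches the paper's own caveats (see Remark~\ref{rmk: intersections_bad} and the discussion of $d_\mu$ versus $d_\mu^a$). So there is no gap to flag in the sense of a false step; the issue is only that neither you nor the paper closes the argument.

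Where your route genuinely differs from the paper's is in Step 2. You propose to manufacture an invariant $\Phi$ of $\La(L_1,L_2)$ out of the sheaf moduli (point counts, weight polynomials, or the pullback category) and calibrate it against $d_\mu^a$; as you note, the chromatic-polynomial coincidence already shows the naive version of $\Phi$ fails. The paper instead sandwiches the quantity geometrically: it observes
\[
d_\mu^a(L_1,L_2)\;\ge\; d_\mu(L_1,L_2)\;\ge\; d_{\std}(\La(L_1,L_2)),
\]
where $d_{\std}$ is the minimum number of non-loose Legendrian disk surgeries needed to reduce the double to a connect sum of standard tori. Since $d_{\std}$ is tautologically a Legendrian isotopy invariant, the conjecture would follow from the single equality $d_\mu^a = d_{\std}$, with no need to extract an invariant from the sheaf category at all. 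This buys a cleaner target (one combinatorial-geometric equality rather than the construction and invariance of a new $\Phi$), at the cost of requiring control over \emph{all} disk-surgery sequences trivializing the double, not just those coming from mutation paths. Your Step 1 reduction for $\la(2,n)$ via flip distance of triangulations is consistent with the paper's evidence, and Remark~\ref{rem: SSZcluster} suggests the paper's authors also contemplate an algebraic surrogate $d^a_{\std}$ close in spirit to your $\Phi$. If you pursue this, I would recommend adopting the $d_{\std}$ sandwich as the organizing principle and treating your sheaf-theoretic $\Phi$ only as a tool for the lower bound $d_{\std}\ge d_\mu^a$ in the $\la(2,n)$ case.
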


Several results below give additional evidence for this conjecture in simple cases where the combinatorics of trivalent graphs allows us to compute $d_\mu$.

We will also define another notion of distance of Legendrian surfaces. Let $\La$ be a Legendrian surface (not necessarily cubic planar). Recall that Legendrian disk surgery is a lift of Lagrangian disk surgery.

\begin{definition}
    We denote by $d_\std(\La)$ the minimum number of non-loose Legendrian disk surgeries required to produce a connect sum of standard tori from $\La$. We define $d_\std(\La)$ to be infinite if no such sequence of Legendrian disk surgeries exists.
\end{definition}

Note that when $\la$ is a braid positive Legendrian link, Theorem~\ref{thm: Symmetric} immediately implies that \[d_\mu^a(L_1, L_2) \geq d_\mu(L_1, L_2)\geq d_\std(\La(L_1, L_2))\] Since $d_\std(\La(L_1, L_2))$ is manifestly a Legendrian isotopy invariant of $\La(L_1, L_2)$, Conjecture~\ref{conj: Mutation distance invariant} could be verified by showing that $d_\mu^a(L_1, L_2) = d_\std(\La(L_1, L_2))$. 

Given any two fillings $L_1$ and $L_2$ of a Legendrian $\la$, even the quantity $d_\mu^a(L_1, L_2)$ is generally not straightforward to compute. For example, if $L_1$ and $L_2$ are fillings of $\la(2, n)$, the algebraic mutation distance between $L_1$ and $L_2$ is directly related to a longstanding combinatorial problem of computing the number of edge flips between any two triangulations of the $n-$gon.

\begin{remark}\label{rem: SSZcluster}
There is a putative cluster structure \cite{scharitzer-shende, SchraderShenZaslow} on the space of cubic planar Legendrians. In those articles, the authors investigate wavefunctions or counts of holomorphic disks in 6-space that (conjecturally) correspond to Gromov-Witten invariants of Lagrangian fillings of the cubic planar Legendrians. According to \cite{SchraderShenZaslow}, given a cubic planar graph $\Gamma \subset S^2$, the associated cluster chart can be identified with a torsor over rank one local systems on the Legendrian surface $\La_\Gamma$. The cluster variety is the space of Borel-decorated $PGL_2$ local systems on a punctured sphere $S$ with unipotent monodromy around the punctures. Mutation in this cluster algebra corresponds to edge flips on the cubic planar graphs. This allows one to speculate about an algebraic analogue of $d_{\std}$. The graph $\Gamma_{neck}$ \cite[Figure 1.5.1]{SchraderShenZaslow} lifts to a connect sum of standard tori. Given a graph $\Gamma$, one can define $d_{\std}^a(\La_\Gamma)$ to be the minimum number of edge flips relating $\Gamma$ with a cubic planar graph $\Gamma'$ which corresponds to a connect sum of standard tori. This raises the following interesting question about distances in two seemingly unrelated cluster algebras -- with $L_1$ and $L_2$ as above, is $d_\mu^a(L_1,L_2) = d_{\std}^a(\La(L_1,L_2))$? We point out that moduli of sheaves $\SM_1(\La)$ for a cubic planar Legendrian appears in {\em loc. cit.} as the intersection of the cluster chart corresponding to $\La$ with the chromatic Lagrangian $\SM$; see \cite[Remark 4.2]{SchraderShenZaslow}. 
\end{remark}

We show now that if the algebraic mutation distance between two trivalent graph fillings of $\la(2,n)$ is sufficiently large, the corresponding Legendrian double is not a connect sum of standard and Clifford tori. This is a restatement of Theorem~\ref{thm: intro_notdecomposable}.

\begin{theorem}\label{thm: notdecomposable}
    Let $L_1$ and $L_2$ be two trivalent graph fillings of $\la(2,n)$. For $n \geq 4$, if $d_{\mu}^a (L_1, L_2) \geq n$, the double $\La(L_1,L_2)$ is not a connect sum of standard and Clifford tori.
\end{theorem}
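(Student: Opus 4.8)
The plan is to prove the contrapositive: if $\La(L_1,L_2)$ is a connect sum of standard and Clifford tori, then $d_\mu^a(L_1,L_2)\le n-1$, which contradicts $d_\mu^a(L_1,L_2)\ge n$. Throughout I identify $L_1,L_2$ with triangulations $T_1,T_2$ of the $(n+2)$-gon and use that $d_\mu^a(L_1,L_2)$ is the flip distance between $T_1$ and $T_2$.

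\emph{Setting up the chromatic invariant.} First I would identify the graph $\La(L_1,L_2)^*_G$ dual to the doubled $2$-graph: from the flag-moduli description of $\SM_1$, its faces are indexed by the $n+2$ polygon vertices, an edge dual to a polygon edge joins the faces at that edge's two endpoints, and an edge dual to a diagonal of $T_i$ joins the faces at that diagonal's endpoints — so $\La(L_1,L_2)^*_G$ is the simple graph underlying $T_1\cup T_2$, the union of the two triangulation graphs on the $n+2$ polygon vertices. Since $\La(L_1,L_2)$ has genus $n-1$, the doubled $2$-graph has $2n$ vertices and $n+2$ faces, so $\deg P_{T_1\cup T_2}=n+2$. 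If $\La(L_1,L_2)\cong\#^k\T^2_{\std}\#^l\T^2_c$ with $k+l=n-1$, then Lemma~\ref{lem:chromatic}, the Legendrian isotopy invariance of the chromatic polynomial of the dual \cite{TreumannZaslow, CasalsMurphy2}, and the degree count together force $P_{T_1\cup T_2}(q+1)=q(q+1)(q-1)^{k+1}(q-2)^{l}$. Matching the coefficient of $q^{n+1}$ against the edge count of $T_1\cup T_2$ then identifies $k$ with the number $c=|T_1\cap T_2|$ of common diagonals, so $l=n-1-c$.

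\emph{The induction.} I would then induct on $n$ over pairs $(T_1,T_2)$ whose double has chromatic polynomial of the shape above. If $c\ge1$, cut along a common diagonal $\delta$: it partitions the polygon into sub-polygons on which $T_1,T_2$ restrict to triangulations, the double splits as $\La(T_1',T_2')\#\T^2_{\std}\#\La(T_1'',T_2'')$ with both restricted pairs again satisfying the chromatic constraint (using multiplicativity of $P$ over $\delta$), and flip distance is sub-additive along $\delta$ since one can flip the two sides independently without ever touching $\delta$; by induction $d_\mu^a(T_1,T_2)\le(n'-1)+(n''-1)=n-2$. If $c=0$, the constraint reads $P_{T_1\cup T_2}(x)=x(x-1)(x-2)(x-3)^{n-1}$; I would deduce from this that the double has a triangular face, i.e. a polygon vertex $p$ that is an ear of one triangulation, say $T_1$, and lies in exactly two triangles of $T_2$ (equivalently, a degree-three vertex of $T_1\cup T_2$ whose neighbours form a triangle). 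Deleting $p$ produces triangulations $T_1',T_2'$ of the $(n+1)$-gon whose double is again constrained in the same way — its dual chromatic polynomial is $P_{T_1\cup T_2}(q+1)/(q-2)$ — and is, as in the proof of Theorem~\ref{thm: torusknotfillings}, a double of trivalent-graph fillings of $\la(2,n-1)$. Flipping the diagonal $\overline{px}$ of $T_2$ turns $p$ into an ear of $T_2$ as well, so that any flip sequence from $T_1'$ to $T_2'$ lifts to one from $T_1$ to $T_2$ of length one greater; hence $d_\mu^a(T_1,T_2)\le d_\mu^a(T_1',T_2')+1\le(n-2)+1=n-1$. This closes the induction. (The hypothesis $n\ge4$ is exactly what makes the statement non-vacuous: the flip diameter of the $(n+2)$-gon is below $n$ for $n\le3$.)

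\emph{The main obstacle.} The step I expect to require the most care is the $c=0$ case — showing that the identity $P_{T_1\cup T_2}(x)=x(x-1)(x-2)(x-3)^{n-1}$ genuinely produces the degree-three simplicial vertex. Since $c=0$ forces $T_1\cup T_2$ to have $3(n+2)-6$ edges, it is a maximal planar graph, and one expects such a graph with this chromatic polynomial to be a stacked triangulation (a $3$-tree), which has a degree-three simplicial vertex; making this rigorous may require invoking a chromatic characterization of $3$-trees among planar triangulations. The more elementary alternative, in keeping with this section, is an ear-analysis: among the $\ge2$ ears of $T_1$, each lies in at least two triangles of $T_2$ because $T_1$ and $T_2$ share no diagonal, and I would show that if none lay in exactly two triangles of $T_2$ one could exhibit a proper $3$-colouring of $T_1\cup T_2$, contradicting $P_{T_1\cup T_2}(3)=0$. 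Secondary points to verify are that deleting the ear vertex really returns a double of trivalent-graph fillings of $\la(2,n-1)$ (extending the reduction in the proof of Theorem~\ref{thm: torusknotfillings}) and that the two elementary flip lemmas hold with the stated constants.
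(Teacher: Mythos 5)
Your strategy is essentially the paper's: prove the contrapositive by induction on $n$, peeling off a bigon or triangle of the doubled $2$-graph at each step while tracking the chromatic polynomial of the dual, which the paper likewise identifies with the superimposition $T_1\cup T_2$ (Lemma~\ref{lem: doubledual}). Several pieces of your bookkeeping are correct and in fact cleaner than the paper's: the coefficient comparison identifying $k$ with the number $c$ of common diagonals checks out, and your flip-distance reduction $d_\mu^a(T_1,T_2)\le d_\mu^a(T_1',T_2')+1$ (first flip $\overline{px}$, then lift any flip sequence between the reduced triangulations) uses only the easy inequality, whereas Lemma~\ref{lem: inductionstep}(iii) invokes Pournin's lemma to get equality, which the contrapositive does not need. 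The $c\ge 1$ case (subadditivity of flip distance across a common diagonal, multiplicativity of $P$ over the separating edge, unique factorization to pass the hypothesis to both sides) is sound.

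The genuine gap is exactly where you place it, and neither of your proposed routes closes it as stated. In the $c=0$ case you must extract a degree-three vertex of $T_1\cup T_2$ (any such vertex is automatically simplicial here, since it carries exactly one diagonal, necessarily from a single $T_i$, so its neighbours are joined by the other triangulation's ear diagonal and by the two incident triangles) from the identity $P_{T_1\cup T_2}(x)=x(x-1)(x-2)(x-3)^{n-1}$. That identity says $T_1\cup T_2$ is a maximal planar graph with exactly $4!$ proper $4$-colourings, and the statement that uniquely $4$-colourable planar graphs are precisely the planar $3$-trees is Fowler's theorem, a substantial external result proved by a discharging argument -- legitimate to cite, but not a routine verification. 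Your elementary alternative cannot work as written: for a maximal planar graph, $P(3)=0$ is equivalent to the existence of a vertex of odd degree, so non-$3$-colourability is far too weak to force a degree-three vertex; you need the full strength of unique $4$-colourability, not just the factor $(x-3)$. For comparison, the paper routes around this point by isolating the class of ``generalized cube graphs,'' proving via the triangle-counting coefficient $\binom{m}{2}-t_1$ of the chromatic polynomial that such graphs are never decomposable, and running the induction only on doubles outside this class; if you want to avoid Fowler's theorem, that triangle-count coefficient is the more promising elementary invariant to work with than the value $P(3)$.
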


\begin{figure}[h!]{ \includegraphics[width=.8\textwidth]{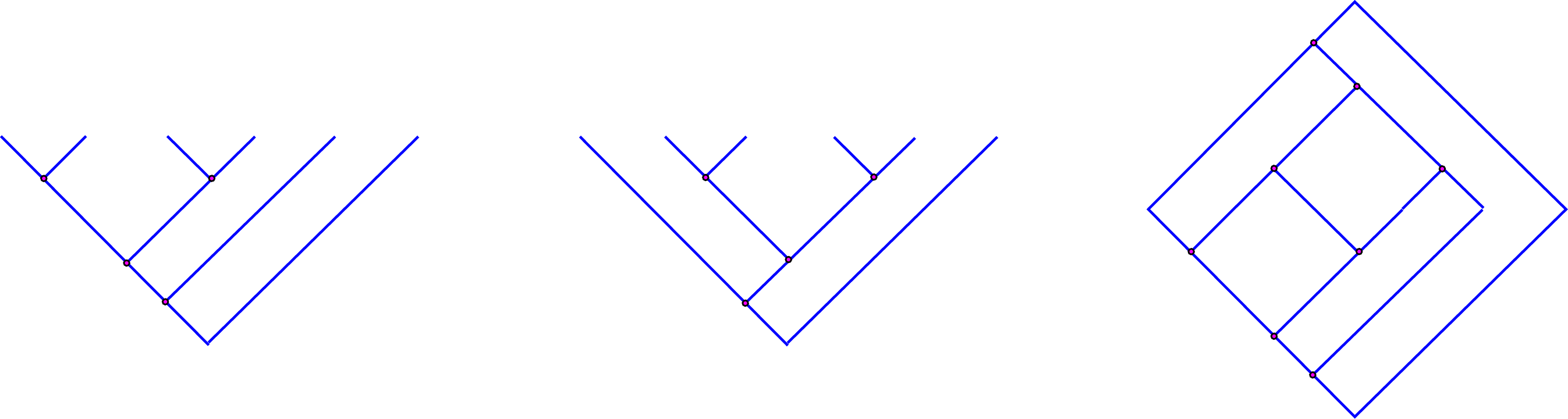}}\caption{Legendrians $L_1$ (left), $L_2$ (middle), and their double $\La(L_1, L_2)$ (right). The double is not a connect sum of standard and Clifford tori.}
			\label{fig:CubeGraphdouble}\end{figure}

\begin{remark}
    The first non-decomposable example is obtained for $n=4$. Let $L_1$ and $L_2$ be fillings of $\la(2,4)$ and consider their double $\La(L_1, L_2)$. Then as long as $d_\mu^a(L_1,L_2) \leq 3$,
    the doubles are connect sums of standard and Clifford tori. If $\{L_1, L_2\}=\{L_{2431}, L_{1324}\},$ ($L_2=\rho(L_1))$ then $\La(L_1, L_2)\not \cong  \#^{k} \T^2_{std} \#^l \T^2_{c}$. The 2-graph obtained by doubling the trivalent binary trees is the cube graph. In particular, $$P_{\La^*(L_1, L_2)}(q+1)=(q+1)(q)(q-1)(q^3-6q^2+14q-11)$$ where the cubic factor is irreducible over $\Q$ and has only one real root. It follows from the main result of \cite{SleatorTarjanThurston} that for $n \geq 9$ there are exact fillings $L_1$ and $L_2$ of $\la(2, n)$ such that $n < d_\mu(L_1, L_2) \leq 2n-6$. Observe that all the triangulations of the hexagon that are distance four apart (see \cite[Figure 4]{SleatorTarjanThurston}) give the cube graph when doubled.
\end{remark}

The above theorem, Theorem~\ref{thm: notdecomposable}, will be proved in a sequence of steps. First, we make the following observation about the 2-graph corresponding to a double, and its dual graph.

\begin{lemma}\label{lem: doubledual}
    Let $L_1$ and $L_2$ be two trivalent graph fillings of $\la(2,n)$. Let $T_1$ and $T_2$ be the corresponding triangulations of the $(n+2)$-gon. Let $\La_G(L_1,L_2)$ denote the trivalent graph whose Legendrian weave is the double $\La(L_1,L_2)$. Then the dual graph $\La_G^*(L_1,L_2)$ is the graph obtained by superimposing the triangulations $T_1$ and $T_2$.
\end{lemma}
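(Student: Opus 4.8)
The plan is to run everything through the classical Catalan bijection between triangulations of the $(n+2)$-gon and trivalent graph fillings of $\la(2,n)$, and then do an Euler-characteristic count on the doubled $2$-graph. First I would recall (this is exactly Proposition~\ref{prop:double}) that $\La(L_1,L_2)$ is the Legendrian weave over the $2$-graph $G_1\cup G_2\subset S^2$ obtained by gluing the two disks carrying the $N$-graphs $G_i$ along their common boundary circle, identifying matching leaves. Writing $P$ for the $(n+2)$-gon, the bijection $T_i\leftrightarrow G_i$ sends triangles of $T_i$ to the $n$ trivalent vertices of $G_i$, diagonals of $T_i$ to the $n-1$ internal edges of $G_i$, and the $n+2$ sides of $P$ to the $n+2$ leaves of $G_i$ on $\partial\D^2$. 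The one extra feature I would make explicit is the correspondence at the level of \emph{faces}: each component of $\D^2\setminus G_i$ meets $\partial\D^2$ in exactly one of the $n+2$ arcs cut out by the leaves, and the component meeting the arc between the two leaves labelled by the two sides of $P$ at a vertex $v$ is exactly the face $R_v^{(i)}$ assigned to $v$. That each face of $\D^2\setminus G_i$ meets the boundary in a single arc is the standard fact for a tree properly embedded in a disk with its leaves on the boundary; I would include a one-line induction on the number of trivalent vertices (or the usual "two disjoint arcs in the boundary of one face force two distinct paths between the same pair of leaves" argument).

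Next I would identify the faces of the double. Euler's formula gives $G_1\cup G_2$ with $2n$ vertices, $3n$ edges, hence $n+2$ faces on $S^2$. The $n+2$ identified leaf-pairs cut the gluing circle into $n+2$ arcs; by the previous paragraph each such arc is met by exactly one face of $\D^2\setminus G_1$ and one face of $\D^2\setminus G_2$, and these glue across the circle to a single face of $G_1\cup G_2$. So the $n+2$ faces of $\La_G(L_1,L_2)$ are canonically labelled by the $n+2$ vertices of $P$, and therefore the vertex set of $\La_G^*(L_1,L_2)$ is the vertex set of $P$.

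It remains to match the $3n$ edges of $\La_G^*(L_1,L_2)$, i.e.\ the $3n$ edges of $G_1\cup G_2$, with the $n+2$ sides of $P$ together with the $n-1$ diagonals of $T_1$ and the $n-1$ diagonals of $T_2$. A circle-crossing edge coming from a leaf labelled by a side $s$ of $P$ separates the two faces labelled by the two endpoints of $s$, so its dual edge is the side $s$ itself (the boundary edge common to $T_1$ and $T_2$). For an internal edge $e$ of $G_1$ corresponding to a diagonal $d_e$ of $T_1$, deleting $e$ splits $G_1$ into two subtrees in a way mirroring the splitting of $P$ along $d_e$ into sub-polygons $P',P''$: if $v$ is an endpoint of $d_e$ then its two incident sides of $P$ lie one in $P'$ and one in $P''$, so the path in $G_1$ joining the two corresponding leaves --- whose complementary face is $R_v$ --- must cross $e$, whereas if $v$ is not an endpoint of $d_e$ both incident sides lie on the same side of $d_e$ and this path avoids $e$; since $e$ borders exactly two faces, those are $R_v,R_{v'}$ with $\{v,v'\}$ the endpoints of $d_e$, so the dual of $e$ is precisely the diagonal $d_e$ of $T_1$ (and symmetrically for $G_2$ and $T_2$). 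Assembling these three cases shows the edge set of $\La_G^*(L_1,L_2)$ is $\{\text{sides of }P\}\cup\{\text{diagonals of }T_1\}\cup\{\text{diagonals of }T_2\}$ --- keeping a doubled edge for any diagonal shared by $T_1$ and $T_2$ --- which is the superposition of $T_1$ and $T_2$.

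The genuinely delicate point is the faces$\leftrightarrow$vertices dictionary together with the last edge-matching step: one has to keep straight which two faces of $G_1\cup G_2$ a given internal edge separates once the edge lives in only one hemisphere, and the subtree/sub-polygon splitting argument above is what controls this. Everything else is bookkeeping with Euler characteristic and the already-established weave description of the double, so I do not expect any serious obstacle beyond writing this combinatorial correspondence carefully.
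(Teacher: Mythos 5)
Your argument is correct. The paper actually states this lemma as an observation and supplies no proof at all, so there is nothing to compare against; your write-up fills that gap with exactly the argument one would expect — the Catalan duality between triangulations and trivalent trees, the identification of the $n+2$ faces of $G_1\cup G_2\subset S^2$ with the vertices of the polygon, and the subtree/sub-polygon splitting that shows an internal edge dual to a diagonal $(v,v')$ separates precisely the faces labelled $v$ and $v'$. The counts all check out ($2n$ vertices, $3n$ edges, $n+2$ faces), and your multigraph convention (shared diagonals giving doubled edges, i.e.\ bigons in $\La_G$) is consistent with how the authors use the lemma afterwards.
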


Following that, we observe a consequence in the dual graph of the presence of triangles in $\La_G(L_1,L_2)$. We set up the following notation for the mutation of a triangulation: Given a triangulation $T$ of an $n$-gon and a diagonal $D \subset T$, the triangulation $\mu_D(T)$ is the one obtained by flipping the diagonal $D$.

\begin{lemma}\label{lem: inductionstep}
Let $L_1$, $L_2$, $T_1$, $T_2$ be as above, and $G_1$ and $G_2$ be the corresponding 2-graphs. If $\La_G(L_1,L_2)$ contains a triangle,
\begin{enumerate}[i)]
\item There exists a diagonal $D$ in $T_2$ (or $T_1$) such that $\mu_D(T_2)$ (respectively $\mu_D(T_1)$) has one fewer diagonal distinct from $T_1$ (resp. $T_2$) than $T_2$ (resp. $T_1$)
\item We can remove an edge from both $G_i$ to get $G_i'$, that correspond to fillings $L_i'$ of $\la(2,n-1)$ and join to form an edge $E$ in $\La_G(L_1,L_2)$ which is a side of the triangle, such that $\La_G(L_1,L_2) - E = \La_G(L_1',L_2')$
\item $d_\mu^a(L_1',L_2') = d_\mu^a(L_1,L_2)-1$
\end{enumerate} 
\end{lemma}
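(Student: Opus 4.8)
The plan is to pass everything through triangulations via Lemma~\ref{lem: doubledual}, which identifies $\La_G^*(L_1,L_2)$ with $T_1\cup T_2$, and to reduce all three parts to elementary combinatorics of an ``ear'' together with one fact about flip distance. First I would pin down the local picture: a triangular face of the trivalent graph $\La_G(L_1,L_2)$ is dual to a trivalent vertex of $T_1\cup T_2$, and the structure of the superposition forces such a vertex to be a polygon vertex $v$ carrying the two boundary edges and exactly one diagonal, which can only be contributed by one of $T_1,T_2$. Hence, after possibly interchanging $T_1$ and $T_2$, the vertex $v$ is an ear of $T_2$, i.e.\ $T_2\supseteq\{v-1,v,v+1\}$ with ear diagonal $e=(v-1,v+1)$, while in $T_1$ the vertex $v$ is incident to exactly one diagonal $D=(v,w)$, so that $T_1\supseteq\{v-1,v,w\},\{v,v+1,w\}$ and $e\notin T_1$. (The alternative that $v$ is a \emph{common} ear is not a triangle but a bigon of $\La_G(L_1,L_2)$ --- this is the difference between a $\T^2_{st}$ summand and a $\T^2_c$ summand.) Since the weave of a trivalent-graph filling is essentially the dual tree of its triangulation, hence has no triangle, the triangle must straddle the gluing circle: two of its sides are the arcs of $\La_G(L_1,L_2)$ meeting that circle at the polygon-edge positions $(v-1,v)$ and $(v,v+1)$ (the two sides at the ear-vertex $b$ of $G_2$), and the third is the edge of $G_1$ dual to $D$.

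Part (i) is then immediate: the flip $\mu_D$ of $T_1$ replaces $D$ by the other diagonal of the quadrilateral $\{v-1,v,v+1,w\}$, which is $e$; since $e\in T_2$, $D\notin T_2$, and no other diagonal changes, $\mu_D(T_1)$ has exactly one fewer diagonal outside $T_2$ than $T_1$ does (symmetrically if the ear lies in $T_1$). For part (ii), set $T_i':=T_i\setminus\{v\}$, the triangulation of the $(n+1)$-gon obtained by deleting the vertex $v$ so that the boundary path $v-1,v,v+1$ becomes the single edge $(v-1,v+1)$: this erases the ear triangle of $T_2$ and merges $\{v-1,v,w\}$ with $\{v,v+1,w\}$ into $\{v-1,v+1,w\}$ in $T_1$, so $T_1',T_2'$ are genuine triangulations, corresponding to trivalent-graph fillings $L_1',L_2'$ of $\la(2,n-1)$ with weaves $G_1',G_2'$. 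Taking $E$ to be one of the two triangle-sides meeting $b$, say the through-arc at position $(v-1,v)$, I would verify in the local weave model --- in the spirit of the $L_{init}$ computation in the proof of Theorem~\ref{thm: torusknotfillings} and of Figure~\ref{fig:simultaneous} --- that deleting $E$ from $\La_G(L_1,L_2)$ and smoothing the two bivalent vertices this creates yields exactly $\La_G(L_1',L_2')$; concretely $E$ is a boundary arc of $G_1$ glued to a boundary arc of $G_2$, and removing those arcs on each side together with the induced smoothings realizes the passage from $G_i$ to $G_i'$.

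The substance of the lemma is (iii). Under the identification of $d_\mu^a$ with flip distance of triangulations --- of the $(n+2)$-gon for fillings of $\la(2,n)$, of the $(n+1)$-gon for fillings of $\la(2,n-1)$ --- and writing $d(\cdot,\cdot)$ for flip distance, one must show $d(T_1',T_2')=d(T_1,T_2)-1$. For ``$\le$'': in an optimal path $T_1=S_0\to\cdots\to S_\ell=T_2$ let $f_m$ (taking $S_{m-1}$ to $S_m$) be the last flip that changes the number of diagonals incident to $v$. Since $T_2$ has no diagonal at $v$ and each flip changes that number by at most one, $S_m,\dots,S_\ell$ all have $v$ as an ear, $S_{m-1}$ has a unique diagonal $(v,w')$ at $v$ with $f_m=\mu_{(v,w')}$ and $S_{m-1}\setminus\{v\}=S_m\setminus\{v\}$, and no later flip touches $v$ or the ear diagonal, so that tail descends directly to a path $S_m\setminus\{v\}\to T_2'$ of length $\ell-m$. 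Deleting $v$ from the initial segment $T_1\to S_{m-1}$ --- fan-triangulating the star of $v$ at each step, so that each flip descends to at most one flip and the result agrees with the canonical deletion at the two endpoints, which carry at most one diagonal at $v$ --- yields a path $T_1'\to S_{m-1}\setminus\{v\}$ of length $\le m-1$; concatenating gives $d(T_1',T_2')\le\ell-1$. For ``$\ge$'': $\mu_D(T_1)$ has $v$ as an ear with $\mu_D(T_1)\setminus\{v\}=T_1'$, and $S\mapsto S\setminus\{v\}$ is a graph isomorphism from the flip graph on $v$-ear triangulations of the $(n+2)$-gon onto the flip graph of the $(n+1)$-gon, so an optimal path $T_1'\to T_2'$ lifts to a path $\mu_D(T_1)\to T_2$ of the same length, and prepending the flip $D$ gives $d(T_1,T_2)\le d(T_1',T_2')+1$.

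The main obstacle I anticipate is the ``$\le$'' half of (iii): the claim that deleting the vertex $v$ from an \emph{arbitrary} flip path --- at whose intermediate triangulations $v$ may carry many diagonals, forcing a choice of how to retriangulate its star --- produces a walk in the smaller flip graph no longer than the original. With the fan-from-a-fixed-boundary-vertex rule this comes down to checking that a flip meeting the star of $v$ descends to exactly one flip while a flip disjoint from it descends to the same flip; this is elementary but must be handled with some care. The weave bookkeeping in part (ii) is likewise routine but needs to be carried out against the gluing conventions of Proposition~\ref{prop:double} and the surgery moves of Figure~\ref{fig:surgery}.
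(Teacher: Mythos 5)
Your proposal is correct, and for parts (i) and (ii) it is essentially the paper's argument in dual language: the paper characterizes the triangle directly in the primal graphs $G_1,G_2$ via the vertical-weave edge labels (one vertex of the triangle is an ear triangle of one triangulation, the other two are the two triangles of the other triangulation at the same polygon vertex, which therefore carries a unique diagonal $D$), then flips $D$ for (i) and removes the glued pair of leaf edges dual to a boundary edge at that vertex for (ii) — exactly your ear/unique-diagonal picture read off from the superposition of Lemma~\ref{lem: doubledual}. The genuine divergence is in (iii), which you rightly identify as the substance of the lemma. The paper does not argue this from scratch: it invokes \cite[Lemma 2]{Pournin14} to produce a \emph{geodesic} from $T_2$ to $T_1$ whose first flip is the one from (i), after which both triangulations share the ear diagonal, the rest of the geodesic stays in the ear-at-$v$ stratum, and the tail descends bijectively to the $(n+1)$-gon. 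You instead prove the identity $d(T_1',T_2')=d(T_1,T_2)-1$ directly: the ``$\ge$'' half is the same lifting argument the paper uses for its converse, and your ``$\le$'' half (last flip touching $v$, plus nonexpansiveness of fan-rule vertex deletion) replaces the Pournin citation. That step does go through: every flip meeting the star of $v$ necessarily changes $\deg v$ by exactly $\pm 1$ (it either flips a diagonal at $v$ away or creates one), and since the star of $v$ is always a fan, the deleted triangulations differ by the single flip $(p_0,p_j)\leftrightarrow(p_{j-1},p_{j+1})$ (or by nothing when $j=1$ or $j=k-1$ coincides with the apex side), while flips disjoint from the star descend verbatim. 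The trade-off is the expected one: the paper's proof is shorter but leans on an external, nontrivial lemma about geodesics in the flip graph; yours is longer but self-contained and in effect reproves the special case of Pournin's lemma that is actually needed.
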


\begin{proof}
    We first characterize the edges in $G_1$ and $G_2$ that bound the triangle. Firstly, without loss of generality, one of the vertices of the triangle comes from $G_1$, and the other two from $G_2$. Using the description of $G_i$'s as vertical weaves (refer for example \cite[Section 2]{Hughes2021b}), every edge in $G_i$ has a top labels, from the region to its left. For the vertex in $G_1$, the labels on its edges must be consecutive (mod $(n+2)$), suppose they are $i-1$ and $i$. In $G_2$, there are three edges contributing to two vertices. Two of the edges have top labels $i-1$ and $i$ -- suppose the third one is $k$.
    \begin{enumerate}[i)]
        \item Following the above, $T_1$ has the diagonal $((i-1),(i+1))$, while $T_2$ has the diagonals $((i-1), k), (i, k),$ and $((i+1), k)$. The result holds by choosing $D = (i, k)$ and mutating it to $(i-1,i+1)$.
        \item We can remove the edges with top label $i$ in both the $G_i$'s.
        \item The triangulations $T_i'$, corresponding to the fillings $L_i'$ of $\la(2,n-1)$ found above, are obtained respectively from $T_i$ by deleting the $i$-th vertex of the $(n+2)$-gon. It is shown in \cite[Lemma 2]{Pournin14} that there exists a minimal sequence of mutations (or flips) taking $T_2$ to $T_1$ that starts with the mutation in (i) above. Every mutation after the first step in said sequence can be identified with a mutation on $T_2'$, and conversely -- it follows that $d_\mu(L_1',L_2') + 1 = d_\mu(L_1,L_2)$.
    \end{enumerate}
\end{proof}

\begin{remark}
    If $T_1$ and $T_2$ have no diagonals in common, $\La_G(L_1,L_2)$ contains no bigons. Also, as we see above, every triangle in $\La_G(L_1,L_2)$ corresponds to a $K_4 \subset \La_G^*(L_1,L_2)$. 
\end{remark}

\begin{definition}
    We will call a cubic planar graph a {\em generalized cube graph} if it can be reduced to the cube graph after a sequence of edge deletions.
\end{definition}

\begin{remark}\label{rem:chrompoly}
    Given a planar graph $G$, the first three coefficients of the chromatic polynomial of $G$ are 1, $-m$, and $\binom{m}{2}-t_1$, where $G$ has $m$ edges and $t_1$ is the number of induced $K_3$.
\end{remark}

\begin{figure}[h!]{ \includegraphics[scale=0.2]{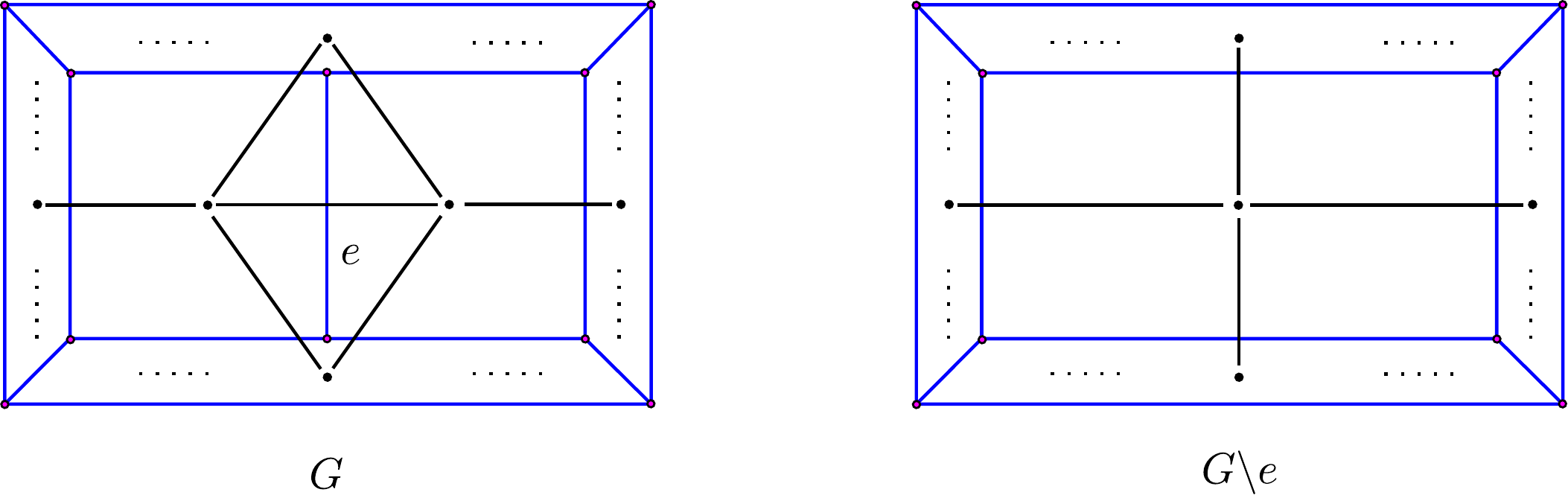}}\caption{The graphs $G$ and $G \setminus e$ and their duals.}
			\label{fig:cubes}\end{figure}

\begin{proposition}
    A Legendrian corresponding to a generalized cube graph is not completely decomposable into Clifford and standard tori.
\end{proposition}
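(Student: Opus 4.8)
The plan is to argue by contradiction using the chromatic polynomial as a Legendrian isotopy invariant. Suppose $\Lambda_G$ is a generalized cube graph corresponding to a cubic planar Legendrian $\Lambda$, and suppose for contradiction that $\Lambda \cong \#^k \T^2_{\std} \#^l \T^2_c$. By definition, $\Lambda_G$ reduces to the cube graph after a sequence of edge deletions; geometrically (by Theorem~\ref{thm:czsurgery} and the discussion in Lemma~\ref{lem: inductionstep}), each such edge deletion corresponds to removing a triangle or bigon summand, i.e.\ splitting off a $\T^2_c$ or $\T^2_{\std}$ factor. So the key structural input is that $\Lambda$ is Legendrian isotopic to $\Lambda_{\text{cube}} \#^a \T^2_{\std} \#^b \T^2_c$ where $\Lambda_{\text{cube}}$ is the Legendrian lift of the cube graph, which is \emph{not} decomposable: this is the content of the preceding Remark, since $P_{\text{cube}^*}(q+1) = (q+1)(q)(q-1)(q^3 - 6q^2 + 14q - 11)$ and the cubic factor is irreducible over $\Q$, hence has no root at $q=1$ or $q=2$, so by Lemma~\ref{lem:chromatic} the cube graph cannot itself carry a further $\T^2_{\std}$ or $\T^2_c$ summand.

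First I would record the precise claim that generalized cube graphs reduce to the cube graph by deletions corresponding to torus connect-sum operations: combining the definition of ``generalized cube graph'' with Theorem~\ref{thm:czsurgery}, every edge deletion in the reduction either removes a bigon (a $\T^2_{\std}$ summand) or a triangle (a $\T^2_c$ summand) in the dual picture; one needs to check that the edges being deleted really do bound such faces, which follows because the cube graph is bridgeless and the reduction process preserves this. Then I would combine this with the hypothesis $\Lambda \cong \#^k \T^2_{\std} \#^l \T^2_c$ to conclude $\Lambda_{\text{cube}}$ is itself a connect sum of standard and Clifford tori (cancelling the explicitly-identified summands from both descriptions, using that $P_{\Lambda^*}$ is an invariant and Lemma~\ref{lem:chromatic} to track the $(q-1)$ and $(q-2)$ factors). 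Finally I would derive the contradiction: if $\Lambda_{\text{cube}} \cong \#^{k'} \T^2_{\std} \#^{l'} \T^2_c$ with $k' + l' = 3$ (it has $b_1 = 3$), then by Lemma~\ref{lem:chromatic} its chromatic polynomial would be $(q+1)(q)(q-1)(q-1)^{k'}(q-2)^{l'} \cdot (\text{something})$, so $P_{\text{cube}^*}(q+1)/\big((q+1)(q)(q-1)\big) = q^3 - 6q^2 + 14q - 11$ would be divisible by $(q-1)^{k'}(q-2)^{l'}$, forcing $q=1$ or $q=2$ to be a root of the cubic $q^3 - 6q^2 + 14q - 11$. But $1 - 6 + 14 - 11 = -2 \neq 0$ and $8 - 24 + 28 - 11 = 1 \neq 0$, a contradiction.

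The main obstacle, which I expect requires the most care, is the bookkeeping in the middle step: matching the two connect-sum descriptions of $\Lambda$ and concluding that after cancelling the ``obvious'' standard/Clifford summands from the generalized-cube side one is genuinely left with $\Lambda_{\text{cube}}$ and nothing else. This is not completely formal because connect-sum decompositions of Legendrian surfaces need not be unique, so the clean way to run the argument is entirely on the level of the invariant: use Lemma~\ref{lem:chromatic} to write $P_{\Lambda^*}(q+1) = (q-1)^a(q-2)^b P_{\text{cube}^*}(q+1)$ on one hand and $P_{\Lambda^*}(q+1) = (q-1)^k(q-2)^l P_{(\text{point})^*}(q+1)$ (with $P_{(\text{point})^*}(q+1) = (q+1)(q)(q-1)$, the chromatic polynomial for a connect sum of standard $\T^2$'s over $S^2$) on the other, then compare factorizations over $\Q$ and invoke irreducibility of the cubic. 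Everything else is elementary polynomial arithmetic, and $k = a + k'$, $l = b + l'$ need never be pinned down individually.
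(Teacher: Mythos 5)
There is a genuine gap at the very first step. You assume that because $\Lambda_G$ reduces to the cube graph by a sequence of edge deletions, each deletion ``corresponds to removing a triangle or bigon summand,'' so that $\Lambda \cong \Lambda_{\text{cube}} \#^a \T^2_{\std} \#^b \T^2_c$ and hence $P_{\Lambda^*}(q+1) = (q-1)^a(q-2)^b P_{\text{cube}^*}(q+1)$. But the definition of a generalized cube graph allows \emph{arbitrary} edge deletions, not only deletions of edges bounding bigons or triangles; indeed, in the paper's own argument the deleted edge $e$ is chosen precisely so that \emph{neither} of its two adjacent regions is a triangle, so its deletion is not the Clifford-torus surgery of Theorem~\ref{thm:czsurgery} at all. (The cube graph itself is bridgeless and has no bigons or triangles, so your appeal to bridgelessness to guarantee that deleted edges bound such faces does not work.) Consequently the multiplicative relation between $P_{\Lambda^*}$ and $P_{\text{cube}^*}$ that your whole argument rests on is unjustified: deleting an edge of $G$ contracts the dual edge of $G^*$, so the two chromatic polynomials are related by deletion--contraction, not by multiplication by $(q-1)$ or $(q-2)$. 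Your base-case arithmetic for the cube graph itself (the cubic $q^3-6q^2+14q-11$ has no root at $q=1$ or $q=2$) is fine, but the reduction of the general case to it fails.

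The paper's proof avoids this by never claiming a connect-sum decomposition of $\Lambda_G$ relative to the cube. Instead it runs an induction on the number of vertices, using the fact that the third coefficient of the chromatic polynomial of $G^*$ records $\binom{m}{2}-t_1$ where $t_1$ is the number of triangles in $G^*$: it carefully counts how $t_1(G^*)$ changes under deletion of a non-triangular edge ($t_1(G^*) = t_1((G-e)^*)+2$) and compares this with the triangle count forced on the dual of any decomposable cubic planar Legendrian with the same number of edges ($t_1(G^*)=T_l+3$ would be required), deriving a contradiction. If you want to salvage your strategy you would need to either restrict the definition of generalized cube graph to deletions of triangle/bigon edges (which would change the class of graphs covered) or replace the multiplicativity step with a deletion--contraction analysis of the kind the paper carries out.
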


\begin{proof}
    We will prove this by induction on the number of vertices in the graph, and by showing that the dual of a generalized cube graph has fewer triangles than the dual with the same number of edges, of a cubic planar graph which corresponds to a decomposable Legendrian. This proves the proposition since it follows by Remark~\ref{rem:chrompoly} that the chromatic polynomial of the dual is not the same as that of the dual to the 2-weave of a decomposable Legendrian, and the chromatic polynomial of the dual is a Legendrian isotopy invariant by \cite{TreumannZaslow}.

    Note that a generalized cube graph must have at least 8 vertices, and the statement is true if there are exactly 8 vertices. Suppose we have a generalized cube graph $G$ with $n$ vertices, and by the induction hypothesis we assume there are no triangles or bigons in the graph. Let $G$, $e$ be as given in Figure~\ref{fig:cubes} -- i.e., $e$ is an edge of $G$ which belongs to two regions neither of which are triangles, and $G-e$ is a generalized cube graph without triangles or bigons. Then $e$ is adjacent to four regions, let us call them $N$, $E$, $S$, and $W$. There are five corresponding edges in $G^*$: $N-E$, $E-S$, $S-W$, $W-N$, and $E-W$. In $G-e$, these contribute to three regions , namely $N$, $EW$, and $S$, and two edges, $N - EW$ and $S - EW$. Note that by our assumption on $e$, there is no edge between $N$ and $S$.

    Observe that if $G$ could be decomposed completely into Clifford and standard tori, $P_{G^*}(x)$ would be of the form $ p_{k,l}(x) = x(x-1)(x-2)^k(x-3)^l$ where $k \geq 1$ and $k+l = (n-2)/2$. Among the polynomials $p_{k,l}(x)$, the coefficient of $x^{N-2}$, where $N$ is the degree of the polynomial, is the maximum when $k=1$, i.e. $G^*$ decomposes into a connect sum of Clifford tori.

    Suppose $G^*$ has $E$ edges. Then $(G-e)^*$ has $E-3$ edges, and one fewer vertex than $G^*$. By Remark~$\ref{rem:chrompoly}$, the coefficient of $x^{N-1}$ in $P_{G^*}(x)$ is $-E$, the coefficient of $x^{N-2}$ in $P_{(G-e)^*}(x)$ is $-E+3$, and the coefficient of $x^{N-3}$ is $\binom{E-3}{2}-t_1((G-e)^*)$, where $t_1((G-e)^*)$ is the number of triangles in $(G-e)$. Now, for the sake of contradiction, if $G$ is decomposable, $P_{G^*}(x) = x(x-1)(x-2)^k(x-3)^l$ with $l = E - (2n-3)$. By the induction hypothesis, $(G-e)^*$ has fewer triangles than the dual to the decomposable 2-weave with $E-3$ edges. Let the number of triangles in dual to the decomposable 2-weave with $E-3$ edges be $T_l$. Then we have
    \[
    t_1((G-e)^*) < T_l
    \]
    Consider a triangle in $(G-e)^*$. A triangle that is not adjacent to any of the edges $N-EW$ and $S-EW$ corresponds to a triangle in $G^*$. A triangle adjacent to $N - EW$ corresponds to a triangle in $G^*$, either with the edge $N-E$ or the edge $N - W$. Similarly for any triangle adjacent to $S-EW$. Further, $G^*$ has two extra triangles $N-E-W$ and $S - E - W$. It follows that:
    \[
    t_1(G^*) = t_1((G-e)^*) + 2
    \]
    However if $G$ is decomposable we must have that 
    \[
    t_1(G^*) = T_l+3
    \] 
    
    This cannot happen so we are done.
\end{proof}

\begin{proposition}\label{prop: decomposable}
    Let $L_1$ be a trivalent graph filling of $\la(2,n)$. For any trivalent graph filling $L_2$, if $\La(L_1,L_2)$ is decomposable, $d_\mu^a(L_1,L_2) \leq n-1$.
\end{proposition}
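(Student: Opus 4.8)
The plan is to run an induction on $n$ that parallels the structure of Lemma~\ref{lem: inductionstep}, using the hypothesis that $\La(L_1,L_2)$ is decomposable as the engine that repeatedly produces triangles in the 2-graph $\La_G(L_1,L_2)$. The base case $n=2$ (or the smallest $n$ where $\la(2,n)$ admits more than one trivalent graph filling) is immediate since the Catalan number is small and $d_\mu^a \le n-1 = 1$ can be checked directly. For the inductive step, assume the statement for $n-1$ and suppose $\La(L_1,L_2)$ is decomposable. If $\La(L_1,L_2)$ is a connect sum involving at least one $\T^2_c$ summand, then $\La_G(L_1,L_2)$ contains a triangle; if it only has $\T^2_{std}$ summands (bigons), then by Lemma~\ref{lem: doubledual} the superimposed triangulations $T_1$ and $T_2$ share a diagonal, and one can peel that off --- so I will first reduce to the case $T_1$ and $T_2$ have no common diagonals, in which case decomposability forces at least one triangle (by the Remark following Lemma~\ref{lem: inductionstep}, no bigons appear, so a nontrivial decomposition must use a Clifford summand, hence a triangle).

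Given a triangle in $\La_G(L_1,L_2)$, I invoke Lemma~\ref{lem: inductionstep}: parts (ii) and (iii) produce trivalent graph fillings $L_1', L_2'$ of $\la(2,n-1)$ with $\La_G(L_1',L_2') = \La_G(L_1,L_2) - E$ and $d_\mu^a(L_1',L_2') = d_\mu^a(L_1,L_2) - 1$. The key point I need to check is that $\La(L_1',L_2')$ is again decomposable: removing the edge $E$ which is a side of the triangle corresponds, via Theorem~\ref{thm:czsurgery}, precisely to undoing the connect sum with a $\T^2_c$, exactly as in the proof of Theorem~\ref{thm: torusknotfillings}. So $\La(L_1,L_2) \cong \La(L_1',L_2') \# \T^2_c$ and decomposability descends. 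Applying the inductive hypothesis to $L_1', L_2'$ gives $d_\mu^a(L_1',L_2') \le (n-1) - 1 = n-2$, hence $d_\mu^a(L_1,L_2) = d_\mu^a(L_1',L_2') + 1 \le n-1$, completing the induction. In the branch where instead a common diagonal of $T_1$ and $T_2$ is removed (a $\T^2_{std}$/bigon summand), the analogous bookkeeping must show the mutation distance drops by exactly one as well; this follows since a shared diagonal contributes $0$ to the flip distance and deleting the corresponding pair of edges from $G_1, G_2$ restricts to triangulations of a polygon with the same flip distance --- I will phrase this uniformly so that whether we strip a bigon or a triangle, $n$ and $d_\mu^a$ each drop by one.

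The main obstacle I anticipate is the reduction step establishing that a decomposable double always has a triangle (or a removable bigon) that is compatible with the edge-deletion/vertex-deletion correspondence of Lemma~\ref{lem: inductionstep}: a priori a $\T^2_c$ summand of $\La(L_1,L_2)$ is only guaranteed to exist after possibly applying surface Reidemeister moves, so the triangle it produces need not be "visible" in the particular 2-graph $\La_G(L_1,L_2)$ obtained by superimposing $T_1,T_2$. To handle this I would argue at the level of the dual graph using Lemma~\ref{lem:chromatic}: decomposability forces $P_{\La_G^*(L_1,L_2)}(q+1)$ to be divisible by $(q-2)$, and combined with the combinatorial description of $\La_G^*(L_1,L_2)$ as the superimposed triangulations (Lemma~\ref{lem: doubledual}) together with Remark~\ref{rem:chrompoly}, one extracts an induced $K_3$ or $K_4$, which is exactly the triangle needed to feed Lemma~\ref{lem: inductionstep}. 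If chasing the chromatic polynomial proves delicate, the fallback is to note that $L_{init}$ realizes $d_\mu^a(L_{init}, L) \le n-1$ unconditionally (the fan triangulation is within $n-1$ flips of any triangulation, cf.\ the Remark after Theorem~\ref{thm: torusknotfillings}), and then show any decomposable double is "no farther" than the worst-case fan example --- but I expect the inductive triangle-peeling argument above to be the cleanest route.
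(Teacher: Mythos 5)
Your overall route is the same as the paper's: an induction on $n$ powered by Lemma~\ref{lem: inductionstep}, peeling off a triangle of $\La_G(L_1,L_2)$ so that the polygon size and $d_\mu^a$ each drop by exactly one. The divergence is in the two places you yourself flag as obstacles, and in both the patches you propose do not close the gap, whereas the paper resolves them by a different bookkeeping.

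First, your inductive invariant is ``$\La(L_1',L_2')$ is decomposable,'' justified by writing $\La(L_1,L_2)\cong \La(L_1',L_2')\#\T^2_c$ and asserting that decomposability descends. That step needs a cancellation law for connect summands of Legendrian surfaces ($X\#\T^2_c$ decomposable implies $X$ decomposable), which is not available: connect-sum decompositions of Legendrian surfaces are not known to be unique, and the chromatic polynomial --- the only invariant in play --- is only conjecturally complete, so dividing it by $(q-2)$ says nothing about the isotopy class of $X$. The paper instead runs the induction on the purely combinatorial condition that $\La_G(L_1,L_2)$ is not a generalized cube graph, which manifestly survives the edge deletion of Lemma~\ref{lem: inductionstep}, and separately establishes (in the proposition immediately preceding) that generalized cube graphs are never decomposable. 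Second, your mechanism for producing the triangle that feeds Lemma~\ref{lem: inductionstep} --- extracting an induced $K_3$ or $K_4$ in $\La_G^*(L_1,L_2)$ from divisibility of $P_{\La_G^*}(q+1)$ by $(q-2)$ --- fails: Remark~\ref{rem:chrompoly} only controls the number of induced $K_3$'s via the third coefficient, a triangle of the $2$-graph corresponds to a $K_4$ in the dual, and a $(q-2)$ factor of the full polynomial does not force such a $K_4$ to exist. Indeed, Figure~\ref{fig:factorable} (right) is precisely an example where $P_{\La^*}(q+1)$ is divisible by $q-2$ yet the $2$-graph contains no triangle. The paper's way around this is the dichotomy: either a triangle is present (induct), or there is none, in which case the decomposable double must be a connect sum of standard tori and the induction terminates. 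Your fallback via $L_{init}$ does not rescue the argument either, since $L_1$ is an arbitrary filling and nothing in the hypotheses bounds $d_\mu^a(L_1,L_2)$ by the fan-to-anything distance.
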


\begin{proof}

Assume that the double graph is not a generalized cube graph -- this means there is an interior region that does not become a triangle after a sequence of triangle edge deletions. Also assume that there is an interior region that is a triangle. If not, then the double Legendrian is a connect sum of standard tori. Then, apply Lemma~\ref{lem: inductionstep} to obtain fillings $L_1'$ and $L_2'$ of $\la(2,n-1)$ with $d_\mu(L_1',L_2') = d_\mu(L_1,L_2) - 1$. It also follows by construction that $\La_G(L_1',L_2')$ is not a generalized cube graph. Thus we are done by induction on $n$.

\end{proof}

\begin{proof}[Proof of Theorem~\ref{thm: notdecomposable}]
This follows from the above since Proposition~\ref{prop: decomposable} is the contrapositive.
\end{proof}

The results here are specific to $\la(2,n)$ and trivalent graph fillings. However, we expect this phenomenon to be true in general. This motivates the following conjecture.

\begin{conjecture}
    Let $\la$ be a Legendrian link such that $\SM_1(\la)$ has a cluster structure. Let $L_1$ and $L_2$ be two exact Lagrangian fillings of $\la$. Then, there exists an $N$ such that $d_\mu (L_1,L_2) > N$ implies that $\La(L_1,L_2)$ is not decomposable.
\end{conjecture}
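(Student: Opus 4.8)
The plan is to prove the contrapositive: if the Legendrian double $\La(L_1,L_2)$ is decomposable, then $d_\mu(L_1,L_2)\le N$, where $N=N(\la)$ is the (finite, $\la$-determined) quantity $\max_L b_1(L)$ over exact Lagrangian fillings $L$ of $\la$; this is the only meaningful reading of the conjecture, since a bound allowed to depend on the pair would be vacuous. Here $N$ is finite and computable from $\la$ because $\chi(L)=-\tb(\la)$ for every exact filling, so $b_1(L)=\tb(\la)+|\pi_0(L)|\le \tb(\la)+|\pi_0(\la)|$. The first, purely topological, step is to observe that if $\La(L_1,L_2)\cong\#^{k}\T^2_{\std}\#^{l}\T^2_c$ then comparing $\chi(\La(L_1,L_2))=\chi(L_1)+\chi(L_2)$ forces $k+l=b_1(L_1)=b_1(L_2)=:b\le N$; thus a decomposable double is one of only $b+1$ Legendrian isotopy types, indexed by $l\in\{0,\dots,b\}$.

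Next I would compute the model $\SM_1(\#^{b-l}\T^2_{\std}\#^{l}\T^2_c)$. Since the connect sums of Theorem~\ref{thm:czsurgery} are taken along standard unknots, $\SM_1$ should be multiplicative, each $\T^2_{\std}$ contributing a factor $\C^\ast$ and each $\T^2_c$ a factor $\SM_1(\T^2_c)$ whose $\P\F_q$-point count involves a factor $q-2$ (consistent with Lemma~\ref{lem:chromatic}); so the model is $(\C^\ast)^{\,b-l}\times(\SM_1(\T^2_c))^{l}$. The structural facts to extract are that this variety contains exactly $2^{l}$ dense rank-$b$ algebraic subtori, and that any two of them are related by at most $l$ pairwise-commuting seed mutations. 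The crux is then to run Lemma~\ref{lem: Double Sheaves} in reverse: it gives $\SM_1(\La(L_1,L_2))=\mathcal{C}_{L_1}\cap\mathcal{C}_{L_2}$, which decomposability identifies with this model; both $\mathcal{C}_{L_i}$ are rank-$b$ tori inside $\SM_1(\la)$, and --- invoking \cite{CasalsGao24} so that they are genuine cluster charts of a common cluster structure --- Lemma~\ref{lem: Daping's Lemma} forces each to be one of the $2^{l}$ dense subtori of the model, whence $d_\mu^a(L_1,L_2)\le l\le N$ and, since $d_\mu\le d_\mu^a$, the conjecture follows.

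The main obstacle is precisely this last step. A priori two cluster charts can be an arbitrarily long mutation sequence apart while still intersecting in a variety as simple as $(\C^\ast)^{b-l}\times(\SM_1(\T^2_c))^{l}$ --- this is the failure of cluster theory to control distant charts flagged in Remark~\ref{rmk: intersections_bad}, and it is exactly why even the $\la(2,n)$ case (Theorem~\ref{thm: notdecomposable}) rests on the flip-distance combinatorics of \cite{Pournin14} rather than general cluster theory. What is really needed is a rigidity statement of the form: \emph{if two full-rank cluster charts meet in a product of copies of $\C^\ast$ and of $\SM_1(\T^2_c)$, then the pair is related by a bounded number of commuting mutations.} Three possible routes: (a) an algebraic one, classifying which Laurent binomials can cut out the ``walls'' between two such charts; (b) a geometric one, upgrading Theorem~\ref{thm:std_and_clifford_tori} to an equivalence by showing a decomposable double admits a Legendrian weave in the normal form of Theorem~\ref{thm:czsurgery} from which one reads off $L_2=\mu_\Gamma(L_1)$ for a \term collection $\Gamma$ of $\L$-compressing cycles with $|\Gamma|=l$; (c) a hands-on one generalizing Proposition~\ref{prop: decomposable}, locating a triangle (a $\T^2_c$-summand) in the doubled $N$-graph $G_1\cup G_2$, excising it so as to lower $b$ and $d_\mu$ by one simultaneously, and inducting --- here the obstruction is that, unlike the passage from $\la(2,n)$ to $\la(2,n-1)$, there is in general no canonical way to realize the excised double as a double of fillings of a Legendrian with smaller $\tb$.

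All three routes presume $L_1$ and $L_2$ induce seeds in a common cluster structure on $\C[\SM_1(\la)]$; when $\SM_1(\la)$ is reducible or carries inequivalent cluster structures one should instead state the conclusion for the algebraic mutation distance $d_\mu^a$ (which is then infinite across distinct structures, making the implication vacuous in that regime) or appeal to the expected uniqueness of $\L$-compressing systems. As a sanity check one should first verify the statement on the family of doubles built by iterating an infinite-order Lagrangian concordance discussed above, where $d_\mu$ grows without bound and decomposability should fail outright.
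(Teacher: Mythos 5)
The statement you are addressing is stated in the paper as a \emph{conjecture}, placed immediately after the $\la(2,n)$ case (Theorem~\ref{thm: notdecomposable}); the authors offer no proof and explicitly say their results are specific to $\la(2,n)$ and trivalent graph fillings. So there is no proof in the paper to compare against, and your proposal does not supply one: you yourself flag that ``the main obstacle is precisely this last step,'' namely passing from the isomorphism type of $\mathcal{C}_{L_1}\cap\mathcal{C}_{L_2}$ to a bound on the mutation distance between the two charts. That is exactly the difficulty the authors isolate in Remark~\ref{rmk: intersections_bad}: cluster theory gives essentially no control over the intersection of two distant charts, and the one case that is proved (Theorem~\ref{thm: notdecomposable}) is established not by cluster-theoretic rigidity but by the flip-distance combinatorics of polygon triangulations via \cite{Pournin14} together with the chromatic-polynomial invariant. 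Your routes (a)--(c) are sensible directions, but none is carried out, so what you have is a strategy outline with the decisive step missing.

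Beyond the acknowledged gap, one step is wrong as written. Lemma~\ref{lem: Double Sheaves} gives $\SM_1(\La(L_1,L_2))=\mathcal{C}_{L_1}\cap\mathcal{C}_{L_2}$, and decomposability would identify this intersection with your model $(\C^\ast)^{b-l}\times(\SM_1(\T^2_c))^{l}$. But the charts $\mathcal{C}_{L_i}$ \emph{contain} this intersection inside $\SM_1(\la)$; they are not subvarieties of it. Hence Lemma~\ref{lem: Daping's Lemma} cannot ``force each to be one of the $2^l$ dense subtori of the model'' --- there is no embedding of $\mathcal{C}_{L_i}$ into the model to which that lemma could apply, and the count of $2^l$ dense rank-$b$ subtori of the model is itself asserted without justification. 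More fundamentally, the abstract isomorphism type of the intersection constrains the relative position of the two charts in $\SM_1(\la)$ only very weakly (for irreducible $\SM_1(\la)$, arbitrarily distant charts still intersect in a nonempty open set), and recovering that relative position is precisely the missing rigidity statement. Of your three routes, (b) --- upgrading Theorem~\ref{thm:std_and_clifford_tori} to an equivalence by putting a decomposable double into the normal form of Theorem~\ref{thm:czsurgery} and reading off a simultaneously simplifiable collection of cycles --- is closest in spirit to how the authors handle $\la(2,n)$ and seems the most promising direction, but it faces the obstruction you note: there is in general no canonical smaller Legendrian whose fillings realize the excised double, which is what makes the induction work in the $\la(2,n)$ case.
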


Moreover, there are examples of Legendrian links $\La$ equipped with a Legendrian loop $\varphi$  where one can provide a lower bound for algebraic mutation distance between fillings $L$ and $\varphi^k(L)$ in terms of $k$.

\begin{theorem}
Let $L$ be an exact Lagrangian filling of $\la(k, n)$ for $k\geq 3, n\geq 6$ or $(k, n)=(4, 4)$, $(4, 5),$ or $(5, 5)$ and $\varphi$ be the Legendrian loop $\Sigma_i$ defined in \cite[Section 5.3.3]{Hughes2024}.  The mutation distance $d_\mu(L, \varphi^k(L))$ is at least $k$.  
\end{theorem}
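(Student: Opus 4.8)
The plan is to translate this geometric statement into a lower bound on a distance in the exchange graph of the cluster algebra $\C[\FM(\la(k,n),T)]$, and then to extract that bound from the explicit description of the loop $\Sigma_i$.

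\emph{Reduction to cluster combinatorics.} By \cite{CasalsWeng, CasalsGao24}, a Lagrangian disk surgery at a cycle of an $\mathbb{L}$-compressing system induces a single cluster mutation on the associated seed, so a length-$\ell$ sequence of Lagrangian disk surgeries from $L$ to $L'$ produces a length-$\ell$ sequence of seed mutations between their seeds; hence $d_\mu(L,\varphi^k(L)) \geq d_\mu^a(L,\varphi^k(L))$, and it is enough to bound the latter. Since $\varphi$ is a Legendrian loop it acts on $\SM_1(\la(k,n))$ by \cite{GKS_Quantization}, and — exactly as in the construction of the cluster structures on twist-spuns in Theorem~\ref{thm: intro_ensembles}, following \cite{Hughes2024} — this action is by an automorphism $\Phi$ of the cluster structure permuting the seeds, with the seed of $\varphi^k(L)$ equal to $\Phi^k$ applied to the seed $s=s(L)$ of $L$. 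Thus the theorem reduces to showing that $s$ and $\Phi^k(s)$ are at distance at least $k$ in the exchange graph.

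\emph{A monotone quantity.} A single seed mutation replaces exactly one cluster variable, so along any path from $s$ to $\Phi^k(s)$ each cluster variable of $s$ absent from $\Phi^k(s)$ is removed by some mutation, and for distinct such variables these mutations are distinct; hence the path has length at least $\#\big(\mathbf{x}(s)\setminus \Phi^k(\mathbf{x}(s))\big)$, where $\mathbf{x}(s)$ denotes the cluster of $s$. It remains to prove this is $\geq k$. From the weave-level description of $\varphi=\Sigma_i$ in \cite[Section 5.3.3]{Hughes2024} — in the stated range, which is precisely the range where $\la(k,n)$ is of \emph{infinite} cluster type — one reads off that $\Phi$ has no periodic cluster variables, i.e.\ every $\Phi$-orbit on cluster variables is infinite. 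For such an orbit $O$, write $v_{j+1}=\Phi(v_j)$ and $S=\{j\in\Z: v_j\in\mathbf{x}(s)\}$, a finite set; then $\Phi^k(\mathbf{x}(s))\cap O=\{v_{j+k}: j\in S\}$, and since $S\setminus(S+k)$ is nonempty for every finite nonempty $S\subset\Z$, each orbit meeting $\mathbf{x}(s)$ contributes at least one variable to $\mathbf{x}(s)\setminus\Phi^k(\mathbf{x}(s))$. Finally one checks, again from the description of $\Sigma_i$ and using that $\la(k,n)$ is of infinite type (so the seed has large rank and the mutable cluster variables of the initial weave filling are spread over at least $k$ distinct $\Phi$-orbits), that $\mathbf{x}(s)$ meets at least $k$ orbits of $\Phi$. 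Combining the last two sentences gives $\#\big(\mathbf{x}(s)\setminus \Phi^k(\mathbf{x}(s))\big)\geq k$. The sporadic links $\la(4,4),\la(4,5),\la(5,5)$ are handled identically, being again of infinite type.

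\emph{Main obstacle.} The crux is the global control of $\Phi$ in the second step: that it has no periodic cluster variables and that a cluster meets at least $k$ of its orbits — mere infinite order of $\Phi$ would only give $d_\mu\geq 1$. Locally $\Phi$ is harmless, a quiver automorphism post-composed with the explicit mutation sequence coming from the braid conjugation that defines $\Sigma_i$; the difficulty is in comparing $s$ with the far iterate $\Phi^k(s)$, where one must rule out that $\Phi$ "folds back" and where, in infinite-type cluster algebras, the usual machinery for comparing distant seeds is unavailable (cf.\ Remark~\ref{rmk: intersections_bad}). For $\Sigma_i$ this is still tractable because the loop is an explicit braid conjugation, so its effect on weave fillings, on their $\mathbb{L}$-compressing systems, and hence on the dual cluster variables can be tracked by hand, and the infinite-type hypothesis is exactly what forbids periodicity of the relevant orbits; carrying out this bookkeeping is the substance of the argument. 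If one prefers a more geometric route, an alternative is to use $d_\mu(L,\varphi^k(L))\geq d_\std\big(\La(L,\varphi^k(L))\big)$ and bound $d_\std$ below via a finite-field point count of the intersection of toric charts $\mathcal{C}_L\cap\mathcal{C}_{\varphi^k(L)}=\SM_1(\La(L,\varphi^k(L)))$, but the essential input — the infinite-order behavior of the $\varphi$-action — is the same.
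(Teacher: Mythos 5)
Your opening reduction---passing from Lagrangian disk surgeries to seed mutations and from $\varphi^k(L)$ to $\Phi^k$ applied to the seed of $L$---matches the paper, and your observation that a length-$\ell$ mutation path must exchange every cluster variable of $\mathbf{x}(s)$ absent from $\Phi^k(\mathbf{x}(s))$ is correct. The gap is in the ``monotone quantity'' step. First, your structural claim about $\Phi$ is false: the paper, quoting \cite[Section 7.2.1]{Hughes2024}, records that the induced action of $\varphi$ \emph{fixes} certain mutable cluster variables of the seed of $L$, so $\Phi$ has periodic (indeed fixed) cluster variables. Your justification---that $\la(k,n)$ is of infinite cluster type---is a non sequitur: infinite type of the algebra says nothing about the orbit structure of a particular automorphism (the K\'alm\'an loop on these very links has finite order, so all of its orbits are finite). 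Second, and more fundamentally, $\#\bigl(\mathbf{x}(s)\setminus\Phi^k(\mathbf{x}(s))\bigr)$ is the wrong quantity to bound below by $k$: the number of cluster variables in which a seed and its image under a power of an automorphism differ need not grow with that power. For the annulus with one marked point on each boundary component (the Kronecker quiver), a triangulation and its image under the $m$-th power of the boundary Dehn twist differ in at most two arcs for every $m$, while their flip distance grows linearly in $m$. Since the relevant reduced cluster algebras here are exactly of surface type with $\varphi$ acting by boundary rotation, there is no reason the set difference of clusters should reach $k$; and your claim that the cluster of an \emph{arbitrary} filling $L$ meets at least $k$ distinct non-periodic $\Phi$-orbits is asserted rather than proved.

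The paper supplies precisely the missing monotone quantity. By the non-leaving-face property, a geodesic from $\mathbf{x}_L$ to $\mathbf{x}_{\varphi^k(L)}$ never mutates the $\varphi$-fixed variables, so one may delete them; by \cite[Section 7]{Hughes2024} the resulting cluster algebra is of surface type for a surface with more than one boundary component, and $\varphi$ acts by rotating a boundary. The winding number of an arc about another boundary component changes by at most one under a single flip, but the rotation $\varphi^k$ increases it by $k$, which yields the bound uniformly over all seeds. To repair your argument you would need to replace the cluster-set-difference count by such an invariant---a winding number, or a tropical/$\mathbf{g}$-vector pairing---that provably grows linearly in $k$ under $\Phi^k$ while changing by at most $1$ per mutation; the set difference of clusters does not have this property.
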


\begin{proof}
    Let $L$ be a filling of $\la(k,n)$. From \cite[Section 7.2.1]{Hughes2024}, we know that the induced action of $\varphi$ fixes cluster variables of the seed induced by $L$. By the non-leaving-face property for cluster exchange graphs, any geodesic path  of mutations from the cluster seed ${\bf x}_L$ to the cluster seed ${\bf x}_{\varphi^k(L)}$ does not mutate at any of these fixed cluster variables. Therefore, the length of any geodesic path of mutations between these two seeds is the same as that of a geodesic path of mutations in the cluster algebra obtained by deleting the fixed vertices. In \cite[Section 7]{Hughes2024}, deleting these fixed vertices is shown to produce a surface-type quiver where the number of boundary components can be determined from $k$ and $n$, but is greater than one for any choice of $k$ and $n$ given above; the action of $\varphi$ corresponds to rotation of the boundary. Given an arc $\gamma$ in the triangulation, the winding number of $\gamma$ about any of the other boundary components of the surface provides a lower bound for mutation distance between $L$ and $\varphi^k(L)$, as a single mutation can reduce this winding number by at most one.    
\end{proof}

As a result, verifying Conjecture~\ref{conj: Mutation distance invariant} would prove the existence of an infinite family of doubles of the same underlying Legendrian knot.

\section{Twist-spun Legendrian tori} \label{sec: twist-spuns}

In this section, we describe a cluster structure on the sheaf moduli of certain twist-spun Legendrians. We start by giving a recipe for constructing exact Lagrangian fillings of twist-spuns. After describing the sheaf moduli of twist-spuns, and the toric charts induced by their fillings, we then discuss the process of folding cluster algebras, as well as the related idea of a skew-symmetrizable cluster ensemble. We conclude the section with a proof of Theorem~\ref{thm: intro_ensembles} and a collection of examples.

Given a Legendrian loop $\varphi$ of $\la$, we can construct an exact Lagrangian filling of the twist spun Legendrian torus $\Sigma_\varphi(\la)$ via certain fillings of $\la$.

\begin{definition}\label{def:twist_spun_filling}
    An exact Lagrangian filling of a twist-spun Legendrian  $\Sigma_\varphi(\la)$ can be obtained from an exact Lagrangian filling $L$ of $\la$ fixed by the action of $\varphi$ in the following manner. For $\theta \in [0,1]$, denote by $\varphi_{\leq \theta}$ the restriction of the Legendrian isotopy defining $\Sigma_\varphi(\la)$ to the domain $\R^3\times [0, \theta]$ and define $L_\theta=L\cup_\la tr(\varphi_{\leq \theta})$ as the concatenation of $L$ with the trace of $\varphi_{\leq \theta}$. Since $L$ is fixed by $\varphi$, this yields an $S^1$ family of exact Lagrangians $\{L_\theta\}\subseteq (\R_t\times \R_z \times T^*\R_x,d(e^t\alpha_{\st}))$. In the symplectization of $\R_z\times T^*\R_x\times T^*S^1$, we can then define 
    \[
    L \times_\varphi S^1 \subset \R_t \times \R_z\times T^*\R_x\times T^*S^1
    \]
    as $(L_{\theta},\theta)$, where $L_\theta$ lives in the $(t,z,p_x,x)$ coordinates and $\theta$ refers to the 0-section coordinates in $T^*S^1$. We thus obtain an exact Lagrangian 3-manifold $L\times_\varphi S^1$ in the symplectization 
$(\R_t\times \R^5, d(e^t\alpha_{\std}))$. By construction, this 3-manifold has boundary $\partial (L\times_\varphi S^1)\cong \Sigma_\varphi(\la)$. 

\end{definition}  

\begin{proposition}\label{prop: twist-spun fillings}
Given an exact Lagrangian filling $L$ and a Legendrian loop $\varphi$ of $\la$ such that $\varphi$ fixes $L$, the 3-manifold $L\times_\varphi S^1$ is an exact Lagrangian filling of $\Sigma_\varphi(\la)$.
\end{proposition}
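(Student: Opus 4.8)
The plan is to verify the three conditions in the definition of an exact Lagrangian filling directly from the construction of $L \times_\varphi S^1$ given in Definition~\ref{def:twist_spun_filling}, using the product structure to reduce everything to statements already known about $L$. First I would check that $L \times_\varphi S^1$ is Lagrangian. The ambient symplectic form on $\R_t \times \R_z \times T^*\R_x \times T^*S^1$ is $d(e^t\alpha_{\std})$ with $\alpha_{\std} = dz - p_x\,dx - p_\theta\,d\theta$. On the locus $(L_\theta, \theta)$ the $T^*S^1$ direction contributes only the zero section ($p_\theta = 0$), so $\alpha_{\std}$ restricts to $dz - p_x\,dx$ plus a term $\theta$-dependence that is killed because $p_\theta \equiv 0$ along the filling; the restriction of $e^t\alpha_{\std}$ to $L\times_\varphi S^1$ is therefore governed fiberwise by the restriction to each $L_\theta$. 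Since $L_\theta = L \cup_\la \mathrm{tr}(\varphi_{\le\theta})$ is an exact Lagrangian cobordism (concatenation of the exact filling $L$ with the trace of a Legendrian isotopy, which is an exact Lagrangian cylinder up to the relevant conventions), $e^t\alpha_{\std}|_{L_\theta}$ is exact, say $= df_\theta$; varying $\theta$ and using that $\varphi$ fixes $L$ (so the family closes up over $S^1$), the primitives $f_\theta$ assemble to a globally defined function on $L\times_\varphi S^1$, which simultaneously gives closedness of the restricted form (hence the Lagrangian condition, dimensions being right: $3 = \tfrac12 \cdot 6$) and exactness.

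Next I would verify the boundary and conical-end conditions. By construction $\partial(L\times_\varphi S^1) = \{L_1\}_{\theta=1}$ glued to $\{L_0\}_{\theta=0}$ via the identification $\varphi(\la)\times\{0\} \sim \la\times\{1\}$, which is exactly the mapping torus description of $\Sigma_\varphi(\la)$ from Definition~\ref{def: twist-spuns}. For the conical end: each $L_\theta$ agrees near $t = T$ with the cylinder on $\la_\theta = \varphi_\theta(\la)$; taking the product with the $S^1$-family and using the canonical identification $T^*\R_{x\ge 0}\times T^*S^1 \cong T^*\R^2$ (the map $(x,\theta)\mapsto xe^{i\theta}$ from Section~\ref{sub: twist-spun def}), this family of cylinders assembles to the cylinder on $\Sigma_\varphi(\la)$ inside the symplectization of $(\R^5,\xi_{st})$. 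So conditions (2) and (3) of the cobordism definition hold with $\La_- = \emptyset$ and $\La_+ = \Sigma_\varphi(\la)$. Finally, the primitive $f$ constructed above is constant on the conical end because each $f_\theta$ is constant there and the family is continuous in $\theta$, giving condition (4) and hence that $L\times_\varphi S^1$ is an exact \emph{filling}. Embeddedness follows because the $L_\theta$ are embedded and mutually disjoint for distinct $\theta$ (they live at different $\theta$-coordinates), so no new intersections are created in the mapping-torus construction.

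The main obstacle I anticipate is not any single step but rather the bookkeeping needed to make the "assemble the primitives over $S^1$" argument rigorous: one must check that the concatenation $L \cup_\la \mathrm{tr}(\varphi_{\le\theta})$ is genuinely an exact Lagrangian cobordism for each $\theta$ (the trace of a Legendrian isotopy is an exact Lagrangian cylinder by the standard construction, cf.\ the discussion of Legendrian loops and their traces in Section~\ref{sub: twist-spun def}), and that the primitive $f_\theta$ can be chosen to depend smoothly on $\theta$ and to close up over the circle precisely because $\varphi$ fixes $L$ (not merely $\la$ setwise). The subtlety is that $\varphi_1$ only fixes $\la$ setwise in general, but the hypothesis of Proposition~\ref{prop: twist-spun fillings} is the stronger requirement that $\varphi$ fixes the \emph{filling} $L$, and it is exactly this hypothesis that guarantees $L_0 = L_1$ as exact Lagrangians and hence that the mapping torus of primitives is well-defined. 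Once that compatibility is pinned down, the remaining verifications are routine.
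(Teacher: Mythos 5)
The paper offers no written proof of Proposition~\ref{prop: twist-spun fillings}: it is stated as an immediate consequence of the construction in Definition~\ref{def:twist_spun_filling}, so your direct verification of the cobordism axioms is the only route available and is the right one to attempt. The boundary identification with the mapping torus, the conical-end discussion, and the embeddedness argument are all fine. The issue is with the one step that carries actual content, namely the Lagrangian and exactness conditions.

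Your argument there is that $p_\theta\equiv 0$ along $L\times_\varphi S^1$, so the restriction of $e^t\alpha_{\st}$ is ``governed fiberwise'' by its restriction to each slice $L_\theta$. This does not work. Writing $\omega=d(e^t\alpha_{\st})$ and evaluating on a pair consisting of a vector $u$ tangent to the slice $L_\theta$ and the transverse vector $\partial_\theta+V$ (where $V$ is the variation vector field of the family $\{L_\theta\}$), one gets $\omega_4(u,V)-u(e^tp_\theta)$, where $\omega_4$ is the symplectic form on the $(t,z,x,p_x)$ factor. Slicewise exactness controls $\omega_4$ on pairs of vectors tangent to $L_\theta$, but says nothing about $\omega_4(u,V)$: for a genuinely moving family this term is $-dH_\theta(u)$ for the generating Hamiltonian $H_\theta$, and it does not vanish. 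The standard suspension construction therefore requires $e^tp_\theta$ to equal (up to sign and a function of $\theta$ alone) the Hamiltonian, not zero; equivalently, the $d\theta$-component of $\iota^*(e^t\alpha_{\st})$ must be $\partial_\theta f_\theta$ for the primitives $f_\theta$ to assemble into a global primitive. A quick sanity check confirms that $p_\theta\equiv 0$ cannot be right: the Legendrian lift $\Sigma_\varphi(\la)$ of the $S^1$-family of fronts in Section~\ref{sub: twist-spun def} has $p_\theta=\partial_\theta z\neq 0$, so a submanifold sitting in $\{p_\theta=0\}$ cannot be cylindrical over it at the positive end. To be fair, you inherited this from the paper's own loosely worded phrase ``$\theta$ refers to the 0-section coordinates,'' and you correctly flagged the assembling of primitives as the delicate point; but the fix is not bookkeeping over the $S^1$-gluing (where the hypothesis $\varphi(L)=L$ is indeed what you need) — it is that $p_\theta$ must be prescribed by the isotopy rather than set to zero, after which both the Lagrangian condition and global exactness follow from the standard mapping-torus argument for an exact Lagrangian isotopy.
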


Proposition~\ref{prop: twist-spun fillings} allows us to construct exact Lagrangian fillings of a Legendrian twist-spun $\Sigma_\varphi(\la)$ by understanding which exact Lagrangian fillings of $\la$ are fixed under the action of $\varphi$. To distinguish these exact Lagrangian fillings of $\Sigma_\varphi(\la)$, we consider the sheaf moduli $\SM_1(\Sigma_\varphi(\la))$.

\subsection{Sheaf invariants of twist-spun Legendrians}\label{sub: twist-spun sheaves}

We describe here how to compute the (decorated) sheaf moduli of twist-spun Legendrian tori and obtain toric charts from local systems on fillings of these twist-spuns.

We start by describing the decorated sheaf moduli $\FM(\Sigma_\varphi(\la))$ for the twist-spun $\Sigma_\varphi(\la)$. Let $\tau=\{t_1\times_\varphi S^1, \dots, t_k\times_\varphi S^1 \}$ be an $S^1$ spun collection of marked points on $\la.$ Similar to our discussion in Section~\ref{sub: decorated sheaf moduli}, we can also consider a decorated version of the sheaf moduli $\mathfrak{M}(\Sigma_\varphi(\la), \tau)$, where local systems on $\Sigma_\varphi(\la)\backslash \tau$ are given by the monodromy of $\varphi$ in the $S^1$ direction and a trivialization in the radial direction, yielding the desired framing data. Analogous to our earlier definition of $\FM(\la, T)$, we define $\FM(\Sigma_\varphi(\la), \tau)$ as $\SM_1(\Sigma_\varphi(\la))$ along with the data of these trivializations in the radial direction. While every result concerning $\SM_1(\Sigma_\varphi(\la), \tau)$ in this subsection will have an analogous statement in terms of $\FM(\Sigma_\varphi(\la), \tau)$, we will generally state these results in terms of $\SM_1(\Sigma_\varphi(\la))$ for the sake of clarity.

Given a Legendrian loop $\varphi,$ we denote by $G$ its induced group action on $\SM_1(\la, T)$ and by $\SM_1(\la, T)^G$ the $G$-invariant subset of $\SM_1(\la, T)$.

\begin{proposition}\label{prop: twist spun sheaf moduli}
    Let $\varphi$ be a Legendrian loop of a Legendrian link $\la$.    Then $\SM_1(\Sigma_\varphi(\la), \tau) \cong\SM_1(\la, T)^G\times \C^\ast$ and $\FM(\Sigma_\varphi(\la), \tau)\cong \FM(\la, T)^G\times \C^\ast$. 
\end{proposition}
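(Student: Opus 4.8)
The plan is to identify $\SM_1(\Sigma_\varphi(\la),\tau)$ with a moduli of sheaves on the mapping torus of $\varphi$ and to compute it by descent along the base circle. First I would fix the geometry: using the strict contact embedding $(\R_z\times T^*\R_{x\geq 0}\times T^*S^1,\,dz-p_\theta d\theta-p_xdx)\hookrightarrow(\R^5,\xi_{st})$ recalled in Section~\ref{sub: twist-spun def}, a sheaf on a neighborhood of $\Sigma_\varphi(\la)$ with singular support in $\Sigma_\varphi(\la)$ and acyclic stalks near $z=-\infty$ is constructible with respect to the stratification of $\R_{x\geq 0}\times S^1\times\R_z$ induced by the spun front of $\Sigma_\varphi(\la)$. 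Restricting to a slice $\{\theta=\theta_0\}$ recovers exactly the front of $\la_{\theta_0}=\varphi_{\theta_0}(\la)$, so slicewise restriction produces a point of $\SM_1(\la_{\theta_0})$; the only non-product feature over $S^1$ sits over the cone point $x=0$, where I would check that conicality together with the $z\to-\infty$ acyclicity forces the sheaf to be the $S^1$-spin of a sheaf on an honest annulus, so that no hidden data lives over the origin.

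Next I would invoke the Guillermou--Kashiwara--Schapira quantization theorem \cite{GKS_Quantization}: the Legendrian isotopy $\{\varphi_\theta\}$ induces equivalences $\SM_1(\la_{\theta_0})\cong\SM_1(\la)$ depending continuously on $\theta_0$, and likewise for $\SM_1(\cdot,T)$ and $\FM(\cdot,T)$ since $\tau$ is the $S^1$-spin of $T$. Thus $\theta\mapsto\SM_1(\la_\theta)$ is a local system of moduli over $S^1$ whose monodromy is the automorphism of $\SM_1(\la,T)$ induced by $\varphi_1$, namely a generator of $G$. A sheaf on $\Sigma_\varphi(\la)$ is then a flat section of this local system, together with the gluing datum identifying the section with its monodromy translate.

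To make this precise I would cover $S^1$ by two intervals $I_1,I_2$ with $I_1\cap I_2=J_1\sqcup J_2$; over each $I_j$ the family is trivial, so sheaves form a copy of $\SM_1(\la,T)$, and the two gluing maps over $J_1\sqcup J_2$ are $(\mathrm{id},\mathrm{id})$ and $(\mathrm{id},\varphi_{1*})$. Therefore $\SM_1(\Sigma_\varphi(\la),\tau)$ is the homotopy fiber product of $\SM_1(\la,T)\xrightarrow{(\mathrm{id},\mathrm{id})}\SM_1(\la,T)\times\SM_1(\la,T)\xleftarrow{(\mathrm{id},\varphi_{1*})}\SM_1(\la,T)$. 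On the rigidified moduli this is the honest equalizer $\SM_1(\la,T)^G$, while the $\C^\ast$-gerbe carried by the stack of microlocal rank one sheaves contributes an additional factor $\Omega B\C^\ast\cong\C^\ast$, exactly as in the identification $\mathrm{Loc}_1(S^1)\cong\C^\ast$; this $\C^\ast$ is the monodromy of a rank one local system in the $S^1$-direction. The same argument applies verbatim to $\FM(\cdot,T)$ — whose decoration trivializes the microlocal monodromy only in the radial ($\la$-)direction — giving $\FM(\Sigma_\varphi(\la),\tau)\cong\FM(\la,T)^G\times\C^\ast$, the $\C^\ast$ again recording the residual $S^1$-monodromy.

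The step I expect to be the main obstacle is controlling this residual $\C^\ast$: one must verify that it splits off as a genuine direct factor rather than fibering nontrivially over $\SM_1(\la,T)^G$ — equivalently that the $\C^\ast$-gerbe restricts trivially to the fixed locus, which should hold because a $G$-fixed sheaf admits a $\varphi_1$-equivariant structure that is well-defined up to an overall scalar — and that the identification is compatible with the relation ``$\sim$ by a global $\C^\ast$'' imposed on framings in the definitions of the decorated moduli $\FM(\la,T)$ and $\SM_1(\la,T)$. A secondary technical point is justifying the descent over the non-proper base $\R_{x\geq 0}\times S^1\times\R_z$ and near the cone point $x=0$, as flagged in the first step.
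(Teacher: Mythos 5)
Your proposal is correct and follows essentially the same route as the paper: both arguments exploit the local product structure $\Pi(\la)\times[-\epsilon,\epsilon]$ of the spun front, identify the extra singular support condition in the $S^1$ codirection with the requirement that a sheaf be carried to itself under $\varphi_*$ (hence land in the $G$-fixed locus), and attribute the remaining $\C^\ast$ to the monodromy in the $S^1$ direction. Your Čech-style descent over a two-interval cover and your explicit worry about whether the residual $\C^\ast$-torsor over the fixed locus splits are refinements the paper does not spell out -- its proof simply asserts the product with ``an additional $\C^\ast$ parameter coming from the monodromy'' -- so your flagged obstacle is a real gap in rigor in the published argument rather than in yours.
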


\begin{proof}

Consider a front projection $\Pi(\Sigma_\varphi(\la))\subseteq \R^2\times \S^1_\theta$ obtained as an $\S^1$ family of fronts of $\varphi_t(\la)\subseteq \R^2 \times \{\theta\}$. Parametrize this family of fronts as $\la_\theta,$ $\theta\in [0, 2\pi],$ and identify $\la_0$ with $\varphi(\la_0)\cong \la_{2\pi}$. If we consider two subsets $\la_{(0, 2\pi)}:=\{\la_\theta | \theta\in (0, 2\pi)\}$ and $\la_{2\pi}$, then we obtain a stratification of $\Pi(\Sigma_\varphi(\la))$, inherited from the stratification of $\Pi(\la)\subseteq \R^2$ given by dividing the front projection into cusps, crossings, and edges. Since $\Pi(\Sigma_\varphi(\la))$ is locally of the form $\Pi(\la)\times [-\epsilon, \epsilon]$, the singular support conditions for $\Sigma_\varphi(\la)$ can be identified with those for $\la$ with the possible addition of conditions coming from the gluing of $\la_0$ to $\la_{2\pi}$. As a result, any sheaf $\mathcal{F}\in \mathcal{M}_1(\Sigma_\varphi(\la))$ can be identified with a sheaf $\mathcal{F}'\subseteq \mathcal{M}_1(\la)$ possibly subject to some additional constraints coming from $\varphi$.

   Given a sheaf $\mathcal{F}\subseteq Sh(\R^2\times \S^1)$, the additional singular support conditions on $\mathcal{F}$ come from the requirement that $SS(\mathcal{F})\subseteq \Sigma_\varphi(\la)$. Denote by $\mathcal{F}_{x, t}$ the stalk of $\mathcal{F}$ at $(x, t)\subseteq \R^2\times \S^1$. In order to satisfy the singular support conditions necessary for $\mathcal{F}$ to be an element of $\mathcal{M}_1(\Sigma_\varphi(\la))$, we need for $\mathcal{F}$ to propagate in the $\S^1$ (co)direction, i.e. we need an isomorphism $\mathcal{F}_{x, t-\epsilon}\cong \mathcal{F}_{x, t}$ of stalks at points $(x, t-\epsilon)$ and $(x, t)$. When $t=1$, this is precisely the condition that $\mathcal{F}_x\cong \mathcal{F}_{\varphi(x)}$. Therefore, as desired, we can identify $\mathcal{M}_1(\Sigma_\varphi(\la))$ with the $G$-fixed locus of $\mathcal{M}_1(\la)$ with an additional $\C^\ast$ parameter coming from the monodromy in the $S^1$ direction. The case of $\FM(\Sigma_\varphi, \tau)$ follows analogously. 

\end{proof}

An exact Lagrangian filling $L$ of $\la$ that is fixed by the action of $\varphi$ together with a choice of local system induces a toric chart $\mathcal{C}_L$ that is fixed by the $G$-action on $\SM_1(\la, T).$ Following our computation in the proof of Proposition~\ref{prop: twist spun sheaf moduli}, fillings of the twist-spun of the form $L \times_\varphi S^1$ therefore can be identified with toric charts inside of $\SM_1(\Sigma_\varphi(\la))$. 

We can realize these toric charts explicitly as local systems on $L\times\varphi S^1$ in the following way. First, we compute $H_1(L\times_\varphi S^1, \tau)$ via the long exact sequence of homology of the mapping torus.

\begin{equation*}
    \dots \rightarrow H_n(L, T)\xrightarrow{1-\varphi_*} H_n(L,  T) \rightarrow H_n(L\times_\varphi S^1, \tau)\rightarrow H_{n-1}(L, T) \rightarrow \dots
\end{equation*}

From the long exact sequence, we have an injective map $H_2(L\times_\varphi S^1, \tau) \hookrightarrow H_1(L, T)$ and the image is isomorphic to the kernel of the map $1-\varphi_*$. In particular, all elements of $H_2(L\times_\varphi S^1, \tau)$ can be identified with elements of $H_1(L, T)$ that are fixed by $\varphi_*$. Denote by $G$ the subgroup of $\Aut(H_1(L, T))$ generated by $\varphi_*$. We can present a basis $\{I_1, \dots, I_m\}
$ of these cycles by considering $G$-orbits of cycles in $H_1(L)$ 
and summing all elements in each orbit. Similarly, $H_1(L\times_\varphi S^1, \tau)$ can be identified with $G$-orbits of elements in $H_1(L,T)$ with the addition of a homology class represented by $\{x\}\times_\varphi S^1$ coming from injectivity of the map $H_1(L\times_\varphi S^1, \tau)\to H_0(L, T)$.

Armed with this description of $H_1(L\times_\varphi S^1, \tau)$, we can understand abelian local systems on $L\times_\varphi S^1$. Denote by $\Phi: \SM_1(\la, T)^G\to \SM_1(\Sigma_\varphi(\la), \tau)$ the isomorphism between sheaf moduli give by Proposition~\ref{prop: twist spun sheaf moduli}. 

\begin{lemma}\label{lem: local_systems_on_spuns}
 An exact Lagrangian filling of the form $L\times_\varphi S^1$ of $\Sigma_\varphi(\la)$ induces a toric chart $\mathcal{C}_{L\times_\varphi S^1}$ in $\SM_1(\Sigma_\varphi(\la), \tau)$ that can be identified with $\Phi(\mathcal{C}_L)\times \C^\ast$. 
\end{lemma}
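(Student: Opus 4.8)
The plan is to combine the Jin--Treumann description of toric charts in terms of rank-one local systems with the explicit identification of the sheaf moduli from Proposition~\ref{prop: twist spun sheaf moduli}, and to check that the two are compatible under restriction to an $S^1$-slice.

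First I would recall from \cite{JinTreumann17} that the embedded exact Lagrangian filling $L\times_\varphi S^1$ of $\Sigma_\varphi(\la)$ induces a fully faithful embedding of the groupoid $\mathrm{Loc}_1(L\times_\varphi S^1)$ of rank-one local systems into $\SM_1(\Sigma_\varphi(\la),\tau)$, whose image is the toric chart $\mathcal{C}_{L\times_\varphi S^1}\cong \Hom(H_1(L\times_\varphi S^1,\tau),\C^\ast)$, compatibly with the decoration data exactly as in Section~\ref{sub: decorated sheaf moduli}. Then I would analyze $H_1(L\times_\varphi S^1,\tau)$ using the long exact sequence of the mapping torus recorded above: since $L$ is connected and $\varphi_\ast$ acts trivially on $H_0(L,T)$, the sequence degenerates to a short exact sequence $0\to H_1(L,T)_G\to H_1(L\times_\varphi S^1,\tau)\to \Z\to 0$, where $H_1(L,T)_G$ denotes the $G$-coinvariants (spanned by the orbit-sum classes $I_1,\dots,I_m$) and the $\Z$ is generated by the $S^1$-orbit class $[\{x\}\times_\varphi S^1]$, which splits the sequence. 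Applying $\Hom(-,\C^\ast)$ and using $\Hom(M_G,\C^\ast)=\Hom(M,\C^\ast)^G$ for the finite group $G$ yields $\mathcal{C}_{L\times_\varphi S^1}\cong \mathcal{C}_L^G\times\C^\ast$, where $\mathcal{C}_L^G$ is the $G$-fixed subtorus of $\mathcal{C}_L=\Hom(H_1(L,T),\C^\ast)$.

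It then remains to identify this decomposition with the one in Proposition~\ref{prop: twist spun sheaf moduli}, i.e. to show that the isomorphism $\Phi\colon \SM_1(\la,T)^G\times\C^\ast\xrightarrow{\sim}\SM_1(\Sigma_\varphi(\la),\tau)$ restricts to $\mathcal{C}_L^G\times\C^\ast\xrightarrow{\sim}\mathcal{C}_{L\times_\varphi S^1}$. For this I would invoke the proof of Proposition~\ref{prop: twist spun sheaf moduli}: $\Phi^{-1}$ is realized by restricting a sheaf $\mathcal F$ on $\R^2\times S^1$ with $SS(\mathcal F)\subseteq\Sigma_\varphi(\la)$ to a slice $\R^2\times\{\theta_0\}$, together with recording the $S^1$-monodromy. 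If $\mathcal F$ comes from a rank-one local system on $L\times_\varphi S^1$, its restriction to the slice comes from the restricted local system on $L_{\theta_0}=L\cup\mathrm{tr}(\varphi_{\le\theta_0})$, which deformation retracts onto $L$; hence this restriction lies in $\mathcal{C}_L$, is automatically $\varphi_\ast$-invariant (being the restriction of a local system on a mapping torus to a fiber), and the $S^1$-monodromy is an unconstrained element of $\C^\ast$. Conversely every element of $\mathcal{C}_L^G\times\C^\ast$ arises this way. Finally I would match coordinates: by the merodromy/monodromy formulas of \cite[Section 4]{CasalsWeng}, the coordinate attached to an orbit-sum cycle $I_j=\sum_{g\in G}g\cdot\gamma_j$ is the product over the $G$-orbit of the coordinate attached to $\gamma_j$, which is precisely the pullback along $\Phi$ of the corresponding coordinate on $\mathcal{C}_L^G$, while the remaining coordinate dual to $[\{x\}\times_\varphi S^1]$ is the extra $\C^\ast$ factor of Proposition~\ref{prop: twist spun sheaf moduli}.

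The main obstacle is this last compatibility check: verifying at the level of microlocal parallel transport, rather than abstract homology, that ``restrict to a slice'' and ``take $\varphi$-orbit sums'' are mutually inverse operations on the character lattices, so that no spurious relations or extra $\C^\ast$ factors appear and the splitting is canonical. A secondary subtlety, which I would flag explicitly, is that $\mathcal{C}_L$ is only $G$-invariant as a subset whereas $\Phi$ is defined on $\SM_1(\la,T)^G$; accordingly $\Phi(\mathcal{C}_L)$ in the statement of Lemma~\ref{lem: local_systems_on_spuns} is to be read as $\Phi(\mathcal{C}_L\cap\SM_1(\la,T)^G)=\Phi(\mathcal{C}_L^G)$, which is exactly what the argument produces.
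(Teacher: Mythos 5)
Your proposal is correct and follows essentially the same route as the paper's proof: both rest on the singular-support analysis of Proposition~\ref{prop: twist spun sheaf moduli} together with the mapping-torus long exact sequence identifying $H_1(L\times_\varphi S^1,\tau)$ with the $\varphi_\ast$-orbit classes in $H_1(L,T)$ plus the $S^1$-class. Your additional care with the character-lattice duality, the slice-restriction compatibility, and the reading of $\Phi(\mathcal{C}_L)$ as $\Phi(\mathcal{C}_L^G)$ only makes explicit what the paper's terser argument leaves implicit.
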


\begin{proof}
The singular support conditions corresponding to (the Legendrian lift of) $L\times_\varphi S^1$ can be determined following the proof of Proposition~\ref{prop: twist spun sheaf moduli}. In particular, since $L$ is $\varphi$-invariant and $L\times_\varphi S^1$ locally resembles $L\times [a,b]$, and the only additional singular support conditions beyond those imposed by the strata $L\times\{\theta\}$ are that stalks are fixed by $\varphi$. Following the description of $H_1(L\times_\varphi S^1, \tau)$ above, since $L$ is $\varphi$-invariant, the image of the local system $\mathcal{C}_L$ is $G$-invariant, and we can identify a local system on $L$ directly with a local system on $L\times_\varphi S^1$ with the addition of the $\C^\ast$ factor from the monodromy in the $S^1$ direction. 
\end{proof}

Note that Hamiltonian isotopy invariance of $\mathcal{C}_{L\times_\varphi S^1}$ follows from \cite{JinTreumann17}, building off of \cite{GKS_Quantization}. Following the notation in the lower-dimensional case, we denote by $X_\gamma$ (resp. $A_{\gamma^\vee}$) the microlocal monodromy (resp. microlocal merodromy) about the homology cycle $\gamma\in H_1(L\times_\varphi S^1, \tau)$ (resp. dual relative cycle $\gamma^\vee \in H_1(L\times_\varphi S^1\backslash \tau, \Sigma_\varphi(\la)\backslash \tau)$).

\subsection{Folding cluster algebras}\label{sub: folding}

Given a cluster variety $\SA$ and a finite group $G$ acting on it in a way that respects mutation, one can form a new cluster variety by quotienting by this $G$-action. In this subsection, we describe this construction algebraically as a prelude to the geometric construction in the following subsection. 

Let $G$ be a finite group acting on a cluster algebra $\SA$ and suppose that there is a seed $({\bf a}, Q)$ of $\mathcal{A}$ such that $g\cdot Q=Q$ for all $g \in G$. We denote by $i\sim i'$ the vertex label $i'$ of any vertex $v_{i'}$ lying in the $G$-orbit of $v_i$ and recall that $\tilde{B}(Q)=(b_{ij})$ denotes the exchange matrix
of $Q$. We define the following compatibility condition for $Q$ with respect to the $G$-action:

\begin{definition}
    The quiver $Q$ is $G$-admissible if the following holds:
    \begin{enumerate}
      \item If $i\sim i'$, then $i$ and $i'$ are either both mutable or both frozen.
      \item For all $i, j$ and any $g\in G$, we have $b_{ij}=b_{g\cdot i, g\cdot  j}$.
      \item  For all $i \sim i'$, we have $b_{i i'}=0$.
      \item For all  $i \sim i'$, and any mutable $j$ we have $b_{ij}b_{i'j}\geq 0$.
    \end{enumerate}
\end{definition}

$G$-admissibility ensures that the result of mutating a quiver at each of the vertices in a $G$-orbit does not depend on the order of mutations. 

Let $I$ be the $G$-orbit of a vertex $v_i$. We denote by $\#\{I \rightarrow J\}$ the sum $\sum_{i\in I} \#\{i\rightarrow j\}$ for some arbitrary $j\in J$. To package the data $\#\{I\rightarrow J\}$ as part of a graph, we produce a weighted quiver. That is, given a $G$-admissible quiver $Q$, we consider the weighted graph $Q^G$ whose vertices are $G$-orbits of vertices in $Q$ where $v_i$ and $v_j$ have an edge of weight $\#\{I \rightarrow J\}$ between. Note that the weighting is distinct from multiplicity, as $\#\{I \rightarrow J\}\neq -\#\{J\rightarrow I\}$ in general. Alternatively, we can define a matrix $\tilde{B}^G$ with entries $b_{ij}^G=\sum_{i\in I} b_{ij}$ where $b_{ij}$ is the $(i,j)$ entry of the original exchange matrix $B$. The mutable part of the exchange matrix $\tilde{B}^G$ is skew-symmetrizable, i.e. there is some diagonal matrix $D$ with positive integer entries such that $B^GD$ is skew-symmetric. Here the $j$th nonzero entry of $D$ is given by the size of the orbit of the quiver vertex $v_j$. 

Given a $G$-admissible quiver $Q$ and a mutable $G$-orbit $I$, we can consider a sequence of mutations $\mu_I=\prod_{i\in I}\mu_i$. When both $Q$ and $\mu_I(Q)$ are $G$-admissible, we have that $\mu_I(Q^G)=\mu_I(Q)^G$ where mutation of the folded quiver $Q^G$ is defined as above, replacing $b_{ij}$ by $b_{ij}^G$. We call a quiver $Q$ with a $G$-action globally foldable with respect to $G$ if $Q$ is $G$-admissible and, for any sequence of mutable $G$-orbits $I_1, \dots I_k$, the quiver $\mu_{I_k}\dots \mu_{I_1}(Q)$ is $G$-admissible as well.

 Let $({\bf a}, Q)$ be a globally foldable seed with respect to a $G$-action. Recall that for ${\bf a}=(a_1, \dots, a_m)$, we denote by $\mathcal{F}$ the field of rational functions in $m$ variables. Let $\mathcal{F}^G$ be the field of rational functions in $m_G$ variables, where $m_G$ denotes the number of $G$-orbits of vertices $v\in Q$, equivalently the number of vertices in $Q^G$. We define the folded seed obtained from $({\bf a}, Q)$ to be the variables $a_{I_1}, \dots a_{I_{m_{g}}}$ together with the weighted quiver $Q^G$. The key consequence of the globally foldable condition is that it allows us to mutate at any mutable element in a folded seed to obtain another folded seed. The collection of cluster variables given by arbitrary mutations of the folded seed then defines the folded cluster algebra.

\subsection{Obtaining cluster structures on $\FM(\Sigma_\varphi(\La))$ via folding}

In this subsection we obtain a folded cluster algebra from the sheaf moduli of certain twist-spun Legendrians. Let $\varphi$ be a Legendrian loop of a Legendrian link $\la$ with induced action $\varphi_*$ on $\FM_1(\la, T)$. Recall that we denote the $S^1$-spun collection of marked points $T\times_\varphi S^1$ by $\tau$. If $\la$ is braid positive, then the moduli $\SM_1(\Sigma_\varphi(\la), \tau)$ and $\FM(\Sigma_\varphi(\la), \tau)$ can both be identified with collections of $\varphi_*$-invariant flags satisfying the transversality conditions described in Section~\ref{sec: sheaves background} together with the possible addition of $\varphi_*$-invariant framing data coming from the marked circles. We show below that for specific $\varphi_*$-actions, $\SM_1(\Sigma_\varphi(\la), \tau)$ and $\FM(\Sigma_\varphi(\la), \tau)$ admit skew-symmetrizable cluster structures with contact-geometric descriptions.

\subsubsection{Character lattices from fillings of $\Sigma_\varphi(\la)$}\label{sub: lattices}
Following the exposition given in \cite[Section 2]{GHK15} for cluster ensembles of skew-symmetrizable cluster algebras, we define a collection of lattices and bilinear form comprising the fixed data of the cluster ensemble. We first consider a lattice $N$ with a skew-symmetric bilinear form $\langle \cdot , \cdot \rangle$. The lattice $N$ contains an unfrozen sublattice $N_{\uf}$, which is a saturated sublattice of $N$. For skew-symmetrizable cluster structures, there is an additional sublattice $N^\circ$ of $N$ that is of finite-index in $N$. We also obtain dual lattices $M:= \Hom(N, \Z)$ and $M^\circ :=\Hom(N^\circ, \Z)$ where $M$ is necessarily a finite-index sublattice of $M^\circ$. The lattice $N$ is the character lattice for the $\SX$ cluster tori, while the lattice $M^\circ$ gives the character lattice for the $\SA$ cluster tori. The toric coordinates obtained from the basis vectors of the corresponding lattices yield cluster variables of the appropriate cluster charts. Finally, the data of the exchange matrix is encoded in a (generally non-skew-symmetric) bilinear form as follows. Let $e_1, \dots, e_r$ form a basis for $N$, and define $d_1, \dots, d_r$ by the condition that $d_1e_1, \dots d_re_r$ form a basis for $N^\circ$. Then we can obtain a bilinear form $[ \cdot, \cdot]$ from $\langle \cdot, \cdot \rangle $ by the formula $[e_i, e_j]=\langle e_i, e_j\rangle d_j$. In the cluster algebraic setup, the bilinear form $[ \cdot, \cdot]$ plays the role of the weighted quiver obtained by folding a $G$-admissible quiver.

Now let $\la(\beta)$ be a Legendrian link such that the ring of regular functions $\C[\FM(\la(\beta), T)]$ is a globally foldable cluster algebra. To understand the character lattices associated to an exact Lagrangian filling $L\times_\varphi S^1$ of $\Sigma_\varphi(\la)$, we examine $H_2(L\times_\varphi S^1, \tau).$
Recall from Subsection~\ref{sub: twist-spun sheaves} that we can obtain a basis for $H_2(L\times_\varphi S^1)$ by examining the $G$-orbits of homology cycles in $H_1(L, T)$. We denote these $G$-orbits by $I_1, \dots, I_k$ and define a skew-symmetrizable bilinear form by $\langle I_i, I_j\rangle:= \max\#\{\gamma_i\cap \gamma_j\}$ where $\gamma_i$ and $\gamma_j$ are cycles in the orbits $I_i$ and $I_j$, respectively. 

Denote by $N$ the homology lattice $H_2(L\times_\varphi S^1, \tau)$ and $M^\circ$ the lattice $H_2((L\times_\varphi S^1)\backslash \tau, (\la\times_\varphi S^1)\backslash \tau$). By the long exact sequence above, all elements of $N$ can be identified with elements of $H_1(L, T)$ that are fixed by $\varphi_*$.
Note that a basis for the dual of $N$ can be given by the $G$-orbits of dual relative cycles with rational weights, i.e. $I_i^\vee=\frac{1}{|I_i|}(\sum_{\gamma\in {I_i}} \gamma^\vee)$ where $\gamma^\vee$ denotes the Poincar\'e dual of $\gamma$. In particular, this lattice contains a finite-index sublattice given by $H_2(L\times_\varphi S^1, \la\times_\varphi S^1)$. 
 
\begin{ex}
Consider $\la=\la(2, 6)$ and the filling $L_\w$ given by the weave $\mathfrak{w}$ pictured in Figure~\ref{fig: FoldingExample} (left). The intersection form of the associated filling yields an $A_5$ quiver with eight frozen variables.  
The action of $\rho^4$ -- rotation by $\pi$ -- fixes $\w$, and therefore $L_\w\times_{\rho^4}S^1$ is an exact Lagrangian filling of the twist-spun $\Sigma_{\rho^4}(\La(2, 6))$. Label the mutable quiver vertices from top to bottom by $\gamma_1,\dots \gamma_5,$ and the frozen quiver vertices by $\gamma_6, \dots, \gamma_{13}$. If we fold according to the $\rho^4$ action, we can obtain the weighted quiver given in Figure~\ref{fig: FoldingExample} (right) where the only nontrivial weights are assigned to 
incoming (2, 1) and outgoing (1, 2) arrows of the quiver vertex corresponding to $\gamma_3$. If we denote by $A_i$ the cluster variable associated to $\gamma_i^\vee$ or, by an abuse of notation, its $\rho^4$ orbit, then the weighted quiver tell us that mutation at $\gamma_3$ in the folded cluster algebra can be computed as 
$x_3\mu(x_3)=x_2^2+x_9^2$

Alternatively, we can choose a set of $\rho^4$ orbits $\gamma_1+\gamma_5, \gamma_2+\gamma_4, \gamma_3$ of mutable vertices and $\gamma_6+\gamma_{13}, \gamma_7+\gamma_{12}, \gamma_{8}+\gamma_{11}, \gamma_9+\gamma_{10}$ of frozen vertices to form a basis for the homology lattice $N:=H_1(L_\w\times S^1,\tau)\cong \Z^7$.  
If we denote by $\gamma^\vee \in H_1(L\backslash T, \la\backslash T)$ the dual relative cycle of $\gamma \in H_1(L, T)$, then $N$ has a dual lattice $M$ with basis \[\frac{1}{2}(\gamma_1^\vee+\gamma_5^\vee), \frac{1}{2}(\gamma_2^\vee+\gamma_4^\vee), \gamma_3^\vee, \text{ and } \left\{\frac{1}{2}(\gamma_i^\vee+\gamma_{19-i}^\vee)\right\}_{i=6}^9\] and a corresponding sublattice $M^\circ$ with basis \[(\gamma_1^\vee+\gamma_5^\vee),(\gamma_2^\vee+\gamma_4^\vee), \gamma_3^\vee, \left\{\gamma_i^\vee+\gamma_{19-i}^\vee\right\}_{i=6}^9.\] 
Similarly, $N$ is a sublattice of $N^\circ$, the dual lattice to $M^\circ$ with basis 
\[\frac{1}{2}(\gamma_1+\gamma_5),\frac{1}{2}(\gamma_2+\gamma_4), \gamma_3, \left\{\frac{1}{2}(\gamma_i+\gamma_{19-i})\right\}_{i=6}^9.\] 
The corresponding bilinear forms  $[\cdot, \cdot]$ for $N$ and $M^\circ$ are given by the matrix 
\[\begin{bmatrix}
\phantom{-}0 & -1 & \phantom{-}0 & -1 & \phantom{-}1 & \phantom{-}1 &\phantom{-}0\\
\phantom{-}1 & \phantom{-}0 & -2 & \phantom{-}0 & \phantom{-}0 & -1 &\phantom{-}1\\
\phantom{-}0 & \phantom{-}1 & \phantom{-}0 & \phantom{-}0 & \phantom{-}0 & \phantom{-}0 &-1\\
\phantom{-}1 & \phantom{-}0 & \phantom{-}0 & \phantom{-}0 & \phantom{-}0 & \phantom{-}0 &\phantom{-}0\\
-1 & \phantom{-}0 & \phantom{-}0 & \phantom{-}0 & \phantom{-}0 & \phantom{-}0 &\phantom{-}0\\
-1 & \phantom{-}1 & \phantom{-}0 & \phantom{-}0 & \phantom{-}0 & \phantom{-}0 &\phantom{-}0\\
\phantom{-}0 & -1 & \phantom{-}2 & \phantom{-}0 & \phantom{-}0 & \phantom{-}0 &\phantom{-}0
\end{bmatrix}\]
\end{ex}

\begin{figure}[h!]{ \includegraphics[width=.6\textwidth]{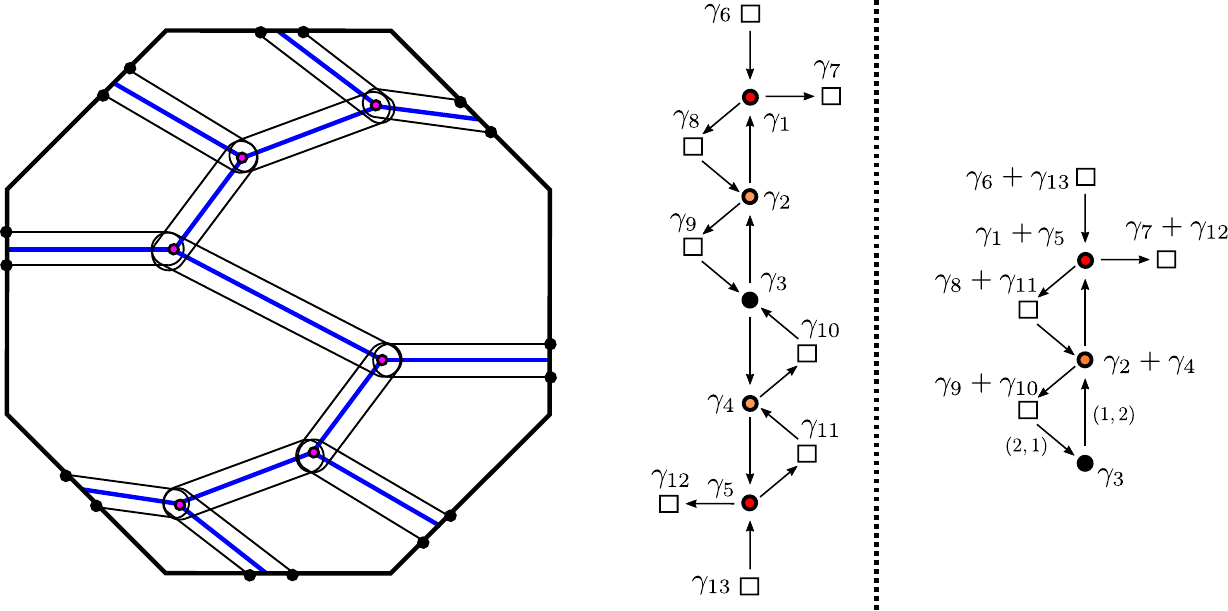}}\caption{Legendrian weave filling of $\La(2, 6)$ and its intersection quiver (left) and the quiver obtained from folding by the action of $\rho^4$ (right). Mutable quiver vertices are colored according to which $\rho^4$ orbits they belong to.}
			\label{fig: FoldingExample}\end{figure}

Given $\gamma\in H_1(L, T)$, denote by $T_\varphi(\gamma)$ the orbit of $\gamma$ under the action of the Legendrian loop $\varphi$. Since $L$ is fixed by $\varphi$, the orbit $T_\varphi(\gamma)$ is a torus and represents a class in $H_2(L\times_\varphi S^1, \tau)$ following our discussion above. To $T_\varphi(\gamma)$, we associate a $\C^\ast$ coordinate via the following definition.

\begin{definition}\label{def: twist_monodromy}
  Let $T_\varphi(\gamma)\in H_2(L\times_\varphi S^1, \tau)$ be a cycle obtained as the $\varphi$ orbit of $\gamma\in  H_1(L, T)$. Then the microlocal monodromy  $X_{T_\varphi(\gamma)}$ is defined to be  $X_\gamma\in \SM_1(\Sigma_\varphi(\la), \tau)$ where $\gamma$ is an orbit representative of $T_\varphi(\gamma)$. 
\end{definition}

Note that the $\varphi_*$-invariant condition above implies that $X_{T_\varphi(\gamma)}$ is well-defined.

Now, let  $T_\varphi(\gamma^\vee)\in H_2(L\times_\varphi S^1, \la \times_\varphi S^1)$ be a relative homology cycle of the form  $\gamma^\vee\times_\varphi S^1$ where $\gamma^\vee\in H_1(L\backslash T, \la \backslash T)$. To define the microlocal merodromy along $T_\varphi(\gamma^\vee)$, we proceed analogously to above.

\begin{definition}\label{def: twist_merodromy}
  The microlocal merodromy $A_{T_\varphi(\gamma^\vee)}$ is the function $A_{\gamma^\vee}\in \FM(\Sigma_\varphi(\la), \tau).$ 
\end{definition}

\begin{remark}
    The function $A_{T_\varphi(\gamma^\vee)}$ can geometrically be recognized, as a parallel transport map analogous to Definitions~\ref{def: merodromy}, as follows. Consider a decorated version of the sheaf moduli $\mathfrak{M}(\Sigma_\varphi(\la), \tau)$ as described in Section~\ref{sub: twist-spun sheaves}. Then, $T_{\varphi(\gamma^\vee)}$ contains the relative 1-cycle $\gamma^\vee$ via the inclusion $\la \subset \Sigma_\varphi(\la)$, and the framing data of $\mathfrak{M}(\Sigma_\varphi(\la), \tau)$ specifies non-zero vectors at the end-points of $\gamma^\vee$. Then $A_{T_\varphi(\gamma^\vee)}$ measures the ratio of the parallel transport to the vector specified by the framing data, as in Definition~\ref{def: merodromy}.
\end{remark}

By our argument in Proposition~\ref{prop: twist spun sheaf moduli} there is a map $\Phi: \C[\FM(\la, T)] \to \C[\FM(\Sigma_\varphi(\la), \tau)]$ of regular functions on the relevant sheaf moduli given by taking $G$-invariance. This allows us to define the information of a folded seed from the cluster seed defined by an initial filling $L$ and its intersection quiver, assuming that the quiver is $G$-admissible. In what follows below, we show that for certain $G$-admissible quivers, we can realize the quiver mutations geometrically, providing the last elements needed for our proof of Theorem~\ref{thm: intro_ensembles}.

\subsubsection{Mutation of twist-spun Legendrians}

We say that a collection $K=\{\gamma_i\}_{i=1}^n$ of $\mathbb{L}$-compressing cycles are simultaneously mutable if mutations at any two cycles $\gamma_i, \gamma_{i'}$ in the collection pairwise commute. That is $\mu_{\gamma_i}\circ \mu_{\gamma_{i'}}(L)$ is Hamiltonian isotopic to $\mu_{\gamma_{i'}}\circ \mu_{\gamma_{i}}(L)$. When we have such a collection of cycles, we denote by $\mu_K$  the sequence of mutations at every $\mathbb{L}$-compressing cycle in $K$ in any order.

\begin{lemma}\label{lem: G-mutation}
   Let $G$ be a finite group generated by the induced action of a Legendrian loop $\varphi$ on $\SM_1(\la(\beta), T)$ and $L$ be an exact Lagrangian filling of a braid positive Legendrian link $\la(\beta)$ that admits a $G$-admissible intersection quiver and is fixed by $\varphi$. Then all $\mathbb{L}$-compressing cycles in $H_1(L)$ belonging to the same mutable $G$-orbit $K$ are simultaneously mutable and $\varphi(\mu_K(L))\cong \mu_K(L)$.
      
\end{lemma}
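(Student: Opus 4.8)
The plan is to reduce both claims to the well-understood lower-dimensional picture of Lagrangian disk surgery on exact fillings of $\la(\beta)$, using the fact that a $\varphi$-fixed filling $L$ together with its $\varphi$-orbit determines the surgery configuration in $\R^5$ via the spinning construction of Lemma~\ref{lem: lag_surgery}. First I would fix a mutable $G$-orbit $K=\{\gamma_{i_1},\dots,\gamma_{i_r}\}$ of $\L$-compressing cycles in $H_1(L)$; by $G$-admissibility, conditions (1)--(4) in the definition of $G$-admissible quiver say precisely that the $\gamma_{i_j}$ are all mutable, that $\langle \gamma_{i_j},\gamma_{i_{j'}}\rangle=0$ for $j\ne j'$ (condition (3)), and that the intersection numbers are $\varphi$-equivariant. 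The vanishing of the pairwise intersection numbers is the input that makes the local models of the attaching Lagrangian solid tori $S_{i_1},\dots,S_{i_r}$ (obtained by spinning the attaching Lagrangian disks dual to the $\gamma_{i_j}$, as in Section~\ref{sec: lag_surgery}) disjoint after a small Hamiltonian perturbation, so that the surgeries $\eta_{S_{i_j}}$ can be performed in any order with the same result. This gives simultaneous mutability of the cycles in $K$ (for $L$ itself, in the $4$-dimensional setting), and I would state it as: the surgered fillings $\mu_{\gamma_{i_j}}\circ\mu_{\gamma_{i_{j'}}}(L)$ and $\mu_{\gamma_{i_{j'}}}\circ\mu_{\gamma_{i_j}}(L)$ are Hamiltonian isotopic because the surgery regions are disjoint and each $\eta_{S}$ is supported in a neighborhood of its $S$.

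Next I would address $\varphi$-invariance of $\mu_K(L)$. The key observation is that $\varphi$ acts on $H_1(L)$ permuting the cycles within the orbit $K$, and by condition (2) of $G$-admissibility this permutation is realized by a Hamiltonian isotopy of $\R^3$ (indeed the Legendrian loop $\varphi$ itself, which fixes $L$) carrying the attaching disk of $\gamma_{i_j}$ to that of $\gamma_{\varphi(i_j)}$ up to Hamiltonian isotopy. Hence $\varphi$ carries the unordered collection of surgery data $\{(L,S_{i_1}),\dots,(L,S_{i_r})\}$ to itself. Since the surgery output $\mu_K(L)=\eta_{S_{i_r}}\circ\cdots\circ\eta_{S_{i_1}}(L)$ depends only on this unordered collection (by the commutativity just established) and $\eta$ is natural under symplectomorphisms, we conclude $\varphi(\mu_K(L))$ is Hamiltonian isotopic to $\mu_K(L)$. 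Concretely: $\varphi(\mu_K(L)) = \varphi\big(\eta_{S_{i_r}}\circ\cdots\circ\eta_{S_{i_1}}(L)\big) = \eta_{\varphi(S_{i_r})}\circ\cdots\circ\eta_{\varphi(S_{i_1})}(\varphi(L)) = \eta_{S_{\varphi(i_r)}}\circ\cdots\circ\eta_{S_{\varphi(i_1)}}(L)$, and since $\{\varphi(i_1),\dots,\varphi(i_r)\}=\{i_1,\dots,i_r\}$ as sets, reordering via commutativity gives $\mu_K(L)$ back.

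I expect the main obstacle to be making rigorous the claim that the attaching solid tori $S_{i_j}$ can be simultaneously realized with pairwise disjoint Weinstein neighborhoods from the hypothesis that the \emph{homological} intersection numbers $\langle\gamma_{i_j},\gamma_{i_{j'}}\rangle$ vanish: a priori vanishing homological intersection does not imply geometric disjointness of the bounding disks. Here I would lean on the combinatorial control afforded by Legendrian weaves — one should be able to represent the cycles in $K$ (possibly after a Legendrian isotopy of the weave, as in the ``$\term$'' discussion of Section~\ref{sub: weave doubles} and \cite[Proposition 3.5]{CasalsWeng}) as short $\sf I$-cycles or $\sf Y$-trees with disjoint supports in the $N$-graph, whereupon the attaching disks are manifestly disjoint and Legendrian mutation is a purely local modification of the $N$-graph. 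The second, softer obstacle is bookkeeping the dependence of $\eta_S$ on auxiliary choices (Weinstein neighborhood, the curve $\gamma_1$ in Lemma~\ref{lem: lag_surgery}) up to Hamiltonian isotopy; this is standard but must be invoked carefully so that the naturality statement $\varphi\circ\eta_S = \eta_{\varphi(S)}\circ\varphi$ holds on the nose up to Hamiltonian isotopy. Once these two points are in place, the lemma follows formally.
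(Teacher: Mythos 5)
Your proposal is correct and follows essentially the same route as the paper: $G$-admissibility condition (3) gives vanishing algebraic intersection numbers within an orbit, one upgrades this to geometric disjointness of the cycles and their compressing disks, disjointly supported local surgeries commute, and $\varphi$-invariance of $\mu_K(L)$ follows because $\varphi$ permutes the surgery data within the orbit. The one place you diverge is the step you correctly flag as the main obstacle — passing from vanishing homological intersection to geometric disjointness. The paper does not go through weave combinatorics here; it invokes the result of \cite{CasalsGao24} that there is a Hamiltonian isotopy of $L$ after which geometric intersections of the cycles agree with their algebraic intersections, and then observes that this isotopy is built from local moves, so it can be performed near each cycle of the orbit to produce a $G$-invariant isotopy. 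Your weave-based alternative (realizing the orbit as short {\sf I}-cycles with disjoint supports via \cite[Proposition 3.5]{CasalsWeng}) would work when such a representation exists, but it is less general than the hypotheses of the lemma and would require an extra argument that the cycles in a mutable $G$-orbit can always be so represented; the citation to \cite{CasalsGao24} sidesteps this. Your equivariance bookkeeping $\varphi\circ\eta_S=\eta_{\varphi(S)}\circ\varphi$ is a slightly more explicit phrasing of what the paper asserts ``by construction'' once the disjoining isotopy is made $G$-invariant, and is fine.
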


\begin{proof}
Let $L$ be a $\varphi$-invariant filling of $\la(\beta)$ with a $G$-admissible intersection quiver. The $G$-admissible condition implies that any two cycles $\gamma_i$ and $\gamma_{i'}$ belonging to the same $G$-orbit have algebraic intersection number $\langle \gamma_i, \gamma_{i'}\rangle=0$. By \cite{CasalsGao24}, there is a Hamiltonian isotopy of $L$ so that the geometric intersection of cycles in $H_1(L)$ matches the algebraic intersection, ensuring that $\mu_{\gamma_i}(L)$ fixes a neighborhood of $\gamma_{i'}$ and vice versa. As a result, we can successively perform the Lagrangian disk surgeries $\mu_{\gamma_i}$ and $\mu_{\gamma_{i'}}$, and either ordering produces Hamiltonian isotopic fillings of $\La(\beta)$.

We also observe that the Hamiltonian isotopy of $L$ given in \cite{CasalsGao24} to ensure that geometric intersections match algebraic intersections is, at each step, a local move. In particular, this allows us to perform these local Hamiltonian isotopies in a neighborhood of any $\gamma_i$ in the same $G$-orbit in order to produce a $G$-invariant Hamiltonian isotopy. The result of mutating at each of the cycles in the $G$-orbit then produces another $\varphi$-invariant filling, by construction.  
\end{proof}

\begin{remark}
    Note that while mutation at every cycle in a $G$-orbit produces another $\varphi$-invariant filling, it does not necessarily preserve $G$-admissibility. As explained in Subsection~\ref{sub: folding}, $G$-admissibility is only preserved under an arbitrary sequence of mutations of $G$-orbits if the cluster algebra is globally foldable with respect to the $G$-action.
\end{remark}

We relate a $G$-orbit of $\mathbb{L}$-compressing cycles of $L$ to a solid mutation configuration (as in Section~\ref{sec: lag_surgery}) in a filling for a twist-spun Legendrian torus via the following lemma. Let $D_\gamma$ be an $\mathbb{L}$-compressing disk for $L$ with boundary $\gamma$. As explained above, since $L$ is fixed by $\varphi$, the set $T_\varphi(\gamma)=\{(\gamma_\theta, \theta)\}_{\theta=0}^{2\pi|K|}$ is a torus representing a class in $H_2(L\times_\varphi S^1, \tau)$. Here $|K|$ denotes the size of the orbit of $\gamma$ under the induced action of $\varphi$ on homology. Since $\gamma$ is an $\L$-compressing cycle, $T_\varphi(\gamma)$ bounds a solid Lagrangian torus $S_\varphi(D_\gamma)=\{(D_\gamma, \theta)\}_{\theta=0}^{2\pi|K|}$ obtained as the orbit of $D_\gamma$ under the $\varphi$ action.

\begin{lemma}\label{lem: solid mutation}
             Let $G, \varphi,$ and $L$ be as in the statement of Lemma~\ref{lem: G-mutation}. For any $\mathbb{L}$-compressing cycle $\gamma \in H_1(L, T)$ with Lagrangian compressing disk $D_\gamma$, the pair $(L\times_\varphi S^1, S_\varphi(D_\gamma))$ is a generalized solid mutation configuration.
\end{lemma}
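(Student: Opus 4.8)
The plan is to verify directly the two defining conditions of a generalized solid mutation configuration from Section~\ref{sec: lag_surgery}, taking as ambient filling $L\times_\varphi S^1\subset B^6$, which is a properly embedded exact Lagrangian filling of $\Sigma_\varphi(\la)$ by Proposition~\ref{prop: twist-spun fillings}. First I would pin down the object $S_\varphi(D_\gamma)$: write $\{\gamma_\theta\}_{\theta\in[0,2\pi|K|]}$ for the $S^1$-family of cycles representing the class $T_\varphi(\gamma)\in H_2(L\times_\varphi S^1,\tau)$, so $\gamma_0=\gamma$, the cycle $\gamma_{2\pi j}$ is $\varphi^j(\gamma)$ up to the loop monodromy of the filling, and $\gamma_{2\pi|K|}=\gamma$. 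One has $\gamma_\theta\subset L_\theta$ and an $\L$-compressing disk $D_{\gamma_\theta}$ of $L_\theta$ with $\partial D_{\gamma_\theta}=\gamma_\theta$, obtained by dragging $D_\gamma$ along the mapping-torus construction of Definition~\ref{def:twist_spun_filling}. Then $S_\varphi(D_\gamma)=\{(D_{\gamma_\theta},\theta)\}$ is the Lagrangian suspension of this family of disks, built inside $\R_t\times\R^5$ in exactly the same way that $L\times_\varphi S^1$ is built from the slices $L_\theta$; since the family is induced by an ambient contact/Hamiltonian isotopy, the same argument that makes $L\times_\varphi S^1$ Lagrangian shows $S_\varphi(D_\gamma)$ is Lagrangian, and topologically it is the mapping torus of a disk, hence diffeomorphic to $S^1\times\D^2$.

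Next I would establish embeddedness. Over a single turn $\theta\in[0,2\pi)$ the slices $D_{\gamma_\theta}$ are embedded and pairwise disjoint because $D_\gamma$ is; the only new phenomenon is that $S_\varphi(D_\gamma)$ covers the base circle $|K|$ times, so I must arrange that the disks $D_\gamma, D_{\varphi(\gamma)},\dots,D_{\varphi^{|K|-1}(\gamma)}$ lying over a common base point are pairwise disjoint. This is where the hypotheses of Lemma~\ref{lem: G-mutation} enter: $\gamma$ lies in a $G$-orbit, $G$-admissibility forces the cycles in this orbit to have vanishing algebraic intersection, and by \cite{CasalsGao24} there is a Hamiltonian isotopy of $L$ realizing algebraic intersections geometrically; exactly as in the proof of Lemma~\ref{lem: G-mutation}, this isotopy can be performed $\varphi$-equivariantly, so the compressing disks of the various $\varphi^j(\gamma)$ become pairwise disjoint. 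Hence $S_\varphi(D_\gamma)$ is an embedded Lagrangian solid torus.

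For condition~(1), $\partial S_\varphi(D_\gamma)=\{(\partial D_{\gamma_\theta},\theta)\}=\{(\gamma_\theta,\theta)\}=T_\varphi(\gamma)$; and since an $\L$-compressing disk meets $L$ precisely along its boundary and may be taken disjoint from the conical ends and from the traces $tr(\varphi_{\leq\theta})$ appearing in $L_\theta$, one gets $D_{\gamma_\theta}\cap L_\theta=\gamma_\theta$ fiberwise, so $S_\varphi(D_\gamma)\cap(L\times_\varphi S^1)=T_\varphi(\gamma)=\partial S_\varphi(D_\gamma)$. For condition~(2), I would check transversality along $T_\varphi(\gamma)$ slicewise plus the spinning direction: at a point of $T_\varphi(\gamma)$ the tangent spaces are, up to the identifications of the suspension construction, $T_{\gamma_\theta}L_\theta\oplus\R\partial_\theta$ and $T_{\gamma_\theta}D_{\gamma_\theta}\oplus\R\partial_\theta$; transversality of $D_\gamma$ and $L$ along $\gamma$ inside the four-dimensional slice gives $T_{\gamma_\theta}L_\theta\cap T_{\gamma_\theta}D_{\gamma_\theta}=T_{\gamma_\theta}\gamma_\theta$, and adjoining the common $\partial_\theta$ direction yields intersection $T_{\gamma_\theta}\gamma_\theta\oplus\R\partial_\theta$, which is exactly the tangent space of $T_\varphi(\gamma)$ and has the expected dimension $2$. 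Together these verify that $(L\times_\varphi S^1,S_\varphi(D_\gamma))$ is a generalized solid mutation configuration.

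The hard part, I expect, will be the symplectic bookkeeping in the first paragraph: making precise that the $S^1$-family of Lagrangian disks, once it closes up after $|K|$ turns, is genuinely an embedded exact Lagrangian in the symplectization. This requires care that the loop monodromy used to drag $D_\gamma$ around is a Hamiltonian isotopy of the filling compatible with the contact-boundary isotopy $\varphi$, and that the ``slice $\oplus\ \partial_\theta$'' description of the tangent space holds on the nose, rather than with a correction term in the $p_\theta$ coordinate that must be absorbed into the choice of family. A secondary but essential subtlety is that disjointness of the compressing disks within a $G$-orbit has to be compatible with $\varphi$-equivariance, which is precisely why one needs $G$-admissibility of the intersection quiver, as in Lemma~\ref{lem: G-mutation}, rather than merely $\varphi$-invariance of $L$.
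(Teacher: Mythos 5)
Your proof is correct and follows the same route as the paper's: parametrize $S_\varphi(D_\gamma)$ as the suspension $\{(D_{\gamma_\theta},\theta)\}$ of the $S^1$-family of compressing disks and verify the two conditions of a generalized solid mutation configuration slice-by-slice, with transversality of $D_{\gamma_\theta}$ and $L_\theta$ in each four-dimensional slice doing the work. The paper's three-sentence proof omits the embeddedness discussion you supply (disjointness of the $|K|$ disks lying over a common base point), which is a worthwhile addition; the cleanest justification of that step is that these disks all belong to the $\varphi$-orbit of $D_\gamma$ inside the assumed maximal $\L$-compressing collection, whose disks are pairwise disjoint by definition, rather than trying to deduce disjointness of the disks from disjointness of their boundary cycles (which alone would not rule out interior intersection points of two Lagrangian $2$-disks in a $4$-dimensional slice).
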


\begin{proof}
    Following the notation in Definition~\ref{def:twist_spun_filling}, for every $\theta \in S^1$, there is a simple closed curve $\gamma_\theta \subset L_{\theta}$ which bounds a transverse Lagrangian disk $D_{\gamma_\theta}$. As described above, the solid torus $S_\varphi(D_\gamma)$ can be parametrized as $(D_{\gamma_\theta},\theta)$ for $0\leq \theta\leq 2\pi|K|$. Since each $D_{\gamma_\theta}$ meets $L_\theta$ transversely, the assertion follows.
   
\end{proof}

\begin{remark}
We comment here on a slight subtlety regarding the order of $|G|$ as a group action on $\SM_1(\la, T)$ and the order of $|G|$ as a cluster automorphism. Typically, cluster automorphisms -- automorphisms of a cluster algebra that commute with mutation -- are considered up to a permutation of the cluster variables in a seed. However, in order to ensure that we can glue $S^1$-families of cycles together in the mapping torus $L\times\varphi S^1$, we need to consider the entire orbit of a cycle $\gamma$. That is, any non-identity permutation of homology cycles in an orbit, even if it results in the same cluster seed, does not yield a torus (or a homology class) in $L\times\varphi S^1$, as it does not comprise the full orbit of $\gamma$. This means that $|K|$ may differ from the order of the cluster automorphism of $\SM_1(\la, T)$ induced by $\varphi$ some multiple. In each of the specific cases we consider in this work, the order of $G$ as a cluster automorphism is equal to its order as an action on $H_1(L, T)$, as $G$ acts by rotation on $L$.
\end{remark}

Denote by $Q_L^G$ the weighted quiver obtained by folding a $G$-admissible intersection quiver $Q_L$ of a filling $L$ by the group $G$. The observation below follows immediately from Lemma~\ref{lem: solid mutation} and our description of the homology basis for $H_2(L\times_\varphi S^1, \tau)$ above.

\begin{corollary}\label{cor: SMCs}
    Any mutable vertex of $Q_L^G$ is represented by a solid mutation configuration whose boundary $T_\varphi(\gamma)$ is a class in $H_2(L\times_\varphi S^1, \tau)$.
\end{corollary}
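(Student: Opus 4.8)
\textbf{Proof proposal for Corollary~\ref{cor: SMCs}.}

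The plan is to assemble the corollary directly from the pieces already in place. First I would recall the setup: by Lemma~\ref{lem: solid mutation}, for each $\mathbb{L}$-compressing cycle $\gamma \in H_1(L, T)$ with Lagrangian compressing disk $D_\gamma$, the pair $(L\times_\varphi S^1, S_\varphi(D_\gamma))$ is a generalized solid mutation configuration, with attaching Lagrangian solid torus $S_\varphi(D_\gamma)$ and boundary $T_\varphi(\gamma) = \partial S_\varphi(D_\gamma) = S_\varphi(D_\gamma) \cap (L\times_\varphi S^1)$. So the geometric input is entirely supplied; what remains is the bookkeeping identifying which such configurations correspond to mutable vertices of the folded quiver $Q_L^G$.

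The key step is the translation of combinatorics into geometry. By definition, a vertex of the folded quiver $Q_L^G$ is a $G$-orbit of vertices of the intersection quiver $Q_L$, and a mutable vertex of $Q_L^G$ is the $G$-orbit of a mutable vertex of $Q_L$; since $Q_L$ is $G$-admissible, every vertex in a given orbit is mutable, so this orbit consists of mutable vertices, each of which corresponds to an $\mathbb{L}$-compressing cycle $\gamma_i \in H_1(L,T)$. Choosing an orbit representative $\gamma$ of such a mutable $G$-orbit $K$, the discussion preceding Lemma~\ref{lem: solid mutation} shows that the $\varphi$-orbit $T_\varphi(\gamma) = \{(\gamma_\theta,\theta)\}_{\theta=0}^{2\pi|K|}$ is a torus; and from the long exact sequence of the mapping torus in Section~\ref{sub: twist-spun sheaves} (specifically the injection $H_2(L\times_\varphi S^1,\tau)\hookrightarrow H_1(L,T)$ with image the $\varphi_*$-fixed part), this torus represents a class in $H_2(L\times_\varphi S^1,\tau)$, because summing over the orbit produces a $\varphi_*$-invariant cycle in $H_1(L,T)$. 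Finally, applying Lemma~\ref{lem: solid mutation} to this $\gamma$ yields the solid mutation configuration $(L\times_\varphi S^1, S_\varphi(D_\gamma))$ with boundary exactly $T_\varphi(\gamma)$, which is the desired homology class. This is the proof.

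I do not expect a serious obstacle here: the corollary is essentially a repackaging, and the only point requiring a sentence of care is well-definedness — the configuration should not depend on the choice of orbit representative $\gamma$ of $K$. This follows from the $\varphi_*$-invariance already invoked for $X_{T_\varphi(\gamma)}$ in Definition~\ref{def: twist_monodromy} and from the $G$-invariant Hamiltonian isotopy constructed in the proof of Lemma~\ref{lem: G-mutation}: any two representatives are carried to one another by (a power of) $\varphi$, which is an exact symplectomorphism, so their associated configurations and boundary classes agree. The main content of the corollary is therefore inherited from Lemmas~\ref{lem: G-mutation} and \ref{lem: solid mutation} together with the homology computation of Section~\ref{sub: twist-spun sheaves}, and the proof is just the observation that these combine as stated.
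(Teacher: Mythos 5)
Your proposal is correct and follows exactly the route the paper intends: the paper gives no separate proof, stating only that the corollary "follows immediately from Lemma~\ref{lem: solid mutation} and our description of the homology basis for $H_2(L\times_\varphi S^1,\tau)$," which is precisely the combination you assemble. Your additional remarks on $G$-admissibility forcing whole orbits to be mutable and on independence of the orbit representative are correct fillings-in of details the paper leaves implicit.
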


If we consider the quiver $Q_L^G$, we can justify our definition of microlocal monodromies and merodromies for cycles $T_\varphi(\gamma)$ and $T_\varphi(\gamma^\vee)$ by the following observation. As above, we assume that $L$ is a $\varphi$-invariant exact Lagrangian filling of $\la$ and $\SM_1(\la, T)$ is a globally foldable cluster algebra with respect to the $G$-action induced by $\varphi.$

\begin{proposition}\label{prop: solid_cluster_mutation}
Let $\gamma\in H_1(L, T)$ be an $\L$-compressing cycle of $L$. Then surgery of $L\times_\varphi S^1$ along the solid Lagrangian torus $S_\varphi(D_\gamma)$ induces a cluster mutation of $A_{T_\varphi(\gamma^\vee)}$ and $X_{T_\varphi(\gamma)}$.
\end{proposition}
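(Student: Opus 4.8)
The plan is to reduce the statement to the analogous, already-established fact for Legendrian links (via \cite{CasalsWeng} and \cite{CasalsGao24}) by exploiting the fibration structure $L\times_\varphi S^1\to S^1$ and the compatibility of everything in sight with the $G$-action. First I would set up the local model: by Lemma~\ref{lem: solid mutation}, the pair $(L\times_\varphi S^1, S_\varphi(D_\gamma))$ is a generalized solid mutation configuration, so the Lagrangian surgery $\eta_{S_\varphi(D_\gamma)}$ of Lemma~\ref{lem: lag_surgery} is defined and produces an exact Lagrangian filling $L'\times_\varphi S^1$ smoothly isotopic to $L\times_\varphi S^1$. I would then observe that $\eta_{S_\varphi(D_\gamma)}$ is, by construction, the $\varphi$-orbit (i.e. the $S^1$-spun version) of ordinary Lagrangian disk surgery on $L$ along $D_\gamma$: on the fibre $L_\theta$ over $\theta\in S^1$ it restricts to Yau-style disk surgery along $D_{\gamma_\theta}$. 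Hence $\partial(L'\times_\varphi S^1)=\Sigma_\varphi(\la)$ and, fibrewise, $L'$ is $\varphi$-invariant, so by Lemma~\ref{lem: G-mutation} the surgered filling is again of the form $L'\times_\varphi S^1$ with $L' = \mu_\gamma(L)$ (or $\mu_K(L)$ for the $G$-orbit $K$ of $\gamma$).

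Next I would track the coordinates. By Lemma~\ref{lem: local_systems_on_spuns}, the toric chart $\mathcal{C}_{L\times_\varphi S^1}\cong \Phi(\mathcal{C}_L)\times\C^\ast$, and $X_{T_\varphi(\gamma)}$, $A_{T_\varphi(\gamma^\vee)}$ are by Definitions~\ref{def: twist_monodromy} and \ref{def: twist_merodromy} literally the images under $\Phi$ of $X_\gamma$, $A_{\gamma^\vee}$ on the $\varphi$-invariant part $\SM_1(\la,T)^G$, $\FM(\la,T)^G$. The key computational input is that microlocal monodromy/merodromy is computed locally near the surgery region, so comparing the charts of $L\times_\varphi S^1$ and $L'\times_\varphi S^1$ reduces to comparing the charts of $L$ and $L'$ — and for the latter, Casals–Weng \cite[Section 4]{CasalsWeng} (or \cite[Proposition 4.29]{CasalsWeng} together with the mutation formula) tell us that Lagrangian disk surgery along $D_\gamma$ implements the cluster-$\mathcal{A}$ mutation $\mu_\gamma$ on $A_{\gamma^\vee}$ and the cluster-$\mathcal{X}$ mutation on $X_\gamma$. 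I would then apply $\Phi$: because $\gamma$ lies in a mutable $G$-orbit $K$ and the intersection quiver $Q_L$ is $G$-admissible, the simultaneous mutation $\mu_K$ on $\SM_1(\la,T)$ restricts on the $G$-invariant locus to the \emph{folded} mutation at the vertex $I=K$ of $Q_L^G$, with the skew-symmetrizable exchange data $[e_i,e_j]=\langle e_i,e_j\rangle d_j$ from Section~\ref{sub: lattices}. Concretely, the identity $x_{T_\varphi(\gamma)}\mu(x_{T_\varphi(\gamma)})=\prod_{[I,J]>0}(\cdots)+\prod_{[I,J]<0}(\cdots)$ is obtained by summing the link-level exchange relation over the orbit, exactly as in the worked $\la(2,6)$ example (where $x_3\mu(x_3)=x_2^2+x_9^2$). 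The extra $\C^\ast$ factor coming from the $S^1$-monodromy is untouched by the surgery since $S_\varphi(D_\gamma)$ is disjoint from a representative of that class, so it passes through as a frozen/spectator coordinate.

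The main obstacle I anticipate is not the coordinate bookkeeping but verifying that the higher-dimensional surgery $\eta_{S_\varphi(D_\gamma)}$ genuinely agrees, fibrewise and globally, with the $S^1$-spin of ordinary Lagrangian disk surgery, and in particular that the resulting filling is Hamiltonian isotopic to $\mu_K(L)\times_\varphi S^1$ rather than merely smoothly isotopic to it — this requires using the $G$-equivariant version of the local Hamiltonian isotopy from \cite{CasalsGao24} provided by Lemma~\ref{lem: G-mutation}, and checking that equivariance is compatible with the symplectic normal form $U\cong T^*S$ used to define $\eta_{S_\varphi(D_\gamma)}$ (so that the surgery region is a genuine $\varphi$-orbit of standard Yau neighborhoods). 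A secondary subtlety, which I would address by a direct local computation with Equations~\eqref{eq: merodromy1}–\eqref{eq: merodromy2}, is that the merodromy along $T_\varphi(\gamma^\vee)$ is computed on the slice $\la\subset\Sigma_\varphi(\la)$ and hence equals the link-level merodromy $A_{\gamma^\vee}$ verbatim, so that the folded $\mathcal{A}$-mutation formula is inherited without correction factors beyond those dictated by the orbit size $|K|=d_I$.
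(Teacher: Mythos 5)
Your proposal is correct and follows essentially the same route as the paper's proof: identify $A_{T_\varphi(\gamma^\vee)}$ and $X_{T_\varphi(\gamma)}$ with the images of the link-level functions $A_{\gamma^\vee}$ and $X_\gamma$, use the global foldability together with Lemmas~\ref{lem: lag_surgery}, \ref{lem: G-mutation}, and \ref{lem: solid mutation} to realize the surgered filling as $\mu_I(L)\times_\varphi S^1$, and then push the Casals--Weng mutation formula through the folding map $\Phi$. If anything, you are more careful than the paper at the step it states without elaboration — that the higher-dimensional surgery output is Hamiltonian isotopic to $\mu_I(L)\times_\varphi S^1$ rather than merely smoothly isotopic — which is exactly the point where your appeal to the $G$-equivariant local Hamiltonian isotopy of \cite{CasalsGao24} is needed.
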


\begin{proof}
    By construction, $A_{T_\varphi(\gamma^\vee)}$ and $X_{T_\varphi(\gamma)}$ can be identified with the image of $A_{\gamma^\vee}$ and $X_{\gamma}$ in $\FM(L\times_\varphi S^1, \tau)$ and $\SM_1(L\times_\varphi S^1, \tau)$, respectively. Since $\SM_1(\la, T)$ is globally foldable cluster algebra with respect to $G$, surgery of $L\times_\varphi S^1$, using Lemma~\ref{lem: lag_surgery}, along $S_\varphi(\gamma)$ yields a filling of $\Sigma_\varphi(\la)$ Hamiltonian isotopic to $\mu_I(L)\times_\varphi S^1$ where $\mu_I$ denotes the result of mutating at every cycle in the $G$-orbit of $\gamma$. The corresponding functions $A_{\mu_I(\gamma)^\vee}$ and $X_{\mu_I(\gamma)}$ are related to $A_{\gamma^\vee}$ and $X_{\gamma}$ by cluster mutation, following \cite{CasalsWeng}, and therefore, their images $ A_{T_\varphi(\mu_I(\gamma)^\vee)}$ and $X_{T_\varphi(\mu_I(\gamma))}$ are related to $A_{T_\varphi(\gamma^\vee)}$ and $X_{T_\varphi(\gamma)}$ by cluster mutation. 
\end{proof}

\subsubsection{Proof of theorem}

We now have all the necessary ingredients to prove Theorem~\ref{thm: intro_ensembles}, restated here for clarity.

\begin{theorem}\label{thm: ensembles}
Let $G$ be a finite group generated by the action of a Legendrian loop $\varphi$ of a braid positive Legendrian link $\la$ and suppose that $C[\FM(\la, T)]$ is a globally foldable cluster algebra with respect to the $G$-action. Assume that $L$ is an exact Lagrangian filling of $\la$ fixed by the action of $\varphi$ and equipped with a maximal collection of $\L$-compressing cycles. The moduli stacks $\FM(\Sigma_\varphi(\la, T))$ and $\SM_1(\Sigma_\varphi(\la, T))$ form a cluster ensemble with every cluster chart induced by an embedded exact Lagrangian filling. 
\end{theorem}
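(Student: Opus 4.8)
The plan is to assemble the result from the ingredients developed in the preceding subsections, in three stages: (1) identify the two moduli stacks as a skew-symmetrizable cluster ensemble by folding; (2) check that the initial folded seed is realized geometrically by $L\times_\varphi S^1$ together with its collection of solid mutation configurations; and (3) propagate this to every seed using the globally foldable hypothesis together with the adaptation of Casals--Gao. First I would invoke Proposition~\ref{prop: twist spun sheaf moduli} to identify $\SM_1(\Sigma_\varphi(\la),\tau)\cong \SM_1(\la,T)^G\times\C^\ast$ and $\FM(\Sigma_\varphi(\la),\tau)\cong \FM(\la,T)^G\times\C^\ast$, so that the sheaf moduli of the twist-spun are precisely the $G$-fixed loci (the extra $\C^\ast$ factor contributes the frozen variable corresponding to the $S^1$-monodromy class $\{x\}\times_\varphi S^1$). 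Since $\C[\FM(\la,T)]$ is a globally foldable cluster algebra with respect to the $G$-action, the folding construction of Subsection~\ref{sub: folding} produces a skew-symmetrizable cluster algebra $\C[\FM(\la,T)]^G$, with lattices $N,N^\circ,M,M^\circ$ and bilinear form $[\cdot,\cdot]$ as in Subsection~\ref{sub: lattices}; I would then identify these with $H_2(L\times_\varphi S^1,\tau)$, its dual, and the finite-index sublattices generated by the $G$-orbit (sums, resp.\ rational-weight averages) of the $\L$-compressing cycles of $L$, using the long exact sequence of the mapping torus. This gives the fixed data of a cluster ensemble in the sense of Gross--Hacking--Keel, realized by $(\FM(\Sigma_\varphi(\la),\tau),\SM_1(\Sigma_\varphi(\la),\tau))$.

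\textbf{Geometric realization of the seeds.} For the initial seed, the filling $L\times_\varphi S^1$ exists and is exact by Proposition~\ref{prop: twist-spun fillings}, and its associated toric chart is $\Phi(\mathcal C_L)\times\C^\ast$ by Lemma~\ref{lem: local_systems_on_spuns}. By Corollary~\ref{cor: SMCs}, every mutable vertex of the folded quiver $Q_L^G$ is represented by a genuine generalized solid mutation configuration $(L\times_\varphi S^1, S_\varphi(D_\gamma))$, so that Lemma~\ref{lem: lag_surgery} applies to perform the high-dimensional Lagrangian surgery; and by Proposition~\ref{prop: solid_cluster_mutation} the effect on $A_{T_\varphi(\gamma^\vee)}$ and $X_{T_\varphi(\gamma)}$ is exactly the (skew-symmetrizable) cluster $\SA$- and $\SX$-mutation at the corresponding vertex of $Q_L^G$, with the surgered Lagrangian being Hamiltonian isotopic to $\mu_I(L)\times_\varphi S^1$ where $I$ is the $G$-orbit of $\gamma$. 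Thus a single step of the exchange graph of the folded cluster algebra is matched by an honest Lagrangian surgery producing an embedded exact filling. The content of the globally foldable hypothesis (see the Remark after Lemma~\ref{lem: G-mutation}) is precisely that this can be iterated: after each orbit-mutation the new intersection quiver is again $G$-admissible, so the next solid mutation configuration again exists, and by induction on the length of a mutation sequence every seed of $\C[\FM(\la,T)]^G$ arises from an embedded exact filling of $\Sigma_\varphi(\la)$ of the form $(\mu_{I_k}\cdots\mu_{I_1}(L))\times_\varphi S^1$.

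\textbf{Every cluster chart comes from a filling.} To upgrade ``every seed in the iterated-surgery orbit'' to ``every cluster chart,'' I would adapt the argument of Casals--Gao \cite{CasalsGao24}: in the unfolded setting they show that every cluster seed of $\C[\FM(\la,T)]$ is realized by an embedded exact Lagrangian filling equipped with an $\L$-compressing system, and that distinct seeds in the cluster exchange graph are connected by finite mutation sequences realized by Lagrangian disk surgeries. One checks that their inductive construction, applied equivariantly (using that the local Hamiltonian isotopies in \cite{CasalsGao24} making geometric intersections match algebraic ones can be performed $\varphi$-equivariantly, exactly as in the proof of Lemma~\ref{lem: G-mutation}), produces at each stage a $\varphi$-invariant filling whose quiver is $G$-admissible; folding and spinning then gives the corresponding chart in $\SM_1(\Sigma_\varphi(\la),\tau)$. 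Frozen variables are accounted for by the marked circles $\tau$ and by the non-$\L$-compressing $G$-orbits of cycles in $L$, together with the distinguished $\C^\ast$ from the $S^1$-direction.

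\textbf{Main obstacle.} The most delicate point is the equivariance bookkeeping in stage (3): Casals--Gao's realization of cluster seeds by fillings is carried out by a sequence of local moves (Hamiltonian isotopies and disk surgeries) that is not a priori $G$-symmetric, and one must argue that it can be run orbit-by-orbit so as to preserve $\varphi$-invariance of the filling and $G$-admissibility of the quiver at every intermediate step — this is exactly where the globally foldable hypothesis is used and where the subtlety about $|K|$ versus the order of the induced cluster automorphism (see the Remark following Corollary~\ref{cor: SMCs}) must be handled, since gluing an $S^1$-family of cycles in the mapping torus requires the \emph{full} orbit, not merely a permutation returning to the same seed. A secondary technical point, which I would address by appealing to the setup of Subsection~\ref{sec: lag_surgery}, is verifying that the high-dimensional surgery $\eta_S$ is compatible with the exactness and with the identification of toric charts — i.e.\ that $\eta_{S_\varphi(D_\gamma)}(L\times_\varphi S^1)$ is again of the product-with-$S^1$ form up to Hamiltonian isotopy, which is what makes the induction close.
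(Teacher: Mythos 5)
Your proposal is correct and follows essentially the same route as the paper: identify the fixed data of the skew-symmetrizable ensemble via folding and the homology lattices $H_2(L\times_\varphi S^1,\tau)$ and $H_2(L\times_\varphi S^1\setminus\tau,\la\times_\varphi S^1\setminus\tau)$ of the mapping torus, realize the initial seed by $L\times_\varphi S^1$, and realize each folded mutation by the solid-torus surgery of Lemma~\ref{lem: lag_surgery} via Lemmas~\ref{lem: G-mutation} and \ref{lem: solid mutation} and Proposition~\ref{prop: solid_cluster_mutation}, with the globally foldable hypothesis guaranteeing the iteration closes. Your ``stage (3)'' equivariance concern is exactly what Lemma~\ref{lem: G-mutation} handles in the paper, so once each orbit-mutation is realized geometrically, every seed of the folded cluster algebra is automatically in the surgery orbit of the initial one and no separate upgrade step is needed.
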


\begin{proof}
     Let $\la(\beta)$ be a Legendrian link such that the ring of regular functions $\C[\FM(\la(\beta), T)]$ is a globally foldable cluster algebra  and assume that $L$ is a filling of $\la$ fixed by $\varphi$ with a maximal collection of $\L$-compressing cycles $\gamma_1,\dots \gamma_{n}\subseteq H_1(L)$. 
     From $L\times_\varphi S^1$, we obtain a weighted quiver with vertices given by the solid mutation configurations identified by orbits of $\gamma_i$ as in Corollary~\ref{cor: SMCs} and weighted edges given by $\sum_{i\in I}\langle \gamma_i, \gamma_j\rangle$ where $I$ is the $G$-orbit of $\gamma_i$ and $\gamma_j$ is a single cycle in the $G$-orbit $J$. From the perspective of Subsection~\ref{sub: lattices}, the choice of homology basis given in Subsection~\ref{sub: folding} gives us a lattice structure on $N=H_2(L\times_\varphi S^1, \tau)$ and $M^\circ=H_2(L\times_\varphi S^1\backslash \tau, \la\times_\varphi S^1\backslash \tau)$. The requisite bilinear form $[\gamma_I, \gamma_J]$ is defined by the sum $\sum_{i\in I}\langle \gamma_i, \gamma_j\rangle$. The corresponding microlocal monodromies and merodromies about $T_\varphi(\gamma_i)$ and $T_\varphi(\gamma_i^\vee)$, as defined in Definitions~\ref{def: twist_monodromy} and \ref{def: twist_merodromy} together form the fixed data required to define a skew-symmetrizable cluster ensemble structure on the pair $\FM(\Sigma_\varphi(\la, T))$ and $\SM_1(\Sigma_\varphi(\la, T))$, where we rely on the globally foldable condition to ensure that $\C[\FM(\Sigma_\varphi(\la, T))]$ is a cluster algebra defined by this fixed data.

To show that every cluster chart of $\FM(\Sigma_\varphi(\la, T))$ is induced by a filling of $\Sigma_\varphi(\la, T)$, we note that the globally foldable condition implies that mutation commutes with folding. Therefore, we can apply Lemmas~\ref{lem: G-mutation} and \ref{lem: solid mutation} to realize the cluster mutations of the folded cluster algebra as the Lagrangian surgery defined in Lemma~\ref{lem: lag_surgery}, thereby producing a filling of $\Sigma_\varphi(\la, T)$ for every cluster seed.
 
\end{proof}

\subsection{Applications}\label{sub: twist-spun enumeration}

We now discuss some applications of the discussion above to constructing exact Lagrangian fillings and obstructing fillability of various families of twist-spuns. 

\subsubsection{Groups actions and fillings of twist spun Legendrians}

The first author previously studied orbits of exact Lagrangian fillings of $\la(2,n)$ under the K\'alm\'an loop $\rho$, described in Subsection~\ref{sub: twist-spun def}. As a corollary of \cite[Theorem 1.2]{Hughes2021b}, we obtain the following result on twist-spun Legendrian tori of $\la(2,n)$. Recall that we denote by $C_n$ the $n$th Catalan number $\frac{1}{n+1}{2n \choose n}$. We restate Theorem~\ref{thm: intro_catalan_fillings} for clarity.
    
\begin{theorem}\label{thm: Catalan_fillings}
There are at least $f(k)$ exact Lagrangian fillings of $\Sigma_{\rho^k}(\la(2, n))$ where 
$$f(k)=\begin{cases}
    C_{\frac{n+2}{3}} & k=\frac{n+2}{3}\in \N\\ 
    C_{\frac{n}{2}} & k=\frac{n+2}{2} \in \N  \\
    C_n & k=0\\
    0 & \text{otherwise}
\end{cases}$$
and $k\in \{0, \dots, n-1\}.$
\end{theorem}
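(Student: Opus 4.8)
The plan is to apply Theorem~\ref{thm: ensembles} to the Legendrian loop $\rho^k$ and then count seeds in the resulting folded cluster algebra, identifying that count with a count of $\rho^k$-fixed fillings of $\la(2,n)$. First I would recall from the Casals--Weng framework (and the ADE picture for $\la(2,n)$) that $\C[\FM(\la(2,n), T)]$ is a cluster algebra of type $A_{n-1}$, whose seeds are in bijection with triangulations of an $(n+2)$-gon, and hence with the $C_n$ known weave fillings of $\la(2,n)$. The K\'alm\'an loop $\rho$ acts on this cluster algebra as the cyclic rotation of the $(n+2)$-gon by one vertex, so $\rho^k$ acts as rotation by $k$ vertices. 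The key case analysis is exactly the orbit structure of rotation on the $(n+2)$-gon: when $k = 0$ we recover all $C_n$ fillings; when $k = \tfrac{n+2}{2}$ (rotation by a half-turn) the quotient polygon has $\tfrac{n+2}{2}$ effective vertices and the fixed seeds are counted by $C_{n/2}$; when $k = \tfrac{n+2}{3}$ (rotation by a third-turn) we get $C_{(n+2)/3}$; and for all other $k$ the rotation has no compatible fixed triangulation/seed, giving $0$. These Catalan counts for centrally symmetric and $\Z/3$-symmetric triangulations are classical (and are the content of \cite[Theorem 1.2]{Hughes2021b}, which I would cite directly).

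\textbf{Key steps in order.} (1) Verify that for these values of $k$ the rotation $\rho^k$ fixes the relevant initial weave filling $L$ and that the induced intersection quiver is $G$-admissible for $G = \langle \rho^k\rangle$; in the type $A$ case one checks this on the standard $A_{n-1}$ quiver with the rotation action, where admissibility is immediate because non-adjacent vertices in an orbit have zero intersection. (2) Check that $\C[\FM(\la(2,n),T)]$ is \emph{globally} foldable with respect to this $G$-action --- since type $A$ cluster algebras are of finite type, every mutation sequence lands among finitely many seeds and admissibility is preserved; alternatively this is folklore for the foldings $A_{2m-1}\to C_m$ and the $\Z/3$ folding. (3) Invoke Theorem~\ref{thm: ensembles} to conclude $\FM(\Sigma_{\rho^k}(\la(2,n)), \tau)$ carries a (generalized) cluster structure in which every seed comes from an embedded exact Lagrangian filling of the form $L'\times_{\rho^k} S^1$. (4) Count the seeds: by Theorem~\ref{thm: ensembles} distinct seeds of the folded algebra correspond to distinct $\rho^k$-orbits of seeds of the $A_{n-1}$ algebra that are themselves $\rho^k$-fixed, equivalently $\rho^k$-symmetric triangulations of the $(n+2)$-gon, and \cite{Hughes2021b} gives these counts as the stated $f(k)$. (5) Conclude that $\Sigma_{\rho^k}(\la(2,n))$ admits at least $f(k)$ exact Lagrangian fillings, noting the lower-bound phrasing accounts for the possibility that two distinct seeds might a priori give Hamiltonian-isotopic fillings.

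\textbf{Main obstacle.} I expect the delicate point to be the case $k = \tfrac{n+2}{3}$, where folding a type $A_{3m-1}$ cluster algebra by a $\Z/3$ action does not produce an ordinary cluster algebra but a \emph{generalized} cluster structure in the sense of Fraser \cite{fraser2020cyclic} (as the paper flags in its footnote). This means the ``cluster ensemble'' produced by Theorem~\ref{thm: ensembles} has exchange relations with more than two terms at the $\Z/3$-orbit of the central vertex, and I must make sure the filling-counting argument still goes through: specifically that solid mutation configurations still realize the generalized exchange relations, and that the seed count is still the Catalan number $C_{(n+2)/3}$ (which matches the count of $\Z/3$-symmetric triangulations). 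Care is also needed to confirm that the ``otherwise'' case genuinely yields $0$ fillings \emph{by this construction} --- the statement only claims a lower bound, so I just need that no $\rho^k$-fixed weave filling exists when $k$ is none of the three special values, which again follows from the absence of $\rho^k$-symmetric triangulations of the $(n+2)$-gon, while being careful that $k$ ranges over $\{0,\dots,n-1\}$ as stated. Finally, I should make the identification between $\rho$ acting on fillings and the cyclic rotation on the $(n+2)$-gon precise by citing \cite{Hughes2021b} rather than re-deriving it.
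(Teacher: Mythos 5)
Your proposal is correct and follows essentially the same route as the paper: the cases $k=0$ and $k=\tfrac{n+2}{2}$ are read off directly from Theorem~\ref{thm: ensembles}, and the case $k=\tfrac{n+2}{3}$ is handled via Fraser's generalized cluster structure together with the count of rotationally symmetric triangulations from \cite[Theorem 1.2]{Hughes2021b}. The only difference is that for the $\Z/3$ case the paper does not attempt to upgrade Theorem~\ref{thm: ensembles} to the generalized setting (e.g.\ realizing the higher-order exchange relations by solid mutation configurations, as you anticipate needing); it simply builds the symmetric weave fillings from the symmetric triangulations, applies Proposition~\ref{prop: twist-spun fillings}, and distinguishes the resulting fillings by the distinct generalized seeds they induce, which suffices for the stated lower bound.
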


Note that in the case of $\rho^{n+2}$ (or the identity, more generally) we obtain a spherical spun $\Sigma_{id}(\la)) \cong \la\times S^1$. Since the identity fixes any exact Lagrangian filling of $\la$, exact Lagrangian fillings of spherical spuns can be constructed directly from exact Lagrangian fillings of $\la$, as done in \cite{Golovko22, capovillasearle2023newton}. 

\begin{proof}[Proof of Theorem~\ref{thm: Catalan_fillings}]
    The cases of $k=\frac{n+2}{2}$ and $k=0$ are immediate corollaries of Theorem~\ref{thm: ensembles}. The case of $k=\frac{n+2}{3}$ is slightly more involved, as the cluster algebra $\C[\FM(\la(2, n))]$ is not globally foldable with respect to the action of $\rho^3$. In this case, folding produces a \emph{generalized} cluster algebra \cite[Conjecture 9.1 and Example 9.3]{fraser2018braid} on $Gr^\circ (2, n+2)^{\rho^{\frac{n+2}{3}}}\cong \C[\Sigma_{\rho^{\frac{n+2}{3}}}(\la(2, n)]$ with precisely $C_{\frac{n+2}{3}}$ seeds indexed by rotationally symmetric triangulations of the $n+2$-gon. Following the construction of symmetric weaves dual to triangulations in \cite[Theorem 1.2]{Hughes2021b}, we can apply Proposition~\ref{prop: twist-spun fillings} to produce the given number of fillings, which we can then distinguish as belonging to distinct seeds in the generalized cluster algebra $\C[\Sigma_{\rho^{\frac{n+2}{3}}}(\la(2, n)]$.
\end{proof}

\begin{remark}
Outside of the case of $\Sigma_{\rho^{\frac{n+2}{3}}}(\la(2, n))$, results of Fraser in  \cite{fraser2020cyclic} hint at the possibility of obtaining a generalized cluster algebra structure in cases where $\la$ admits no $\varphi$-invariant exact Lagrangian fillings. Initial computations suggest that singular exact Lagrangian fillings correspond to generalized cluster seeds in these cases. We believe this to be the first low-dimensional contact-geometric setting in which such generalized cluster structures appear and plan to explore this further in future work.
\end{remark}

As an additional corollary to Theorem~\ref{thm: ensembles}, we obtain infinitely many fillings for several families of twist-spun tori. With a bit more care, we can even leverage the relationship between Legendrian loops and their relationship to cluster modular groups to describe specific infinite group actions on the exact Lagrangian fillings of certain twist spun Legendrians. The cluster modular group of a cluster algebra $\mathcal{A}$ can be defined as the set of all algebra automorphisms of $\mathcal{A}$ that preserve cluster seeds and commute with mutation. This group is difficult to compute for an arbitrary cluster algebra, but is known for certain classes of cluster algebras and conjecturally computed for those corresponding to coordinate rings of Grassmannians \cite[Conjecture 8.2]{fraser2018braid}.

We denote by $\langle \varphi\rangle^H$ the normal closure of the group generated by $\varphi$ in $H$. In \cite[Section 6.4]{Hughes2024}, the first author gave descriptions of Legendrian loops that generate $H/\langle \varphi\rangle^H$, thought of as the cluster modular group for a folded cluster algebra, for certain families of Legendrian links. These families are summarized in Table~\ref{tab: folded}, and from this, we obtain the following result on fillings of twist-spun Legendrians.

\begin{theorem}\label{thm: infinitely many fillings}
  
Let $\la$ be a $(-1)$-closure of a braid appearing in the leftmost column of Table~\ref{tab: folded} and let $\varphi$ and $H$ be the corresponding loop and group, respectively. Then $H/\langle\varphi\rangle^H$ acts faithfully on the set of exact Lagrangian fillings of $\Sigma_\varphi(\la)$.
\end{theorem}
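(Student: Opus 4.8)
The plan is to combine the cluster ensemble of Theorem~\ref{thm: ensembles} with the description of $H/\langle\varphi\rangle^H$ as the cluster modular group of the folded cluster algebra given in \cite[Section 6.4]{Hughes2024}. First I would record a filling-to-seed dictionary: by Theorem~\ref{thm: ensembles}, every cluster seed of $\C[\FM(\Sigma_\varphi(\la, T))]$ is induced by an embedded exact Lagrangian filling of $\Sigma_\varphi(\la)$ of the form $\mu_{I_k}\circ \cdots \circ \mu_{I_1}(L)\times_\varphi S^1$, and conversely each such filling induces a toric chart which, by \cite{JinTreumann17}, depends only on its Hamiltonian isotopy class. Since distinct cluster seeds cut out distinct cluster tori, this yields an injection from the set of seeds of the folded cluster algebra into the set of Hamiltonian isotopy classes of exact Lagrangian fillings of $\Sigma_\varphi(\la)$.

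Second, I would identify the two group actions. An element of $H$ is a Legendrian loop of $\la$, acting on $\FM(\la, T)$ by a direct cluster automorphism; for the braids in Table~\ref{tab: folded} one checks that $\langle\varphi\rangle^H$ acts trivially on the twist-spun $\Sigma_\varphi(\la)$, since the relevant loops become isotopic to the identity on the mapping torus of $\varphi$, so the geometric action on fillings of $\Sigma_\varphi(\la)$ descends to $H/\langle\varphi\rangle^H$. On the sheaf side this is the restriction to $\FM(\la,T)^G$ of the $H$-action, which by \cite[Section 6.4]{Hughes2024} realizes $H/\langle\varphi\rangle^H$ as the cluster modular group of the folded algebra. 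The compatibility to verify here is that, under the dictionary of the first step, applying a loop $\psi$ to $L\times_\varphi S^1$ and re-extracting a seed agrees with applying the corresponding cluster automorphism to the initial seed; this is the naturality of microlocal merodromies and monodromies under Legendrian isotopy, established as in \cite{CasalsWeng, CasalsGao24}.

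With these two steps in place faithfulness is formal: if $[\psi]\in H/\langle\varphi\rangle^H$ fixes every exact Lagrangian filling of $\Sigma_\varphi(\la)$ up to Hamiltonian isotopy, then by the equivariant injection of the first step $[\psi]$, viewed as a cluster automorphism, fixes every cluster seed of the folded algebra; tracking the induced permutation of $\L$-compressing cycles of $L\times_\varphi S^1$ against the fixed single mutations $\mu_\gamma(L)\times_\varphi S^1$ shows $[\psi]$ fixes the initial seed pointwise, hence is the trivial element of the cluster modular group, so $[\psi] = 1$. The main obstacle is the second step: checking carefully that the geometric loop action on fillings of the twist-spun matches the algebraic cluster modular group action after folding, and in particular confirming that the conjugates of $\varphi$ one quotients by are exactly the loops that die on the mapping torus. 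The first and third steps are essentially bookkeeping given Theorem~\ref{thm: ensembles} and the cited results.
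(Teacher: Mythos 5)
Your proposal is correct, and it reaches the conclusion by the same essential inputs as the paper, but it routes the argument differently. The paper's proof is shorter and more geometric: it quotes Theorems 1.3 and 1.4 of \cite{Hughes2024} for the faithful $H$-action on fillings of $\la$ itself, observes (again citing \cite[Section 6.4]{Hughes2024}) that the loops generating $H$ commute with $\varphi$, and concludes via $\varphi\circ\psi(L)=\psi\circ\varphi(L)=\psi(L)$ that each $\psi$ preserves the set of $\varphi$-invariant fillings and hence acts on fillings of $\Sigma_\varphi(\la)$; the faithfulness of the quotient action is then left to the identification of $H/\langle\varphi\rangle^H$ with the folded cluster modular group. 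Your version instead pushes everything through Theorem~\ref{thm: ensembles}: seeds of the folded algebra inject into Hamiltonian isotopy classes of fillings of the twist-spun, the quotient group acts as the folded cluster modular group, and an element fixing every filling fixes every seed (pointwise, after your single-mutation bookkeeping) and is therefore trivial. What your route buys is an explicit justification of the faithfulness step, which the paper's written proof essentially delegates to the citation; what it costs is the extra compatibility check you correctly flag in your second step (that the geometric loop action on $L\times_\varphi S^1$ matches the algebraic action on folded seeds), which the paper avoids by working directly with fillings of $\la$. One small caveat: your assertion that the conjugates of $\varphi$ ``become isotopic to the identity on the mapping torus'' is stronger than what is needed or what the paper proves; the relevant (and easier) statement is that $\varphi$ fixes every $\varphi$-invariant filling by definition, and since the generators of $H$ commute with $\varphi$ the normal closure $\langle\varphi\rangle^H$ is just $\langle\varphi\rangle$, which therefore acts trivially on the set of such fillings. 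With that adjustment your argument is sound.
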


\begin{table}[]
\begin{tabular}{l|l|l|l}
 Braid word $\beta$  & $H$ & $\varphi$ & $H/\langle \varphi \rangle^H$  \\
\hline

 $\sigma_1^{2n+2}$ &  $\Z_{2n+2}$ &  $\rho^{n+1}$ & $\Z_{n+1}$ \\

  $(\sigma_2\sigma_1)^6$ & $\Z_4\times \S_3$ & $\rho^2$ & $\Z_4$
\\
 $\sigma_1^{n-2}\sigma_2\sigma_1\sigma_1\sigma_2(\sigma_1\sigma_2)^3$, $n>4$ & $\Z_n\times \Z_2$ & -- & --\\

 $(\sigma_2\sigma_1)^7$ & $\Z_{14}$ & -- & --\\ 
  $(\sigma_2\sigma_1)^7\sigma_1$ & $\Z_{10}$ & -- & -- \\ 
   $(\sigma_2\sigma_1)^8$ & $\Z_{16}$ & -- & --\\

 $(\sigma_2\sigma_1^4)^3$ & $S_3\times\Z$  & $\delta^5$ & $\Z$ \\
 
 $(\sigma_1\sigma_2\sigma_3)^8$ &  $\pi_1(Conf(S^2, 4))\rtimes \Z_2$   & $\rho^4$ & $\text{Mod}(S^2, 4)\rtimes \Z_2$\\  

$(\sigma_1\sigma_2)^9$ &  $\PSL_2(\Z)\rtimes \Z_6$  & $\rho^3$ & $\PSL_2(\Z)\rtimes \Z_2$ \\

\end{tabular}
\caption{}
\label{tab: folded}
\end{table}

\begin{proof} 
For any triple $\la, \varphi, H$ appearing in the table, Theorems 1.3 and 1.4 of \cite{Hughes2024} prove that $H$ acts faithfully on fillings of $\la$. When $\varphi= id$, the statement follows immediately. In the cases where $\varphi\neq id$, any Legendrian loop inducing a generator of $H$ commutes with $\varphi$, as argued in \cite[Section 6.4]{Hughes2024}. As a result, we have that implying that $\psi$ acts on fillings of $\Sigma_\varphi(\la)$, as we have
$$\varphi \circ \psi(L)=\psi \circ\varphi (L)=\psi(L)$$ for any $\psi$ with $\psi_*\in H$.
\end{proof}

\begin{proof}[Proof of Theorem~\ref{thm: intro_infinite}]
There are faithful $\PSL(2, \Z)$ and $\text{Mod}(\Sigma_{0,4})$ actions on the set of exact Lagrangian fillings of $\Sigma_{\rho^3}(\la(3,6))$ and $\Sigma_{\rho^4}(\la(4,4))$, respectively.
The Legendrian loops inducing the group actions are constructed in \cite{CasalsGao} and give the first construction of an infinite exact Lagrangian fillings. Figure~\ref{fig: symmetric fillings} gives  initial fillings for these Legendrian loops to act on constructed from symmetric plabic graphs via the $T$-shift process of \cite{CLSBW}. This proves Theorem~\ref{thm: intro_infinite}.
\end{proof}

\begin{figure}[h!]{ \includegraphics[width=.8\textwidth]{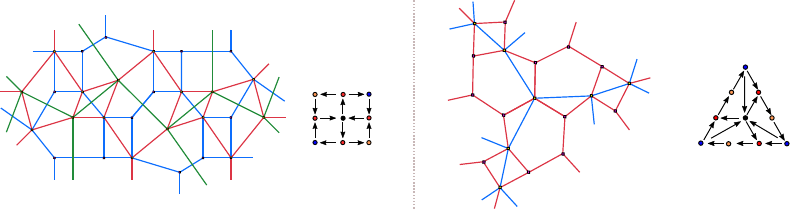}}\caption{Symmetric fillings of $\La(4,4)$ (left) and $\La(3, 6)$ (right)  corresponding to fillings of $\Sigma_{\rho^2}(\La(4, 4))$ and $\Sigma_{\rho^3}(\La(3, 6))$, respectively. }
			\label{fig: symmetric fillings}\end{figure}

Given the existence of cluster structures on sheaf moduli of twist-spuns, we extend the conjectural ADE classification of exact Lagrangian fillings of Legendrian rainbow closures of positive braids due to Casals \cite[Conjectures 5.1 and 5.4]{CasalsLagSkel} to our context. In particular, for such $\la$ we conjecture that there are exactly as many embedded exact Lagrangian fillings of the twist spun $\Sigma_\varphi(\la)$ as there are $\varphi$-invariant fillings of $\la$, as there are seeds in the folded cluster algebra. 

\subsubsection{Filling obstructions for twist spun Legendrians}\label{sub: nonfillable_spuns}

In this subsection, we show that certain twist spuns of Legendrian torus links do not admit any exact Lagrangian fillings. Recall that we denote the K\'alm\'an loop and its induced action on $\SM_1(\la(k,n)$ by $\rho$.

Our strategy for obstructing fillings of $\Sigma_{\rho^{\ell}}(\la(k,n))$ for certain values of $\ell, k,$ and $n$ is to show that $\SM_1(\Sigma_{\rho^\ell}(\la(k, n)))$ does not have any rational points. By \cite{JinTreumann17}, an embedded algebraic torus induced by an exact Lagrangian filling of $\Sigma_{\rho^\ell}(\la(k, n))$ necessarily implies the existence of rational points in $\SM_1(\Sigma_{\rho^\ell}(\la(k, n)))$, so a lack of rational points gives an obstruction for fillability. We thank David Treumann for suggesting this strategy and for describing the case of $\Sigma_\rho(\la(2, 3))$ to us.

For positive torus links $\la(k, n-k)$, the sheaf moduli $\SM_1(\la(k, n-k))$ is closely related to the Grassmannian $Gr(k, n)$. In order to determine when $\SM_1(\Sigma_{\rho^\ell}(\la(k, n-k)))$ admits rational points, we require a characterization of fixed points of the Grassmannian under the action of the cyclic shift, an automorphism connected to the K\'alm\'an loop. Given a point $V$ represented by a $k\times n$ matrix with columns $v_1, \dots, v_n,$ the cyclic shift acts by $v_i\mapsto v_{i-1}$ for $i\geq 2$ and $v_1\mapsto (-1)^{k-1}v_n$. The fixed points of this action are studied in \cite{Karp}.

\begin{theorem}{\cite[Theorem 1.1]{Karp}} \label{thm: Karp}
There are ${n \choose k}$ fixed points of $Gr(k , n)$ under the cyclic shift action represented by the span of vectors $\{(1, \zeta_j, \dots, \zeta_j^{n-1}) \mid 1 \leq j \leq k\}$ where $\zeta_1, \dots, \zeta_k$ are any $k$ distinct $n$th root of $(-1)^{k-1}$. 
\end{theorem}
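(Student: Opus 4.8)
The plan is to reduce the statement to elementary linear algebra by recognizing the cyclic shift on $Gr(k,n)$ as induced by a single diagonalizable operator on $\C^n$ with simple spectrum. First I would package the cyclic shift as a linear map $\sigma \colon \C^n \to \C^n$, namely the ``twisted'' cyclic permutation with $\sigma(e_i) = e_{i+1}$ for $1 \le i \le n-1$ and $\sigma(e_n) = (-1)^{k-1} e_1$, arranged so that its induced action on column spans agrees with the description $v_i \mapsto v_{i-1}$, $v_1 \mapsto (-1)^{k-1} v_n$ given just above the theorem statement. The key computation is then $\sigma^n = (-1)^{k-1}\,\mathrm{Id}_{\C^n}$, which one reads off directly from the definition after tracking indices once; the only thing to be careful about is the placement of the sign $(-1)^{k-1}$, so that one genuinely obtains $\sigma^n = (-1)^{k-1}\mathrm{Id}$ rather than $\sigma^n = \pm\mathrm{Id}$ with the wrong sign.

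Next, since $(-1)^{k-1} \ne 0$, the polynomial $x^n - (-1)^{k-1}$ is separable, so $\sigma$ is diagonalizable with exactly $n$ distinct eigenvalues, namely the $n$ roots $\zeta$ of $\zeta^n = (-1)^{k-1}$. A short calculation shows that the $\zeta$-eigenline is spanned by a Vandermonde-type vector: solving $\sigma(v) = \zeta v$ with $v_1 = 1$ forces $v = (1, \zeta, \zeta^2, \dots, \zeta^{n-1})$ (up to replacing $\zeta$ by $\zeta^{-1}$, which is harmless since the set of $n$-th roots of $(-1)^{k-1}$ is closed under inversion — this is the one bookkeeping point needed to match the exact form in the statement).

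I would then invoke the standard structure theorem: for a diagonalizable operator with pairwise distinct eigenvalues, a subspace is $\sigma$-invariant if and only if it is a direct sum of eigenlines. A point $V \in Gr(k,n)$ is fixed by the cyclic-shift action precisely when $V$ is $\sigma$-invariant, hence precisely when $V$ is the span of $k$ of the $n$ eigenlines. Choosing $k$ distinct eigenvalues $\zeta_1, \dots, \zeta_k$ among the $n$ roots of $(-1)^{k-1}$ thus parametrizes the fixed points bijectively, giving exactly $\binom{n}{k}$ of them, each equal to the span of $\{(1, \zeta_j, \dots, \zeta_j^{n-1}) \mid 1 \le j \le k\}$, which is the asserted description. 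The main ``obstacle'' here is genuinely minor: there is no deep difficulty, only the need to fix sign and index conventions so that $\sigma^n = (-1)^{k-1}\mathrm{Id}$ comes out exactly, and to check that the eigenvector normalization matches the stated Vandermonde vectors; once these are pinned down the count is immediate.
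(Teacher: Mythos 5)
Your argument is correct, and in fact the paper does not prove this statement at all: it is quoted verbatim from Karp, so there is no in-paper proof to compare against. Your eigenline argument — realize the cyclic shift as an invertible operator $\sigma$ on $\C^n$ with $\sigma^n=(-1)^{k-1}\mathrm{Id}$, note that its minimal polynomial divides the separable polynomial $x^n-(-1)^{k-1}$ so that $\sigma$ has $n$ distinct eigenvalues with Vandermonde eigenvectors, and then identify fixed points of $Gr(k,n)$ with $\sigma$-invariant $k$-planes, i.e.\ sums of $k$ eigenlines — is exactly the standard proof and is how Karp establishes the complex fixed-point count. The only conventions to pin down are the ones you already flag (row span of a $k\times n$ matrix versus column span, and the harmless $\zeta\leftrightarrow\zeta^{-1}$ ambiguity, which is benign because the set of $n$th roots of $(-1)^{k-1}$ is closed under inversion), so there is no gap.
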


\begin{ex}
    Let $k=2$, $n=5$, and choose $\zeta_1=\zeta_2^{-1}=e^{\pi i/5}$. Then $$V=\begin{bmatrix}
        1 & \zeta_1 &\zeta_1^2 & \zeta_1^3& \zeta_1^4\\
        1 & \zeta_1^{-1} &\zeta_1^{-2} & \zeta_1^{-3}& \zeta_1^{-4}\\
    \end{bmatrix}$$
    represents a fixed point of the cyclic shift acting on $Gr(2, 5)$. In particular, $V$ is the unique positive real fixed point of this action, as can be verified by checking that the ratio $\Delta_{ij}/\Delta_{12}$ is a positive real number for each minor $\Delta_{ij}$ of $V$.
\end{ex}

In order to produce fillability obstructions using the fixed points identified in Theorem \ref{thm: Karp}, we restrict to torus links $\la(2, n-2)$ and $\la(3, n-3)$, with $n\geq 6$ satisfying the following restrictions.

\begin{align}\label{eqn: divisibility1}
&\text{For } k\in 2\Z, \text{ we require that } n\neq 2m, 3m \text{ for } m\in 2\Z+1.\\
\label{eqn: divisibility2}
&\text{For } k\in 2\Z+1, \text{ we require that } n \text{ is not divisible by } 2, 4, \text{ or } 6.
\end{align}

We now have all of the necessary ingredients to state our fillability obstruction. Theorem \ref{thm: Intro_nonfillable_spuns} is an immediate consequence of the following.

\begin{theorem}\label{thm: nonfillable_spuns}
Let $\ell$ and $n$ be relatively prime and assume that $n$ satisfies Equations~\ref{eqn: divisibility1} and \ref{eqn: divisibility2}. The twist-spuns $\Sigma_{\rho^\ell}(\la(2, n-2))$ and $\Sigma_{\rho^\ell}(\la(3, n-3))$ do not admit any exact Lagrangian fillings.
\end{theorem}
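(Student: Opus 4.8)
The plan is to translate the fillability question into a question about rational points on the fixed locus of a cyclic shift action on a Grassmannian, and then to show that no such rational points exist under the arithmetic hypotheses \eqref{eqn: divisibility1} and \eqref{eqn: divisibility2}. By \cite{JinTreumann17}, an embedded exact Lagrangian filling $\mathcal{L}$ of $\Sigma_{\rho^\ell}(\la(k,n-k))$ would force the moduli $\SM_1(\Sigma_{\rho^\ell}(\la(k,n-k)))$ to contain an embedded algebraic torus $(\C^\ast)^{b_1(\mathcal{L})}$, and in particular to contain a $\C$-point (indeed, a $\Q$-point after an appropriate base change, but a $\C$-point suffices for the contradiction we seek, since we will show the relevant set is empty). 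By Proposition~\ref{prop: twist spun sheaf moduli}, $\SM_1(\Sigma_{\rho^\ell}(\la(k,n-k))) \cong \SM_1(\la(k,n-k))^{G} \times \C^\ast$ where $G$ is the cyclic group generated by the action of $\rho^\ell$. So it suffices to prove that $\SM_1(\la(k,n-k))^{G}$ is empty, i.e. that the induced action of $\rho^\ell$ on $\SM_1(\la(k,n-k))$ has no fixed points.

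\textbf{Key steps.} First I would recall the identification of $\SM_1(\la(k,n-k))$ (or its decorated/framed version) with an open positroid-type stratum of the Grassmannian $Gr(k,n)$, under which the K\'alm\'an loop $\rho$ acts by the cyclic shift $v_i \mapsto v_{i-1}$, $v_1 \mapsto (-1)^{k-1}v_n$; thus $\rho^\ell$ acts by the $\ell$-fold iterate of the cyclic shift. Since $\gcd(\ell, n) = 1$, the $\ell$-fold cyclic shift generates the same cyclic group of order $n$ (up to the sign twist) as the single shift, so a point is fixed by $\rho^\ell$ if and only if it is fixed by $\rho$; hence the fixed locus is exactly the set described in Theorem~\ref{thm: Karp}, namely the $\binom{n}{k}$ points spanned by $\{(1,\zeta_j,\dots,\zeta_j^{n-1}) : 1 \le j \le k\}$ for $\zeta_1,\dots,\zeta_k$ distinct $n$th roots of $(-1)^{k-1}$. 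The second step is to check that none of these $\binom{n}{k}$ fixed points actually lies in the open stratum $\SM_1(\la(k,n-k))$: a point of $Gr(k,n)$ lies in this stratum precisely when certain consecutive Pl\"ucker coordinates are nonzero (equivalently, the flags at adjacent regions of the front satisfy the genericity/transversality conditions of Section~\ref{sec: sheaves background}). The Pl\"ucker coordinates of a fixed point are, up to a common factor, Vandermonde-type determinants and products of $(\zeta_i - \zeta_j)$ and powers of the $\zeta_j$; a vanishing consecutive minor forces a coincidence $\zeta_i = \zeta_j$ or forces the $\zeta_j$ to satisfy an extra polynomial relation that the divisibility hypotheses on $n$ in \eqref{eqn: divisibility1} and \eqref{eqn: divisibility2} are engineered to rule out — specifically, when $n$ avoids the forbidden small divisors, no $k$-subset of $n$th roots of $(-1)^{k-1}$ is "degenerate" in the required sense, so every candidate fixed point fails the openness condition. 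I would carry out this minor-vanishing analysis explicitly in the two cases $k=2$ and $k=3$, where the relevant identities reduce to elementary statements about roots of unity (e.g. for $k=2$ one needs $\zeta_1^{a} \ne \zeta_2^{a}$ and $\zeta_1^a\zeta_2^b \ne \zeta_1^b\zeta_2^a$ for the small exponents $a,b$ dictated by adjacency of regions, which translates into $2a, 2b \not\equiv 0$, hence the divisibility-by-$2,4,6$ restrictions).

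\textbf{Main obstacle.} The crux, and the step I expect to be most delicate, is the bookkeeping in the second step: correctly identifying \emph{which} Pl\"ucker coordinates must be nonvanishing for a point of $Gr(k,n)$ to lie in the specific open stratum corresponding to $\SM_1(\la(k,n-k))$ (as opposed to a larger positroid cell), and then verifying that the Karp fixed points violate exactly one of these conditions under the stated arithmetic hypotheses. This requires being careful about the precise dictionary between front-projection regions of $\la(k,n-k)$, their flags, the transversality conditions from \cite{CasalsZaslow}, and Pl\"ucker coordinates of $Gr(k,n)$ — including sign conventions coming from the $(-1)^{k-1}$ twist in the cyclic shift. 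Once that dictionary is pinned down, the number-theoretic input is exactly Equations~\eqref{eqn: divisibility1} and \eqref{eqn: divisibility2}: they guarantee that the only way a consecutive minor of a Karp fixed point vanishes is a forced one, so \emph{every} fixed point of $\rho^\ell$ on $Gr(k,n)$ lies outside $\SM_1(\la(k,n-k))$, giving $\SM_1(\la(k,n-k))^{G} = \varnothing$ and hence, by Proposition~\ref{prop: twist spun sheaf moduli} and \cite{JinTreumann17}, the non-existence of exact Lagrangian fillings of $\Sigma_{\rho^\ell}(\la(2,n-2))$ and $\Sigma_{\rho^\ell}(\la(3,n-3))$. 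I would also double-check the edge case where $\ell$ is a multiple of the order of $\rho$ as a cluster automorphism (so $\rho^\ell$ acts trivially) is excluded by $\gcd(\ell,n)=1$ together with $n \ge 6$.
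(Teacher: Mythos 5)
Your overall frame — reduce via Proposition~\ref{prop: twist spun sheaf moduli} to the $\rho^\ell$-fixed locus of $\SM_1(\la(k,n-k))$, identify this moduli with the top positroid cell of $Gr(k,n)$ with $\rho$ acting by the cyclic shift, note that $\gcd(\ell,n)=1$ makes $\operatorname{Fix}(\rho^\ell)=\operatorname{Fix}(\rho)$, and invoke Theorem~\ref{thm: Karp} — matches the paper. But your key second step is aimed at the wrong target and would fail. You propose to show that the fixed locus inside the open stratum is \emph{empty}, by arguing that every Karp fixed point has a vanishing consecutive Pl\"ucker coordinate. This is false: Karp's fixed points generically lie in the open cell, and one of them is even totally positive (the paper's own example for $Gr(2,5)$ exhibits a fixed point with all ratios $\Delta_{ij}/\Delta_{12}$ positive real, hence nonvanishing). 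The divisibility hypotheses~\ref{eqn: divisibility1} and \ref{eqn: divisibility2} are not engineered to force minors to vanish, and no amount of bookkeeping about which minors cut out the stratum will make the fixed locus empty. You briefly mention the correct mechanism in a parenthetical — that a filling would produce a $\Q$-point, not merely a $\C$-point — and then discard it; that discarded observation is the whole proof.

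What the paper actually does: the fixed points exist, but none of them is \emph{rational}. By \cite{JinTreumann17} an embedded exact filling induces an embedded algebraic torus $(\C^\ast)^{b_1}$ in the sheaf moduli, which forces the existence of $\Q$-points; so it suffices to show every $\rho$-fixed point of the open cell has some irrational ratio of Pl\"ucker coordinates. One first observes that a fixed point with all ratios $\Delta_I/\Delta_{1\dots k}$ real forces $\zeta_2=\zeta_1^{-1}$ (and $\zeta_3=1$ when $k=3$), and then computes $\Delta_{13}/\Delta_{12}=2\cos(\pi k/n)$ for $k=2$ and $\Delta_{124}/\Delta_{123}=2\cos(2\pi j/n)+1$ for $k=3$; Niven's theorem together with Equations~\ref{eqn: divisibility1} and \ref{eqn: divisibility2} shows these are irrational. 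To repair your proposal you would replace your minor-vanishing analysis with this rationality analysis; as written, the proposal does not prove the theorem.
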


\begin{proof}
By Proposition~\ref{prop: twist spun sheaf moduli}, we have $\SM_1(\Sigma_\rho(\la(k, n-k)))\cong\SM_1(\la(k, n-k))^\rho\times \C^\ast$. Moreover, we can identify $\SM_1(\la(k, n-k))$ with the top-dimensional positroid cell $\hat{Gr}(k, n)$ of the Grassmannian $Gr(k, n)$. Under this identification, the action of the K\'alm\'an loop $\rho$ on $\SM_1(\la(k,n-k))$ intertwines with that of the cyclic shift $\rho$ on $\hat{Gr}(k, n)$; see e.g. Lemma 5.8 of \cite{Hughes2024}. 
Therefore, we can obstruct fillability of our family of twist-spuns by showing that none of Karp's fixed points are rational when Equations~\ref{eqn: divisibility1} and \ref{eqn: divisibility2} are satisfied. 

Let $V$ denote one of the fixed points identified in Theorem~\ref{thm: Karp}. The Pl\"ucker coordinates of $V$ descend to affine coordinates in the top-dimensional positroid cell $\hat{Gr(k, n)}$, and we can see that to obtain real fixed points of $\SM_1(\Sigma_{\rho^\ell}(\la(2, n-2))$, we must choose $\zeta_1, \zeta_2$, $n$th roots of $-1$, so that $\zeta_{2}=\zeta_{1}^{-1}$; for real fixed points of $\SM_1(\Sigma_{\rho^\ell}(\la(3, n-3))$, we choose $n$th roots of unity $\zeta_1,\zeta_2, \zeta_3$ so that $\zeta_{2}=\zeta_{1}^{-1}$, and $\zeta_3=1$. To verify this for $k=3$, we note that the ratio of minors $\Delta_{124}/\Delta_{123}$ is given by  $\zeta_1+\zeta_2 + \zeta_3$, while $\Delta_{234}/\Delta_{123}=\zeta_1\zeta_2\zeta_3$.\footnote{In general, the ratio $\Delta_I/\Delta_{12\dots k}$ is a Schur polynomial evaluated at roots of $(-1)^{k-1}$. As far as we are aware there is no general study of when this particular choice of evaluation yields a rational number.} Together, requiring that these two ratios are real implies the above condition up to re-indexing. The case of $k=2$ is similar. 

To show that we cannot obtain a rational point from the Pl\"ucker functions of one of these fixed points, we must show that the ratio $\frac{\Delta_I}{\Delta_{1\dots k }}$ is irrational for some Pl\"ucker function $\Delta_I$.

For $k=2,$ we have $V=\begin{bmatrix}
    1 & \zeta_1 & \dots & \zeta_1^{n-1}\\
    1 & \zeta_1^{-1} & \dots & \zeta_1^{-(n-1)}
\end{bmatrix}$ and $\Delta_{12}=i\sin(\pi k/n)$, $1 \leq |k| \leq n-1, k\in 2\Z+1$. $\Delta_{13}=\zeta_1^2-\zeta_1^{-2}$ and $\Delta_{13}/\Delta_{12}=(\zeta_1+\zeta_1^{-1})=2\cos(\pi k/n)$. When $n$ satisfies Equations~\ref{eqn: divisibility1} and \ref{eqn: divisibility2}, we have that $2\cos(\pi k/n)\not\in \Q$ by Niven's theorem.

For $k=3$, we have that $\Delta_{124}/\Delta_{123}=\zeta+\zeta^{-1}+1$ for $\zeta=e^{2\pi i j/n}, 1\leq j \leq n-1$. This simplifies to $2\cos(2\pi j/n)+1$, which again is irrational when $n$ satisfies Equations~\ref{eqn: divisibility1} and \ref{eqn: divisibility2}.

\end{proof}

\begin{remark}\label{rmk: obstruction_hard}
    In line with Conjecture \ref{conj: number_fillings}, we expect that the conclusions of Theorem~\ref{thm: nonfillable_spuns} hold more generally, both for $k\geq 4$, and for cases where $n$ does not satisfy Equations~\ref{eqn: divisibility1} and \ref{eqn: divisibility2}. Our approach to the proof of Theorem~\ref{thm: nonfillable_spuns} can be extended to $k\geq 4$, assuming the restrictions on $n$ from Equations~\ref{eqn: divisibility1} and \ref{eqn: divisibility2}, by showing that the sum or product of the roots of $(-1)^{k-1}$ defining $V$ are irrational, as these expressions correspond to the ratios of Pl\"ucker functions $\Delta_{1 2 \dots (k-1) (k+1)}/\Delta_{12 \dots k}$ and $\Delta_{2 3 \dots (k+1)}/\Delta_{1 2 \dots k}$, respectively.
\end{remark}

	\bibliographystyle{alpha}
	\bibliography{main}

\end{document}